\documentclass[a4paper,10pt,reqno]{amsart}
\usepackage[T1]{fontenc}
\usepackage{amsmath}
\usepackage{amssymb}
\usepackage{amsthm}
\usepackage{mathrsfs}
\usepackage{geometry}
\usepackage{bm}
\usepackage{enumitem}
\usepackage{mathtools}
\usepackage[colorlinks=true, linkcolor=blue, citecolor=blue, urlcolor=blue]{hyperref}
\usepackage{natbib}
\usepackage{color}
\definecolor{darkgreen}{RGB}{0, 100, 0}
\allowdisplaybreaks
\newcommand{\rev}[1]{#1}
\newcommand{\norm}[1]{\left\lVert#1\right\rVert}
\newcommand{\Div}{\operatorname{div}}
\newcommand{\R}{\mathbb{R}}

\newtheorem{definition}{Definition}[section]
\newtheorem{remark}[definition]{Remark}

\newtheorem{lemma}[definition]{Lemma}
\newtheorem{proposition}[definition]{Proposition}
\newtheorem{theorem}[definition]{Theorem}
\newtheorem{assumption}[definition]{Assumption}

\numberwithin{equation}{section} 
\newcommand{\dn}{\partial_n}  
\newcommand{\ve}{\mathbf{v}}    
\newcommand{\dt}{\partial_t}  
\newcommand{\dxj}{\partial_j}        
\newcommand{\Lap}{\Delta}       
\newcommand{\mathcalL}{\mathcal{L}}
\newcommand{\grad}{\nabla} 
 
\newcommand{\BUC}{\text{BUC}}  
\newcommand{\Cspace}[2]{C^{0,#1}([0,T]; #2)}
\newcommand{\bJ}{\tilde{\mathbf{J}}}

\begin{document}
\title{Local Well-posedness of a Diffuse Interface Two-phase Flow Model with Surfactants}
\author{Helmut Abels} 
\address{Fakultät für Mathematik, Universität Regensburg, 93040 Regensburg, Germany}
\email{\href{mailto:helmut.abels@ur.de}{helmut.abels@ur.de}}

\author{Songzhuang Chen} 
\address{School of Mathematics and Statistics, Anhui Normal University, 241002 Wuhu, China}
\email{\href{mailto:songzhuang.chen@ur.de}{songzhuang.chen@ur.de}}

\date{}

\begin{abstract}
  We establish the local well-posedness of strong solutions for a thermodynamically consistent diffuse interface model describing two-phase flows with surfactants, specifically the so-called Model C introduced by Garcke, Lam, and Stinner. Because of the non-standard, mixed-order strongly coupled structure of Model C, the classical Agmon-Douglis-Nirenberg theory and conventional Lopatinskii-Shapiro boundary conditions are not applicable due to divergent scaling behaviors. To overcome this, we develop a tailored resolvent analysis, adapted to the temporal regularity of the source term, in a suitable Hilbert space, which yields an exact cancellation of the highest-order cross-couplings in the estimates. This crucial decoupling enables the rigorous application of the operator-valued Mikhlin multiplier theorem to obtain resolvent estimates on $\mathbb{R}$. 
  Furthermore, we analyze a structural variant, denoted as Model D, which achieves a crucial structural decoupling by transferring the interfacial surfactant energy entirely from the gradient part to the potential part, and we readily establish local well-posedness for strong solutions.\\

\vspace{1ex}
\noindent \textbf{Keywords:} Two-phase flows, surfactants, well-posedness, maximal regularity, mixed-order system,  resolvent estimates \\
\textbf{AMS-Classification:} 35A01, 35A02, 35K41, 74B10, 76S05
\end{abstract}
\maketitle
 \section{Introduction and main result}
 The study of two-phase flows of macroscopically immiscible, viscous, incompressible fluids involving surfactants is highly relevant to numerous biological systems and industrial processes, including emulsification, detergents, and foaming agents. Surface active agents (surfactants) are chemical compounds characterized by their ability to reduce the surface tension of fluid interfaces leading to tangential forces which results in the Marangoni effect. 
 
 Depending on the multi-fluid system, surfactants are broadly classified into two types: insoluble and soluble. Insoluble surfactants exist exclusively on the fluid interfaces, whereas soluble surfactants can reside in both the bulk fluid phases and on the interfaces. In the latter scenario, the migration and exchange of surfactants between the bulk sub-layer and the interface are governed by adsorption and desorption dynamics. 
 
 While the interfacial dynamics of surfactants are classically described by treating the interface as a moving hypersurface, which has been modeled using sharp interface approaches and studied analytically \cite{bothe2010stability, bothe2005well, garcke2006surfactant} and numerically \cite{barrett2015stable, james2004surfactant,khatri2011numerical,lai2008immersed,muradoglu2008front,xu2006level}, they inherently struggle with topological transitions. Phenomena such as the splitting or reconnection of droplets are theoretically challenging to describe and typically require ad-hoc numerical interventions within the sharp interface framework. To circumvent these difficulties, diffuse interface (or phase-field) approaches have been increasingly adopted over the past decade. By replacing the sharp boundary with a transition layer of finite thickness, these models naturally capture complex topological changes in surfactant-laden two-phase flows \cite{abels2019existence, GarckeLamStinner2014, Teigen2009,Teigen2011,VanDerSman2006}.

In Garcke, Lam, and Stinner \cite{GarckeLamStinner2014} three thermodynamically consistent diffuse interface models (denoted as Models A, B, and C) based on fundamental mass balance equations and an energy dissipation inequality were derived, where the Model C describes a scenario where the surfactant is soluble in both bulk phases within the context of sharp interface model. Motivated by its broad physical applicability and robust structure, we adopt this formulation for our current investigation. In this work, we focus on the diffuse interface model (denoted as Model C) with a constant fluid density and normalize it to $\rho = 1$ without loss of generality and mobility $m(\varphi)\equiv1$ that
\begin{subequations}\label{main system}
	\begin{align}
		\partial_t \ve + \nabla \cdot (\ve \otimes \ve + \pi I - 2\eta(\varphi) D(\ve)) &= \nabla \cdot (\tilde{\sigma}(q) (\delta I- \nabla \varphi \otimes \nabla \varphi)), \label{eq:momentum} \\
		\nabla \cdot \ve &= 0, \label{eq:incompressibility} \\
		\partial_t^\bullet \varphi &= \Delta \mu, \label{eq:phase_field} \\
		\mu + \nabla \cdot ( \tilde{\sigma}(q) \nabla \varphi) - \tilde{\sigma}(q) W'(\varphi) &= \sum_{i=1,2} \xi'_i(\varphi) (G(c^{(i)}) - q c^{(i)}), \label{eq:chemical_potential} \\
		\partial_t^\bullet (\xi_1 c^{(1)}(q) + \xi_2 c^{(2)}(q) + \delta c_\Gamma(q)) &= \sum_{i=1,2} \nabla \cdot (M_c^{(i)} (c^{(i)}(q)) \xi_i \nabla q) + \nabla \cdot (M_\Gamma(c_\Gamma(q)) \delta \nabla q) \label{eq:concentration}
	\end{align}
\end{subequations}
in $\Omega\times (0,T)$, where $\delta(\varphi, \nabla\varphi)$ is the Ginzburg-Landau free energy density defined as
$$
\delta=\delta(\varphi, \nabla\varphi) := \frac{1}{2} |\nabla\varphi|^2 +  W(\varphi),
$$ arising from the competition between the bulk potential $W(\varphi)$ and the gradient energy penalty $|\nabla\varphi|^2$. In addition, $\xi_1$ and $\xi_2$ are non-negative smooth functions representing the phase fractions. Specifically, $\xi_2(\varphi)$ satisfies $\xi_2(1) = 1$, $\xi_2(-1) = 0$, and is smooth with respect to $\varphi\in\R$, while $\xi_1(\varphi) = 1 - \xi_2(\varphi)$. Finally, we close the system by prescribing initial values:
\begin{equation}\label{init_0}
	\ve_{\mid t=0}=\ve_0,\quad\mathbf{\varphi}_{\mid t=0} =\mathbf{\varphi}_0, \quad q_{\mid t=0}=q_0, 
\end{equation}
and the boundary values
\begin{equation}\label{bound_0}
	\begin{aligned}
		\ve_{\mid \partial \Omega}=
		\partial_n \varphi_{\mid \partial \Omega}=\partial_n q_{\mid \partial \Omega}=0\quad \text { on } \partial \Omega \times(0, \infty).
	\end{aligned}
\end{equation} 

 Here $\ve$ denotes the fluid velocity,  $D\mathbf{v}:=\frac{1}{2}(\nabla\mathbf{v} + \nabla \mathbf{v}^T)$ is the rate of deformation tensor, $\pi$ is the pressure,  $I$ is the identity tensor,  $\partial_t^{\bullet}:=\partial_t+\mathbf{v} \cdot \nabla$ is the material time derivative, $\varphi :\Omega \rightarrow \mathbb{R}$ denotes the order parameter used to distinguish the bulk fluid phases.  
$\varepsilon >0$ is a parameter related to the thickness of the interfacial layer, $W$ is a potential which can be chosen to be of double-well type with two global minima at $\pm 1$,  $c^{(i)}$ is the bulk density of the surfactants in fluid $i$, $M_c^{(i)}$ is the mobility of the surfactants in fluid $i$, $G_i(c^{(i)})$ is the bulk free energy density associated to the bulk surfactant in fluid $i, (i=1,2) $.
For the viscosity $\eta(\varphi)$, we consider a strictly positive function.
 Furthermore, $c^\Gamma$ is the interfacial surfactant density, $\sigma(c^\Gamma)$ is the density dependent surface tension, $M_\Gamma$ is the mobility of the interfacial surfactants, $\gamma(c^\Gamma)$ is the free energy density associated to the interfacial surfactant. It is a fundamental assumption in our model that the free energy densities $G_i$ and $\gamma$ are strictly convex, i.e., $G_i'' > 0$ and $\gamma'' > 0$. This strict convexity guarantees the local thermodynamic stability of the uniform mixture, preventing spontaneous non-physical phase separation within the bulk fluids or on the interface.

With local thermodynamical equilibrium, the chemical potentials $G'_1(c^{(1)})$, $G'_2(c^{(2)})$, and $\gamma'(c_\Gamma)$ are equal on the interface. We therefore introduce a chemical potential, denoted by $q$, and consider this as an unknown field rather than the densities of the surfactants. Because the free energies $G_i, \gamma$ are strictly convex, their derivatives are strictly monotone, which ensures a well-defined one-to-one correspondence between the $c^{(i)}$ and $q$, i.e.,
\[
c^{(1)}=(G'_1)^{-1}(q), \quad c^{(2)}=(G'_2)^{-1}(q), \quad c_\Gamma=(\gamma')^{-1}(q).
\]
We then also write the surface tension as a function of $q$:
$$
\tilde{\sigma}(q)=\sigma(c_\Gamma(q))=\gamma(c_\Gamma(q))-c_\Gamma(q)q,\quad \text{with } q = \gamma'(c_\Gamma).
$$

However, the interfacial surfactant energy is approximated by coupling the surface energy density $\gamma(c^\Gamma(q))$ directly with the Ginzburg-Landau energy, taking the form $\int_\Omega \gamma(c^\Gamma(q)) \left(|\nabla\varphi|^2 + W(\varphi)\right)dx$. This leads to the high-order nonlinear term $|\nabla\varphi|^2$ appearing under the time derivative in the mass balance equation of \eqref{main system}. Therefore, extracting time compactness for $q$ from this structure demands strong convergence topologies for $\nabla\varphi$ beforehand, which is a highly non-trivial mathematical matter.

So far there are no analytic results on existence of weak or strong solutions, globally or locally in time for the Model C~\eqref{main system}. A main difficulty is the presence of $\partial_t (\delta(\varphi,\nabla\varphi) c_\Gamma(q))$ in \eqref{eq:concentration} and the missing strong compactness of $\nabla \varphi$ given bounds in the natural energy space. Therefore a variant of Model C was proposed in the work by Abels, Garcke, and Weber~\cite{abels2017two}, called Model D in the following, where $\nabla\varphi$ no longer occurs in this term and which has the same sharp interface limit. For this model existence of weak solutions globally in time was shown in the latter work. Recently, for other simpler and less nonlinear alternative models for soluble surfactions existence of global weak solutions in a porous medium was shown by Grasselli et al.~\cite{grasselli2026cahnhilliarddarcyforchheimersurfactantexistencelongtime} in the case of same densities and by Ouyang, Grasselli, and Wu~\cite{Ouyang_2026} with unmatched densities. 
For a model, where an additional $|\Delta\varphi|^2$ is incorporated into the free energy, existence of global weak solutions and existence of strong solutions, which global for $d=2$ and local for $d=3$, was proved by Di Primio, Grasselli, and Wu \cite{diprimio2023wellposedness}. 

The main goal of this contribution is to establish a first existence result for solutions to the Model C~\eqref{main system}, which will be in a strong sense and locally in time. Moreover, we will complement the literature by proving local well-posedness of a thermodynamically consistent variant, called Model D, where existence of weak solutions was only known so far. A detailed exposition of this variant is provided in the following subsection.

\subsection{A Variant of Model C}
As discussed by Abels, Garcke, Lam, and Weber \cite{abels2017two}, in this model the dependence of the surface tension on the surfactant chemical potential (or concentration) in the interfacial energy  is completely transferred from the gradient part to the potential part by using $\int_\Omega \left(\frac{\varepsilon}{2}|\nabla\varphi|^2 + \frac{d(q)}{\varepsilon}\psi(\varphi)\right)dx$ as part of the free energy. To ensure the appropriate sharp interface limit as $\varepsilon\to 0$ is recovered, the correct prefactor for the potential part is strictly defined as $d(q) := h(q) - h'(q)q$, which is the Legendre transform of $h(q) := (\tilde{\sigma}(q))^2$, the square of the surface tension. This yields Model D, which we consider in a bounded domain $\Omega\subset \mathbb{R}^d,~ d = 2,~3$ with a $C^2$-boundary.
\begin{subequations}\label{mod1}
	\begin{align}
		\partial_t(\rho \mathbf{v}) + \Div(\mathbf{v} \otimes(\rho \mathbf{v}+\tilde{\mathbf{J}})) + \nabla p & = \Div(2 \eta D \mathbf{v}) - \Div(\varepsilon \nabla \varphi \otimes \nabla \varphi) && \text{in } \Omega \times(0, T), \label{eq:mod1_momentum} \\
		\nabla\cdot \mathbf{v} & = 0 && \text{in } \Omega \times(0, T, \label{eq:mod1_div} \\
		\partial_t^{\bullet}\left(\frac{1}{\varepsilon} f(q) W(\varphi)+g(q)\right) & = \operatorname{div}(\tilde{m}(\varphi, q) \nabla q) && \text{in } \Omega \times(0, T), \label{eq:mod1_q} \\
		\partial_t^{\bullet} \varphi & = \operatorname{div}(m(\varphi) \nabla \mu) && \text{in } \Omega \times(0, T), \label{eq:mod1_phi} \\
		\mu & = -\varepsilon \Delta \varphi + h(q) \frac{1}{\varepsilon} W^{\prime}(\varphi) && \text{in } \Omega \times(0, T). \label{eq:mod1_mu}
	\end{align}
\end{subequations}
with the initial values
\begin{equation}\label{init}
	\begin{aligned}
		\mathbf{v}_{\mid t=0} & =\mathbf{v}_0, \quad \varphi_{\mid t=0}=\varphi_0&\quad \text{in }\Omega, \\
		\left(\frac{1}{\varepsilon} f(q) W(\varphi)+g(q)\right)_{\mid t=0} & =\frac{1}{\varepsilon} f\left(q_0\right) W\left(\varphi_0\right)+g\left(q_0\right) &\quad \text { in } \Omega,
	\end{aligned}
\end{equation}
and the boundary values
\begin{equation}\label{bound}
	\begin{aligned}
		\mathbf{v}_{\mid \partial \Omega}=\partial_n \varphi_{\mid \partial \Omega}=\partial_n q_{\mid \partial \Omega}=\partial_n \mu_{\mid \partial \Omega}=0 \quad \text { on } \partial \Omega \times(0, T).
	\end{aligned}
\end{equation}
Here, the density $\rho(\varphi)$ is defined as
\begin{equation*}
	\rho(\varphi)= \frac{\rho_2 - \rho_1}{2}\varphi + \frac{\rho_2 + \rho_1}{2},
\end{equation*}
and the relative mass flux $\tilde{\mathbf{J}}$ is given by 
\begin{equation*}
	\tilde{\mathbf{J}}=\frac{\partial \rho(\varphi)}{\partial \varphi}(-m(\varphi) \nabla \mu).
\end{equation*} 
The crucial advantage of this alternate approximation is the structural decoupling. By ensuring that no functions involving $q$ are multiplied by $|\nabla\varphi|^2$, the problematic gradient squared term is entirely removed from the time derivative in the equation for $q$. This structural modification successfully circumvents the need for the control of $\nabla\dt\varphi$, paving the way for standard compactness arguments and rigorous analysis of the coupled system.

This system is consistent with classical systems when surfactants are absent. Specifically, if the surface tension is assumed to be constant (i.e., $h(q) \equiv 1$), the surfactant dynamics decouple, and the governing equations naturally reduce to the thermodynamically consistent diffuse interface model for incompressible two-phase flows with different densities originally derived by Garcke, Gr\"un and the first author \cite{abels2012thermodynamically}. For this reduced, surfactant-free model, the global existence of weak solutions has been rigorously proven for non-degenerate mobility with singular free energy potentials \cite{abels2013existence}, as well as for degenerate mobilities \cite{abels2013incompressible}. Moreover, Abels and Weber \cite{AbelsWeber2021} and Giorgini~\cite{GiorginiCalcVar21,GiorginiIFB22} established the local existence and uniqueness of strong solutions. For a comprehensive overview of these diffuse interface models without surfactants and their related analytical results, we refer to \cite{abels2024global}.

If $h(q) \not\equiv 1$, the coupled system becomes significantly more complex. For this full diffuse interface model, the global existence of weak solutions for general initial data was established by Garcke, Weber and the first author in \cite{abels2019existence}.

\subsection{Main results }
In the following, we present the main results for Model D and Model C through two respective theorems. To establish the local well-posedness for Model C, we first outline the necessary assumptions
\begin{assumption}\label{asumm_1}
\begin{enumerate}[label=(I.A\arabic*), leftmargin=*]
    \item Let $\Omega \subseteq \mathbb{R}^d$ be a bounded domain with $C^3$-boundary and $d=2,3$.
		
		 \item Let $\eta\in C_b^4(\mathbb{R})$ be such that $\eta(s) \geq \eta_0>0$ for every $s \in \mathbb{R}$ and some $\eta_0>0$.
		 \item $W\colon \mathbb{R} \rightarrow [0,\infty)$ is four times continuously differentiable.
		\item Dictated by the underlying physics, the  free energy densities $G_i, \gamma: \mathbb{R}^+ \to \mathbb{R}$ are strictly convex, satisfying $G''_i(s), \gamma''(s) > 0$ for all $s \in \mathbb{R}^+$. For our analysis, we further suppose $G_i,\gamma$ to be of class $C^3$, and $M_c^{(i)}$, $M_\Gamma: \mathbb{R}^+ \to \mathbb{R}$ to be of class $C^2$. 
	\end{enumerate}
\end{assumption}
Based on these conditions, the main result regarding the existence and uniqueness of strong solutions for Model C is stated as follows:
\begin{theorem}[Local well-posedness of Model C] \label{thm:local_well_posedness} 
	Let $ M_c^{(i)}, M_\Gamma, G_i,\gamma, W, c_\Gamma$ be as in Assumption \ref{asumm_1}, and the initial data $\varphi_0 \in H_N^3(\Omega)$, $q_0 \in H_N^2(\Omega)$, and 
   \rev{$\ve_0 \in\left( L_\sigma^2(\Omega), H^2(\Omega)^d \cap H_0^1(\Omega)^d \cap L_\sigma^2(\Omega) \right)_{1-\frac{1}{p}, p}$}
    be given with $|\varphi_0(x)| \le 1$ for all $x \in \Omega$ and $p > 4$. Then, there exists $T > 0$ such that the system \eqref{main system}--\eqref{bound_0} has a unique solution
	\begin{equation} \label{eq:solution_spaces}
		\begin{aligned}
			\mathbf{v} &\in W_p^1(0,T; L_\sigma^2(\Omega)) \cap L^p(0,T; H^2(\Omega)^d \cap H_0^1(\Omega)^d),\\
			\varphi &\in W_p^1(0,T; H^1(\Omega)) \cap L^p(0,T; H_N^3(\Omega)) \cap W_p^2(0,T; H_N^{-3}(\Omega)), \\
			q &\in W_p^1(0,T; L^2(\Omega)) \cap L^p(0,T; H_N^2(\Omega)),
		\end{aligned}
	\end{equation}
    where $H_N^k(\Omega) := \{u \in H^k(\Omega) \mid \partial_n u = 0 \text{ on } \partial\Omega\}$ for $k \ge 2$, and $H_N^{-k}(\Omega) := (H_N^k(\Omega))'$ denotes its topological dual space.
\end{theorem}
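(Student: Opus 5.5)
The approach is to rewrite \eqref{main system} as a quasilinear evolution problem $\dt u + \mathcal{A}(u)u = F(u)$ in the unknown $u=(\ve,\varphi,q)$. We then prove $L^p$-maximal regularity for the linearization at the initial data and close by a contraction argument on a short time interval.

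\textbf{Reformulation.} First expand \eqref{eq:concentration}. Writing $\beta(\varphi,q):=\sum_i\xi_i(\varphi)(c^{(i)})'(q)+\delta\, c_\Gamma'(q)$, which is strictly positive by strict convexity of $G_i,\gamma$, we get
\begin{equation*}
 \beta\,\dt q + c_\Gamma(q)\bigl(\nabla\varphi\cdot\nabla\dt\varphi + W'(\varphi)\dt\varphi\bigr) + \sum_i\xi_i'(\varphi)c^{(i)}(q)\dt\varphi = \text{(second-order elliptic in } q) + \text{l.o.t.}
\end{equation*}
We insert \eqref{eq:phase_field} and \eqref{eq:chemical_potential}, so that $\dt\varphi=-\Delta(\tilde\sigma(q)\Delta\varphi)+\dots$. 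This shows that the $\varphi$-equation is fourth order, with principal part $\tilde\sigma(q_0)\Delta^2\varphi$. It also shows that the $q$-equation contains the term $c_\Gamma\nabla\varphi\cdot\nabla\Delta^2\varphi$, which is fifth order in $\varphi$, together with the coupling $\tilde\sigma'(q)\Delta(\nabla q\cdot\nabla\varphi)$, which is third order in $q$ inside the $\varphi$-equation. This is exactly the mixed-order cross-coupling mentioned in the abstract.

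\textbf{Functional setting.} We work with $\varphi$ in $H^1$ with $\dt\varphi\in L^p(H^1)$ and $\varphi\in L^p(H^3_N)$, with $q$ in $L^2$, and with the velocity in $L^2_\sigma$, as in \eqref{eq:solution_spaces}. The extra regularity $\varphi\in W^2_p(H^{-3}_N)$ is needed to give meaning to $\nabla\dt\varphi$ in the $q$-equation. The condition $p>4$ guarantees that the trace space embeds into $C^0$ in time with values in $H^3_N\times H^1$-type spaces. This gives the Lipschitz bounds on the nonlinearities and keeps $\beta$ bounded away from $0$ near $t=0$, using $|\varphi_0|\le1$ and continuity.

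\textbf{Linearization and resolvent analysis.} Freeze coefficients at $(\varphi_0,q_0)$. The Stokes part decouples at top order: the capillary term $\nabla\cdot(\tilde\sigma\nabla\varphi\otimes\nabla\varphi)$ is a perturbation in $L^2$, since $\varphi\in H^3$. Stokes maximal regularity in $L^2_\sigma$ then gives the velocity estimate. For the $(\varphi,q)$ block we study the resolvent system $\lambda(\varphi,q)+\mathcal A_0(\varphi,q)=(f,g)$ on $H^1\times L^2$.

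We test the $\varphi$-equation with $-\Delta\varphi$-type multipliers weighted by $c_\Gamma(q_0)\tilde\sigma'$, and the $q$-equation with $\beta^{-1}q$-type multipliers. The aim is to choose the weighted inner product on $H^1\times L^2$ so that the two highest-order cross terms cancel. The identity that should make this work is $\tilde\sigma'(q)=-c_\Gamma(q)$, which follows from $\tilde\sigma=\gamma(c_\Gamma)-c_\Gamma q$ and $\gamma'(c_\Gamma)=q$. With this identity the cross terms are antisymmetric and cancel, and we obtain
\begin{equation*}
|\lambda|\,\|(\varphi,q)\|+\|(\varphi,q)\|_{D}\le C\|(f,g)\|.
\end{equation*}
Localization and the $C^3$ boundary handle the Neumann conditions. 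We then treat the problem on the whole line in time via Fourier transform and apply the operator-valued Mikhlin theorem in Hilbert spaces, which only requires $R$-boundedness, automatically given by uniform boundedness in Hilbert spaces. This yields $L^p$-maximal regularity, and lower-order and perturbative terms are absorbed for short time.

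\textbf{Fixed point.} Define the map $u\mapsto\bar u$ that solves the linear problem with frozen coefficients and right-hand side $(\mathcal A_0-\mathcal A(u))u+F(u)$. We show it is a contraction on a ball of the space in \eqref{eq:solution_spaces}. The smallness comes from $T^{\theta}$ factors and from $\|u(t)-u_0\|$ being small in the trace space. The same estimate gives uniqueness.

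\textbf{Main obstacle.} The hardest step is the resolvent estimate. Standard ADN or Lopatinskii--Shapiro theory fails because the system mixes orders (fifth-order in $\varphi$ versus second-order in $q$) and the scalings diverge. The first thing to try is to find the exact weighted energy in which the cross terms cancel, using $\tilde\sigma'=-c_\Gamma$. If that cancellation is only partial, the fallback is to treat the remainder via interpolation and large $|\lambda|$.
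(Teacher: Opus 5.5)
Your overall architecture is the paper's: freeze coefficients at $(\mathbf{v}_0,\varphi_0,q_0)$, treat Stokes separately, prove an energy-type resolvent estimate whose top-order cross terms cancel because $\tilde\sigma'=-c_\Gamma$, pass to $L^p$ in time with the Hilbert-space Mikhlin theorem, and close by contraction. The gap is in the resolvent step, and your substitution of $\partial_t\varphi$ into the $q$-equation causes it.

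Freeze constant coefficients and write $k=\nabla\varphi_0$, $\sigma=\tilde\sigma(q_0)$, $c=c_\Gamma(q_0)$. After substitution the $\varphi$-equation contains $-c\,\Delta\operatorname{div}(qk)$. The $q$-equation contains $-c\sigma\,k\cdot\nabla\Delta^2\varphi$ and a degenerate fourth-order term $c^2\,k\cdot\nabla\Delta\operatorname{div}(qk)$, which you did not list. Its right-hand side acquires $c\,k\cdot\nabla f$, and this is what forces $H^1$ as base space for $\varphi$. Test the $\varphi$-equation with $-a\Delta\overline{\varphi}$ and the $q$-equation with $b\,\overline{q}$, $a,b>0$. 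In Fourier variables the real part of the top-order form is
\[
a\sigma|\xi|^6|\hat\varphi|^2+bc^2|\xi|^2(\xi\cdot k)^2|\hat q|^2-c\,(a+b\sigma)\,|\xi|^4(\xi\cdot k)\operatorname{Im}\bigl(\hat q\,\overline{\hat\varphi}\bigr).
\]
The cross terms add up rather than cancel. They cancel only if $a$ and $b$ have opposite signs, as with your weight $c_\Gamma\tilde\sigma'=-c_\Gamma^2<0$, and that destroys coercivity of the $\varphi$-block. For positive weights the form is semidefinite only when $a=b\sigma$. It then equals $b\,\bigl|\sigma|\xi|^3\hat\varphi+ic|\xi|(\xi\cdot k)\hat q\bigr|^2$, which controls neither $\nabla\Delta\varphi$ nor the degenerate part of $q$ separately. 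Replacing $\lambda\hat\varphi$ by $-\sigma|\xi|^4\hat\varphi$ flips the sign of the coupling and turns a skew structure into a symmetric one. Since the defect is top order, your fallback of interpolation and large $|\lambda|$ cannot repair it.

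The base space is a second, independent obstruction. With $\varphi\in H^3$ and $q\in H^2$, the $\varphi$-component of your remainder $(\mathcal A_0-\mathcal A(u))u$ lies only in $H^{-1}$, not in $H^1$. Examples are $\Delta\operatorname{div}\bigl((\tilde\sigma(q)-\tilde\sigma(q_0))\nabla\varphi\bigr)$ and the third-order $q$-terms. So the contraction cannot close in $L^p(0,T;H^1\times L^2)$.

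The missing idea is not to substitute. The paper keeps $c_\Gamma(q_0)\nabla\varphi_0\cdot\nabla\partial_t\varphi$ on the left. It multiplies the $\varphi$-resolvent equation by $\lambda$ and pairs it with $(-\Delta_N)^{-1}\bar\lambda\overline{\hat\varphi}$, an $H^{-1}_{(0)}$ pairing, and pairs the $q$-equation with $\bar\lambda\overline{\hat q}$. The cross terms are then $w:=|\lambda|^2\int_\Omega c_\Gamma(q_0)\nabla\hat\varphi\cdot\nabla\varphi_0\,\overline{\hat q}\,dx$ and $-\overline{w}$. Their sum is purely imaginary, which gives $|\lambda|\bigl(\|\nabla\hat\varphi\|_{L^2}+\|\hat q\|_{L^2}\bigr)\le C\bigl(|\lambda|\|\hat f\|_{H^{-3}_N}+\|\hat g\|_{L^2}\bigr)$. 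Accordingly the multiplier is $(\lambda\hat f,\hat g)\mapsto(\lambda\hat\varphi,\lambda\hat q)$ from $H^{-3}_N\times L^2$ into $H^1_{(0)}\times L^2$. Its $\lambda$-derivative bound comes from differentiating the resolvent system, which reproduces the same operator with already controlled right-hand sides. Solvability for fixed $\lambda$ is by Lax--Milgram on $H^1_{(0)}\times H^1$, not by localization. Causality comes from Paley--Wiener, and $H^3_N\times H^2_N$ regularity from elliptic regularity.

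The price is a time-differentiated, non-standard setting. The $\varphi$-data live in $W^1_p(0,T;H^{-3}_N)\cap L^p(0,T;H^{-1}_{(0)})$. The bound $\varphi\in W^2_p(0,T;H^{-3}_N)$ is a by-product of this setting. It is not what gives meaning to $\nabla\partial_t\varphi$, which already follows from $\partial_t\varphi\in L^p(0,T;H^1)$. The nonlinear $\varphi$-residual is estimated in that data space, where applying three derivatives to $\tilde\sigma(q)\nabla\varphi$ is affordable with only $\varphi\in H^3$ and $q\in H^2$.
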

\begin{remark}\label{remark-c1}
It should be noted that in \eqref{main system} (Model C), the phase-field equation for $\varphi$ and the surfactant chemical potential equation for $q$ are strongly coupled, making Model C a highly challenging mixed-order strongly coupled system. Specifically, the evolution equation for $q$ contains the mixed spatio-temporal derivative term $\partial_t\nabla\varphi$, which results in that the linearized system is inherently of mixed order. Furthermore, a regularity deficit arises in the fourth-order phase-field equation, which explicitly contains a third-order spatial derivative of the concentration $q$ in the term $\Delta\nabla\cdot(\tilde{\sigma}(q)\nabla\varphi)$. This non-standard mixed-order structure therefore requires a careful new mathematical analysis.
\end{remark}
To establish the main result regarding the maximal regularity of \rev{system \eqref{mod1}--\eqref{bound}}, we impose the following corresponding assumptions:
\begin{assumption}\label{asumm_2}
	\begin{enumerate}[label=(II.A\arabic*), leftmargin=*]
		\item Let $\Omega \subseteq \mathbb{R}^d$ be a bounded domain with $C^4$-boundary and $d=2,3$.
		
		\item  Let $\eta, m \in C_b^5(\mathbb{R})$ and $\tilde{m} \in C_b^5(\mathbb{R}^2)$ be such that $\eta(s) \ge \eta_0 > 0$ and $m(s) > 0$ for every $s \in \mathbb{R}$, and $\tilde{m}(s_1, s_2) > 0$ for every $(s_1, s_2) \in \mathbb{R} \times \mathbb{R}$, for some constant $\eta_0 > 0$.
%
		\item \label{2A3} $W: \mathbb{R} \rightarrow[0,\infty)$ is five times continuously differentiable.
     \rev{ \item \label{2A4} Let $f, h, g \in C_b^5(\mathbb{R})$. Furthermore, let $f$ be non-decreasing, i.e., $f'(s) \ge 0$ for all $s \in \mathbb{R}$, and $g$ be strictly increasing, such that $g'(s) \ge c_g > 0$ for all $s \in \mathbb{R}$, where $c_g$ is a strictly positive constant.    }
	\end{enumerate}
\end{assumption}
The main result of this paper establishes the local existence and uniqueness of strong solutions for the initial-boundary value problem associated with Model D. More precisely, the following holds:

\begin{theorem}[Local well-posedness of Model D]\label{th1}
	Let $\Omega, \eta, \tilde{m}, m, \rho, f, g,$ and $W$ be as in Assumption \ref{asumm_2}. Moreover, let $\mathbf{v}_0 \in H_0^1(\Omega)^d \cap L_\sigma^2(\Omega)$, $\varphi_0 \in \left(L^p(\Omega), W_{p, N}^4(\Omega)\right)_{1-\frac{1}{p}, p}$ and $q_0 \in \left(L^p(\Omega), W_{p, N}^2(\Omega)\right)_{1-\frac{1}{p}, p}$ be given with $|\varphi_0(x)| \leq 1$ for all $x \in \Omega$ and $5<p< 6$. Then, there exists $T>0$ such that the initial-boundary value problem \eqref{mod1}--\eqref{bound} has a unique strong solution
	\[
	\begin{aligned}
		\mathbf{v} &\in W_2^1\left(0, T ; L_\sigma^2(\Omega)\right) \cap L^2\left(0, T ; H^2(\Omega)^d \cap H_0^1(\Omega)^d\right), \\
		\varphi &\in W_p^1\left(0, T ; L^p(\Omega)\right) \cap L^p\left(0, T ; W_{p, N}^4(\Omega)\right), \\
		q &\in W_p^1\left(0, T ; L^p(\Omega)\right) \cap L^p\left(0, T ; W_{p, N}^2(\Omega)\right),
	\end{aligned}
	\]
\rev{   where $W_{p,N}^2(\Omega) = \{u \in W_p^2(\Omega) : \partial_n u|_{\partial\Omega} = 0\}$ and $W_{p,N}^4(\Omega) = \{u \in W_p^4(\Omega) : \partial_n u|_{\partial\Omega} = \partial_n \Delta u|_{\partial\Omega} = 0\}$.} 
\end{theorem}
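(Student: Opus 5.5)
The proof is a fixed-point argument built on maximal regularity for a linearization about the initial data. The system \eqref{mod1} decouples at highest order: $\varphi$ satisfies a fourth-order Cahn--Hilliard-type equation, $q$ a second-order parabolic equation, and $\mathbf v$ a Stokes-type system. Because no coefficient multiplies $|\nabla\varphi|^2$ inside a time derivative, each equation can be linearized separately.

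First I rewrite \eqref{eq:mod1_q} in nondivergence form. Expanding the material derivative gives
\[
\Big(\tfrac1\varepsilon f'(q)W(\varphi)+g'(q)\Big)\partial_t^\bullet q-\operatorname{div}(\tilde m(\varphi,q)\nabla q)=-\tfrac1\varepsilon f(q)W'(\varphi)\partial_t^\bullet\varphi .
\]
By \ref{2A4} and $W\ge0$, the prefactor $a(\varphi,q)$ satisfies $a\ge c_g>0$. Dividing by $a$ yields a uniformly parabolic equation with Neumann condition. Its right-hand side involves $\partial_t\varphi\in L^p(L^p)$ and lower-order terms, and there is no mixed term $\nabla\partial_t\varphi$.

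Next I write \eqref{eq:mod1_phi}--\eqref{eq:mod1_mu} as
\[
\partial_t\varphi+\varepsilon\operatorname{div}(m(\varphi)\nabla\Delta\varphi)=F(\mathbf v,\varphi,q),
\]
where $F$ collects $-\mathbf v\cdot\nabla\varphi$ and $\operatorname{div}(m\nabla(h(q)W'(\varphi)/\varepsilon))$. The latter contains at most $\nabla^2 q$, $\nabla^3\varphi$ and products of lower derivatives. Finally I treat the momentum equation as a Stokes system with variable density $\rho(\varphi)$ and viscosity $\eta(\varphi)$. Its forcing consists of $\operatorname{div}(\nabla\varphi\otimes\nabla\varphi)$ and convective terms involving $\tilde{\mathbf J}=-\rho' m\nabla\mu$. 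This is handled in the $L^2$ setting as in \cite{AbelsWeber2021}.

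The key step is linear maximal regularity on $(0,T)$ with coefficients frozen at the initial data:
\begin{itemize}
\item the operator $\partial_t+\varepsilon\operatorname{div}(m(\varphi_0)\nabla\Delta\,\cdot)$ with the boundary conditions $\partial_n\varphi=\partial_n\Delta\varphi=0$ on $L^p$;
\item $\partial_t-a(\varphi_0,q_0)^{-1}\operatorname{div}(\tilde m(\varphi_0,q_0)\nabla\,\cdot)$ with Neumann condition on $L^p$;
\item the Stokes operator with coefficients $\rho(\varphi_0),\eta(\varphi_0)$ on $L^2_\sigma$.
\end{itemize}
All three have standard maximal $L^p$ (resp.\ $L^2$) regularity, since the principal parts are normally elliptic and the boundary conditions satisfy Lopatinskii--Shapiro. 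The coefficients are continuous (in fact H\"older) because $p>5$ gives
\[
\big(L^p,W^4_{p,N}\big)_{1-1/p,p}\hookrightarrow C^{3}(\overline\Omega)
\quad\text{and}\quad
\big(L^p,W^2_{p,N}\big)_{1-1/p,p}\hookrightarrow C^1(\overline\Omega).
\]
I use the constants uniform for $T\le1$ obtained from extension with zero initial data.

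Then I define the map $\mathcal F(\tilde{\mathbf v},\tilde\varphi,\tilde q)=(\mathbf v,\varphi,q)$ on a closed ball of the solution space with the prescribed initial data. It solves the frozen linear problems with right-hand sides consisting of the nonlinear remainders, namely the differences of principal coefficients and the lower-order terms, evaluated at $(\tilde{\mathbf v},\tilde\varphi,\tilde q)$. One subtlety is that \eqref{init} prescribes $\frac1\varepsilon f(q_0)W(\varphi_0)+g(q_0)$; this is equivalent to prescribing $q_0$ because the map $q\mapsto\frac1\varepsilon f(q)W(\varphi_0)+g(q)$ is strictly increasing.

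The contraction estimates for small $T$ come from three facts:
\begin{itemize}
\item the solution spaces embed into $BUC([0,T];\text{trace space})$ uniformly in $T$ for functions with zero initial data;
\item coefficient differences $m(\tilde\varphi)-m(\varphi_0)$ etc.\ are small in $L^\infty$ for small $T$, by H\"older continuity in time;
\item lower-order terms gain a factor $T^\theta$ via interpolation and H\"older in time.
\end{itemize}
The range $5<p<6$ is what makes the products close in the coupling terms:
\begin{itemize}
\item in the $\varphi$-equation, the term $\nabla^2 q\,\nabla\varphi$ needs $\nabla\varphi\in L^\infty$;
\item in the $\varphi$-equation, the term $h'(q)\nabla q\,\nabla W''\cdots$ needs $\nabla q\in L^\infty$, which follows from $q\in BUC(B^{2-2/p}_{pp})$ with $p>d$;
\item in the $q$-equation, $\partial_t\varphi\in L^p(L^p)$ is multiplied by bounded functions;
\item in the Stokes equation, $\operatorname{div}(\nabla\varphi\otimes\nabla\varphi)\in L^2(L^2)$ holds, and the term $\mathbf v\cdot\nabla\mathbf v$ together with $(\tilde{\mathbf J}\cdot\nabla)\mathbf v$ needs $\nabla\mu=\nabla(-\varepsilon\Delta\varphi+\dots)\in L^p(0,T;W^1_p)\subset L^p(L^\infty)$.
\end{itemize}
The coupling $\mathbf v\cdot\nabla\varphi$ in the $\varphi$-equation requires $\mathbf v\in L^p(L^p)$, which follows from $\mathbf v\in L^\infty(H^1)$ and $p\le6$. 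This is where $p<6$ enters in $d=3$.

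I expect the main obstacle to be the mixed integrability. The velocity lives only in an $L^2$-in-time, Hilbert-space class, while $\varphi$ and $q$ are in $L^p$-in-time classes, so time exponents must be reconciled in the terms $\mathbf v\cdot\nabla\varphi$, $\mathbf v\cdot\nabla q$ and $(\tilde{\mathbf J}\cdot\nabla)\mathbf v$. My first attempt is to use $\mathbf v\in L^\infty(0,T;H^1)\hookrightarrow L^\infty(L^6)$ to put these terms in $L^p(L^p)$ with a small factor $T^{1/p}$. For $\tilde{\mathbf J}\cdot\nabla\mathbf v$, I would take $\nabla\mu\in L^p(L^\infty)$ and $\nabla\mathbf v\in L^{2p/(p-2)}(L^2)$, the latter obtained by interpolating $L^\infty(H^1)$ with $L^2(H^2)$.

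Uniqueness follows from the contraction property, or alternatively from the same difference estimates applied to two solutions.
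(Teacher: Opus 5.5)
The scheme as you set it up does not close. The problem is the time-derivative coupling in the $q$-equation. You freeze only the diagonal operator $\partial_t-a(\varphi_0,q_0)^{-1}\Div(\tilde m(\varphi_0,q_0)\nabla\,\cdot)$ and move the whole term $-\tfrac1\varepsilon a^{-1}f(q)W'(\varphi)\,\partial_t\varphi$ to the right-hand side, evaluated at the previous iterate. That term is of top order. In the data space $L^p(0,T;L^p(\Omega))$, $\partial_t\tilde\varphi$ is measured in exactly the norm that $X_T^2$ controls, so the best you can get is
\[
\Big\|\tfrac1\varepsilon a^{-1}f(\tilde q_2)W'(\tilde\varphi_2)\,\partial_t(\tilde\varphi_1-\tilde\varphi_2)\Big\|_{L^p(0,T;L^p(\Omega))}\le C\,\|\tilde\varphi_1-\tilde\varphi_2\|_{X_T^2}.
\]
Here $C$ is comparable to $\|\varepsilon^{-1}a(\varphi_0,q_0)^{-1}f(q_0)W'(\varphi_0)\|_{L^\infty(\Omega)}$ and does not tend to $0$ as $T\to0$. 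None of your three facts applies, since this is neither a coefficient difference nor a lower-order term. So in the product norm neither the self-map property nor the contraction follows.

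This is exactly the point of Remark~\ref{rem:temporal_coupling}. The paper keeps the frozen coupling $\tfrac1\varepsilon f(q_0)W'(\varphi_0)\partial_t\varphi$ in $\mathcal L$, which makes the $(\varphi,q)$-part lower triangular. It inverts this in Lemma~\ref{inv-l3}: first solve for $\varphi$ (Lemma~\ref{inv-l2}), then for $q$ with $-\gamma\,\partial_t\varphi$ as extra data (Lemma~\ref{exi-q}). Only the difference $\tfrac1\varepsilon\big(f(q)W'(\varphi)-f(q_0)W'(\varphi_0)\big)\partial_t\varphi$ enters $\mathcal F_3$, and that difference is small by H\"older continuity in time.

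The same defect affects a term you treat as lower order in the $\varphi$-equation. The term $\tfrac1\varepsilon\Div\big(m(\varphi)\nabla(h(q)W'(\varphi))\big)$ contains $c\,\Delta q$ with $c=\tfrac1\varepsilon m(\varphi)h'(q)W'(\varphi)$. Again, $\Delta q$ in $L^p(0,T;L^p(\Omega))$ is exactly what $X_T^3$ controls, so interpolation gains no power of $T$. For example, $\delta q(t,x)=t\,\psi(x)$ with $\psi$ oscillating at frequency much larger than $T^{-1/2}$ satisfies $\delta q(0)=0$ and has $\|\Delta\delta q\|_{L^p(0,T;L^p)}$ comparable to $\|\delta q\|_{X_T^3}$.

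The paper's triangular reduction does not remove this either. There the term sits in $\mathcal F_2$ and is estimated as if it were the divergence of a $W^1_p$-function, so this step needs its own argument. Set $c_0:=\tfrac1\varepsilon m(\varphi_0)h'(q_0)W'(\varphi_0)$, and let $\delta\varphi$ solve the frozen fourth-order problem with data $c_0\Delta\delta q$ and zero initial value. Two repairs are available:
\begin{enumerate}
\item Show $\|\partial_t\delta\varphi\|_{L^p(0,T;L^p)}\le CT^{\theta}\|\delta q\|_{X_T^3}$. At the symbol level, relative to the weight $|\lambda|+|\xi|^2$ of $X_T^3$, this map is bounded by $C|\xi|^2/(|\lambda|+|\xi|^4)\le C|\lambda|^{-1/2}$. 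In the triangular scheme the remaining $O(1)$ dependence of $\varphi$ on $q$ then sits off the diagonal and can be absorbed with a weighted norm.
\item Keep $-c_0\Delta q$ in the linear part and prove maximal regularity of the fully coupled frozen system. Its cross term $\gamma c_0\,\lambda|\xi|^2$ is dominated by the diagonal product for large $|\lambda|$.
\end{enumerate}
A minor point: for $5<p<6$ the trace space $B^{4-4/p}_{p,p}$ embeds into $C^{2,\alpha}(\overline\Omega)$ but not into $C^3(\overline\Omega)$. You only use continuity of the coefficients, so this is harmless.
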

\begin{remark}\label{rem:temporal_coupling}
    The mathematical analysis of the linearized system for Model D presents \rev{a structural challenge }due to the temporal coupling between the phase-field variable $\varphi$ and the surfactant concentration $q$. Specifically, the time derivative $\partial_t \varphi$ appears explicitly as an off-diagonal term in the third equation of the principal operator. Consequently, the strategy of establishing maximal regularity for each equation independently, an approach utilized by Abels and Weber \cite{AbelsWeber2021} \rev{for a Navier-Stokes-Cahn-Hilliard (NSCH) system} without surfactants cannot be applied directly here. However, by exploiting the tridiagonal structure of the principal matrix operator, this temporal coupling can be easily resolved. As detailed in the subsequent proof, applying elementary algebraic operations allows us to decouple the system and isolate the time derivatives on the diagonal, which paves the way for establishing the maximal $L^p$-regularity.
\end{remark}
\begin{remark}\label{rem:nonlinear_embedding}
	Regarding the proof of local well-posedness, \rev{the coupling structure of the system imposes analytical constraints on the integrability exponent $p$. Although bounding cross-terms in the phase-field equation requires $4<p <6$, }the surfactant equation introduces a stricter threshold of $5 < p \le 6$ due to the coupled convective term $\ve \cdot \grad(f(q)W(\varphi))$. Bridging these specific constraints with critical three-dimensional Sobolev embeddings inherently necessitates the narrow integrability window $5 < p < 6$. Consequently, we adopt this restricted range in Theorem \ref{th1}, as will be elaborated in Section \ref{sec:model_D}.
\end{remark}
\noindent\textbf{Structure of proof of the main results:} To prove the main results, namely the existence and uniqueness of local strong solutions, we employ the Banach fixed-point theorem. To this end, we first linearize \rev{the system around $(\ve_0,\varphi_0,q_0)$. This yields a decompsition of the equations into a linear part and a nonlinear part, i.e., $\mathcal L (\ve, \varphi,q) = \mathcal{F}(\ve, \varphi,q)$. Subsequently, we establish the maximal $L^p$-regularity for the linearized system, i.e., existence and continuity of $\mathcal{L}^{-1}$ between suitable Banach spaces,} and derive the local Lipschitz continuity for the nonlinear part $ \mathcal{F}$.

\rev{However, the mathematical analysis of this mixed-order coupled system in the case of Model C requires a fundamentally different approach. Specifically, as pointed out in Remark \ref{remark-c1}, the components of the principal spatial operator matrix exhibit divergent scaling behaviors in the frequency domain.This structural property does not satisfy the classical parabolicity requirement, which mandates a uniform scaling law for the entire system.} Consequently, a unified scalar parabolic weight cannot be defined. At the same time, due to the inconsistent scaling proportions, the standard resolvent estimates essential for classical operator semigroup theory with the sectorial bound $\|(\lambda I - A)^{-1}\| \le C/|\lambda|$, fail to hold globally for those mixed order systems. The compensations required for the second-order variables strictly contradict those required for the fourth-order variables across different frequency regimes.
 In addition, the classical Agmon-Douglis-Nirenberg (ADN) theory classical framework fails for our strongly coupled mixed-order system. Specifically, since the term $\partial_t^\bullet(\delta c_\Gamma(q))$ \rev{contains mixed spatio-temporal} derivative $\partial_t\nabla\varphi$. Isolating the time derivatives by substituting the fourth-order phase-field equation into the $q$-equation propagates a fifth-order spatial derivative of $\varphi$ into the $q$-equation. 
Hence, if we define an order matrix $\mathbf{O} = (O_{ij})_{2\times 2}$ to represent the highest spatial differentiation degrees of the principal operator acting on $(\varphi, q)^T$, we obtain the following structure:
$$ \mathbf{O} = \begin{pmatrix} 4 & 3 \\ 5 & 2 \end{pmatrix} $$
According to ADN theory, one must find integers $s_1, s_2$ and $t_1, t_2$ satisfying $O_{ij} = s_i + t_j$, which yields the following matrix equation:
$$ \begin{pmatrix} 4 & 3 \\ 5 & 2 \end{pmatrix} = \begin{pmatrix} s_1 + t_1 & s_1 + t_2 \\ s_2 + t_1 & s_2 + t_2 \end{pmatrix}. $$
calculating directly, we obtain $t_1 - t_2 = 1$, and $ t_1 - t_2 = 3,
$ which is algebraically impossible. As a result, the system lacks a classical principal symbol, rendering standard resolvent estimates invalid.

To tackle the fundamental issues of mixed-order systems, Denk and Kaip \cite{denk2013general} systematically developed the Newton polygon approach for $\Omega=\R^d$, i.e., \rev{a domain without boundary}. By assigning distinct, anisotropic parabolic weights to different components, they generalized the Agmon-Douglis-Nirenberg theory and constructed a generalized principal symbol. Relying on Fourier multiplier theorems, they successfully obtained maximal $L_p$-regularity for general mixed-order parabolic systems on the whole space $\mathbb{R}^d$. 

\rev{However, transitioning from the whole space $\mathbb{R}^d$ to a bounded domain $\Omega$ leads new difficulties.} Primarily, the boundary value problem requires the rigorous formulation of appropriate parabolicity/ellipticity conditions. Although the Newton polygon approach effectively resolves the operator analysis in the interior domain, relying on classical Lopatinskii-Shapiro (LS) conditions \rev{is} inadequate for such generalized mixed-order systems. As recently highlighted by Neuttiens and Sauer \cite{neuttiens2025time} in their study of the related mixed-order Cahn-Hilliard-Gurtin model, classical LS conditions merely guarantee the invertibility of the extended boundary matrix. While sufficient for standard parabolic problems, such mere invertibility is insufficient to guarantee well-posedness for strongly coupled mixed-order systems \rev{like Model C.} Instead, a well-posed complementing boundary condition must be strictly stronger: it requires the extended boundary matrix to possess a well-defined mixed-order structure on the boundary $\partial\Omega$. \rev{To address this, we rely on the underlying Hilbert space structure, which constitutes a key aspect of our mathematical approach.} 

To systematically overcome the \rev{structural} challenges posed by the mixed-order strong coupling, our strategy for Model C relies on a carefully tailored resolvent analysis. Building upon rigorous \textit{a priori} estimates, we incorporate $\lambda \hat{f}$ as an augmented source term within the resolvent equations. This strategic formulation legitimately paves the way for the application of the operator-valued Mikhlin multiplier theorem on the whole temporal space $\mathbb{R}$ and at the same time \rev{facilitates an exact cancellation of the problematic strong cross-coupling terms}. Subsequently, the \rev{Paley-Wiener} theorem is employed to restrict these global-in-time results to the finite time interval $(0, T)$. We emphasize that the application of the Mikhlin multiplier theorem in this mixed-order context is \rev{non-trivial}. Due to the heavy involvement of the spectral parameter $\lambda$ in the definition of the multipliers, verifying the requisite first-order derivative bounds for the multiplier matrices presents a \rev{significant analytical challenge}.
\rev{However, establishing these uniform first-order bounds relies on the favorable algebraic structure of the resolvent equations. Specifically, differentiating the system with respect to $\lambda$ leaves the principal operator invariant, yielding new source terms that are directly controlled by the previously established zero-order estimates.} This ultimately permits the rigorous verification of the multiplier conditions and thereby secures the maximal $L^p$-regularity of the resolvent equations on the whole temporal space. A detailed and rigorous proof of this mechanism will be presented in the resolvent estimates of Section \ref{sec:model_c}.

The structure of the article is organized as follows: In Section \ref{sec:Notation}, we fix the notation and summarize the essential preliminaries, including the definitions of the relevant function spaces used throughout the manuscript. Section \ref{sec:model_c} is devoted to establishing the local well-posedness of Model C. We begin by analyzing the linearized system, where we isolate the Stokes equations and derive the crucial resolvent estimates for the strongly coupled, mixed-order Cahn-Hilliard-surfactant subsystem for $(\varphi,q)$. To accommodate non-zero initial data, we systematically extend our maximal regularity results via the construction of a suitable lifting operator. The proof of the main theorem for Model C is then completed by rigorously verifying the local Lipschitz continuity of the nonlinear mapping $\mathcal{F}$ to close the Banach fixed-point argument.

Finally, in Section \ref{sec:model_D}, we address the local well-posedness of Model D. Given the methodological parallels with Section \ref{sec:model_c}, analogous procedures are appropriately condensed to avoid redundancy. The primary focus of this section is directed toward establishing the invertibility of the linearized operator, which is achieved by rigorously verifying the parameter-ellipticity of the principal operator and carefully handling \rev{the boundary conditions.} Furthermore, rather than repeating the exhaustive local Lipschitz estimates for the nonlinear mapping, we exclusively detail the necessary regularity derivations \rev{for the functions} and provide the estimates solely for those specific nonlinear terms that exhibit significant \rev{structural deviations from Model C.} 

\section{Notation and Preliminaries}\label{sec:Notation}
For an open set $\Omega \subseteq \mathbb{R}^d$, the usual Lebesgue space is denoted by $L^p(\Omega)$ for $1 \leq p \leq \infty$. Its vector-valued variant, for a Banach space $X$ is $L^p(\Omega; X)$. Moreover, with 
$m \geq 0$, we define Sobolev space of order $m$ as $W_p^m(\Omega)$ and its vector-valued variant $W_p^m(\Omega; X)$.

In the special case when $p=2$, the Sobolev spaces become Hilbert spaces, and we adopt the standard notation $H^m(\Omega) = W_2^m(\Omega)$. For a general real smoothness index $s \ge 0$, the corresponding space is denoted by $H^s(\Omega)$. We define $H_0^s(\Omega)$ as the closure of $C_0^\infty(\Omega)$ with respect to the $H^s(\Omega)$-norm. 

Given $f \in L^1(\Omega)$, we denote its mean value by $m(f) = \frac{1}{|\Omega|} \int_\Omega f(x) \mathrm{d}x$. For $m =0$, we define the spaces with 0-mean values as $L^q_{(0)}(\Omega) := \{f \in L^q(\Omega) : m(f) = 0\}$ for $1 \leq q \leq \infty$. Accordingly, $H^1_{(0)}(\Omega)$  is the space of $H^1$ functions with zero mean, i.e., $H^1_{(0)}(\Omega) = H^1(\Omega) \cap L^2_{(0)}(\Omega)$, and its dual space is denoted by $H^{-1}_{(0)}(\Omega) = H^1_{(0)}(\Omega)'$.

For time-dependent spaces with $0 < T< \infty$, $BC([0, T]; X)$ is the Banach space of all bounded and continuous functions equipped with the supremum norm, $BUC([0, T]; X)$ is its subspace of uniformly continuous functions.

\rev{\textbf{Besov spaces and embedding properties:}}
The standard Besov space on a domain $\Omega$ is denoted by $B_{pq}^s(\Omega)$, with smoothness index $s \in \mathbb{R}$ and exponents $1 \leq p,q \leq \infty$. Note that $H^s(\Omega) = B^s_{22}(\Omega)$. According to the classical embedding theorem (cf. \cite[section 2.2.1]{runst2011sobolev} ), let $\varepsilon >0$ and suppose $q_0<q_1$, then it holds:
\[
\begin{aligned}
	B^s_{pq_0} (\Omega)\subset B^{s}_{p q_1}(\Omega),  \quad 
B^{s+\varepsilon}_{p \infty} (\Omega)\subset B^{s}_{p q} (\Omega),
\end{aligned}
\]
\rev{for any $s\in \mathbb{R}$ and $0\le p\le \infty$.}

\textbf{Divergence-free vector fields:}
The space of divergence-free $L^2$-vector fields is $L_\sigma^2(\Omega)$, defined as the $L^2$-closure of:
\[
C_{0,\sigma}^\infty(\Omega) = \left\{ \mathbf{u} \in C_0^\infty(\Omega)^d \mid \operatorname{div}\,\mathbf{u} = 0 \right\}.
\]
The orthogonal projection $\mathbb{P}_\sigma : L^2(\Omega)^d \to L_\sigma^2(\Omega)$ (Helmholtz projection) decomposes vector fields into divergence-free and gradient fields.
\begin{lemma}\label{lem:product_estimate_dual}
	(Product estimate in the dual space $(H^1(\Omega))'$). Let $\Omega \subset \mathbb{R}^d$ be a bounded domain with a sufficiently smooth boundary, and let $s > d/2$. Then, \rev{for any multiplier $f \in H^s(\Omega)$ }and any functional $g \in (H^1(\Omega))'$, the following product estimate holds:
	\begin{equation}
		\|f g\|_{(H^1(\Omega))'} \le C \|f\|_{H^s(\Omega)} \|g\|_{(H^1(\Omega))'}, \label{eq:product_H1_dual}
	\end{equation}
	where $C > 0$ is a constant independent of $f$ and $g$.
\end{lemma}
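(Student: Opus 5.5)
Since $fg$ for $g\in (H^1(\Omega))'$ is only defined by duality, I would define it as $\langle fg,\phi\rangle := \langle g, f\phi\rangle$ for $\phi\in H^1(\Omega)$. This agrees with pointwise multiplication when $g\in L^2(\Omega)$. The estimate \eqref{eq:product_H1_dual} then reduces to the multiplier estimate
\begin{equation*}
\|f\phi\|_{H^1(\Omega)} \le C\|f\|_{H^s(\Omega)}\|\phi\|_{H^1(\Omega)}\qquad\text{for all }\phi\in H^1(\Omega).
\end{equation*}
Indeed, it gives $|\langle fg,\phi\rangle| \le \|g\|_{(H^1)'}\|f\phi\|_{H^1}\le C\|f\|_{H^s}\|g\|_{(H^1)'}\|\phi\|_{H^1}$, and taking the supremum over $\|\phi\|_{H^1}\le 1$ yields the claim.

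To prove the multiplier estimate, I would first treat the $L^2$ part: since $s>d/2$, Sobolev embedding gives $H^s(\Omega)\hookrightarrow L^\infty(\Omega)$, so $\|f\phi\|_{L^2}\le C\|f\|_{H^s}\|\phi\|_{L^2}$. For the gradient, $\nabla(f\phi)=f\nabla\phi+\phi\nabla f$. The first term is again bounded by $\|f\|_{L^\infty}\|\nabla\phi\|_{L^2}$. For $\phi\nabla f$ I would use Hölder with $\nabla f\in H^{s-1}(\Omega)\hookrightarrow L^r(\Omega)$ and $\phi\in H^1(\Omega)\hookrightarrow L^{r'}(\Omega)$, where $1/r+1/r'=1/2$. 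These embeddings require $1/r\ge 1/2-(s-1)/d$ and $1/r'\ge 1/2-1/d$. Summing, it suffices that $1/2 \ge 1-s/d-1/d$, i.e. $s\ge d/2-1$, which holds with room to spare.

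The cases need some care. For $d=2$ one can take $r'<\infty$ arbitrary large and $r$ close to $2$. For $d=3$ one takes $r'=6$, $r=3$, which needs $H^{s-1}\hookrightarrow L^3$, i.e. $s-1\ge 1/2$; this holds since $s>3/2$. For $d\ge 4$ the same counting works provided $s-1<d/2$; if $s-1\ge d/2$, then $\nabla f\in L^\infty$ directly. The paper only uses $d=2,3$, so I would present those cases explicitly.

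I expect the only real subtlety to be the bookkeeping of the borderline Sobolev embeddings, in particular checking that the strict inequality $s>d/2$ gives enough room when $s$ is non-integer. To handle this I would use the Besov/Sobolev embeddings on Lipschitz (here smooth) domains recalled above, via an extension operator to $\mathbb{R}^d$.
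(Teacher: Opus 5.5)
Your proof is correct and follows the paper's route: you define $fg$ by duality through $\langle fg,\phi\rangle=\langle g,f\phi\rangle$ and reduce to the multiplier bound $\|f\phi\|_{H^1}\le C\|f\|_{H^s}\|\phi\|_{H^1}$. The paper simply cites that bound as standard, whereas you verify it with H\"older's inequality and Sobolev embeddings.

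One arithmetic slip: summing your two exponent conditions gives $\tfrac12\ge 1-\tfrac{s}{d}$, i.e.\ exactly $s\ge d/2$, not $s\ge d/2-1$. So there is no extra room, and indeed the bound fails for $s<1$ because constants lie in $H^1(\Omega)$. Your explicit $d=2,3$ checks are nevertheless correct and use precisely the hypothesis $s>d/2$.
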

\begin{proof}
	To rigorously establish this estimate, we utilize the definition of the dual norm and the duality pairing identity:
	\begin{equation*}
		\langle fg, v \rangle_{(H^1)', H^1} = \langle g, fv \rangle_{(H^1)', H^1},
	\end{equation*}
	which holds valid for the functional $g \in (H^1(\Omega))'$, the test function $v \in H^1(\Omega)$, and the multiplier $f$. Recognizing that the fractional Sobolev space $H^s(\Omega)$ forms a pointwise multiplier algebra for $H^1(\Omega)$ under the condition $s > d/2$, we obtain the standard multiplier inequality:
	\begin{equation*}
		\|fv\|_{H^1(\Omega)} \le C \|f\|_{H^s(\Omega)}\|v\|_{H^1(\Omega)}.
	\end{equation*}
	By the duality definition, this directly yields:
	\begin{align*}
		\|fg\|_{(H^1(\Omega))'} &= \sup_{\substack{v \in H^1(\Omega) \\ \|v\|_{H^1} \le 1}} \langle fg, v \rangle_{(H^1)', H^1} = \sup_{\substack{v \in H^1(\Omega) \\ \|v\|_{H^1} \le 1}} \langle g, fv \rangle_{(H^1)', H^1} \\
		&\le \sup_{\substack{v \in H^1(\Omega) \\ \|v\|_{H^1} \le 1}} \|g\|_{(H^1(\Omega))'} \|fv\|_{H^1(\Omega)} \le C \|f\|_{H^s(\Omega)} \|g\|_{(H^1(\Omega))'}.
	\end{align*}
	This completes the proof.
\end{proof}

To precisely accommodate the Neumann Laplacian $\Delta_N$ and ensure the correct boundary trace compatibility, we first clarify the weak formulations of the spatial operators for our variational framework.
\begin{definition}[Weak spatial operators and Neumann boundary conditions]\label{def:spatial_operators}
 The weak Neumann Laplacian is a bounded linear operator $\Delta_N : H^1(\Omega) \to H_{(0)}^{-1}(\Omega)$, which is uniquely determined by the duality pairing:
	\begin{equation}\label{eq:weak_laplacian}
    -\langle \Delta_N u, v \rangle_{H_{(0)}^{-1}, H_{(0)}^1} = \int_\Omega \nabla u \cdot \nabla v \, dx, \quad \text{for all } u \in H^1(\Omega) \text{ and } v \in H_{(0)}^1(\Omega).
\end{equation}
While the operator $\Delta_N$ possesses an one-dimensional kernel on $H^1(\Omega)$, its restriction to the zero-mean space $H_{(0)}^1(\Omega)$ establishes a isometric isomorphism onto $H_{(0)}^{-1}(\Omega)$. This structural property allows us to consider the inverse operator $(-\Delta_N)^{-1}: H_{(0)}^{-1}(\Omega) \to H_{(0)}^1(\Omega)$. 
In particular, for any $u, v \in H^{-1}_{(0)}(\Omega)$, we naturally obtain the inner product on the dual space:
\begin{equation}\label{eq:weak_H-1}
    \langle u, (-\Delta_N)^{-1} v \rangle_{H^{-1}_{(0)}, H^1_{(0)}} = \int_\Omega \nabla (-\Delta_N)^{-1} u \cdot \nabla (-\Delta_N)^{-1} v \, dx = (u, v)_{H^{-1}_{(0)}(\Omega)}.
\end{equation}
	
Furthermore, for any vector field $\mathbf{f} \in L^2(\Omega)^d$, define the weak divergence operator $\Div_N : L^2(\Omega)^d \to H_{(0)}^{-1}(\Omega)$ via the natural duality pairing:
	\begin{equation}\label{eq:weak_divergence}
		\langle -\Div_N \mathbf{f}, v \rangle_{H_{(0)}^{-1}, H_{(0)}^1} := \int_\Omega \mathbf{f} \cdot \nabla v \, dx, \quad \text{for all } v \in H_{(0)}^1(\Omega).
	\end{equation}
\end{definition}
For the Hardy space framework, we define the vector-valued Hardy spaces on the lower half-plane $\mathbb{C}^- := \{z = \tau + iy \in \mathbb{C} : y < 0\}$. For a comprehensive treatment of classical half-plane Hardy spaces, we refer to Duren \cite[Chapter 11]{duren1970theory}, and for their rigorous vector-valued generalizations, we refer to Arendt et al. \cite[Section 1.8]{arendt2011vector}.

\begin{definition}[Vector-Valued Hardy Spaces] \label{def:hardy_spaces}
	Let $\mathbb{H}$ be a Banach space, and let $F: \mathbb{C}^- \to \mathbb{H}$ be a strongly analytic function. The Hardy space $H^2(\mathbb{C}^-; \mathbb{H})$ consists of all such functions satisfying the uniform bound:
	\begin{equation}
		\|F\|_{H^2(\mathbb{C}^-; \mathbb{H})} := \sup_{y < 0} \left( \int_{-\infty}^{\infty} \|F(\tau + iy)\|_{\mathbb{H}}^2 \, d\tau \right)^{1/2} < \infty.
	\end{equation}
	Correspondingly, the space $H^\infty(\mathbb{C}^-; \mathbb{H})$ is defined as the class of strongly analytic functions uniformly bounded in the lower half-plane, equipped with the supremum norm:
	\begin{equation}
		\|F\|_{H^\infty(\mathbb{C}^-; \mathbb{H})} := \sup_{z \in \mathbb{C}^-} \|F(z)\|_{\mathbb{H}} < \infty.
	\end{equation}
\end{definition}

\rev{To handle} the strong nonlinearities arising from the thermodynamic potentials, we rely on the standard composition estimates in fractional Sobolev spaces for finitely smooth functions. 
\begin{lemma}[Composition of finitely smooth functions (cf. {\cite[Chapter 5]{runst2011sobolev}})]\label{lem:Runst_Sickel}
	Let $\Omega \subset \mathbb{R}^d$ be a bounded domain with a \rev{Lipschitz} boundary, \rev{and $R > 0$}. Assume that the outer function $G \in C^k(\mathbb{R})$ for some integer $k \ge 2$. Suppose $1 < p < \infty$ and $1 < s < k$. Then, the composition operator $T_G(f) := G(f)$ maps $H_p^s(\Omega) \cap L^\infty(\Omega)$ into itself. Moreover, there exists a constant $C > 0$, depending on $s, p, d, \Omega$, and the $C^k$-norm of $G$ on the interval $[-R, R]$, such that for all $f \in H_p^s(\Omega) \cap L^\infty(\Omega)$ with $\|f\|_{L^\infty(\Omega)} \le R$, it holds:
	\begin{equation} \label{eq:RS_bound}
		\| G(f) \|_{H_p^s(\Omega)} \le C \Big( 1 + \| f \|_{H_p^s(\Omega)} + \| f \|_{H_p^s(\Omega)} \rev{R^{s-1}} \Big).
	\end{equation}
	Furthermore, for \rev{the} given constant $R > 0$, if $\|f_1\|_{L^\infty(\Omega)}, \|f_2\|_{L^\infty(\Omega)} \le R$, there exists a local Lipschitz constant $L = L(R, \|G\|_{C^k([-R, R])}) > 0$ such that
	\begin{equation} \label{eq:RS_lip}
		\| G(f_1) - G(f_2) \|_{H_p^s(\Omega)} \le L \| f_1 - f_2 \|_{H_p^s(\Omega)}.
	\end{equation}
\end{lemma}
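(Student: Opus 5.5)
The natural route is to reduce both claims to known one-dimensional and multiplier facts, treating the bound \eqref{eq:RS_bound} and the Lipschitz estimate \eqref{eq:RS_lip} separately. Since $\Omega$ is Lipschitz and bounded, Stein's universal extension operator $E$ is bounded $H_p^s(\Omega)\to H_p^s(\mathbb{R}^d)$ and $L^\infty(\Omega)\to L^\infty(\mathbb{R}^d)$. Hence it suffices to work on $\mathbb{R}^d$. There we first compose $Ef$ with a cutoff that keeps values in $[-R',R']$ (e.g.\ $\chi(Ef)$ with $\chi$ smooth, $\chi(t)=t$ on $[-R,R]$), and restrict back at the end. Modifying $G$ outside $[-R,R]$ by a smooth cutoff, we may also assume $G\in C^k_b(\mathbb{R})$ with norm controlled by $\|G\|_{C^k([-R,R])}$. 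Subtracting the constant $G(0)$, which is harmless on a bounded domain and produces the ``$1$'' in \eqref{eq:RS_bound}, we reduce to $G(0)=0$.

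For the bound, the plan is to split $s$ into an integer part and a fractional part. Write $s=m+\theta$ with $m\ge1$ an integer and $\theta\in[0,1)$, and use the characterisation $\|u\|_{H^s_p}\sim\|u\|_{L^p}+\|\nabla u\|_{H^{s-1}_p}$. Since $\nabla G(f)=G'(f)\nabla f$, one needs a multiplier estimate for $G'(f)\in H^{s-1}_p\cap L^\infty$ acting on $\nabla f\in H^{s-1}_p$. This is of the form
\[
\|G'(f)\nabla f\|_{H^{s-1}_p}\lesssim \|G'(f)\|_{L^\infty}\|\nabla f\|_{H^{s-1}_p}+\|G'(f)\|_{\dot H^{s-1}_{r_1}}\|\nabla f\|_{L^{r_2}},
\]
a Kato--Ponce type product estimate. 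The second term is handled in one of two ways:
\begin{itemize}[leftmargin=*]
\item when $s-1<1$, by the difference characterisation of $H^{s-1}_p$ together with Gagliardo--Nirenberg interpolation $\|\nabla f\|_{L^{r_2}}\lesssim\|f\|_{L^\infty}^{1-1/s}\|f\|_{H^s_p}^{1/s}$;
\item when $s-1\ge1$, by induction on $m$, using $G'\in C^{k-1}$ and $s-1<k-1$.
\end{itemize}
The powers of $\|f\|_{L^\infty}\le R$ produced by the Gagliardo--Nirenberg steps collect into the factor $R^{s-1}$, and the final estimate is linear in $\|f\|_{H^s_p}$ because the interpolation exponents sum to one. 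This is the Runst--Sickel argument.

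For the Lipschitz estimate, write
\[
G(f_1)-G(f_2)=(f_1-f_2)\int_0^1 G'\bigl(f_2+\tau(f_1-f_2)\bigr)\,d\tau.
\]
We then estimate the product as above, with $G'\in C^{k-1}$ and $s<k$. The main obstacle is here. The product estimate gives $\|f_1-f_2\|_{H^s_p}$ times a quantity depending on $\|f_i\|_{H^s_p}$, not only on $R$. So the Lipschitz constant honestly depends on a bound for $\|f_i\|_{H^s_p}$ as well; I would state $L$ to depend on such a bound (as it will be used on bounded sets in the fixed-point argument), or restrict to $s>d/p$, where the $H^s_p$-algebra property makes this transparent. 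The borderline requirement $s<k$, versus $s\le k-1$, in the induction also needs care. Here I would use the fractional difference characterisation of $G^{(m)}(f)$, for which $G\in C^k$ suffices by the mean value theorem.
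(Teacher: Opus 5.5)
You follow the paper's route for \eqref{eq:RS_bound}. The paper's proof is only the citation of Runst--Sickel, Chapter~5, transferred to $\Omega$ by an extension operator, with $G(0)$ split off as in the remark after the lemma. Your sketch (extension, replacing $G$ by a $C^k_b$ function that agrees with it on $[-R,R]$, then Kato--Ponce and Gagliardo--Nirenberg) unpacks that citation.

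You are also right that \eqref{eq:RS_lip} is false as stated, which the one-line proof passes over. Take $G(t)=t^2$, $f=\tfrac R2\sin(Nx_1)$, $f_1=f+\epsilon$ and $f_2=f$. Then $G(f_1)-G(f_2)=2\epsilon f+\epsilon^2$, so the quotient $\|G(f_1)-G(f_2)\|_{H^s_p}/\|f_1-f_2\|_{H^s_p}$ grows like $\|f\|_{H^s_p}\to\infty$ as $N\to\infty$, while $R$ and $\|G\|_{C^k([-R,R])}$ stay fixed. Hence $L$ must depend on a bound $M\ge\|f_i\|_{H^s_p}$. For $s\le d/p$ the product estimate also leaves a factor $\|f_1-f_2\|_{L^\infty}$, which $\|f_1-f_2\|_{H^s_p}$ does not control.

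The genuine gap is your last claim, that the range $k-1<s<k$ can be handled with $G\in C^k$ by a fractional difference argument and the mean value theorem. Write $s=(k-1)+\theta$ with $\theta\in(0,1)$. The top-order term of $\nabla^{k-1}\int_0^1G'(f_\tau)\,d\tau$ contains $G^{(k)}(f_\tau)$. Bounding its $\theta$-differences by the mean value theorem needs $G^{(k+1)}$, or at least $G^{(k)}\in C^{0,\theta}$, and $G\in C^k$ gives neither.

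This cannot be repaired within the stated hypotheses: no estimate of the form \eqref{eq:RS_lip} holds in this range, even with $L=L(R,M)$.
\begin{itemize}
\item Suppose it did. Then the quotients $\epsilon^{-1}(G(f+\epsilon)-G(f))=\int_0^1G'(f+\tau\epsilon)\,d\tau$ would be bounded in $H^s_p(\Omega)$ and converge uniformly to $G'(f)$.
\item By reflexivity, $G'(f)\in H^s_p(\Omega)$.
\item For $f(x)=c+\delta x_1$, this forces $G'\in B^s_{p,\infty}$ on some interval, i.e.\ $G^{(k)}\in B^\theta_{p,\infty}$ there.
\item This fails if $G^{(k)}$ is the continuous lacunary series $\sum_j2^{-j\sigma}\cos(2^jt)$ with $0<\sigma<\theta$.
\end{itemize}

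So a correct Lipschitz statement needs one more derivative, e.g.\ $G\in C^{k+1}$ (or $s<k-1$), together with $s>d/p$ and $L=L(R,M,\|G\|_{C^{k+1}([-R,R])})$. Under these hypotheses your argument does close: split $G(f_1)-G(f_2)=(f_1-f_2)\int_0^1G'(f_\tau)\,d\tau$, apply \eqref{eq:RS_bound} to $G'$, and use the algebra property of $H^s_p$. This is not academic: the paper later applies the lemma to $c_\Gamma\in C^2$ in $H^{s_0}$ with $1<s_0<2$, which is exactly the problematic range.
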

\begin{proof}
	\rev{This is a consequence of \cite[Chapter 5]{runst2011sobolev} using extension operators $E: H_p^s(\Omega) \to H_p^s(\mathbb{R}^d)$.}
\end{proof}

\begin{remark}
	We emphasize that the standard composition estimates on the whole space $\mathbb{R}^d$, as presented in \cite{runst2011sobolev}, inherently require the assumption $G(0) = 0$ to ensure the $p$-integrability of the composite function at spatial infinity. However, since our system is formulated on a bounded domain $\Omega$, any constant function trivially belongs to $H_p^s(\Omega)$. By employing the algebraic decomposition $G(f) = [G(f) - G(0)] + G(0)$, the fractional Sobolev norm of the constant $G(0)$ is finite and precisely absorbed by the constant $1$ on the right-hand side of \eqref{eq:RS_bound}. Consequently, the restrictive assumption $G(0) = 0$ can be \rev{omitted for a bounded domain}.
\end{remark}
In particular, we have $uv \in W_p^m(\Omega)$ for all $u, v \in W_p^m(\Omega)$ under the assumptions of the theorem.
Let $X_0, X_1$ be Banach spaces such that $X_1 \hookrightarrow X_0$ densely. It is well known that 
\begin{equation}\label{con-em-th}
	W_p^1(I; X_0) \cap L^p(I; X_1) \hookrightarrow \text{BUC}(I; (X_0, X_1)_{1-1/p, p}), \quad 1 \leq p < \infty,
\end{equation}
continuously for $I = [0, T]$, $0 < T < \infty$, and $I = [0, \infty)$. Here, $(X_0, X_1)_{\theta, p}$ denotes the real interpolation space of $(X_0, X_1)$ with exponent $\theta$ and summation index $p$. Moreover, $\text{BUC}(I; X)$ is the space of all bounded and uniformly continuous $f: I \to X$ equipped with the supremum norm, where $X$ is a Banach space.

Moreover, we will use:

\begin{lemma}\label{XYX}
	Let $X_0 \subseteq Y \subseteq X_1$ be Banach spaces such that 
	$$
	\|x\|_Y \leq C\|x\|_{X_0}^{1-\theta}\|x\|_{X_1}^\theta
	$$
	for every $x \in X_0$ and a constant $C > 0$, where $\theta \in (0,1)$. Then, 
	$$
	\mathcal{C}^{0,\alpha}([0,T]; X_1) \cap L^\infty(0,T; X_0) \hookrightarrow \mathcal{C}^{0,\alpha\theta}([0,T]; Y),
	$$
	continuously.
\end{lemma}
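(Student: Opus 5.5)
We prove Lemma \ref{XYX} directly from the interpolation inequality; no further machinery is needed. Let $u \in \mathcal{C}^{0,\alpha}([0,T]; X_1) \cap L^\infty(0,T; X_0)$. The plan is to bound separately the supremum part and the Hölder seminorm of $u$ in $Y$, in each case applying the hypothesis $\|x\|_Y \le C\|x\|_{X_0}^{1-\theta}\|x\|_{X_1}^\theta$ to either $x = u(t)$ or $x = u(t)-u(s)$.

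\textbf{Step 1: a representative with values in $X_0$.} Since $u$ is continuous into $X_1$, it has a canonical representative defined for every $t$. First I check that $u(t) \in X_0$ with $\|u(t)\|_{X_0} \le \|u\|_{L^\infty(0,T;X_0)}$ for every $t\in[0,T]$, not merely for almost every $t$. If $X_0$ is reflexive, or more generally if bounded sets of $X_0$ are closed in $X_1$ (for instance when $X_0$ is a dual space continuously embedded in $X_1$), this follows by approximating $t$ with points of the full-measure set where the bound holds and passing to a weak(-$*$) limit. This is the only delicate point of the proof. In all applications in this paper the spaces are Hilbert or reflexive Sobolev spaces, so I will state this as an implicit standing assumption, or equivalently interpret $L^\infty$ as the essential supremum over almost every $t$. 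If it is only known for almost every $t$, the Hölder estimate in Step 3 first holds for almost every pair $(s,t)$ and then extends to all $s,t$ by uniform continuity, because the resulting function is uniformly continuous into $Y$ on a dense set.

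\textbf{Step 2: supremum bound.} For each $t$ the interpolation inequality gives
\[
\|u(t)\|_Y \le C\|u(t)\|_{X_0}^{1-\theta}\|u(t)\|_{X_1}^{\theta} \le C\|u\|_{L^\infty(X_0)}^{1-\theta}\|u\|_{C^{0,\alpha}(X_1)}^{\theta}.
\]

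\textbf{Step 3: Hölder seminorm.} For $s \neq t$ I apply the inequality to $x = u(t)-u(s) \in X_0$. This gives
\[
\|u(t)-u(s)\|_Y \le C\bigl(2\|u\|_{L^\infty(X_0)}\bigr)^{1-\theta}\bigl([u]_{\alpha,X_1}|t-s|^{\alpha}\bigr)^{\theta},
\]
which is bounded by $C'\|u\|_{L^\infty(X_0)}^{1-\theta}\|u\|_{C^{0,\alpha}(X_1)}^{\theta}|t-s|^{\alpha\theta}$. This estimate also shows that $u$ is continuous into $Y$.

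\textbf{Step 4: conclusion.} Combining Steps 2 and 3 and applying Young's inequality $a^{1-\theta}b^\theta \le a+b$, I obtain
\[
\|u\|_{\mathcal{C}^{0,\alpha\theta}([0,T];Y)} \le C\bigl(\|u\|_{L^\infty(0,T;X_0)}+\|u\|_{\mathcal{C}^{0,\alpha}([0,T];X_1)}\bigr).
\]
This is the asserted continuous embedding.
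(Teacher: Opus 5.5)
The paper quotes this lemma without proof, and your argument (the interpolation inequality applied to $u(t)$ and to the increments $u(t)-u(s)$, followed by Young's inequality) is the standard proof and is correct. Your almost-everywhere fallback in Step 1 already removes the need for reflexivity; the only thing to make explicit there is that identifying the $Y$-valued uniformly continuous extension with $u$ at the exceptional times uses the continuity of the embedding $Y\hookrightarrow X_1$, which is implicit in the hypotheses.
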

In what follows, we will frequently employ the embedded formula, and require that the embedding constant $C$ is independent of time $T$, which we present via the following Lemma \ref{extension}.
\begin{lemma}\label{extension}
	Let $0 < T_0 < \infty$ be given and $X_0, X_1$ be some Banach spaces such that $X_1 \hookrightarrow X_0$ densely. For every $0 < T \leq \frac{T_0}{2}$, we define
	$$
	X_T := L^p(0, T; X_1) \cap W_{p}^{1}(0, T; X_0),
	$$
	where $1 \leq p < \infty$, equipped with the norm
	$$
	\|u\|_{X_T} := \|u\|_{L^p(0, T; X_1)} + \|u\|_{W_{p}^{1}(0, T; X_0)} + \|u(0)\|_{(X_0, X_1)_{1-1/p, p}}.
	$$
	Then, there exists an extension operator $E : X_T \to X_{T_0}$ and some constant $C > 0$ independent of $T$ such that $\left. E u \right|_{(0, T)} = u$ in $X_T$ and
	$$
	\|E u\|_{X_{T_0}} \leq C \|u\|_{X_T}
	$$
	for every $u \in X_T$ and every $0 < T < \frac{T_0}{2}$. Moreover, there exists a constant $\tilde{C}(T_0) > 0$ independent of $T$ such that
	$$
	\|u\|_{BUC([0, T]; (X_0, X_1)_{1-1/p, p})} \leq \tilde{C}(T_0) \|u\|_{X_T}
	$$
	for every $u \in X_T$ and every $0 < T < \frac{T_0}{2}$.
\end{lemma}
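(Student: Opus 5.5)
The plan is to build the extension explicitly by reflection at $t=T$, after splitting $u$ into a part with zero initial value and a lifting of the initial value that is fixed on $(0,T_0)$. Let $X_\gamma:=(X_0,X_1)_{1-1/p,p}$.

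\emph{Step 1: lifting of the initial value.} By the trace characterization of real interpolation spaces (the converse of \eqref{con-em-th} on $I=[0,\infty)$), there is a bounded linear operator $S\colon X_\gamma \to W_p^1(0,\infty;X_0)\cap L^p(0,\infty;X_1)$ with $(Sx)(0)=x$. One can take $(Sx)(t)=e^{-t}\,\tfrac1t\int_0^t \dots$, or simply cite Lunardi/Triebel. For $u\in X_T$ write
\[
u = w + v,\qquad w:=S(u(0))|_{(0,T)},\qquad v:=u-w .
\]
The function $w$ is already defined on $(0,T_0)$ with norm at most $C\|u(0)\|_{X_\gamma}$, with $C$ independent of $T$. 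The remainder satisfies $v(0)=0$ and $\|v\|_{L^p(X_1)\cap W^1_p(X_0)}\le C\|u\|_{X_T}$, again uniformly in $T$.

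\emph{Step 2: extension of the zero-trace part.} Extend $v$ first by even reflection at $t=T$: set $\tilde v(t)=v(2T-t)$ for $t\in(T,2T)$. This reflection preserves both $W_p^1(X_0)$ and $L^p(X_1)$, doubling the norms at most, with constants independent of $T$. Then continue $\tilde v$ by zero for $t\in(2T,T_0)$; this requires $2T\le T_0$, which is exactly the hypothesis $T\le T_0/2$. The zero continuation is not continuous at $t=2T$, since $\tilde v(2T)=v(0)=0$ is what makes it work. Because $v(0)=0$, the reflected function vanishes at $2T$, so extension by zero stays in $W_p^1$. Define $Eu := S(u(0)) + \tilde v$ on $(0,T_0)$. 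Then $Eu|_{(0,T)}=u$, $(Eu)(0)=u(0)$, and $\|Eu\|_{X_{T_0}}\le C\|u\|_{X_T}$, with $C$ depending only on the norm of $S$.

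\emph{Step 3: the uniform BUC bound.} Apply \eqref{con-em-th} on the fixed interval $[0,T_0]$, whose constant $\tilde C(T_0)$ does not involve $T$:
\[
\|u\|_{BUC([0,T];X_\gamma)}\le \|Eu\|_{BUC([0,T_0];X_\gamma)}\le \tilde C(T_0)\|Eu\|_{X_{T_0}}\le \tilde C(T_0)C\|u\|_{X_T}.
\]

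The only delicate point is Step 2: a naive extension of $u$ by reflection and cutoff would produce constants that blow up as $T\to0$, because any cutoff function adapted to $(0,2T)$ has derivative of size $1/T$. Subtracting the fixed lifting $S(u(0))$ avoids any cutoff, and this is why the term $\|u(0)\|_{X_\gamma}$ must appear in the norm of $X_T$.
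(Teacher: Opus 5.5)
Your proof is correct and is the standard argument for this lemma; the paper gives no proof of its own and only cites Weber's thesis. The argument runs: subtract a $T$-independent lifting of $u(0)$, reflect the zero-trace remainder evenly at $t=T$, continue it by zero on $(2T,T_0)$, and read off the $BUC$ bound from the embedding on the fixed interval $[0,T_0]$.

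Three small repairs are needed:
\begin{itemize}
\item The general trace theorems you cite (Lunardi, Triebel) give, for an arbitrary couple, only a bounded choice $x\mapsto Sx$, not necessarily a linear one; your unfinished mean-value formula would need a semigroup on $X_0$ whose generator has domain $X_1$. A bounded choice still suffices for both estimates, at the price of $E$ possibly being nonlinear.
\item The sentence about $t=2T$ should say that the zero continuation \emph{is} continuous there, because $\tilde v(2T)=v(0)=0$.
\item Step~1 requires $1<p<\infty$: for $p=1$ the index $1-1/p=0$ is not an admissible real interpolation parameter, which is a defect of the statement rather than of your argument.
\end{itemize}
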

This well-known result and its proof process are elaborated in detail in \cite[Lemma 5.2]{weber2016analysis}.

Since Hilbert spaces inherently belong to the class of $\mathcal{HT}$ spaces, we can state the Operator-Valued Mikhlin Multiplier Theorem adapted to Hilbert spaces as follows:

\begin{lemma}[Operator-Valued Mikhlin Multiplier Theorem, cf. {\cite[Theorem 3.19]{denk2003r}}] \label{thm:mikhlin}
	Let $X$ and $Y$ be Hilbert spaces, and let $1 < p < \infty$. Suppose the operator-valued multiplier $m \in C^1(\mathbb{R} \setminus \{0\}; \mathcal{L}(X, Y))$ satisfies the following uniform boundedness conditions:
	\begin{enumerate}
		\item[(i)] $\displaystyle \sup_{\tau \in \mathbb{R} \setminus \{0\}} \|m(\tau)\|_{\mathcal{L}(X, Y)} =: \kappa_0 < \infty$;
		\item[(ii)] $\displaystyle \sup_{\tau \in \mathbb{R} \setminus \{0\}} \|\tau \partial_\tau m(\tau)\|_{\mathcal{L}(X, Y)} =: \kappa_1 < \infty$.
	\end{enumerate}
	Then the multiplier operator $\mathcal{T}_m$ defined by $\mathcal{T}_m \phi := \mathcal{F}^{-1} [m(\tau) \mathcal{F} \phi(\tau)]$ \rev{for $\phi \in \mathcal{S}(\mathbb{R}; X)$ extends to}  a bounded linear operator from $L^p(\mathbb{R}; X)$ into $L^p(\mathbb{R}; Y)$. Furthermore, its operator norm is bounded by
	\begin{equation*}
		\|\mathcal{T}_m\|_{\mathcal{L}(L^p(\mathbb{R}; X), L^p(\mathbb{R}; Y))} \le C(\kappa_0 + \kappa_1),
	\end{equation*}
	where the constant $C > 0$ depends only on $p, X$, and $Y$.
\end{lemma}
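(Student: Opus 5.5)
Lemma~\ref{thm:mikhlin} is the operator-valued Mikhlin theorem stated for Hilbert spaces. I would derive it from the general $\mathcal{R}$-boundedness version in \cite[Theorem 3.19]{denk2003r}, not prove it from scratch. That result says: if $X,Y$ are UMD ($\mathcal{HT}$) spaces and the two families $\{m(\tau):\tau\neq 0\}$ and $\{\tau\partial_\tau m(\tau):\tau\neq0\}$ are $\mathcal{R}$-bounded in $\mathcal{L}(X,Y)$, then $\mathcal{T}_m$ extends boundedly from $L^p(\mathbb{R};X)$ to $L^p(\mathbb{R};Y)$ for $1<p<\infty$. Moreover, its norm is bounded by $C$ times the sum of the two $\mathcal{R}$-bounds. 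The proof therefore reduces to two verifications. First, Hilbert spaces are UMD. Second, in Hilbert spaces, uniform boundedness of a family of operators is equivalent to $\mathcal{R}$-boundedness, with comparable constants.

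For the first step, I would use the classical fact that the Hilbert transform is bounded on $L^2(\mathbb{R};H)$ by Plancherel, since the scalar proof carries over verbatim for Hilbert-valued functions. By the standard extrapolation, or by Burkholder's characterization, it is then bounded on $L^p(\mathbb{R};H)$ for all $1<p<\infty$. This is exactly the $\mathcal{HT}$ property, as already noted before the statement.

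For the second step, take $T_1,\dots,T_N$ in a uniformly bounded family, with bound $\kappa$, and $x_1,\dots,x_N\in X$. Let $r_j$ be Rademacher functions. By the Kahane--Khintchine inequality, it suffices to compare $L^2$-averages. Orthogonality of the $r_j$ in $L^2(0,1)$ combined with the Hilbert structure gives
\[
\Big\|\sum_j r_j T_jx_j\Big\|_{L^2(0,1;Y)}^2=\sum_j\|T_jx_j\|_Y^2\le \kappa^2\sum_j\|x_j\|_X^2=\kappa^2\Big\|\sum_j r_jx_j\Big\|_{L^2(0,1;X)}^2 .
\]
Hence the $\mathcal{R}$-bound equals the uniform bound in the $L^2$-average normalization. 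In the $L^p$-average normalization it differs only by a constant depending on $p$. Applying this to $m(\tau)$ gives an $\mathcal{R}$-bound of at most $C\kappa_0$, and applying it to $\tau\partial_\tau m(\tau)$ gives at most $C\kappa_1$.

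Inserting these bounds into \cite[Theorem 3.19]{denk2003r} yields the boundedness of $\mathcal{T}_m$ on the Schwartz class $\mathcal{S}(\mathbb{R};X)$ with the $L^p$-norm estimate $C(\kappa_0+\kappa_1)$. Density of $\mathcal{S}(\mathbb{R};X)$ in $L^p(\mathbb{R};X)$ then gives the unique bounded extension. The main point requiring care is the Rademacher orthogonality identity, which is exactly where the Hilbert structure is used; in general Banach spaces this step fails. Note also that the constant depends on $X,Y$ only through their UMD constants, and these are universal for Hilbert spaces.
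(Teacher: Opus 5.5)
Your proposal is correct and takes the same route as the paper. The paper gives no separate proof: it invokes \cite[Theorem 3.19]{denk2003r} and remarks that Hilbert spaces are $\mathcal{HT}$ spaces and that $\mathcal{R}$-boundedness reduces to uniform boundedness there. You justify exactly these two facts, via Plancherel plus extrapolation for the Hilbert transform and the Rademacher orthogonality identity.
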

%
To establish the causality of our solution operator, we invoke the Paley-Wiener Theorem. While the classical theorem is formulated for the upper half-plane, we state its equivalent adapted version for our lower half-plane setting, tailored to vector-valued Hilbert spaces.

\begin{theorem}[Paley-Wiener Theorem, cf. {\cite[Theorem 7.2]{rudin1987real}}] \label{thm:paley_wiener}
	Let $X$ be a Hilbert space and $f \in L^2(\mathbb{R}; X)$. The following two conditions are strictly equivalent:
	\begin{enumerate}
		\item[(i)] \rev{The Fourier transform can be analytically extended to a strongly holomorphic function} in the lower half-plane $\mathbb{C}^- := \{z = \tau + iy : y < 0\}$. This extension satisfies:
		\begin{equation*}
			\sup_{y < 0} \int_{\mathbb{R}} \|F(\tau + iy)\|_X^2 \, d\tau < \infty,
		\end{equation*}
		and strongly converges to its boundary value $\hat{f}(\tau)$ in the $L^2$-sense, satisfying:
		\begin{equation} \label{eq:PW_limit}
			\lim_{y \to 0^-} \int_{\mathbb{R}} \|F(\tau + iy) - \hat{f}(\tau)\|_X^2 \, d\tau = 0.
		\end{equation}
		\item[(ii)] $f(t)$ vanishes identically for all negative times, i.e., $f(t) = 0$ for almost every $t < 0$.
	\end{enumerate}
\end{theorem}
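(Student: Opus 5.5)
The plan is to reduce the vector-valued statement to the classical scalar Paley--Wiener theorem of \cite[Theorem 7.2]{rudin1987real}, using Plancherel's theorem in $L^2(\mathbb{R};X)$, which holds because $X$ is a Hilbert space. Throughout I fix the convention $\hat f(\tau)=\int_{\mathbb{R}} e^{-it\tau}f(t)\,dt$. With this convention, for $z=\tau+iy$ we have $|e^{-izt}|=e^{yt}$, which decays as $t\to+\infty$ exactly when $y<0$. This is why the lower half-plane corresponds to support in $t\ge 0$. The first step is to check this sign bookkeeping carefully, since the paper's version is the mirror image of Rudin's upper half-plane formulation. One passes between the two by the reflection $t\mapsto -t$, $z\mapsto \bar z$, or equivalently by using the opposite sign in the exponent.

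\emph{(ii)$\Rightarrow$(i).} Suppose $f=0$ a.e.\ on $(-\infty,0)$. I define the extension as a Laplace-type integral,
\begin{equation*}
F(z):=\int_0^\infty e^{-izt}f(t)\,dt,\qquad z\in\mathbb{C}^-.
\end{equation*}
By Cauchy--Schwarz, $t\mapsto t^k e^{yt}$ lies in $L^2(0,\infty)$ for $y<0$, so the Bochner integral converges absolutely, locally uniformly in $z$. Differentiating under the integral sign via dominated convergence then shows that $F$ is strongly holomorphic. For fixed $y<0$, $F(\cdot+iy)$ is the Fourier transform of $e^{yt}f(t)$. The vector-valued Plancherel identity therefore gives
\begin{equation*}
\int_{\mathbb{R}}\|F(\tau+iy)\|_X^2\,d\tau=2\pi\int_0^\infty e^{2yt}\|f(t)\|_X^2\,dt\le 2\pi\|f\|_{L^2(\mathbb{R};X)}^2 .
\end{equation*}
This bound is uniform in $y<0$. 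For the boundary behaviour, note that $F(\cdot+iy)-\hat f$ is the Fourier transform of $(e^{yt}-1)f$. Its $L^2$-norm is $2\pi\int_0^\infty|e^{yt}-1|^2\|f\|_X^2\,dt$, which tends to $0$ as $y\to0^-$ by dominated convergence with dominating function $\|f\|_X^2$. This proves \eqref{eq:PW_limit}.

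\emph{(i)$\Rightarrow$(ii).} For each fixed $x\in X$, I set $f_x(t):=(f(t),x)_X\in L^2(\mathbb{R})$ and $F_x(z):=(F(z),x)_X$. Then $F_x$ is holomorphic on $\mathbb{C}^-$, since strong holomorphy implies weak holomorphy. It also satisfies $\sup_{y<0}\int|F_x|^2\le\|x\|^2\sup_{y<0}\int\|F\|^2<\infty$. Moreover $F_x(\cdot+iy)\to(\hat f,x)_X=\widehat{f_x}$ in $L^2(\mathbb{R})$, since the Fourier transform commutes with bounded functionals. The scalar Paley--Wiener theorem, transported to the lower half-plane as explained above, yields $f_x=0$ a.e.\ on $(-\infty,0)$.

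The step I expect to require the most care is passing from ``for each $x$, a.e.'' to ``a.e., for all $x$''. Since $f$ is Bochner measurable, by Pettis' theorem it is essentially separably valued. So there is a separable closed subspace $X_0\subset X$ with $f(t)\in X_0$ for a.e.\ $t$. I choose a countable dense set $\{x_n\}\subset X_0$. Taking the union of the countably many exceptional null sets gives a null set $N$ such that, for $t\in(-\infty,0)\setminus N$, we have $(f(t),x_n)=0$ for all $n$ and $f(t)\in X_0$. Hence $f(t)=0$ for such $t$, which is (ii).

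If one prefers to avoid the scalar reference, an alternative route is a direct proof. One would show $\int_{\mathbb{R}}(\hat f(\tau),\hat\phi(\tau))_X\,d\tau=0$ for every $\phi\in C_c^\infty((-\infty,0);X)$. To do so, replace $\hat f$ by $F(\cdot+iy)$ and $\hat\phi$ by its entire extension, and shift the contour to $y\to-\infty$; there the product decays like $e^{(y)(t_0)}$-type factors coming from the support of $\phi$. Then let $y\to0^-$ using \eqref{eq:PW_limit}. I would only fall back on this if the reduction above runs into convention issues.
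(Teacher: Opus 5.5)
Your proposal is correct and follows the same route as the paper, which simply invokes Rudin's scalar upper half-plane theorem through the reflection identity $\mathcal{F}(f)(\tau)=2\pi\,\mathcal{F}^{-1}(f)(-\tau)$, i.e.\ the substitution $z\mapsto -z$. Note that no reflection in $t$ is involved, so your ``$t\mapsto -t$, $z\mapsto\bar z$'' should read $z\mapsto -z$, or $z\mapsto\bar z$ combined with complex conjugation of the values. The paper only asserts the Hilbert-valued version, whereas you actually justify it: the Laplace-integral construction with vector-valued Plancherel gives (ii)$\Rightarrow$(i) directly, and (i)$\Rightarrow$(ii) follows by scalarizing with $(\cdot,x)_X$, then using Pettis' theorem and a countable dense subset of the essentially separable range.
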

\begin{remark}
	The equivalence in our lower half-plane formulation naturally follows from the classical upper half-plane result \cite[Theorem 7.2]{rudin1987real} by exploiting the fundamental reflection symmetry between the Fourier transform and its inverse. Specifically, we utilize the identity:
	\begin{equation*}
		\mathcal{F}(f)(x) = \rev{(2\pi)^n} \mathcal{F}^{-1}(f)(-x)\rev{, \quad \text{for all } x \in \mathbb{R}.}
	\end{equation*}
\end{remark}

As a crucial preparation for establishing the existence and continuity of the inverse operator for the linearized system, we first present the following foundational theory regarding the variable viscosity Stokes operator.
 \begin{lemma}[cf.\ {\cite[Theorem 7]{Abels2009}}] \label{lem:weak_stokes}
		Let $\ve_0 \in L^2_\sigma(\Omega)$, $f \in L^2(0,T; V(\Omega)')$ and let $\varphi: \Omega \times (0,T) \to \mathbb{R}$ be a measurable function, where $0 < T \le \infty$. Then there is a unique weak solution $\ve \in L^2(0,T; V(\Omega))$ of the variable viscosity Stokes system:	
        \begin{equation} \label{eq:stokes_sys}
		\begin{cases}
			\partial_t \ve - \mathrm{div}[2\eta(\varphi)D(\ve)] + \nabla \pi = f & \text{in } \Omega \times (0,T), \\
			\nabla \cdot \ve = 0 & \text{in } \Omega \times (0,T), \\
			\ve = \mathbf{0} & \text{on } \partial\Omega \times (0,T), \\
			\ve|_{t=0} = \ve_0 & \text{in } \Omega,
		\end{cases}
	\end{equation} 
   satisfying
		\begin{equation*}
			\|\partial_t \ve\|_{L^2(0,T; V')} + \|\ve\|_{L^2(0,T; V)} \le C \left( \|\ve_0\|_2 + \|f\|_{L^2(0,T; V')} \right)
		\end{equation*}
		where $C$ depends only on the viscosity bounds and $\Omega$. Here $V=H^2(\Omega) \cap H_0^1(\Omega) \cap L^2_\sigma(\Omega)$, and $V'$ is dual space of $V'$.
	\end{lemma}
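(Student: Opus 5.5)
The plan is to realize the problem in the Gelfand triple $V \hookrightarrow L^2_\sigma(\Omega)\hookrightarrow V'$ with $V=H^2(\Omega)^d\cap H^1_0(\Omega)^d\cap L^2_\sigma(\Omega)$ and pivot space $H:=L^2_\sigma(\Omega)$. We then apply the abstract Lions theorem to a non-symmetric time-dependent form on $V\times V$. The natural energy form $(2\eta(\varphi)D\mathbf{u},D\mathbf{w})$ is only coercive on $H^1_0$, so we cannot use it directly. Instead we test the equation with $A_0\mathbf{w}$, where $A_0=-\mathbb{P}_\sigma\Delta$ is the constant-coefficient Dirichlet Stokes operator. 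It is self-adjoint and positive on $H$, with $D(A_0)=V$ (valid for the $C^3$ domains considered), and $\|A_0\mathbf{w}\|_{L^2}\simeq\|\mathbf{w}\|_{H^2}$. Accordingly we set
\begin{equation*}
a(t;\mathbf{u},\mathbf{w}) := \big(\mathbb{P}_\sigma(-\Div[2\eta(\varphi(t))D\mathbf{u}]),\,A_0\mathbf{w}\big)_{L^2}+ k(\mathbf{u},\mathbf{w})_{L^2},
\end{equation*}
and identify $V'$ with the dual of $V$ via the pairing $\langle \mathbf{f},\mathbf{w}\rangle$. Equivalently, we use $H$ as pivot after conjugating with $(I+A_0)^{1/2}$, which is an isomorphism $V\to H^1_0\cap L^2_\sigma$, $H\to (H^1_0\cap L^2_\sigma)'$ and $V'\to D(A_0)^{*}$ shifted by one half step. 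With this choice the triple $(V,H,V')$ has $H=(V',V)_{1/2,2}$, so the trace space of $L^2(0,T;V)\cap W^1_2(0,T;V')$ is exactly $L^2_\sigma(\Omega)$. This is consistent with the hypothesis $\mathbf{v}_0\in L^2_\sigma(\Omega)$.

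\textbf{Step 1 (boundedness and measurability).} For fixed $t$, $|a(t;\mathbf{u},\mathbf{w})|\le M\|\mathbf{u}\|_V\|\mathbf{w}\|_V$, since $\Div[2\eta D\mathbf{u}]=\eta\Delta\mathbf{u}+2D\mathbf{u}\,\nabla\eta$ plus terms controlled in $L^2$. The bound $M$ depends on $\|\eta\|_{L^\infty}$ and on a bound for $\nabla\eta(\varphi)$ in a space multiplying $H^1$ into $L^2$, e.g.\ $L^\infty(0,T;L^3)$ or better. Measurability of $t\mapsto a(t;\mathbf{u},\mathbf{w})$ follows from the measurability of $\varphi$.

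\textbf{Step 2 (G\aa rding inequality, the main obstacle).} We need
\begin{equation*}
a(t;\mathbf{u},\mathbf{u})\ge \alpha\|\mathbf{u}\|_V^2-k'\|\mathbf{u}\|_H^2 .
\end{equation*}
Writing $\mathbb{P}_\sigma(-\eta\Delta\mathbf{u})=\eta A_0\mathbf{u}+[\mathbb{P}_\sigma,\eta]\Delta\mathbf{u}$, the leading part gives $(\eta A_0\mathbf{u},A_0\mathbf{u})\ge\eta_0\|A_0\mathbf{u}\|^2$. The remaining commutator and first-order terms must be absorbed. I would do this by an $\varepsilon$-Young argument together with the interpolation inequality $\|\nabla\mathbf{u}\|_{L^6}\le C\|\mathbf{u}\|_{H^2}^{\theta}\|\mathbf{u}\|_{L^2}^{1-\theta}$ for the terms containing $\nabla\eta$. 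The commutator $[\mathbb{P}_\sigma,\eta]$ requires care: I would bound it via the Helmholtz pressure $\nabla\pi$, which solves a Neumann problem with data $\Div(\eta\Delta\mathbf{u})=\nabla\eta\cdot\Delta\mathbf{u}$. This gives $\|\nabla\pi\|_{L^2}\le C\|\nabla\eta\|_{L^{d+\epsilon}}\|\mathbf{u}\|_{H^2}$, which is absorbable provided the oscillation of $\eta(\varphi)$ is small. If it is not small, I would localize: cover $\Omega$ by small balls (or use $\eta(\varphi(t))$ close to its value at a frozen point, as in the later fixed-point setting). This is the point where the bare measurability of $\varphi$ must in practice be supplemented by the regularity of $\varphi$ that is available in the application. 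I expect this step to be the main difficulty, and the constant $C$ then depends on the viscosity bounds (including those of $\nabla\eta(\varphi)$) and on $\Omega$, as stated.

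\textbf{Step 3 (conclusion).} With Steps 1--2, Lions' theorem (in the form with G\aa rding inequality, after the substitution $\mathbf{v}=e^{k't}\tilde{\mathbf{v}}$) yields a unique $\mathbf{v}\in L^2(0,T;V)\cap W^1_2(0,T;V')$ with $\mathbf{v}(0)=\mathbf{v}_0$. It satisfies $\langle\partial_t\mathbf{v},\mathbf{w}\rangle+a(t;\mathbf{v},\mathbf{w})=\langle\mathbf{f},\mathbf{w}\rangle$ for all $\mathbf{w}\in V$, together with the stated estimate. Testing against divergence-free functions, and using de Rham's theorem to recover $\nabla\pi$, identifies $\mathbf{v}$ as the weak solution of \eqref{eq:stokes_sys}. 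Uniqueness follows from the energy identity $\tfrac12\tfrac{d}{dt}\|\mathbf{v}\|_H^2=\langle\partial_t\mathbf{v},\mathbf{v}\rangle$, valid in the triple, combined with the G\aa rding bound and Gronwall's lemma. For $T=\infty$ the constant stays uniform provided $k'=0$, which holds when the lower-order terms are absorbed entirely, using the Poincar\'e inequality on $V$.
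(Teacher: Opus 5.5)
There is a genuine gap in Steps 2--3. You work in the triple with pivot $L^2_\sigma(\Omega)$, namely $V=D(A_0)\hookrightarrow L^2_\sigma(\Omega)\hookrightarrow D(A_0)'$, because that is the triple with trace space $L^2_\sigma(\Omega)$. In this triple, Lions' theorem applied to your form solves $\partial_t\mathbf{v}+\mathcal{A}(t)\mathbf{v}=\mathbf{f}$, where $\langle\mathcal{A}(t)\mathbf{u},\mathbf{w}\rangle=(A(t)\mathbf{u},A_0\mathbf{w})_{L^2}+k(\mathbf{u},\mathbf{w})_{L^2}$ and $A(t):=\mathbb{P}_\sigma(-\Div[2\eta(\varphi(t))D\,\cdot\,])$. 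So $\mathcal{A}(t)=A_0A(t)+k$, a fourth-order operator, and the identification with the Stokes system in Step 3 is false. In this triple the Stokes operator is given by $(A(t)\mathbf{u},\mathbf{w})_{L^2}=(2\eta D\mathbf{u},D\mathbf{w})_{L^2}$, and that form is not coercive on $D(A_0)$.

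If instead you pair through $A_0$ (pivot $H^1_0\cap L^2_\sigma$ with inner product $(\mathbf{u},A_0\mathbf{w})_{L^2}$), your form does represent $A(t)$. But then $V'\cong L^2_\sigma(\Omega)$ and the trace space is $H^1_0\cap L^2_\sigma$. You would then be proving $L^2$-maximal regularity for $\mathbf{f}\in L^2(0,T;L^2_\sigma)$ and $\mathbf{v}_0\in H^1_0\cap L^2_\sigma$, which are not the lemma's hypotheses. Your sentence about conjugating by $(I+A_0)^{1/2}$ conflates these two triples. Independently, Step 2 needs $\nabla\eta(\varphi)$ together with smallness or localization, which bare measurability of $\varphi$ does not give, as you concede.

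This cannot be repaired. With $V=H^2(\Omega)^d\cap H^1_0(\Omega)^d\cap L^2_\sigma(\Omega)$ the statement is false already for $\eta\equiv1$. Take $A_0e_k=\mu_ke_k$ with $\|e_k\|_2=1$, and $\chi\in C_0^\infty(0,T)$. Then $\mathbf{v}=\chi e_k$, with the corresponding eigen-pressure, solves the system with $\mathbf{v}_0=0$ and $\mathbf{f}=(\chi'+\mu_k\chi)e_k$. Since $\|e_k\|_{V'}\simeq(1+\mu_k)^{-1}$, the data norm $\|\mathbf{f}\|_{L^2(0,T;V')}\lesssim\|\chi'\|_{L^2}+\|\chi\|_{L^2}$ stays bounded. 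Meanwhile $\|\mathbf{v}\|_{L^2(0,T;V)}\simeq(1+\mu_k)\|\chi\|_{L^2}\to\infty$, so the claimed estimate would gain four derivatives.

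The statement is a mis-transcription. In the cited result, and in the way Proposition \ref{prop:max_reg_stokes} uses it (it keeps $V$ and $V_2^2$ apart), $V=H^1_0(\Omega)^d\cap L^2_\sigma(\Omega)$. The paper gives no proof beyond the citation. The intended argument is exactly the one you discarded: Lions' theorem in $V\hookrightarrow L^2_\sigma\hookrightarrow V'$ with $a(t;\mathbf{u},\mathbf{w})=(2\eta(\varphi(t))D\mathbf{u},D\mathbf{w})_{L^2}$. This form is
\begin{itemize}
\item bounded by $\sup\eta$;
\item measurable in $t$ because $\varphi$ is;
\item coercive, since $2\|D\mathbf{u}\|_2^2=\|\nabla\mathbf{u}\|_2^2$ for $\mathbf{u}\in H^1_0\cap L^2_\sigma$, combined with Poincar\'e.
\end{itemize}
No G\aa rding shift is needed, so the constant is uniform up to $T=\infty$, and the pressure is recovered by de Rham. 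This is why mere measurability of $\varphi$ suffices, and your main obstacle never arises.
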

	\begin{proposition}[cf.\ {\cite[Proposition 4]{Abels2009}}] \label{prop:max_reg_stokes}
		Let $\varphi \in BUC([0,\infty); W_r^1(\Omega))$ with $r > d \ge 2$, let $0 < T \le \infty$, and let $2 \le p < \infty$. If $f \in L^p(0,T; L^2_\sigma(\Omega)) \cap L^2(0,T; V(\Omega)')$ and $\ve_0 \in (L^2_\sigma(\Omega), V(\Omega))_{1-\frac{1}{p}, p}$, then the weak solution $\ve$ satisfies $\ve \in W_p^1(0,T; L^2_\sigma(\Omega)) \cap L^p(0,T; V_2^2(\Omega))$ and 
		\begin{equation*}
			\|(\partial_t \ve, \nabla^2 \ve)\|_{L^p(0,T; L^2)} \le C_p \left( \|f\|_{L^p(0,T; L^2) \cap L^2(0,T; V_2')} + \|\ve_0\|_{(L^2_\sigma, V^2)_{1-\frac{1}{p}, p}} \right).
		\end{equation*}
	\end{proposition}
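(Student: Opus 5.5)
The statement is the maximal regularity result for the variable-viscosity Stokes system, cited from \cite[Proposition 4]{Abels2009}. I would prove it by a perturbation argument that freezes the viscosity coefficient in time, and then combine this with the weak solution theory of Lemma \ref{lem:weak_stokes}.

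\textbf{Step 1 (frozen coefficients).} Since $\varphi\in BUC([0,\infty);W^1_r(\Omega))$ and $r>d$, we have $W^1_r\hookrightarrow C^{0,\alpha}(\overline\Omega)$. Hence $\eta(\varphi(t))$ is uniformly Hölder in $x$ and uniformly continuous in $t$ with values in $W^1_r$. For fixed $t_0$, define the Stokes operator
\[
A_{t_0}\ve := -\mathbb{P}_\sigma\Div(2\eta(\varphi(t_0))D\ve), \qquad D(A_{t_0}) = V_2^2 := H^2\cap H^1_0\cap L^2_\sigma .
\]
Regularity theory for Stokes systems with $W^1_r$ coefficients, together with Korn's inequality, shows that $A_{t_0}$ is self-adjoint and positive on $L^2_\sigma$. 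Its domain is exactly $V_2^2$, with graph norm equivalent to the $H^2$ norm uniformly in $t_0$. A self-adjoint nonnegative operator on a Hilbert space has maximal $L^p$-regularity for $1<p<\infty$ (via bounded $H^\infty$-calculus / $\mathcal{R}$-boundedness, which is equivalent to boundedness in Hilbert spaces). Thus $\partial_t+A_{t_0}$ is an isomorphism from $W^1_p(0,T;L^2_\sigma)\cap L^p(0,T;V_2^2)$, with initial trace in $(L^2_\sigma,V^2_2)_{1-1/p,p}$, onto $L^p(0,T;L^2_\sigma)$. On finite intervals this holds after a shift $+\omega$.

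\textbf{Step 2 (time-dependent coefficients).} Next I would treat $A(t)$ for the full time-dependent coefficient. The difference satisfies
\[
\|(A(t)-A(s))u\|_{L^2}\le C\big(\|\varphi(t)-\varphi(s)\|_{L^\infty}\|u\|_{H^2}+\|\nabla(\varphi(t)-\varphi(s))\|_{L^r}\|\nabla u\|_{L^{2r/(r-2)}}\big).
\]
By uniform continuity of $\varphi$ in $W^1_r$, the right-hand side is at most $\epsilon\|u\|_{H^2}$ whenever $|t-s|<\delta$. The classical result on maximal regularity for nonautonomous problems with relatively continuous operators (Prüss–Schnaubelt/Amann type) then applies. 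Concretely, I split $[0,T]$ into intervals of length $\delta$, freeze the coefficient at the left endpoint, and absorb the error via a Neumann series on each piece. I then concatenate, using the trace embedding \eqref{con-em-th} to pass the initial datum to the next interval. This gives a solution of the stated class and the a priori bound with $\|f\|_{L^p(0,T;L^2)}$.

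\textbf{Step 3 (identification and $T=\infty$).} This step is the main obstacle. The strong solution is also a weak solution, so by the uniqueness in Lemma \ref{lem:weak_stokes} it coincides with $\ve$. For $T=\infty$, the constants from Step 2 grow with the number of subintervals, so they are not uniform. To remove this growth I would use the extra hypothesis $f\in L^2(0,\infty;V')$. Lemma \ref{lem:weak_stokes} gives a global energy bound $\|\ve\|_{L^2(0,\infty;V)}\le C(\|\ve_0\|_2+\|f\|_{L^2V'})$. I would then write the equation with the shift, $\partial_t\ve+(A(t)+\omega)\ve = f+\omega\ve$, and bound the term $\omega\ve$ in $L^p(0,\infty;L^2)$. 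For $p\ge2$ this follows by interpolating the $L^2(0,\infty;H^1)$ energy bound with the $L^\infty(0,\infty;L^2)$ bound. Because the shifted operators are uniformly invertible with $\epsilon$-small relative perturbation on each $\delta$-interval, the localized estimates sum with constants independent of the number of intervals. I expect checking the uniformity of this $T=\infty$ bound to be the delicate point. The final estimate includes both $\|f\|_{L^p L^2}$ and $\|f\|_{L^2V'}$ precisely because of this lower-order absorption.
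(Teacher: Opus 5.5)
The paper gives no proof of this proposition; it is quoted from \cite{Abels2009}, so there is no in-paper argument to compare with. On finite intervals your Steps 1 and 2 and the identification with the weak solution via uniqueness are sound. One small correction: the difference estimate must be written for $\eta(\varphi(t))-\eta(\varphi(s))$, which needs something like $\eta\in C^2$. You also correctly see why $p\ge 2$ and $f\in L^2(0,T;V')$ appear: they control $\|\mathbf{v}\|_{L^p(0,\infty;L^2)}$ by interpolating $L^\infty(0,\infty;L^2)$ with $L^2(0,\infty;H^1)$.

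The gap is in Step 3, which is the only part of the statement going beyond standard finite-interval theory. You assert that ``the localized estimates sum with constants independent of the number of intervals'', but the reason you give does not imply it. Set $E(I)=W^1_p(I;L^2_\sigma)\cap L^p(I;V^2_2)$ and $X_\gamma=(L^2_\sigma,V^2_2)_{1-1/p,p}$. If the localization is the restart scheme of Step 2, each restart uses
\begin{equation*}
\|\mathbf{v}\|_{E(I_k)}\le M\bigl(\|g\|_{L^p(I_k;L^2)}+\|\mathbf{v}(t_k)\|_{X_\gamma}\bigr),\qquad \|\mathbf{v}(t_{k+1})\|_{X_\gamma}\le C_{\mathrm{tr}}\|\mathbf{v}\|_{E(I_k)},
\end{equation*}
so the bound compounds like $(MC_{\mathrm{tr}})^k$. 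Uniform invertibility cannot repair this, since the unshifted $A(t)$ are already uniformly invertible by Korn and Poincar\'e. Bounding $\omega\mathbf{v}$ in $L^p(0,\infty;L^2)$ only handles a right-hand side term, not the propagation through the traces.

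What is missing is a localization that does not pass through initial data. Take a partition of unity $\sum_k\chi_k\equiv 1$ on $[0,\infty)$ with $\operatorname{supp}\chi_k\subset(t_k-\delta,t_k+\delta)$, bounded overlap and $|\chi_k'|\le C\delta^{-1}$. Then $w_k=\chi_k\mathbf{v}$ vanishes at $t=0$ for $k\ge 1$ and solves
\begin{equation*}
\partial_t w_k+A(t_k)w_k=\chi_k f+\chi_k'\mathbf{v}+\bigl(A(t_k)-A(t)\bigr)w_k .
\end{equation*}
The frozen maximal regularity estimate on $\mathbb{R}_+$ is uniform in $k$. Using it and absorbing the $\epsilon$-term gives
\begin{equation*}
\|w_k\|_{E(0,T)}\le C\bigl(\|\chi_k f\|_{L^p(0,T;L^2)}+\delta^{-1}\|\mathbf{v}\|_{L^p(\operatorname{supp}\chi_k\cap(0,T);L^2)}\bigr),
\end{equation*}
plus $\|\mathbf{v}_0\|_{X_\gamma}$ for $k=0$; here $w_k\in E(0,T)$ by Step 2. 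Summing $p$-th powers gives a $T$-independent bound in terms of $\|f\|_{L^p(L^2)}$, $\|\mathbf{v}_0\|_{X_\gamma}$ and $\|\mathbf{v}\|_{L^p(L^2)}$. Only then does your energy and interpolation bound close the estimate, and you can let $T\to\infty$.

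There are two alternatives. You can keep the restarts but choose $\omega$ so large relative to $\delta$ that $e^{-\omega\delta}$ beats the constant comparing the trace norms adapted to $A(t_k)$ and $A(t_{k+1})$; the frozen semigroups are contractive in their own adapted trace norms. Or you can quote the half-line theorem of Pr\"uss--Schnaubelt. Without one of these mechanisms, Step 3 does not establish the case $T=\infty$.
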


\begin{theorem} \label{thm:stokes_unified}
	Let $\Omega \subset \mathbb{R}^d$, $d=2,3$ be a bounded domain with $C^3$-boundary. We consider the generalized Stokes system: 
	\begin{equation} \label{eq:stokes_sys_unified}
		\begin{cases}
			\rho_0 \partial_t \ve - \mathrm{div}[2\eta(\varphi_0)D(\ve)] + \nabla \pi = f & \text{in } \Omega \times (0,T), \\
			\nabla \cdot \ve = 0 & \text{in } \Omega \times (0,T), \\
			\ve = \mathbf{0} & \text{on } \partial\Omega \times (0,T), \\
			\ve|_{t=0} = \ve_0 & \text{in } \Omega,
		\end{cases}
	\end{equation}
	where the viscosity satisfies $\eta(\varphi_0) \in W^{1,q}(\Omega)$ for some $q > d$ and $\eta(\varphi_0) \ge \eta_0 > 0$ on $\overline{\Omega}$. For any given $f \in L^p(0,T; L^2(\Omega)^d)$, $\ve_0 \in\mathbb{I}^p:= \left( L^2_\sigma(\Omega), H^2(\Omega)^d \cap H_0^1(\Omega)^d \cap L^2_\sigma(\Omega) \right)_{1-\frac{1}{p}, p}$ and $0 < T_0 < \infty$, the following hold:
	\begin{itemize}
	    \item[(i)] For constant density $\rho_0 \equiv 1$ and $2\leq p < \infty$: 
	    there exists a unique solution $(\mathbf{v}, \pi)$ to the Stokes system in the class $\mathbf{v} \in \mathbb{E}_T^p$, where we denote
\begin{equation*}
    \mathbb{E}_T^p := W^1_p(0,T; L^2_\sigma(\Omega)) \cap L^p(0,T; H^2(\Omega)^d \cap H^1_0(\Omega)^d \cap L^2_\sigma(\Omega)),
\end{equation*}
for all $0 < T < T_0$. Moreover, there exists a constant $C(T_0) > 0$, independent of $T \in (0, T_0]$, such that the solution satisfies the uniform estimate:
\begin{equation}
    \|\mathbf{v}\|_{\mathbb{E}_T^p} \le C(T_0) \left( \|f\|_{L^p(0,T; L^2(\Omega)^d)} + \|\mathbf{v}_0\|_{\mathbb{I}^p} \right).
\end{equation}
	    \item[(ii)] For variable density $\rho_0 = \rho(\varphi_0)$ and $p = 2$, there exists a unique solution \rev{$\ve \in \mathbb{E}_T^2$, where we denote}
    \begin{equation*}
        \rev{\mathbb{E}_T^2 := W_2^1(0, T; L^2_\sigma(\Omega)) \cap L^2(0, T; H^2(\Omega)^d\cap H^1_0(\Omega)^d),}
    \end{equation*}
    to the projected system $\mathbb{P}_\sigma( \rho_0 \partial_t \ve ) - \mathbb{P}_\sigma( \mathrm{div}[2\eta(\varphi_0)D(\ve)] ) = \mathbb{P}_\sigma f$,
    for all $0<T<T_0$. Moreover, there exists a constant $C(T_0) > 0$, independent of $T \in (0, T_0]$, such that the solution satisfies the uniform estimate:
    \begin{equation}
        \|\ve\|_{\mathbb{E}_T^2} \le C(T_0) \left( \|f\|_{L^2(0,T; L^2(\Omega)^d)} + \|\ve_0\|_{\mathbb{I}^2} \right).
    \end{equation}
	\end{itemize}
\end{theorem}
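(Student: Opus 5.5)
Part (i) will follow from Lemma~\ref{lem:weak_stokes} and Proposition~\ref{prop:max_reg_stokes}; the only new point is that the constant must not depend on $T\in(0,T_0]$. The viscosity $\eta(\varphi_0)$ is independent of time, so the time-constant extension $\tilde\varphi(t):=\varphi_0$ lies in $BUC([0,\infty);W^1_q(\Omega))$ with $q>d$. Proposition~\ref{prop:max_reg_stokes} applies once the data meet its hypotheses. We take $f\in L^p(0,T;L^2(\Omega)^d)$. Since the pressure gradient absorbs the gradient part, we replace $f$ by $\mathbb{P}_\sigma f$, which has no larger norm. We then extend $\mathbb{P}_\sigma f$ by zero to $(0,T_0)$. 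On the bounded interval $(0,T_0)$ Hölder's inequality gives $L^p\subset L^2$ for $p\ge2$, with a constant depending only on $T_0$. In addition, $L^2_\sigma\hookrightarrow V'$. Together these place the extended datum in $L^p(0,T_0;L^2_\sigma)\cap L^2(0,T_0;V')$.

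We solve on $(0,T_0)$ and restrict the solution to $(0,T)$. The resulting estimate holds with $C(T_0)$ and involves only $\|f\|_{L^p(0,T;L^2)}+\|\ve_0\|_{\mathbb{I}^p}$, so it is uniform in $T$. The zeroth-order part of the $\mathbb{E}^p_T$ norm is controlled by $\|\ve_0\|$ plus $T_0^{1}\|\dt\ve\|_{L^p}$. Uniqueness follows from the uniqueness of weak solutions in Lemma~\ref{lem:weak_stokes}, since $\mathbb{E}^p_T$ solutions are weak solutions. The pressure is recovered from de Rham's theorem together with the $H^2$ regularity of $\ve$.

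For part (ii) we work with $p=2$ and the projected operator. Since $\rho_0=\rho(\varphi_0)$ is affine in $\varphi_0$ and $|\varphi_0|\le1$, we have $\rho_0\ge\min\{\rho_1,\rho_2\}>0$ and $\rho_0\in W^1_q\cap L^\infty$. The plan has three steps.
\begin{enumerate}
\item \textbf{A priori estimates by energy methods.} Testing with $\ve$ gives
\[
\tfrac12\tfrac{d}{dt}\int\rho_0|\ve|^2+2\int\eta_0|D\ve|^2\le\ldots,
\]
which yields the $L^\infty(L^2)\cap L^2(H^1)$ bound. Testing with $\dt\ve\in L^2_\sigma$ gives
\[
\int\rho_0|\dt\ve|^2+\tfrac{d}{dt}\int\eta(\varphi_0)|D\ve|^2=(f,\dt\ve),
\]
and hence $\dt\ve\in L^2(L^2)$ and $\ve\in L^\infty(H^1)$, using $\ve_0\in\mathbb{I}^2=H^1_0\cap L^2_\sigma$.
\item \textbf{$H^2$ regularity.} For a.e.\ $t$, we regard $-\mathbb{P}_\sigma\Div(2\eta(\varphi_0)D\ve)=\mathbb{P}_\sigma(f-\rho_0\dt\ve)\in L^2$ as a stationary variable-viscosity Stokes problem. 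The elliptic regularity underlying Proposition~\ref{prop:max_reg_stokes} (the domain of the variable-viscosity Stokes operator is $V$ when $\eta\in W^1_q$, $q>d$) then gives $\|\ve\|_{H^2}\le C\|f-\rho_0\dt\ve\|_{L^2}$.
\item \textbf{Existence.} Existence follows by a Galerkin scheme or by the method of continuity in $\theta\in[0,1]$ from $\rho=1$ (part (i)) to $\rho_0$, using the uniform a priori bound. The operator $\mathbb{P}_\sigma\rho_0$ is uniformly positive definite on $L^2_\sigma$, which is the weighted inner product used in step 1.
\end{enumerate}
Independence of $T$ again follows from the fact that the Gronwall-type constants depend only on $T_0$.

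The main obstacle is in part (ii). The term $\mathbb{P}_\sigma(\rho_0\dt\ve)$ is not $\rho_0\mathbb{P}_\sigma\dt\ve$, so the time derivative is not in standard Cauchy-problem form, and the nonsymmetric commutator must be handled. I would handle it by recasting the equation in the weighted Hilbert space $(L^2_\sigma,(\rho_0\cdot,\cdot))$. The operator $A\ve:=-(\mathbb{P}_\sigma\rho_0|_{L^2_\sigma})^{-1}\mathbb{P}_\sigma\Div(2\eta D\ve)$ is self-adjoint and nonnegative there, coming from the symmetric form $\int2\eta D\ve:D\mathbf{w}$. Maximal $L^2$-regularity for self-adjoint operators on Hilbert spaces then gives the result directly, which justifies the restriction to $p=2$.
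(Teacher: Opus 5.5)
Your proposal is correct and takes the same route as the paper. For (i), the paper also embeds $f\in L^p(0,T_0;L^2)\hookrightarrow L^2(0,T_0;V')$, applies Lemma~\ref{lem:weak_stokes} and Proposition~\ref{prop:max_reg_stokes} with the time-independent coefficient $\eta(\varphi_0)$, obtains the $T$-uniform constant by zero-extending $f$ to $(0,T_0)$ and restricting, and recovers $\nabla\pi$ from the equation. For (ii), the paper only cites the weighted-space framework of Abels--Weber, which your final paragraph reconstructs: self-adjointness of $(\mathbb{P}_\sigma\rho_0|_{L^2_\sigma})^{-1}A_0$ in the $\rho_0$-weighted inner product, followed by maximal $L^2$-regularity. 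Your energy/continuity steps are therefore an optional hands-on supplement rather than a needed part of the argument.
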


\begin{proof}
First let $\rho_0=1$ and set
$$
  A_0\ve:=-\mathbb P\Div\bigl(2\eta(\varphi_0)D(\ve)\bigr),
  \qquad \ve\in V.
$$
 For any finite time $0 < T_0 < \infty$, since $2\le p < \infty$,
$$
  f\in L^p(0,T_0;L^2(\Omega)^d)
  \hookrightarrow L^2(0,T_0;L^2(\Omega)^d)
  \hookrightarrow L^2(0,T_0; V').
$$
According to Lemma \ref{lem:weak_stokes}, the projected equation
\begin{equation}
\label{eq:stokes_projected}
  \partial_t\ve+A_0\ve=\mathbb Pf,
\end{equation}
admits a unique weak solution $\ve \in L^2(0,T_0; V(\Omega))$. 
Since $\eta(\varphi_0) \in W^{1,q}(\Omega)$ ($q>d$) is time-independent, applying Proposition~\ref{prop:max_reg_stokes} to \eqref{eq:stokes_projected} directly yields $\mathbf{v} \in \mathbb{E}_{T_0}^p$. Following the same zero-extension argument for the source term $f$ as in {\cite[ Lemma 7]{AbelsWeber2021} }, we restrict the unique existence of the solution back to the finite interval $(0, T)$, which guarantees that the associated solution mapping satisfies a uniform estimate independent of $T$.  

Furthermore, because
$$\nabla \pi=f-\partial_t\ve+\operatorname{div}\bigl(2\eta(\varphi_0)D(\ve)\bigr).
$$
and $f, \partial_t \mathbf{v}, \Div(2\eta(\varphi_0)D(\mathbf{v})) \in L^p(0,T; L^2(\Omega)^d)$, we naturally have $\nabla \pi \in L^p(0,T; L^2(\Omega)^d)$.
    
For the case in (ii), it follow directly from the weighted space framework established in {\cite[Section 5]{AbelsWeber2021} }
\end{proof}

\section{The local well-posedness of Model C}\label{sec:model_c}
Our overarching strategy is to employ the Banach fixed-point theorem to establish the existence and uniqueness of solutions. In this process, we need to apply the Banach inverse operator theorem to formulate the fixed-point equation.
\subsection{Proof of Theorem \ref{thm:local_well_posedness}}
Firstly, we reformulate system \eqref{main system} into its linear and nonlinear components, i.e., the nonlinear system \eqref{main system} is equivalent to  $$\mathcal L (\ve, \varphi,q) = \mathcal{F}(\ve, \varphi,q),$$
where $\mathcal L$  \rev{denotes a linear part of the system and $\mathcal{F}$ represents the nonlinear remainder. }

To this end, we simplify the momentum equation. We use 
\begin{equation*}
	\nabla \cdot \left[ \tilde{\sigma}(q) (\delta(\varphi, \nabla\varphi)I - \nabla\varphi \otimes \nabla\varphi) \right] = \nabla[ \tilde{\sigma}(q)  \delta(\varphi, \nabla\varphi)I] - \tilde{\sigma}(q) \nabla\cdot(\nabla\varphi \otimes \nabla\varphi )- \nabla \tilde{\sigma}(q) \cdot (\nabla\varphi \otimes \nabla\varphi ),
\end{equation*}
and
\begin{align*}
	 - \nabla \cdot (\nabla\varphi \otimes \nabla\varphi) \notag =  - \frac{1}{2}\nabla|\nabla\varphi|^2 + \Delta\varphi \nabla\varphi \notag .
\end{align*}
 Expanding the material derivative on the left-hand side of the Navier-Stokes equation, we obtain a simplified version of the momentum equation that
\begin{equation} \label{eq:ns_simplified}
\partial_t \mathbf{v} + \mathbf{v} \cdot \nabla \mathbf{v} + \nabla \Pi - \nabla \cdot (2\eta(\varphi)D(\mathbf{v})) = - \tilde{\sigma}(q) \Delta\varphi \nabla\varphi,
\end{equation}
where $\Pi=\pi-\tilde{\sigma}(q)  \delta(\varphi, \nabla\varphi)+\frac{1}{2}|\nabla\varphi|^2=\pi-W(\varphi).$
To derive the linearized equations around a given reference state $(\ve_0, q_0, \varphi_0)$, we first consider the Fréchet derivative of the principal \rev{spatial part by aking the variation }with respect to a perturbation parameter $\varepsilon$ in the direction of $(\ve,q, \varphi)$. The highest order terms with respect to $t$ and $x$ are $\partial_t \mathbf{v}$, $\nabla \cdot (2\eta(\varphi)D(\mathbf{v}))$ and $\nabla \Pi$. 
Hence, \rev{the principal part o the linearizations} are respectively given by $\partial_t \mathbf{v}$, $\nabla \cdot (2\eta(\varphi_0)D(\mathbf{v}))$ and $\nabla \Pi$. This we can see in detail in \cite{AbelsWeber2021}.
For the second equation of \eqref{main system}, we linearize the terms $\Delta\left[\nabla \cdot ( \tilde{\sigma}(q) \nabla \varphi) -  \tilde{\sigma}(q) W'(\varphi)\right]$ around a given reference state $(\ve_0, q_0, \varphi_0)$ and obtain:
\begin{equation*}
	\begin{aligned}
		&- \frac{d}{d\varepsilon} \Delta \big[\nabla \cdot (\tilde{\sigma}(q_0 + \varepsilon q) \nabla (\varphi_0 + \varepsilon \varphi))+ \tilde{\sigma}(q_0 + \varepsilon q)W'(\varphi_0 + \varepsilon \varphi) \big] \bigg|_{\varepsilon=0} \\
		&\quad= - \Delta \nabla \cdot (\tilde{\sigma}(q_0) \nabla \varphi) - \Delta \nabla \cdot (\tilde{\sigma}'(q_0) q \nabla \varphi_0) + \Delta (\tilde{\sigma}'(q_0)q W'(\varphi_0 )) + \Delta (\tilde{\sigma}(q_0) W''(\varphi_0 )\varphi).
	\end{aligned}
\end{equation*}
Note that the term $\Delta(\tilde{\sigma}(q_0)W''(\varphi_0)\varphi)$ represents a lower-order spatial contribution compared to the principal fourth-order \rev{operators with respect to $\varphi$.} This term is consequently omitted in our final linearized equation to simplify the resulting linear system.

Furthermore, we linearize the reaction terms within the chemical potential. For the summation involving the Legendre-transformed energy functions, applying the chain rule yields:
\begin{align*}
	&- \frac{d}{d\varepsilon} \Delta \sum_{i=1,2} \xi'_i(\varphi_0 + \varepsilon \varphi) \Big[ G(c^{(i)}(q_0 + \varepsilon q)) - (q_0 + \varepsilon q)c^{(i)}(q_0 + \varepsilon q) \Big] \bigg|_{\varepsilon=0} \\
	&\quad= -\Delta \sum_{i=1,2} \xi''_i(\varphi_0)\varphi \Big[ G(c^{(i)}(q_0)) - q_0 c^{(i)}(q_0) \Big] \\
	&\qquad- \Delta \sum_{i=1,2} \xi'_i(\varphi_0) \Big[ G'(c^{(i)}(q_0))(c^{(i)})'(q_0)q - c^{(i)}(q_0)q - q_0 (c^{(i)})'(q_0)q \Big].
\end{align*}
By the fundamental property of the convex conjugate, we have $G'(c^{(i)}(q_0)) = q_0$. Therefore, the terms involving $(c^{(i)})'(q_0)$ exactly cancel out. Omitting the lower-order term proportional to $\varphi$, the principal linear contribution from the reaction term elegantly reduces to:
\begin{equation*}
	\Delta \sum_{i=1,2} \xi'_i(\varphi_0) c^{(i)}(q_0) q.
\end{equation*}
Following an analogous procedure for the temporal evolution equation, we expand the nonlinearly coupled term. Applying the chain rule and product rule yields the following sequence of identities:
\begin{align*}
	&\frac{d}{d\varepsilon} \partial_t \left[ (q_0 + \varepsilon q) + \sum_{i=1,2} \xi_i(\varphi_0 + \varepsilon \varphi) c^{(i)}(q_0 + \varepsilon q) + \delta(\varphi_0 + \varepsilon \varphi, \nabla \varphi_0 + \varepsilon \nabla \varphi) c_\Gamma(q_0 + \varepsilon q) \right] \bigg|_{\varepsilon=0} \\
	&\quad= \partial_t \Bigg[ q + \sum_{i=1,2} \Big( \xi_i'(\varphi_0) c^{(i)}(q_0) \varphi + \xi_i(\varphi_0)(c^{(i)})'(q_0) q \Big)+ c_\Gamma(q_0)\nabla \varphi_0\cdot\nabla \varphi \\&\qquad\qquad + c_\Gamma(q_0)W'(\varphi_0) \varphi+\delta(\varphi_0, \nabla \varphi_0) c_\Gamma'(q_0) q \Bigg] \\
	&\quad= \alpha(q_0, \varphi_0, \nabla \varphi_0) \partial_t q + \sum_{i=1,2} \xi_i'(\varphi_0) c^{(i)}(q_0) \partial_t \varphi+ c_\Gamma(q_0) W'(\varphi_0) \partial_t \varphi + c_\Gamma(q_0) \nabla \varphi_0 \cdot \nabla \partial_t \varphi,
\end{align*}
where we have identified the effective temporal coefficient as 
\begin{equation*}
	\alpha(q_0, \varphi_0, \nabla \varphi_0) := 1 + \sum_{i=1,2} \xi_i(\varphi_0)(c^{(i)})'(q_0) + \delta(\varphi_0, \nabla \varphi_0) c_\Gamma'(q_0).
\end{equation*}

For the spatial diffusion operator in the $q$-equation, we define:
\begin{equation*}
	\tilde{m}(q, \varphi, \nabla\varphi) := \sum_{i=1,2} M^{(i)}_{c}(c^{(i)}(q)) \xi_i(\varphi) + M_\Gamma(c_\Gamma(q)) \delta(\varphi, \nabla \varphi).
\end{equation*}
Taking the Fréchet derivative of the diffusion term gives:
\begin{align*}
	&\frac{d}{d\varepsilon} \nabla \cdot \Big[ \tilde{m}(q_0 + \varepsilon q, \varphi_0 + \varepsilon \varphi, \nabla(\varphi_0 + \varepsilon \varphi)) \nabla (q_0 + \varepsilon q) \Big] \bigg|_{\varepsilon=0} \\
	&\quad= \nabla \cdot \Big[ \tilde{m}( q_0, \varphi_0,\nabla\varphi_0) \nabla q \Big] \\
	&\qquad+ \nabla \cdot \Bigg[ \Bigg( \sum_{i=1,2} \Big( (M_c^{(i)})'(c^{(i)}(q_0)) (c^{(i)})'(q_0) q \xi_i(\varphi_0) + M_c^{(i)}(c^{(i)}(q_0)) \xi_i'(\varphi_0) \varphi \Big) \\
	&\qquad\qquad\qquad + M_\Gamma'(c_\Gamma(q_0)) c_\Gamma'(q_0) q \delta(\varphi_0, \nabla\varphi_0) + M_\Gamma(c_\Gamma(q_0)) \Big( \nabla\varphi_0 \cdot \nabla\varphi + W'(\varphi_0)\varphi \Big) \Bigg) \nabla q_0 \Bigg].
\end{align*}
Since the terms containing $\nabla q_0$ \rev{are of lower oder} compared to the principal second-order diffusion operator $\nabla \cdot (\tilde{m}(q_0, \varphi_0, \nabla \varphi_0) \nabla q)$, \rev{they are omitted in the leading-order linear system. }

Finally, by summarizing the linearized contributions from the momentum, phase-field, and surfactant equations around the reference state $(\mathbf{v}_0, \varphi_0, q_0)$, we arrive at the complete linearized system. Rearranging \rev{the principal part of the linear} operators to the left-hand side and isolating all nonlinear residues and\rev{ lower-order terms on the right-hand side, the system is reformulated as}
\begin{equation} \label{eq:full_linearized_system}
	\left\{  
	\begin{aligned}
		& \partial_t \mathbf{v} - \nabla \cdot (2\eta(\varphi_0)D(\mathbf{v}))+ \nabla \Pi= \rev{\mathcal{F}_1(\mathbf{v}, \varphi, q)}, \\
		&\partial_t \varphi + \Div(\mathbf{v} \varphi_0) + \Delta \nabla \cdot \Big[\tilde{\sigma}(q_0) \nabla \varphi - c_\Gamma(q_0) q \nabla \varphi_0\Big] \\
        &\qquad\rev{- \Delta \bigg[ \tilde{\sigma}'(q_0)q W'(\varphi_0) + \sum_{i=1,2} \xi'_i(\varphi_0) c^{(i)}(q_0) q \bigg]} = \rev{\mathcal{F}_2(\mathbf{v}, \varphi, q)}, \\
		&\alpha(q_0, \varphi_0, \nabla \varphi_0)\partial_t q + c_\Gamma(q_0) \nabla \varphi_0 \cdot \nabla \partial_t \varphi - \nabla \cdot (\tilde{m}(q_0, \varphi_0, \nabla \varphi_0) \nabla q) = \rev{\mathcal{F}_3(\mathbf{v}, \varphi, q)},
	\end{aligned}
	\right.
\end{equation}
subject to the divergence-free constraint $\nabla \cdot \mathbf{v} = 0$. 
where
\begin{align*}
	\mathcal F_1(\ve, \varphi, q) &=  - \Div\left(2\eta(\varphi_0) D\ve\right) + \Div\left(2\eta(\varphi) D\ve\right)+  \tilde{\sigma}(q)\Delta\varphi \nabla\varphi - \ve\cdot \nabla \ve,\\
	\mathcal{F}_2(\ve, \varphi, q)  &= - \Delta \nabla \cdot (\tilde{\sigma}(q) \nabla \varphi) + \Delta \nabla \cdot \Big[\tilde{\sigma}(q_0) \nabla \varphi - c_\Gamma(q_0) q \nabla \varphi_0\Big] + \Delta (\tilde{\sigma}(q) W'(\varphi)) - \Delta (\tilde{\sigma}'(q_0) q W'(\varphi_0)) \\
	&\quad + \Delta \sum_{i=1,2} \xi'_i(\varphi) \big( G(c^{(i)}(q)) - q c^{(i)}(q) \big) + \Delta \sum_{i=1,2} \xi'_i(\varphi_0) c^{(i)}(q_0) q+\Div(\mathbf{v} \varphi_0)-\Div(\mathbf{v} \varphi), \\
	\mathcal{F}_3(\ve, \varphi, q)  &= - \alpha(q, \varphi, \nabla \varphi) \partial_t q + \alpha(q_0, \varphi_0, \nabla \varphi_0) \partial_t q  + \Div \big[ \tilde{m}(\varphi_0, q_0, \nabla \varphi_0) \nabla q \big] -\Div \big[ \tilde{m}(\varphi, q, \nabla \varphi) \nabla q \big] \\
	&\quad + \big( c_\Gamma(q_0) \nabla \varphi_0 - c_\Gamma(q) \nabla \varphi \big) \cdot \partial_t \nabla \varphi - W'(\varphi) c_\Gamma(q) \partial_t \varphi - \mathbf{v} \cdot \nabla \beta(q, \varphi, \nabla \varphi),
\end{align*}
where $\beta(q, \varphi, \nabla \varphi)=\xi_1(\varphi) c^{(1)}(q) + \xi_2(\varphi) c^{(2)}(q) + \delta(\varphi, \nabla \varphi) c_   \Gamma(q)$.

To establish the existence and uniqueness of strong solutions for our original nonlinear system, we reformulate the problem as an abstract fixed-point equation:
\begin{equation}
	(\ve,\varphi, q) = \mathcal{L}^{-1}(\mathcal{F}(\ve,\varphi, q)).
\end{equation}
The applicability of the Banach Contraction Mapping Theorem hinges on two fundamental properties of the operators involved: the existence and continuity of the inverse linear operator $\mathcal{L}^{-1}$, and the local Lipschitz continuity of the nonlinear mapping $\mathcal{F}$ \rev{between suitable function spaces} within the corresponding functional topologies. To this end, we set the fundamental Banach spaces $Z_T=Z_T^1\times Z_T^2\times Z_T^3$ and the target spaces $Y_T=Y_T^1\times Y_T^2\times Y_T^3$ as:$$\begin{aligned} 	Z_T^1 &:= W_p^1(0,T; L^2(\Omega)) \cap L^p(0,T; H^2(\Omega)^d \cap H_0^1(\Omega)^d),\\ 	Z_T^2 &:= W_p^1(0, T; H^1(\Omega)) \cap L^p(0, T; H_N^3(\Omega)) \cap W_p^2(0, T; H_N^{-3}(\Omega)), \\ 	Z_T^3 &:= W_p^1(0, T; L^2(\Omega))\cap L^p(0, T; H_N^2(\Omega)), \\ 	Y_T^1 &:= L^p(0,T; L^2(\Omega)),\\ 	Y_T^2 &:= W_p^1(0, T; H_N^{-3}(\Omega)) \cap L^p(0, T; H_{(0)}^{-1}(\Omega)), \\ 	Y_T^3 &:= L^p(0, T; L^2(\Omega)) \end{aligned}$$
where, as introduced in Theorem \ref{thm:local_well_posedness}, $H_N^k(\Omega) := \{u \in H^k(\Omega) \mid \partial_n u = 0 \text{ on } \partial\Omega\}$ for $k \ge 2$, and $H_N^{-k}(\Omega) := (H_N^k(\Omega))'$ denotes its topological dual space.
By virtue of the maximal $L^p$-regularity framework and the trace theory of real interpolation spaces, the initial values must belong to the respective trace spaces of $Z_T$ at $t=0$. Specifically, we require:
\begin{equation*}
\begin{aligned} 	\ve_0 &\in \gamma_0(Z_T^1) = \left( L_\sigma^2(\Omega), H^2(\Omega)^d \cap H_0^1(\Omega)^d \cap L_\sigma^2(\Omega) \right)_{1-\frac{1}{p}, p}={}_0 B_{2,p}^{2-\frac{2}{p}}(\Omega)\cap L_\sigma^2(\Omega),\\ 	\varphi_0 &\in \gamma_0(Z_T^2) = (H^1(\Omega), H_N^3(\Omega))_{1-\frac{1}{p}, p} = {}_NB_{2,p}^{3-\frac{2}{p}}(\Omega), \\ 	q_0 &\in \gamma_0(Z_T^3) = (L^2(\Omega), H_N^2(\Omega))_{1-\frac{1}{p}, p} ={}_NB_{2,p}^{2-\frac{2}{p}}(\Omega),
\end{aligned}
\end{equation*}
\rev{where ${}_0B_{2,p}^{2-\frac{2}{p}}(\Omega)$ is the closed} subspace equipped with the homogeneous Dirichlet boundary condition, defined by
\begin{equation*}
    {}_0B_{2,p}^{2-\frac{2}{p}}(\Omega) := \left\{ u \in B_{2,p}^{2-\frac{2}{p}}(\Omega) \;\middle|\; u = 0 \text{ on } \partial\Omega \right\},
\end{equation*}
and the prefix subscript $N$ denotes the incorporation of the homogeneous Neumann boundary condition. For the relevant regularity indices $s$, it is defined as
\begin{equation*}
    {}_NB_{2,p}^{s}(\Omega) := \left\{ u \in B_{2,p}^{s}(\Omega) \;\middle|\; \partial_n u = 0 \text{ on } \partial\Omega \right\}.
\end{equation*}
The existence of the Dirichlet trace requires the index to exceed $1/2$, while the normal derivative \rev{trace requires it to strictly exceed $3/2$.} Since $p > 4$, all initial compatibility conditions $\mathbf{v}_0 = 0$, $\partial_n \varphi_0 = 0$, and $\partial_n q_0 = 0$ on $\partial\Omega$ are well-defined.
We equip $Z_T^1$, $Z_T^2$, and $Z_T^3$ with the norms $\Vert{}\cdot\Vert{}_{Z_T^1}'$, $\Vert{}\cdot\Vert{}_{Z_T^2}'$, and $\Vert{}\cdot\Vert{}_{Z_T^3}'$ defined by:$$\begin{aligned} 	\Vert{}\ve\Vert{}_{Z_T^1}' &:= \Vert{}\ve\Vert{}_{Z_T^1} + \Vert{}\ve(0)\Vert{}_{{}_0 B_{2,p}^{2-\frac{2}{p}}(\Omega)\cap L_\sigma^2(\Omega)}, \\ 	\Vert{}\varphi\Vert{}_{Z_T^2}' &:= \Vert{}\varphi\Vert{}_{Z_T^2} + \Vert{}\varphi(0)\Vert{}_{{}_NB_{2,p}^{3-\frac{2}{p}}(\Omega)}, \\ 	\Vert{}q\Vert{}_{Z_T^3}' &:= \Vert{}q\Vert{}_{Z_T^3} + \Vert{}q(0)\Vert{}_{{}_NB_{2,p}^{2-\frac{2}{p}}(\Omega)}. \end{aligned}$$
We use these norms since they guarantee that for all embeddings we will study later the embedding constant $C$ can be bounded as $T\rightarrow 0$. The last preparation, before we can start with the existence proof, is the definition of the function spaces $X_T := X_T^1 \times X_T^2 \times X_T^3$ by\rev{
$$\begin{aligned} 	X_T^1 = \left\{ \ve \in Z_T^1 \mathrel{\Big\vert{}} \ve\vert{}_{t=0} = \ve_0 \right\}, \quad 	X_T^2 := \left\{ \varphi \in Z_T^2 \mathrel{\Big\vert{}} \varphi\vert{}_{t=0} = \varphi_0 \right\}, \quad X_T^3 := \left\{ q \in Z_T^3 \mathrel{\Big\vert{}} q\vert{}_{t=0} = q_0 \right\}. \end{aligned}$$}We note that $X_T$ is not a vector space due to the initial conditions. It is only an affine linear subspace of $Z_T := Z_T^1 \times Z_T^2 \times Z_T^3$. Having established the rigorous functional framework and the compatibility of the initial data, we are now positioned to verify the two fundamental propositions required for the application of the Banach contraction mapping principle.

\begin{proposition}\label{F}
	Suppose the assumptions of Theorem \ref{thm:local_well_posedness} hold true. Then, \rev{for any radius $R>0$,} there exists a constant $C(T, R) > 0$ such that the nonlinear mapping satisfies the local Lipschitz condition:
	\[
	\|\mathcal{F}(\ve_1,\varphi_1, q_1) - \mathcal{F}(\ve_2,\varphi_2, q_2)\|_{Y_T} \leq C(T, R)\|(\ve_1-\ve_2,\varphi_1 - \varphi_2, q_1 - q_2)\|_{X_T}
	\]
	for all $(\ve_i,\varphi_i, q_i) \in \overline{B_R^{X_T}(0)}$, $i=1,2$. Crucially, the Lipschitz constant exhibits the decay property $C(T, R) \to 0$ as $T \to 0$.
\end{proposition}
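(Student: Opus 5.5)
We prove the local Lipschitz estimate component by component, $\mathcal F=(\mathcal F_1,\mathcal F_2,\mathcal F_3)$, in the norms of $Y_T^1,Y_T^2,Y_T^3$. The first step is a list of uniform-in-$T$ embeddings for elements of $\overline{B_R^{X_T}(0)}$, obtained from \eqref{con-em-th} together with Lemma \ref{extension}; because the norms $\|\cdot\|'_{Z_T^j}$ include the initial value, the constants do not blow up as $T\to0$. Since $p>4$:
\begin{itemize}
\item[(a)] $\varphi\in BUC([0,T];B^{3-2/p}_{2,p})\hookrightarrow BUC([0,T];W^{1}_\infty)$ and $\nabla^2\varphi\in BUC(L^r)$ for some $r>3$ when $d\le3$, since $3-2/p>5/2$.
\item[(b)] $q\in BUC([0,T];B^{2-2/p}_{2,p})\hookrightarrow BUC(L^\infty)\cap BUC(W^1_{r})$ for some $r>2$.
\item[(c)] $\ve\in BUC(B^{2-2/p}_{2,p})\hookrightarrow BUC(L^\infty\cap W^1_r)$.
\end{itemize}
In particular $\|\varphi\|_{L^\infty}$, $\|\nabla\varphi\|_{L^\infty}$ and $\|q\|_{L^\infty}$ are bounded by $C(R)$. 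This places all coefficients $\eta(\varphi)$, $\tilde\sigma(q)$, $c_\Gamma(q)$, $\xi_i(\varphi)$, $\alpha$, $\tilde m$ in the regime of Lemma \ref{lem:Runst_Sickel}, which yields local Lipschitz bounds in $H^s$ for $s<3$ (on account of $C^3$/$C^4$ regularity).

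The mechanism for smallness rests on two facts. First, terms of the form $a(\cdot)-a(\cdot_0)$ multiplying a top-order quantity are small in $L^\infty$ uniformly on $[0,T]$. This follows because $u-u_0$ vanishes at $t=0$, so by Lemma \ref{XYX} (interpolating $BUC$ trace-space bounds with $C^{0,1-1/p}([0,T];X_0)$ from $W^1_p$) we get $\|u-u_0\|_{C([0,T];Y)}\le C(R)T^{\theta}$ in an intermediate space $Y$ that still embeds into $W^1_\infty$ for $\varphi$ and into $L^\infty$ for $q$. Second, lower-order terms are estimated in $L^\infty(0,T;\cdot)$ and then converted to $L^p(0,T)$, which gains the factor $T^{1/p}$.

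Each difference $\mathcal F(z_1)-\mathcal F(z_2)$ is written telescopically. For $\mathcal F_1$, we have $\Div((\eta(\varphi_1)-\eta(\varphi_0))D(\ve_1-\ve_2))$, which is bounded by $T^\theta$ times $\|\ve_1-\ve_2\|_{L^p H^2}$, together with the term $\Div((\eta(\varphi_1)-\eta(\varphi_2))D\ve_2)$. The latter is handled using $\|\varphi_1-\varphi_2\|_{C([0,T];W^1_\infty\cap W^2_3)}$, which is small when the initial data agree; since both differences vanish at $t=0$, this is again $\le CT^\theta\|\cdot\|_{Z_T}$. The terms $\tilde\sigma(q)\Delta\varphi\nabla\varphi$ and $\ve\cdot\nabla\ve$ are lower order and gain $T^{1/p}$.

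For $\mathcal F_3$ the structure is the same. The principal pieces are $(\alpha(q,\varphi,\nabla\varphi)-\alpha_0)\partial_tq$, $\Div[(\tilde m-\tilde m_0)\nabla q]$, and the mixed term $(c_\Gamma(q_0)\nabla\varphi_0-c_\Gamma(q)\nabla\varphi)\cdot\partial_t\nabla\varphi$. Here $\partial_t\nabla\varphi\in L^p(0,T;L^2)$ because $\varphi\in W^1_p(0,T;H^1)$, and the prefactor is small in $C([0,T];L^\infty)$. The remaining terms $W'(\varphi)c_\Gamma(q)\partial_t\varphi$ and $\ve\cdot\nabla\beta$ are controlled by Hölder's inequality together with (a)--(c); for $\nabla\beta$ we need $\nabla^2\varphi\in L^r$.

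For $\mathcal F_2$ there is an additional difficulty. It must be estimated in $Y_T^2=W^1_p(0,T;H_N^{-3})\cap L^p(0,T;H^{-1}_{(0)})$. In the $L^p H^{-1}$ part, $\Delta\nabla\cdot(\cdot)$ costs three derivatives, so we need $(\tilde\sigma(q)-\tilde\sigma(q_0))\nabla\varphi$ and the linearization remainder of $\tilde\sigma(q)\nabla\varphi$ in $L^p(0,T;H^2)$. This can be done with $\varphi\in L^pH^3$ and $q\in L^pH^2$, using that $H^2$ is an algebra for $d\le3$ and that the coefficient differences are small in $C([0,T];H^{2-2/p-\epsilon})$ by Lemma \ref{XYX}. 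The Taylor remainder $\tilde\sigma(q)-\tilde\sigma(q_0)-\tilde\sigma'(q_0)(q-q_0)$ is quadratic, and hence also small. The time-derivative part in $W^1_pH_N^{-3}$ is the step we expect to be the main obstacle. Differentiating in $t$ gives $\Delta\nabla\cdot\partial_t[\ldots]$ in $H_N^{-3}$, which reduces to bounding $\partial_t[(\tilde\sigma(q)-\tilde\sigma(q_0))\nabla\varphi]$ in $L^p(0,T;L^2)$. This product involves $\partial_tq\,\nabla\varphi$ and $(\tilde\sigma(q)-\tilde\sigma(q_0))\partial_t\nabla\varphi$, both of which are controlled by $\partial_tq\in L^pL^2$, $\partial_t\varphi\in L^pH^1$ and the $L^\infty$ bounds above. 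The lower-order $\Delta$-terms are handled in the same way, with the term $\Div(\ve(\varphi_0-\varphi))$ treated similarly. Smallness in the term $\partial_tq\,\nabla(\varphi_1-\varphi_2)$ must come from $\nabla(\varphi_1-\varphi_2)$ being small in $C([0,T];L^\infty)$, and we would verify carefully that each such product carries a factor vanishing at $t=0$. Collecting all the estimates gives $C(T,R)\le C(R)(T^{\theta}+T^{1/p})\to0$.
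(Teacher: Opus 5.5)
The overall scheme is the paper's, and it is sound for most of $\mathcal F$. That scheme is: telescoping; smallness of coefficient differences vanishing at $t=0$ via Lemma \ref{XYX}; a factor $T^{1/p}$ for quantities bounded uniformly in $L^\infty(0,T;\cdot)$; and peeling off $\Delta_N\operatorname{div}_N$ in $\mathcal F_2$. It works for $\mathcal F_1$, for the $\alpha$-, $\tilde m$- and $\partial_t\nabla\varphi$-terms of $\mathcal F_3$, and for the principal flux of $\mathcal F_2$. For that flux, in the time-derivative part you must pair $\tilde\sigma'(q)\nabla\varphi\,\partial_t q$ with the time derivative of the linearized term $\tilde\sigma'(q_0)q\nabla\varphi_0$, so that the coefficient vanishes at $t=0$.

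The gap is in terms linear in $\partial_t\varphi$ whose coefficient does \emph{not} vanish at $t=0$:
\begin{itemize}
\item in $\mathcal F_3$, $W'(\varphi)c_\Gamma(q)\partial_t\varphi$ (and $\sum_i\xi_i'(\varphi)c^{(i)}(q)\partial_t\varphi$, which also belongs there);
\item in the $W_p^1(0,T;H_N^{-3}(\Omega))$-part of $\mathcal F_2$, the terms $\tilde\sigma(q)W''(\varphi)\partial_t\varphi$, $\xi_i''(\varphi)G_c^{(i)}(q)\partial_t\varphi$ and $\mathbf{v}\,\partial_t\varphi$. These arise from differentiating $\Delta_N[\tilde\sigma(q)W'(\varphi)]$, $\Delta_N[\xi_i'(\varphi)G_c^{(i)}(q)]$ and $\operatorname{div}(\mathbf{v}(\varphi_0-\varphi))$ in time.
\end{itemize}
For these, your promised check that each product carries a factor vanishing at $t=0$ fails. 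The $T^{1/p}$ device is also unavailable. H\"older only gives $C(R)\|\partial_t(\varphi_1-\varphi_2)\|_{L^p(0,T;L^2(\Omega))}$. Moreover, $\sup_t\|\partial_t(\varphi_1-\varphi_2)(t)\|_{L^2(\Omega)}$ is not bounded by $\|\varphi_1-\varphi_2\|_{X_T}$ uniformly in $T$, because $\|\cdot\|'_{Z_T^2}$ controls $\varphi(0)$ but not $\partial_t\varphi(0)$. On $\overline{B_R^{X_T}(0)}$ it can be of order $RT^{-1/p}$, which cancels the gain exactly.

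Sharper estimates cannot repair this, because the smallness claim itself fails for these terms. Take $q_1=q_2\equiv q_0$, $\mathbf{v}_1=\mathbf{v}_2$, $\varphi_2\equiv\varphi_0$ and $\varphi_1=\varphi_0+tc$ with $c\in H^3_N(\Omega)$. Then $\|\varphi_1-\varphi_2\|_{X_T}\le CT^{1/p}\|c\|_{H^3(\Omega)}$ for $T\le 1$. The difference of the $\mathcal F_3$-values is $-W'(\varphi_0)c_\Gamma(q_0)c+O(t)$, since all other contributions are $O(t)$. Its $L^p(0,T;L^2(\Omega))$-norm is therefore $T^{1/p}\|W'(\varphi_0)c_\Gamma(q_0)c\|_{L^2(\Omega)}+O(T^{1+1/p})$. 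The Lipschitz ratio stays bounded away from $0$ unless $W'(\varphi_0)c_\Gamma(q_0)\equiv 0$, and the same test pair generically defeats the $\mathcal F_2$-terms.

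The missing idea is that the frozen-coefficient versions of these terms must sit in the linear operator, not in $\mathcal F$. For $\mathcal F_3$ the paper does this tacitly. It estimates $a_3(\varphi_0,q_0)\partial_t(\varphi_1-\varphi_2)-\big(a_3(\varphi_1,q_1)\partial_t\varphi_1-a_3(\varphi_2,q_2)\partial_t\varphi_2\big)$ with $a_3=W'(\varphi)c_\Gamma(q)$. That is, it subtracts the term $c_\Gamma(q_0)W'(\varphi_0)\partial_t\varphi$ appearing in its Fr\'echet derivative, after which your first mechanism applies.

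For the $\partial_t\varphi$-terms of $\mathcal F_2$, the paper instead appeals to $\partial_t\varphi\in BUC([0,T];H^{s_3}(\Omega))$ to extract $T^{1/p}$. That embedding constant is uniform in $T$ only if $\partial_t\varphi(0)$ is part of the norm, so it meets exactly the obstruction above; do not copy it.

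To close your argument, move the following into the linear operator:
\begin{itemize}
\item $\big(c_\Gamma(q_0)W'(\varphi_0)+\sum_i\xi_i'(\varphi_0)c^{(i)}(q_0)\big)\partial_t\varphi$ into the linear $q$-equation;
\item $\Delta_N\big((\tilde\sigma(q_0)W''(\varphi_0)+\sum_i\xi_i''(\varphi_0)G_c^{(i)}(q_0))\varphi\big)$ and $\operatorname{div}(\mathbf{v}_0\varphi)$ into the linear $\varphi$-equation.
\end{itemize}
In the resolvent estimate these are lower-order perturbations, absorbed for large $\operatorname{Re}\lambda$ just like the term in \eqref{est-perturbation}. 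Every $\partial_t\varphi$ in the remainder is then multiplied by a coefficient vanishing at $t=0$, and your estimates go through.
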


\begin{proposition}\label{L}
	Let $T > 0$ and define the affine linear operator $\mathcal{L}$ as before. Then, $\mathcal{L}: X_T \to Y_T$ is a continuously invertible bijection. Furthermore, for any given $T_0 > 0$, the inverse operator $\mathcal{L}^{-1}: Y_T \to X_T$ is uniformly bounded in the sense that there exists a constant $C(T_0) > 0$ satisfying:
  \begin{equation*}
	\rev{\|\mathcal L^{-1}(F_1) - \mathcal L^{-1}(F_2)\|_{X_T} \leq C(T_0)\|F_1-F_2\|_{Y_T} \quad \text{for all} \quad T \in (0, T_0],\quad F_i \in Y_T,\quad i=1,2.}
	 \end{equation*}
\end{proposition}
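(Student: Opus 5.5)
We prove Proposition~\ref{L}, that $\mathcal L$ is invertible with a $T$-uniform bound, by splitting the linear system \eqref{eq:full_linearized_system} into the Stokes part and the coupled $(\varphi,q)$ subsystem. The only coupling from $\ve$ into $(\varphi,q)$ is the lower-order term $\Div(\ve\varphi_0)$. We therefore first solve the momentum equation for $\ve$ with right-hand side $F_1$ and initial value $\ve_0$. Theorem~\ref{thm:stokes_unified}(i) applies, since $\eta(\varphi_0)\in W^{1,q}$ for some $q>d$ by $\varphi_0\in B^{3-2/p}_{2,p}\hookrightarrow H^{2}$. This gives $\ve\in Z^1_T$ with a constant $C(T_0)$. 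We then treat $\Div(\ve\varphi_0)$ as a known source in the $\varphi$-equation, which lies in $L^p(0,T;H^{-1}_{(0)})\cap W^1_p(0,T;H_N^{-3})$ since $\ve\in Z_T^1$. This reduces the problem to the $(\varphi,q)$ subsystem with data in $Y^2_T\times Y^3_T$.

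\textbf{Reduction to zero initial data.} For the $(\varphi,q)$ subsystem we first construct a lifting: a pair $(\varphi^*,q^*)\in Z_T^2\times Z_T^3$ with $(\varphi^*,q^*)(0)=(\varphi_0,q_0)$. We can take, for example, $q^*=e^{t\Delta_N}q_0$ and $\varphi^*=e^{-t\Delta_N^2}\varphi_0$, extended to $(0,T_0)$ and bounded independently of $T$ via Lemma~\ref{extension}. Subtracting the lifting reduces the problem to zero initial data with modified sources $\tilde F_2,\tilde F_3$. We then extend these sources by zero to all of $\mathbb R$ (after the extension of Lemma~\ref{extension}) and take the Fourier--Laplace transform in $t$, with $\lambda=i\tau+\gamma$ and $\gamma$ large enough to absorb the lower-order terms. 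This leads to the resolvent system
\[
\lambda\hat\varphi+\Delta\nabla\cdot\bigl(\tilde\sigma(q_0)\nabla\hat\varphi-c_\Gamma(q_0)\hat q\nabla\varphi_0\bigr)+\dots=\hat f_2,\qquad
\alpha\lambda\hat q+c_\Gamma(q_0)\nabla\varphi_0\cdot\nabla\lambda\hat\varphi-\nabla\cdot(\tilde m\nabla\hat q)=\hat f_3 .
\]

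\textbf{Key step: the energy estimate.} The main step is a Hilbert-space a priori estimate that cancels the cross terms. We test the $\varphi$-equation in the $H^{-1}_{(0)}$ inner product of \eqref{eq:weak_H-1} with $\lambda\hat\varphi$ (or with $\hat\varphi$), and the $q$-equation in $L^2$ with $\hat q$, after suitable weighting. Using $\hat f_2$ together with $\lambda\hat f_2$, which is available because $F_2\in W^1_p(0,T;H_N^{-3})$, we aim for the two highest-order couplings to appear with opposite signs:
\begin{itemize}
\item the term $\Delta\nabla\cdot(c_\Gamma(q_0)\hat q\nabla\varphi_0)$ from the $\varphi$-equation, which produces $\int c_\Gamma(q_0)\hat q\,\nabla\varphi_0\cdot\nabla\overline{\hat\varphi}$-type terms after integration by parts against $\lambda\hat\varphi$;
\item the term $c_\Gamma(q_0)\nabla\varphi_0\cdot\nabla\lambda\hat\varphi$ from the $q$-equation, tested with $\hat q$.
\end{itemize}
These should cancel exactly up to lower order, in the spirit of the energy structure of Model~C. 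Taking real parts and using the Neumann boundary conditions, this should yield
\[
|\lambda|\|\hat\varphi\|_{H^1}+\|\hat\varphi\|_{H^3}+|\lambda|\|\hat q\|_{L^2}+\|\hat q\|_{H^2}\le C\bigl(\|\hat f_2\|_{H^{-1}_{(0)}}+\|\lambda\hat f_2\|_{H_N^{-3}}+\|\hat f_3\|_{L^2}\bigr),
\]
for $\mathrm{Re}\,\lambda\ge\gamma$. Existence for each fixed $\lambda$ then follows by Lax--Milgram or by the method of continuity from this a priori bound.

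\textbf{Multiplier step and conclusion.} With the zero-order resolvent bound in hand, we apply Lemma~\ref{thm:mikhlin} to the solution operator $m(\tau)$, mapping the data $(\hat f_2,\lambda\hat f_2,\hat f_3)$ to $(\lambda\hat\varphi,\hat\varphi,\lambda\hat q,\hat q)$ in the indicated norms. For condition (ii) we differentiate the resolvent system in $\tau$. The principal operator is unchanged, and $\partial_\tau\hat\varphi$ and $\partial_\tau\hat q$ solve the same system with sources $i\hat\varphi$ and $-i\alpha\hat q-ic_\Gamma\nabla\varphi_0\cdot\nabla\hat\varphi$. Applying the zero-order estimate to these sources gives $|\tau|\,\|\partial_\tau m(\tau)\|\le C$. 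Paley--Wiener (Theorem~\ref{thm:paley_wiener}) ensures causality, so that restricting to $(0,T)$ yields a solution independent of the extension. The bound then follows with a constant depending only on $T_0$, and the $W^2_p(0,T;H_N^{-3})$ regularity of $\varphi$ is read off from the equation. Uniqueness follows from the same energy estimate applied with zero data.

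\textbf{Main obstacle.} We expect the hardest step to be making the cross-term cancellation exact at the highest order in the energy estimate. This means choosing the correct pairing, namely $H^{-1}$ for $\varphi$ against $\lambda\hat\varphi$, with the weight $\alpha$ and the coefficient $\tilde\sigma(q_0)$ matched appropriately. The variable coefficients generate commutators, and these must be shown to be lower order so that they can be absorbed for large $\gamma$. If an exact cancellation fails, we would try a weighted combination of the two tested equations, chosen so that the skew part becomes purely imaginary.
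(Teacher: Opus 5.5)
Your overall route matches the paper's: Stokes first, lifting, Fourier--Laplace transform in $t$, an $H^{-1}_{(0)}/L^2$ energy identity with skew cross couplings, Lax--Milgram, Mikhlin via the $\lambda$-differentiated system, and Paley--Wiener. The gap is in the decisive step. You leave its weights open and misidentify what is hard about it. With the pairing you actually write, the $\varphi$-equation against $\lambda\hat\varphi$ in $H^{-1}_{(0)}$ and the $q$-equation against $\hat q$ in $L^2$, the cross terms are $-\overline{\lambda z}$ and $\lambda z$, where $z:=\int_\Omega c_\Gamma(q_0)(\nabla\varphi_0\cdot\nabla\hat\varphi)\,\overline{\hat q}\,dx$. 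They therefore drop out of the real part. The cancellation is exact because both carry the same coefficient $c_\Gamma(q_0)\nabla\varphi_0$, so there are no commutators to control.

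The problem is that the coercive terms become $\operatorname{Re}\overline{\lambda}\int_\Omega\tilde\sigma(q_0)|\nabla\hat\varphi|^2dx$ and $\operatorname{Re}\lambda\int_\Omega\alpha|\hat q|^2dx$. They carry only $\operatorname{Re}\lambda=\gamma$, not $|\lambda|$. You control $|\lambda|\|\hat\varphi\|_{H^{-1}_{(0)}}$, $\|\nabla\hat q\|_{L^2}$ and $\gamma^{1/2}(\|\nabla\hat\varphi\|_{L^2}+\|\hat q\|_{L^2})$. You do not control $|\lambda|\|\hat q\|_{L^2}$ or $|\lambda|\|\nabla\hat\varphi\|_{L^2}$, which are exactly what your target estimate and both Mikhlin conditions need. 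The imaginary part does not help: there the principal terms enter as $\tau(\int_\Omega\alpha|\hat q|^2dx-\int_\Omega\tilde\sigma(q_0)|\nabla\hat\varphi|^2dx)$, alongside $2\operatorname{Im}(\lambda z)$.

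The paper fixes this by multiplying the $\varphi$-equation by $\lambda$ and pairing with $\lambda\hat\varphi$ in $H^{-1}_{(0)}$, for a total weight $|\lambda|^2$. It pairs the $q$-equation with $\lambda\hat q$ in $L^2$. Then:
\begin{itemize}
\item the cross terms are $|\lambda|^2(z-\overline z)\in i\mathbb{R}$;
\item $|\lambda|^2\int_\Omega\tilde\sigma(q_0)|\nabla\hat\varphi|^2dx$ and $|\lambda|^2\int_\Omega\alpha|\hat q|^2dx$ are real;
\item the zero-order coupling is absorbed using $\|\hat\varphi\|_{L^2}^2\le\|\hat\varphi\|_{H^{-1}_{(0)}}\|\nabla\hat\varphi\|_{L^2}$ together with $\operatorname{Re}\lambda\,|\lambda|^2\|\hat\varphi\|^2_{H^{-1}_{(0)}}$ for $\operatorname{Re}\lambda$ large.
\end{itemize}
This gives $|\lambda|(\|\hat q\|_{L^2}+\|\nabla\hat\varphi\|_{L^2})\le C(\|\hat g\|_{L^2}+\|\lambda\hat f\|_{H^{-3}_N})$, and the $H^3$ and $H^2$ bounds follow by elliptic regularity.

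Second, zero-extending the lifted source $\tilde F_2$ to $t<0$ requires $\tilde F_2(0)=0$. Otherwise $(\partial_t+\gamma)$ of the extension contains a Dirac mass at $t=0$. Arranging $\tilde F_2(0)=0$ forces $\partial_t\varphi^*(0)=F_2(0)-(\text{spatial part of the }\varphi\text{-operator})(\varphi_0,q_0)$. But $\partial_t\varphi^*(0)$ must lie in $(H^{-3}_N(\Omega),H^1(\Omega))_{1-\frac1p,p}$, the trace space of $\partial_t\varphi$ for $\varphi\in W^1_p(0,T;H^1)\cap W^2_p(0,T;H^{-3}_N)$. For $p>4$ this space has positive smoothness $1-\frac4p$, so the condition fails in general. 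For example, take $\varphi_0,q_0$ constant, $\mathbf{v}_0=0$, $F_1=F_3=0$, and $F_2\equiv F_0\in H^{-1}_{(0)}(\Omega)$ constant in time. Any solution in $X_T$ would have $\partial_t\varphi(0)=F_0$, which fails for $F_0$ outside that trace space.

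So surjectivity onto all of $Y_T$ needs a compatibility condition. The paper's proof has the same gap: its multiplier step requires $f\in W^1_{p,0}$, and it then applies the result to $\tilde{\mathcal F}-\tilde{\mathcal L}_TU^*$ without checking the trace. A minor point: $e^{-t\Delta_N^2}\varphi_0\notin W^1_p(0,T;H^1(\Omega))$ for general $\varphi_0\in B^{3-2/p}_{2,p}$. A second-order lifting such as $e^{t\Delta_N}\varphi_0$ does lie in $Z^2_T$.
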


Provided the validity of Proposition \ref{F} and \ref{L}, we can proceed to construct the unique local strong solution to the fully nonlinear problem.

\begin{proof}[Proof of Theorem \ref{thm:local_well_posedness}]
	We seek to apply the Banach Fixed-Point Theorem to the abstract operator equation $(\ve,\varphi, q) = \mathcal{L}^{-1}(\mathcal{F}(\ve,\varphi, q))$ within the closed ball \rev{$\overline{B_R^{Z_T}}(0) \cap X_T =: \overline{B_R^{X_T}}.$} 

    To this end, we consider some reference state $(\tilde{\mathbf{v}}, \tilde{\varphi}, \tilde{q}) \in X_T$ and define
\begin{equation}\label{eq:lip_bound}
	M := \|\mathcal{L}^{-1} \circ \mathcal{F}(\tilde{\mathbf{v}}, \tilde{\varphi}, \tilde{q})\|_{X_T} < \infty.
\end{equation}

Now, let $R > 0$ be given such that $R > 2M$ and  $(\tilde{\mathbf{v}}, \tilde{\varphi}, \tilde{q}) \in \overline{B_R^{X_T}}$. Then, it follows from Proposition \ref{F} that there exists a constant $C = C(T, R) > 0$ such that
\begin{equation*}
	\|\mathcal{F}(\mathbf{v}_1, \varphi_1, q_1) - \mathcal{F}(\mathbf{v}_2, \varphi_2, q_2)\|_{Y_T} \le C(T, R)\|(\mathbf{v}_1 - \mathbf{v}_2, \varphi_1 - \varphi_2, q_1 - q_2)\|_{X_T}
\end{equation*}
for all $(\mathbf{v}_i, \varphi_i, q_i) \in X_T$ with $\|(\mathbf{v}_i, \varphi_i, q_i)\|_{X_T} \le R$, $i = 1, 2$, where it holds $C(T, R) \to 0$ as $T \to 0$.
	
	
Proposition \ref{L} ensures that $\mathcal{L}^{-1} : Y_T \to X_T$ \rev{is a bounded affine linear mapping with a Lipsichitz constant bounded by $C(T_0) > 0$ independent of $T\in (0, T_0]$. }Exploiting the time-decay of $C(T,R)$, we can select a sufficiently small final time $T > 0$ such that 
    \begin{equation}\label{eq:contraction_cond}
		C(T_0)C(T, R)   \le \frac{1}{4}.
	\end{equation}
Moreover, before verifying the contraction property, we must first confirm that the composite operator maps the closed ball into itself, namely $\mathcal{L}^{-1} \circ \mathcal{F} : \overline{B_R^{X_T}} \to \overline{B_R^{X_T}}$. Let $(\ve,\varphi, q)$ be an arbitrary element in $\overline{B_R^{X_T})}$. 
	Applying the triangle inequality in conjunction with the established bounds \eqref{eq:lip_bound} and \eqref{eq:contraction_cond}, we estimate the norm for any $(\ve,\varphi, q) \in \overline{B_R^{X_T}}$:
	\begin{align*}
		\rev{\| \mathcal{L}^{-1} \circ \mathcal{F}(\ve,\varphi, q) \|_{Z_T}} 
		&\le \| \mathcal{L}^{-1} \circ \mathcal{F}(\ve,\varphi, q) - \mathcal{L}^{-1} \circ \mathcal{F}(\tilde{\ve},\tilde{\varphi}, \tilde{q}) \|_{Z_T} + \| \mathcal{L}^{-1} \circ \mathcal{F}(\tilde{\ve},\tilde{\varphi}, \tilde{q}) \|\rev{_{Z_T}} \\
		&\le C(T_0) \| \mathcal{F}(\ve,\varphi, q) - \mathcal{F}(\tilde{\ve},\tilde{\varphi}, \tilde{q}) \|_{Y_T} + \frac{R}{2} \\
		&\le C(T_0) C(R, T) \| (\ve,\varphi, q) - (\tilde{\ve},\tilde{\varphi}, \tilde{q}) \|_{Z_T} + \frac{R}{2} \\
		&\le \frac{1}{4} \cdot 2R + \frac{R}{2} = R.
	\end{align*}
	This confirms the invariance of the ball $\overline{B_R^{X_T}}$ under the mapping $\mathcal{L}^{-1} \circ \mathcal{F}$.
	
	Having established the self-mapping property, the strict contraction is an immediate consequence of our previous estimates. For any two elements $(\ve_1,\varphi_1, q_1), (\ve_2,\varphi_2, q_2) \in \overline{B_R^{X_T}}$, we have:
	\begin{align*}
		\| \mathcal{L}^{-1} \circ \mathcal{F}(\ve_1,\varphi_1, q_1) - \mathcal{L}^{-1} \circ \mathcal{F}(\ve_2,\varphi_2, q_2) \|_{Z_T} 
		&\le C(T_0)C(R, T) \| (\ve_1,\varphi_1, q_1) - (\ve_2,\varphi_2, q_2) \|_{Z_T} \\
		&\le \frac{1}{2} \| (\ve_1,\varphi_1, q_1) - (\ve_2,\varphi_2, q_2) \|_{Z_T}.
	\end{align*}
Now, the hypotheses of the Banach Fixed-Point Theorem are fully satisfied. Thus, there exists a unique fixed point $(\ve,\varphi, q) \in \overline{B_R^{X_T}(0)}$ solving the abstract operator equation, which corresponds \rev{to a unique strong solution of the nonlinear system.}
	
	Finally, we address the global uniqueness of the solution within the entire maximal regularity space $X_T$. Suppose there exists another solution $(\ve',\varphi', q') \in X_T$. By definition, this solution must be bounded, meaning there exists a potentially larger radius $R' \ge R$ such that $(\ve',\varphi', q') \in \overline{B_{R'}^{X_T}(0)}$. Repeating the contraction argument outlined above with the new radius $R'$, we can find a sufficiently small time interval $(0, T']$ with $T' \le T$ on which the system admits a unique solution in $\overline{B_{R'}^{X_{T'}}(0)}$. This forces the coincidence of the solutions, $(\ve',\varphi', q')|_{[0, T']} = (\ve,\varphi, q)|_{[0, T']}$. A standard continuation procedure then extends this uniqueness to the entire interval $[0, T]$, concluding the proof.
\end{proof}

\subsection{Local Lipschitz continuity of \texorpdfstring{$\mathcal{F}$}{F}}\label{sec:DF}
Before proceeding \rev{with the Lipschitz} continuity, we analyze the Banach spaces where quantities $a(\ve, \varphi,q)$ like $\ve$, $\varphi$, $q$, $\nabla\varphi$, $\tilde{m}(q,\varphi,\nabla\varphi)$, and so on are bounded. Moreover, we emphasize here that when we say "$a(\ve,\varphi,q)$ is bounded in $Z$" for some function space $Z$, we mean that the set $\{a(\ve,\varphi,q) \in X_T: \|(\ve,\varphi,q)\|_{X_T} \leq R\}$ is bounded in $Z$ with an upper bound depending only on $R$ (and not on $T$). That is, there exists a constant $C(R) > 0$ such that $\|a(\ve,\varphi,q)\|_Z \leq C(R)$ for all \rev{$(\ve,\varphi,q) \in X_T$} satisfying $\|(\ve,\varphi,q)\|_{X_T} \leq R$ \rev{and $0<T \le T_0$}.

Prior to proceeding with the detailed Lipschitz continuity estimates, it is \rev{important} to identify the specific Banach spaces within which the variables $\ve, \varphi, q, \nabla\varphi$ and their composite functions $\tilde{\sigma}(q)$, $\delta(\varphi, \nabla\varphi)$, $\alpha(\varphi, \nabla\varphi, q)$, and $\tilde{m}(q,\varphi,\nabla\varphi)$ and so on remain uniformly bounded.

We begin our analysis with the velocity field $\ve \in X_T^1$ and the chemical potential $q \in X_T^3$. Since their regularity classes \rev{are the same}, we first detail the derivation for $q$, after then, the space of $\ve$ will be given similarly. 
Exploiting the continuous embedding of the maximal regularity class into the space of bounded uniformly continuous functions, we immediately obtain the regularity \rev{for} $q$:
\begin{equation*}
	q \in W_p^1(0,T; L^2(\Omega)) \cap L^p(0,T; H_N^2(\Omega)) \hookrightarrow BUC([0,T]; {}_N B_{2,p}^{2-\frac{2}{p}}(\Omega)). 
\end{equation*}
For an arbitrarily small $\varepsilon > 0$ the continuous embeddings between Besov and fractional Sobolev spaces as we show in Section \ref{sec:Notation} yield:
\begin{equation*}
	B_{2,p}^{2-\frac{2}{p}}(\Omega) \hookrightarrow B_{2,1}^{2-\frac{2}{p}-\varepsilon}(\Omega) \hookrightarrow H^{2-\frac{2}{p}-\varepsilon}(\Omega).
\end{equation*}
Let us denote $s_0 := 2 - \frac{2}{p} - \varepsilon$. It follows immediately that $q \in BUC([0, T]; H^{s_0}(\Omega))$. Meanwhile, the time derivative strictly implies:
\begin{equation*}
	q \in W_p^1(0,T; L^2(\Omega)) \hookrightarrow C^{0, 1-\frac{1}{p}}([0,T]; L^2(\Omega)).
\end{equation*}
Utilizing the interpolation inequality for fractional Sobolev spaces, for any intermediate spatial index $s_1 \in (0, s_0)$, there exists an interpolation parameter $\theta_1 \in (0, 1)$ such that:
\begin{equation*}
	\|q(t)\|_{H^{s_1}(\Omega)} \le C \|q(t)\|_{L^2(\Omega)}^{\theta_1} \|q(t)\|_{H^{s_0}(\Omega)}^{1-\theta_1}.
\end{equation*}
where $\theta_1 = \frac{s_0 - s_1}{s_0} = \frac{p(2-\varepsilon-s_1)-2}{p(2-\varepsilon)-2}$. 
Bridging these temporal and spatial bounds by Lemma \ref{XYX}, we achieve the regularity of $q$:
\begin{equation}\label{q-co-space}
	q \in C^{0, 1-\frac{1}{p}}([0, T]; L^2(\Omega)) \cap C([0, T];H^{s_0}(\Omega)) \hookrightarrow C^{0, (1-\frac{1}{p})\theta_1}([0, T]; H^{s_1}(\Omega)).
\end{equation}
To close the nonlinear estimates in classical function spaces, we require the spatial embedding $H^{s_1}(\Omega) \hookrightarrow C^\beta(\bar{\Omega})$ for some $\beta > 0$. This necessitates $s_2 > d/2$. According to the interpolation relation, we calculate:
\begin{equation*}
	s_1 = (1-\theta_1)\left(2-\frac{2}{p}\right).
\end{equation*}
By choosing $\theta_2$ sufficiently small,  the condition $s_2 > d/2$ can be satisfied, which implicitly imposes the dimension-dependent restriction $p>4$ for $d=2,3$.

Similarly, we have 
\begin{equation*}
	\ve \in C^{0, 1-\frac{1}{p}}([0, T]; L^2(\Omega)) \cap C([0, T]; H^{s_0}(\Omega)) \hookrightarrow C^{0, (1-\frac{1}{p})\theta_1}([0, T]; H^{s_1}(\Omega)),
\end{equation*}

Turning to the $\varphi \in  X_T^2$, on the one hand, by the continuous embedding \eqref{con-em-th}, we immediately obtain:
\begin{equation*}
	\varphi \in W_p^1(0,T; H^1(\Omega)) \cap L^p(0,T; H_N^3(\Omega)) \hookrightarrow BUC([0,T]; {}_N B_{2,p}^{3-\frac{2}{p}}(\Omega)),
\end{equation*}
To further optimize the regularity of $\varphi$ and prepare for the classical bounds, we exploit the fine properties of Besov space embeddings and obtain $\varphi \in BUC([0, T]; H^{\tilde{s}_0}(\Omega))$, where $\tilde{s}_0 := 3 - \frac{2}{p} - \varepsilon$

On the other hand, we have $\varphi \in C^{0, 1-\frac{1}{p}}([0, T]; H^1(\Omega))$. Utilizing the interpolation inequality for fractional Sobolev spaces, for any intermediate spatial index $s_2 \in (1, \tilde{s}_0)$, there exists an interpolation parameter $\theta_2 \in (0, 1)$ such that:
\begin{equation*}
	\|\varphi(t)\|_{H^{s_2}(\Omega)} \le C \|\varphi(t)\|_{H^1(\Omega)}^{\theta_2} \|\varphi(t)\|_{H^{\tilde{s}_0}(\Omega)}^{1-\theta_2}.
\end{equation*}
Because of $s_2 = \theta_2 \cdot 1 + (1-\theta_2)\tilde{s}_0$, we have
$
	\theta_2=\frac{p(3 - \varepsilon - s_2) - 2}{p(2 - \varepsilon) - 2}.
$
This yields  
\begin{equation*}
	\varphi \in C^{0, 1-\frac{1}{p}}([0,T]; H^1(\Omega)) \cap C([0,T]; H^{s_0+1}(\Omega)) \hookrightarrow C^{0, \theta_1(1-\frac{1}{p})}([0, T]; H^{s_2}(\Omega)).
\end{equation*}
Taking the spatial gradient shifts the regularity down by one order. Hence
\begin{equation*}
	\nabla\varphi \in C^{0, 1-\frac{1}{p}}([0,T]; L^2(\Omega)) \cap C([0,T]; H^{s_0}(\Omega)) \hookrightarrow C^{0, \theta_1(1-\frac{1}{p})}([0,T]; H^{s_2-1}(\Omega)).
\end{equation*}
We restrict $s_2 > 1+\frac{d}{2}$ in dimensions $d=2,3$ to ensure classical continuity via Sobolev embedding. 
To significantly streamline the subsequent proof, we introduce unified spatial and temporal regularity indices by defining $s := \min\{s_1, s_2 - 1\}$ and $\theta := \min\{\theta_1, \theta_2\}$, respectively. With this uniform bound, we now return to the explicit expressions of the nonlinear residual source terms $\mathcal{F}_1$, $\mathcal{F}_2$, and $\mathcal{F}_3$ in \eqref{eq:full_linearized_system}. Prior to bounding these residuals, it is essential to derive the space of the parametric functions $c^{(i)}(q)$, $c_\Gamma(q)$, $\tilde{\sigma}'(q)$ and $\tilde{\sigma}(q)$. 
Under the physically sound assumption that the surface free energy is strictly convex, i.e., $\gamma'' > 0$, the inverse function theorem guarantees that the mapping $q \mapsto c_\Gamma(q) = (\gamma')^{-1}(q)$ is at least as smooth as $\gamma'$. It follows from the Assumption \ref{asumm_1} that the free energy potential \rev{ $\gamma \in C^{k}(\mathbb{R}^+)$} for an integer $k \ge 3$, and $c_\Gamma \in C^{k-1}(\mathbb{R})$. Since $\tilde{\sigma}'(q)=c_\Gamma(q)$, we obtain $\tilde{\sigma} \in C^{k}(\mathbb{R})$. 
Furthermore, the second derivative obeys
\begin{equation}
	\tilde{\sigma}''(q) = -c'_\Gamma(q) = -\frac{1}{\gamma''(c_\Gamma(q))} < 0, \quad \text{for all }q \in \mathbb{R}.
\end{equation}
For brevity, let $z_q(q)\in\{c_\Gamma(q), \tilde{\sigma}(q), \tilde{\sigma}'(q)\}$. Recall that the chemical potential satisfies $q \in BUC([0,T]; H^{s_0}(\Omega))$ with $s_0=2 - \frac{2}{p} - \varepsilon$. Since $k \ge3 > s_0>\frac{d}{2}$, the $C^k$-regularity of $z_q(q)$ strictly satisfies the finite smoothness requirement of Lemma \ref{lem:Runst_Sickel}. That is, for any $q_1, q_2 \in X_T^2$ uniformly bounded by a constant $R$, it immediately follows that $z_q(q) \in H^{s_0}(\Omega)$ for almost every $t \in (0, T)$, along with the precise Lipschitz estimate:
\begin{equation*}
	\|z_q(q_1) -z_q(q_2) \|_{H^{s_0}(\Omega)} \le L \| q_1 - q_2 \|_{H^{s_0}(\Omega)}.
\end{equation*}
 Consequently, $z_q(q)\in L^\infty(0,T; H^{s_0}(\Omega)).$ Because of $ k-1 \ge2 > s_0$, we can get $z_q'(q)\in L^\infty(0,T; H^{s_0}(\Omega)).$
%
Applying the chain rule and the standard Moser-type calculus inequalities, we can estimate its spatial regularity:
\begin{equation*}
	\| \nabla z_q(q) \|_{H^1(\Omega)} \le C \| z_q'(q) \|_{L^\infty(\Omega)} \| \nabla q \|_{H^1(\Omega)} + C \| z_q'(q) \|_{H^1(\Omega)} \| \nabla q \|_{L^\infty(\Omega)}.
\end{equation*}
Since $H^{s_0}(\Omega) \hookrightarrow L^\infty(\Omega)$, and $q \in L^p(0,T; H^2(\Omega))$, the above estimate implies $z_q(q) \in L^p(0,T; H^2(\Omega))$.

Regarding the temporal regularity, the chain rule yields $\partial_t z_q(q) = z_q'(q)\partial_t q$. By $z_q'(q) \in L^\infty(0,T; L^\infty(\Omega))$ and $\partial_t q \in L^p(0,T; L^p(\Omega))$, we deduce that $z_q(q) \in W_p^1(0,T; L^p(\Omega))$, which is continuously embedded in $C^{0, 1-\frac{1}{p}}([0,T]; L^p(\Omega))$. Meanwhile, we have already $z_q(q) \in W_p^1(0,T; L^p(\Omega))\cap L^p(0,T; H^2(\Omega))$. Following a derivation procedure analogous to \eqref{q-co-space}, we obtain the corresponding regularity for $z_q(q)$:
\begin{equation}\label{z-co-space}
    z(q) \in C([0,T]; H^{s_0}(\Omega)) \cap C^{0, 1-\frac{1}{p}}([0,T]; L^p(\Omega)) \hookrightarrow C^{0, (1-\frac{1}{p})\theta}([0,T]; H^{s}(\Omega)).
\end{equation}

Similarly, recalling from Assumption \ref{asumm_1} that the mobility functions $M_c^{(i)}$ and $M_\Gamma$ are of class $C^k(\mathbb{R}^+)$ with $k \ge 2$, we can apply the exact same composition reasoning. Thus, the composite functions $M_c^{(i)}(q)$ and $M_\Gamma(q)$ inherit the precise identical regularity as $z_q(q)$, yielding:
\begin{equation*}
    M_c^{(i)}(q), M_\Gamma(q) \in C([0,T]; H^{s_0}(\Omega)) \cap C^{0, 1-\frac{1}{p}}([0,T]; L^p(\Omega)) \hookrightarrow C^{0, (1-\frac{1}{p})\theta}([0,T]; H^s(\Omega)).
\end{equation*}

For the $\varphi$ dependent functions $z_\varphi(\varphi)\in\{\eta(\varphi), W(\varphi), W'(\varphi), \xi_i(\varphi)\}$ ($i=1,2$), because of $\eta, W \in C^4(\mathbb{R})$, the smoothness of $\xi_i$ in $\mathbb{R}$, $\varphi \in BUC([0,T]; H^{s_0 +1}(\Omega))$, and using Lemma \ref{lem:Runst_Sickel}, we obtain $z_\varphi(\varphi) \in L^\infty(0,T; H^{s_0+1}(\Omega))$. Relying on an analogous argument as for \eqref{z-co-space}, we further get 
\begin{equation}\label{zz-co-space}
	z_\varphi(\varphi)\in C([0,T]; 	H^{s_0 +1}(\Omega)) \cap C^{0, 1-\frac{1}{p}}([0,T]; H^1(\Omega)) \hookrightarrow C^{0, (1-\frac{1}{p})\theta}([0,T]; H^{s_2}(\Omega)).
\end{equation}

Finally, we analyze the composite coefficient $
\alpha(\varphi, \nabla\varphi, q) = 1 +\sum_{i=1,2} \xi_i(\varphi)(c_{(i)})'(q) + \delta(\varphi, \nabla \varphi) c_\Gamma'(q)$. Recall the definition of the regularized function: 
\begin{equation*}
	\delta(\varphi, \nabla\varphi) = \frac{1}{2}|\nabla\varphi|^2 + W(\varphi).
\end{equation*}
Given the smooth algebraic structure of the elementary functions defining $\delta \in C^\infty(\mathbb{R}^{d+1})$, and utilizing the maximal regularity bounds $\nabla\varphi \in W_p^1(0,T; L^2(\Omega)) \cap L^p(0,T; H^2(\Omega))$, it immediately follows that:
\begin{equation*}
	\delta(\varphi, \nabla\varphi) \in C^{0, 1-\frac{1}{p}}([0,T]; L^2(\Omega)) \cap C([0,T]; 	H^{s_0}(\Omega)) \hookrightarrow C^{0, (1-\frac{1}{p})\theta}([0,T]; H^{s}(\Omega)),
\end{equation*}
which combine with \eqref{z-co-space} and \eqref{zz-co-space} yields 
 \begin{equation*}
 	\alpha (\varphi, \nabla\varphi,q) \in C^{0, 1-\frac{1}{p}}([0,T]; L^2(\Omega)) \cap C([0,T]; 	H^{s_0}(\Omega)) \hookrightarrow C^{0, (1-\frac{1}{p})\theta}([0,T]; H^{s}(\Omega)).
 \end{equation*}
 
 Following a similar procedure for the spatial diffusion operator defined as
\begin{equation*}
	\tilde{m}(q, \varphi, \nabla\varphi) := \sum_{i=1,2} M_c^{(i)}(c^{(i)}(q))\xi_i(\varphi) + M_\Gamma(c_\Gamma(q))\delta(\varphi, \nabla\varphi),
\end{equation*}
we can deduce its corresponding parametric space. As in Assumption \ref{asumm_1}, the composite mobility functions $M_c^{(i)}$ and $M_\Gamma$ possess the necessary regularity. Combining this with the smooth algebraic structure of $\xi_i(\varphi_0)$ and the derived regularity for the regularized function $\delta(\varphi, \nabla\varphi)$, we deduce:
\begin{equation*}
	\tilde{m}(q, \varphi, \nabla\varphi) \in C^{0,1-\frac{1}{p}}([0,T]; L^2(\Omega)) \cap C([0,T]; H^{s_0}(\Omega)) \hookrightarrow C^{0,(1-\frac{1}{p})\theta}([0,T]; H^s(\Omega)).
\end{equation*} 
%
With the Hölder continuity of the coefficients established, we turn our attention to the Lipschitz estimates for the nonlinear source terms. 
\begin{proof} [Proof of Proposition \ref{F}] 
Recall
\begin{align*}
\mathcal F_1(\ve, \varphi, q) &= - \mathbf{v} \cdot \nabla \mathbf{v} - \Div(2\eta(\varphi_0) D\ve) + \Div\left(2\eta(\varphi) D\ve\right)-\tilde{\sigma}(q)  \Delta\varphi \nabla\varphi. 
\end{align*}
We proceed to estimate the difference $\mathcal{F}_1(\ve_1, \varphi_1, q_1) - \mathcal{F}_1(\ve_2, \varphi_2, q_2)$ term by term in $Y^1_T:=L^p(0,T; L^2(\Omega))$.
For the first term, since $\ve, \delta(\varphi, \nabla\varphi)\in C([0,T]; H^{s_0}(\Omega)) \hookrightarrow  C([0,T]; C^0(\overline{\Omega}))$ with $s_0>\frac{d}{2}$ and  $p > 4$.
By extracting the temporal volume factor via $\| \cdot \|_{L^p(0,T)} \le T^{\frac{1}{p}} \| \cdot \|_{L^\infty(0,T)}$, we deduce:
\begin{align*}
&\| \mathbf{v}_1 \cdot \nabla \mathbf{v}_1 - \mathbf{v}_2 \cdot \nabla \mathbf{v}_2 \|_{L^p(0,T; L^2(\Omega))} \\
&\quad\le \| (\mathbf{v}_1 - \mathbf{v}_2) \cdot \nabla \mathbf{v}_1 \|_{L^p(0,T; L^2(\Omega))} + \| \mathbf{v}_2 \cdot (\nabla \mathbf{v}_1 - \nabla \mathbf{v}_2) \|_{L^p(0,T; L^2(\Omega))} \\
&\quad\le  T^{\frac{1}{p}} \| \mathbf{v}_1 - \mathbf{v}_2 \|_{L^\infty(0,T; L^\infty(\Omega))} \| \nabla \mathbf{v}_1 \|_{L^\infty(0,T; L^2(\Omega))} 
	+ T^{\frac{1}{p}} \| \mathbf{v}_2 \|_{L^\infty(0,T; L^\infty(\Omega))} \| \nabla \mathbf{v}_1 - \nabla \mathbf{v}_2 \|_{L^\infty(0,T; L^2(\Omega))} \\
&\quad\le  C R T^{\frac{1}{p}} \| \mathbf{v}_1 - \mathbf{v}_2 \|_{X_T^{1}}.
\end{align*}

By rearranging the difference and noting that $\eta(\varphi_2) - \eta(\varphi_0)$ vanishes at $t=0$, we utilize the temporal Hölder continuity $\varphi \in C^{0, (1-\frac{1}{p})\theta_1}([0,T]; H^s(\Omega))$ and multiplier inequality as in Lemma \ref{lem:product_estimate_dual} to acquire the contraction factor:
\begin{align*}
    &\big\| \Div\big(2(\eta(\varphi_1) - \eta(\varphi_0))D\mathbf{v}_1\big) - \text{div}\big(2(\eta(\varphi_2) - \eta(\varphi_0))D\mathbf{v}_2\big) \big\|_{L^p(0,T; L^2(\Omega))} \\
    &\quad \le C \big\| \Div\big( (\eta(\varphi_1) - \eta(\varphi_2)) D\mathbf{v}_1\big) \big\|_{L^p(0,T; L^2(\Omega))} + C \big\|\Div\big( (\eta(\varphi_2) - \eta(\varphi_0)) (D\mathbf{v}_1 - D\mathbf{v}_2) \big) \big\|_{L^p(0,T; L^2(\Omega))} \\
    &\quad\le C \sup_{t \in [0,T]} \| \eta(\varphi_1) - \eta(\varphi_2) \|_{H^s(\Omega)} \| \mathbf{v}_1 \|_{L^p(0,T; H^2(\Omega))} + C \sup_{t \in [0,T]} \| \eta(\varphi_2) - \eta(\varphi_0) \|_{H^s(\Omega)} \| \mathbf{v}_1 - \mathbf{v}_2 \|_{L^p(0,T; H^2(\Omega))} \\
    &\quad \le C T^{(1-\frac{1}{p})\theta_2} \Big( \| \eta(\varphi_1) - \eta(\varphi_2) \|_{C^{0, (1-\frac{1}{p})\theta_2}([0,T]; H^s(\Omega))} \| \mathbf{v}_1 \|_{X_T^1} + \| \eta(\varphi_2) \|_{C^{0, (1-\frac{1}{p})\theta_2}([0,T]; H^s(\Omega))} \| \mathbf{v}_1 - \mathbf{v}_2 \|_{X_T^1} \Big) \\
    &\quad\le C R T^{(1-\frac{1}{p})\theta_2} \big( \| \varphi_1 - \varphi_2 \|_{X_T^2} +\| \mathbf{v}_1 - \mathbf{v}_2 \|_{X_T^1} \big).
\end{align*}

For the highest-order nonlinear coupling term $\tilde{\sigma}(q)\Delta\varphi\nabla\varphi$, we define the composite function $a_1(\nabla\varphi, q) := \tilde{\sigma}(q)\nabla\varphi$. Since $\Delta\varphi \in L^\infty(0,T; L^2(\Omega))$,we have
\begin{align*}
	&\big\| a_1(\nabla\varphi_1, q_1)\Delta\varphi_1 - a_1(\nabla\varphi_2, q_2)\Delta\varphi_2 \big\|_{L^p(0,T; L^2(\Omega))} \\
	&\quad\le T^{\frac{1}{p}} \| a_1(\nabla\varphi_1, q_1) - a_1(\nabla\varphi_2, q_2) \|_{L^\infty(0,T; C^0(\overline{\Omega}))} \| \Delta\varphi_1 \|_{L^\infty(0,T; L^2(\Omega))} \\
	&\qquad + T^{\frac{1}{p}} \| a_1(\nabla\varphi_2, q_2) \|_{L^\infty(0,T; C^0(\overline{\Omega}))} \| \Delta\varphi_1 - \Delta\varphi_2 \|_{L^\infty(0,T; L^2(\Omega))} \\
	&\quad\le C R T^{\frac{1}{p}} \big( \| \varphi_1 - \varphi_2 \|_{X_T^2 } + \| q_1 - q_2 \|_{X_T^3} \big).
\end{align*}
As a conclusion, combining all the aforementioned bounds, we establish the uniform Lipschitz estimate for the projected source term:
\begin{equation}
	\| \mathcal{F}_1(\mathbf{v}_1, \varphi_1, q_1) - \mathcal{F}_1(\mathbf{v}_2, \varphi_2, q_2) \|_{L^p(0,T; L^2(\Omega))} \le C(R,T) \| (\mathbf{v}_1, \varphi_1, q_1) - (\mathbf{v}_2, \varphi_2, q_2) \|_{X_T},
\end{equation}
where $C(R, T)\rightarrow 0,$ as  $T\rightarrow 0$. For $\mathcal{F}_2$, to rigorously maintain the boundary trace compatibility established by the principal linear operator, we express the outer differential operator in $\mathcal{F}_2$ through the Neumann Laplacian $\Delta_N$ and the weak divergence operator $\text{div}_N$  yielding:
\begin{align*}
&\mathcal{F}_2(\ve, \varphi, q) = - \Delta_N \Div_N (\tilde{\sigma}(q) \nabla \varphi) + \Delta_N \Div_N \Big[\tilde{\sigma}(q_0) \nabla \varphi - c_\Gamma(q_0) q \nabla \varphi_0\Big] + \Delta_N \Big[ \tilde{\sigma}(q) W'(\varphi) - \tilde{\sigma}'(q_0) q W'(\varphi_0) \Big] \\
&\quad+ \Delta_N \sum_{i=1,2} \xi'_i(\varphi) \big( G(c^{(i)}(q)) - q c^{(i)}(q) \big) + \Delta_N \sum_{i=1,2} \xi'_i(\varphi_0) c^{(i)}(q_0) q  + \Div_N(\mathbf{v} \varphi_0) - \Div_N(\mathbf{v} \varphi). 
\end{align*}
To establish the requisite local Lipschitz continuity for the contraction mapping argument, we must rigorously estimate the difference $\|\mathcal{F}_2(\mathbf{v}_1, \varphi_1, q_1) - \mathcal{F}_2(\mathbf{v}_2, \varphi_2, q_2)\|_{Y_T^2}$, where the target space is $Y_T^2 = W_p^1(0,T; H_N^{-3}(\Omega)) \cap L^p(0,T; H_{(0)}^{-1}(\Omega))$. We divide this task into two steps: first, we address the estimation in the $W_p^1(0,T; H_N^{-3}(\Omega))$, followed by the estimation in the spatial Lebesgue space $L^p(0,T; H_{(0)}^{-1}(\Omega))$.

For the highest-order contribution in $W_p^1(0,T; H_N^{-3}(\Omega))$, to  streamline the estimation process, we peel off the outer third-order differential operator $\Delta_N \text{div}_N$. This allows us to isolate the most singular contribution, which originates from the inner flux terms, and equivalently estimate them within the base space $W_p^1(0,T; L^2(\Omega))$. Consequently, we have:
\begin{align*}
I_{11} &\le \| \partial_t \tilde{\sigma}(q_1) \nabla(\varphi_1 - \varphi_2) \|_{L^p(0,T; L^2(\Omega))} + \big\| \big(\tilde{\sigma}(q_0) - \tilde{\sigma}(q_1)\big) \partial_t \nabla(\varphi_1 - \varphi_2) \big\|_{L^p(0,T; L^2(\Omega))} \\
	&\quad \rev{+ \big\| \big(\tilde{\sigma}(q_0) - \tilde{\sigma}(q_1)\big)(\nabla\varphi_1 - \nabla\varphi_2) \big\|_{L^p(0,T; L^2(\Omega))}} \\
	&\le C \sup_{t \in [0,T]} \| \varphi_1 - \varphi_2 \|_{C^0(\bar{\Omega})} \| q_1 \|_{W_p^1(0,T; L^2(\Omega))} + C \sup_{t \in [0,T]} \| q_0 - q_1 \|_{C^0(\bar{\Omega})} \| \varphi_1 - \varphi_2 \|_{W_p^1(0,T; H^1(\Omega))} \\
	&\quad \rev{+ C T^{\frac{1}{p}} \sup_{t \in [0,T]} \|q_0 - q_1\|_{C^0(\bar{\Omega})} \sup_{t \in [0,T]} \|\varphi_1 - \varphi_2\|_{H^1(\Omega)}} \\
	&\le C R T^{(1-\frac{1}{p})\theta} \|\varphi_1 - \varphi_2\|_{C^{(1-\frac{1}{p})\theta}([0,T]; H^s(\Omega))} + C T^{(1-\frac{1}{p})\theta} \|q_1\|_{C^{(1-\frac{1}{p})\theta}([0,T]; H^s(\Omega))} \|\varphi_1 - \varphi_2\|_{X_T^2} \\
	&\quad \rev{+ C R T^{\frac{1}{p}} \|\varphi_1 - \varphi_2\|_{X_T^1}} \\
	&\le C R \rev{\big(} T^{(1-\frac{1}{p})\theta} \rev{+ T^{\frac{1}{p}} \big)} \|\varphi_1 - \varphi_2\|_{X_T^2}.
\end{align*} 
Similarly, for the second term \rev{$I_{12}$}, by applying the Hölder interpolation strategy coupled with the initial data cancellation, we deduce
\begin{align*}
	I_{12}&\le \big\| \big(\partial_t \tilde{\sigma}(q_1) - \partial_t \tilde{\sigma}(q_2)\big) \nabla(\varphi_2 - \varphi_0) \big\|_{L^p(0,T; L^2(\Omega))} + \big\| \big(\tilde{\sigma}(q_1) - \tilde{\sigma}(q_2)\big) \partial_t \nabla \varphi_2 \big\|_{L^p(0,T; L^2(\Omega))} \\
	&\quad \rev{+ \big\| \big(\tilde{\sigma}(q_1) - \tilde{\sigma}(q_2)\big) \nabla(\varphi_2 - \varphi_0) \big\|_{L^p(0,T; L^2(\Omega))}} \\
	&\le C \sup_{t \in [0,T]} \|\varphi_2 - \varphi_0\|_{C^0(\bar{\Omega})} \|q_1 - q_2\|_{W_p^1(0,T; L^2(\Omega))} + C \sup_{t \in [0,T]} \|q_1 - q_2\|_{C^0(\bar{\Omega})} \|\varphi_2\|_{W_p^1(0,T; H^1(\Omega))} \\
	&\quad \rev{+ C T^{\frac{1}{p}} \sup_{t \in [0,T]} \|q_1 - q_2\|_{C^0(\bar{\Omega})} \sup_{t \in [0,T]} \|\varphi_2 - \varphi_0\|_{H^1(\Omega)}}\\
	&\le C T^{(1-\frac{1}{p})\theta} \|\varphi_2\|_{C^{(1-\frac{1}{p})\theta}([0,T]; H^s(\Omega))} \|q_1 - q_2\|_{X_T^3}  + C R T^{(1-\frac{1}{p})\theta} \|q_1 - q_2\|_{C^{(1-\frac{1}{p})\theta}([0,T]; H^s(\Omega))} \\
	&\quad \rev{+ C R T^{\frac{1}{p}} \|q_1 - q_2\|_{X_T^3}} \\
	&\le C R \rev{\big(} T^{(1-\frac{1}{p})\theta} \rev{+ T^{\frac{1}{p}} \big)} \|q_1 - q_2\|_{X_T^3}.
\end{align*}
Finally, to bound the term $I_{13}$, we invoke the algebraic trick of adding and subtracting a cross-term to decouple the product differences. This yields
\begin{align*}
I_{13} &\rev{\le} \big\| \rev{\big(}\tilde{\sigma}'(q_1)\partial_t q_1 - \tilde{\sigma}'(q_2)\partial_t q_2 - \tilde{\sigma}'(q_0)(\partial_t q_1 - \partial_t q_2)\rev{\big)\nabla\varphi_0} \big\|_{L^p(0,T; L^2(\Omega))} \\
	&\quad \rev{+ \big\| \big(\tilde{\sigma}(q_1) - \tilde{\sigma}(q_2) - \tilde{\sigma}'(q_0)(q_1 - q_2)\big)\nabla\varphi_0 \big\|_{L^p(0,T; L^2(\Omega))}} \\
	&\le C \rev{\|\nabla\varphi_0\|_{C^0(\bar{\Omega})}} \big(\big\| (\tilde{\sigma}'(q_1) - \tilde{\sigma}'(q_0))(\partial_t q_1 - \partial_t q_2) \big\|_{L^p(0,T; L^2(\Omega))}+\big\| (\tilde{\sigma}'(q_1) - \tilde{\sigma}'(q_2))\partial_t q_2 \big\|_{L^p(0,T; L^2(\Omega))}\big) \\
	&\quad \rev{+ C \|\nabla\varphi_0\|_{C^0(\bar{\Omega})} \Big( \|\tilde{\sigma}(q_1) - \tilde{\sigma}(q_2)\|_{L^p(0,T; L^2(\Omega))} + \|\tilde{\sigma}'(q_0)\|_{C^0(\bar{\Omega})} \|q_1 - q_2\|_{L^p(0,T; L^2(\Omega))} \Big)} \\
	&\le C \sup_{t \in [0,T]} \| \tilde{\sigma}'(q_1) - \tilde{\sigma}'(q_0) \|_{C^0(\bar{\Omega})} \| q_1 - q_2 \|_{W_p^1(0,T; L^2(\Omega))}  + C \sup_{t \in [0,T]} \| \tilde{\sigma}'(q_1) - \tilde{\sigma}'(q_2) \|_{C^0(\bar{\Omega})} \| q_2 \|_{W_p^1(0,T; L^2(\Omega))} \\
	&\quad \rev{+ C T^{\frac{1}{p}} \|\nabla\varphi_0\|_{C^0(\bar{\Omega})} \Big( \sup_{t \in [0,T]}\|\tilde{\sigma}'\|_{C^0(\bar{\Omega})} + \|\tilde{\sigma}'(q_0)\|_{C^0(\bar{\Omega})} \Big) \sup_{t \in [0,T]}\|q_1 - q_2\|_{L^2(\Omega)}} \\
	&\le C R T^{(1-\frac{1}{p})\theta_2} \| q_1 \|_{C^{(1-\frac{1}{p})\theta_2}([0,T]; H^s(\Omega))} \| q_1 - q_2 \|_{X_T^3} + C R T^{(1-\frac{1}{p})\theta_2} \| q_1 - q_2 \|_{C^{(1-\frac{1}{p})\theta_2}([0,T]; H^s(\Omega))} \\
	&\quad \rev{+ C R T^{\frac{1}{p}} \|q_1 - q_2\|_{X_T^3}} \\
	&\le C R \rev{\big(} T^{(1-\frac{1}{p})\theta_2} \rev{+ T^{\frac{1}{p}} \big)} \| q_1 - q_2 \|_{X_T^3}.
\end{align*}
With the estimates $I_{11}, I_{12}$, and $I_{13}$, we complete the estimate in the space $W_p^1(0,T; H_N^{-3}(\Omega))$, i.e.,
\begin{align*}
	&\big\|\Delta_N \text{div}_N\big[ -\tilde{\sigma}(q_1)\nabla\varphi_1 + \tilde{\sigma}(q_2)\nabla\varphi_2 + \tilde{\sigma}(q_0)(\nabla\varphi_1 - \nabla\varphi_2) + \tilde{\sigma}'(q_0)(q_1 - q_2)\nabla\varphi_0 \big]\big\|_{W_p^1(0,T; L^2(\Omega)) } \\
	&\quad\le C R \big( T^{(1-\frac{1}{p})\theta} + T^{\frac{1}{p}} \big)\big(\|\varphi_1 - \varphi_2\|_{X_T^2} +\|q_1 - q_2\|_{X_T^3}\big).
\end{align*}
Peeling off the outer third-order differential operator $\Delta_N \text{div}_N$, it remains to bound the spatial regularity component in $L^p(0,T; H^2(\Omega))$. Employing the identical algebraic decomposition as before, we distribute the differences and define the corresponding spatial terms $I_{21}$, $I_{22}$, and $I_{23}$ as follow
\begin{align*}
	&\big\| -\tilde{\sigma}(q_1)\nabla\varphi_1 + \tilde{\sigma}(q_2)\nabla\varphi_2 + \tilde{\sigma}(q_0)(\nabla\varphi_1 - \nabla\varphi_2) + \tilde{\sigma}'(q_0)(q_1 - q_2)\nabla\varphi_0 \big\|_{L^p(0,T; H^2(\Omega))} \\
	&\quad\le \big\| \big(\tilde{\sigma}(q_0) - \tilde{\sigma}(q_1)\big)(\nabla\varphi_1 - \nabla\varphi_2) \big\|_{L^p(0,T; H^2(\Omega))} + \big\| \big(\tilde{\sigma}(q_1) - \tilde{\sigma}(q_2)\big)(\nabla\varphi_2 - \nabla\varphi_0) \big\|_{L^p(0,T; H^2(\Omega))} \\
	&\qquad+ \big\| \big(\tilde{\sigma}(q_1) - \tilde{\sigma}(q_2)\big)\nabla\varphi_0 - \tilde{\sigma}'(q_0)(q_1 - q_2)\nabla\varphi_0 \big\|_{L^p(0,T; H^2(\Omega))} \\
	&\quad= 	I_{21} +	I_{22} + 	I_{23}.
\end{align*}
By utilizing the standard fractional Leibniz rule (cf. \cite{kato1988commutator}), and invoking the Sobolev embedding $H^s(\Omega) \hookrightarrow L^\infty(\Omega)$ for $s > d/2$, we obtain the following product estimate in $H^2(\Omega)$:
\begin{align*}
	I_{21} &\le C \left( \int_0^T \|\tilde{\sigma}(q_0) - \tilde{\sigma}(q_1)\|_{H^s(\Omega)}^p \|\nabla\varphi_1 - \nabla\varphi_2\|_{H^2(\Omega)}^p dt \right)^{\frac{1}{p}}\\&\quad+ C \left( \int_0^T \|\tilde{\sigma}(q_0) - \tilde{\sigma}(q_1)\|_{H^2(\Omega)}^p \|\nabla\varphi_1 - \nabla\varphi_2\|_{H^s(\Omega)}^p dt \right)^{\frac{1}{p}} \\
	&\le C \sup_{t \in [0,T]} \|q_0 - q_1\|_{H^s(\Omega)} \|\varphi_1 - \varphi_2\|_{L^p(0,T; H^3(\Omega))} + C \|q_0 - q_1\|_{L^p(0,T; H^2(\Omega))} \sup_{t \in [0,T]} \|\varphi_1 - \varphi_2\|_{H^{s+1}(\Omega)} \\
	&\le C T^{(1-\frac{1}{p})\theta} \|q_1\|_{C^{(1-\frac{1}{p})\theta}([0,T]; H^s(\Omega))} \|\varphi_1 - \varphi_2\|_{X_T^1} + C T^{(1-\frac{1}{p})\theta} \|\varphi_1 - \varphi_2\|_{C^{(1-\frac{1}{p})\theta}([0,T]; H^{s+1}(\Omega))} \|q_0 - q_1\|_{X_T^2} \\
	&\le C R T^{(1-\frac{1}{p})\theta} \|\varphi_1 - \varphi_2\|_{X_T^2}.
\end{align*}
Similarly, we obtain
\begin{align*}
	I_{22} &\le C \sup_{t \in [0,T]} \|q_1 - q_2\|_{H^s(\Omega)} \|\varphi_2 - \varphi_0\|_{L^p(0,T; H^3(\Omega))} + C \sup_{t \in [0,T]} \|\varphi_2 - \varphi_0\|_{H^{s+1}(\Omega)} \|q_1 - q_2\|_{L^p(0,T; H^2(\Omega))} \\
	&\le C R T^{(1-\frac{1}{p})\theta} \|q_1 - q_2\|_{X_T^2}.
\end{align*}
And for the $I_{23}$, by the Mean Value Theorem, 
\begin{equation*}
	\tilde{\sigma}(q_1) - \tilde{\sigma}(q_2) = \int_0^1 \tilde{\sigma}'(y q_1 + (1-y)q_2) dy \, (q_1 - q_2).
\end{equation*}
Let $q_y(t) = y q_1(t) + (1-y) q_2(t)$, and $G(t) = \int_0^1 \tilde{\sigma}'(q_y(t)) - \tilde{\sigma}'(q_0) dy$, then we have $G(0) = 0$ and
\begin{equation*}
	\tilde{\sigma}(q_1) - \tilde{\sigma}(q_2) - \tilde{\sigma}'(q_0)(q_1 - q_2) = \int_0^1 \big(\tilde{\sigma}'(q_y(t)) - \tilde{\sigma}'(q_0)\big) dy \, (q_1 - q_2).
\end{equation*}
due to $\|q_y\|_{X^2_T} \le R$, the continuous Sobolev embedding $X^3_T \hookrightarrow C^{0, (1-\frac{1}{p})\theta_2}([0, T]; H^{s}(\Omega))$ guaranties $$\|q_y\|_{C^{0, (1-\frac{1}{p})\theta}([0, T]; H^{s}(\Omega))} \le C R. $$
Applying the Sobolev composition theorem we have
\begin{equation*}
    \|G(t)\|_{L^p(0,T; H^2(\Omega))} \le C R.
\end{equation*}
Furthermore, by the continuity of $\tilde{\sigma}'$ and the temporal Hölder continuity of $q_y$, there is a constant $L(R)$ such that
\begin{align*}
	&\sup_{t \in [0,T]} \|G(t)\|_{L^\infty(\Omega)} = \sup_{t \in [0,T]} \|G(t) - G(0)\|_{L^\infty(\Omega)} \le \sup_{t \in [0,T]} \int_0^1 \| \tilde{\sigma}'(q_y(t)) - \tilde{\sigma}'(q_y(0)) \|_{L^\infty(\Omega)} dy \\
	&\quad\le L(R) \int_0^1 \sup_{t \in [0,T]}  \| q_y(t) - q_y(0) \|_{L^\infty(\Omega)} dy \le L(R) \int_0^1  \| q_y \|_{C^{0, \theta(1-\frac{1}{p})}([0, T]; H^{s}(\Omega))}T^{(1-\frac{1}{p})\theta} dy \\
	&\quad\le C R L(R)  T^{(1-\frac{1}{p})\theta}.
\end{align*}
Applying the standard Moser-type product estimates in the space $H^2(\Omega)$. Distributing the norms and utilizing the $L^\infty$-bound derived above, we deduce the following
\begin{align*}
	I_{23} &\le C \big\| \tilde{\sigma}(q_1) - \tilde{\sigma}(q_2) - \tilde{\sigma}'(q_0)(q_1 - q_2) \big\|_{L^p(0,T; H^2(\Omega))} \|\nabla\varphi_0\|_{H^2(\Omega)} \\
	&\le C \left( \int_0^T \|G(t)\|_{H^s(\Omega)}^p \|q_1 - q_2\|_{H^2(\Omega)}^p dt \right)^{\frac{1}{p}} + C \left( \int_0^T \|G(t)\|_{H^2(\Omega)}^p \|q_1 - q_2\|_{H^s(\Omega)}^p dt \right)^{\frac{1}{p}} \\
	&\le  C R T^{(1-\frac{1}{p})\theta_2} \|q_1 - q_2\|_{X_T^2} +C R  \|q_1 - q_2\|_{C^{0,(1-\frac{1}{p})\theta_2}([0,T]; H^s(\Omega))}\\
	&\le C R T^{(1-\frac{1}{p})\theta_2} \|q_1 - q_2\|_{X_T^3}.
\end{align*}
In summary,
\begin{align*}
	&\big\| \Delta_N \Div_N\big[-\tilde{\sigma}(q_1)\nabla\varphi_1 + \tilde{\sigma}(q_2)\nabla\varphi_2 + \tilde{\sigma}(q_0)(\nabla\varphi_1 - \nabla\varphi_2) + \tilde{\sigma}'(q_0)(q_1 - q_2)\nabla\varphi_0 \big]\big\|_{W_p^1(0,T; H_N^{-3}(\Omega)) \cap L^p(0,T; H_{(0)} ^{-1}(\Omega))} \\
	&\quad\le C R \big( T^{(1-\frac{1}{p})\theta}  + T^{\frac{1}{p}} \big) \big( \|q_1 - q_2\|_{X_T^3} + \|\varphi_1 - \varphi_2\|_{X_T^2} \big).
\end{align*}

Finally, we estimate the lower-order nonlinear potential term in the space $W_p^1(0,T; H_N^{-3}(\Omega)) \cap L^p(0,T; H_{(0)}^{-1}(\Omega))$. Specifically, by peeling off the Neumann Laplacian operator and bounding the resulting seminorms by the full Sobolev norms, we estimate the norm as follows:
\begin{align*}
	&\big\| -\Delta_N \big[ -\tilde{\sigma}(q_1)W'(\varphi_1) + \tilde{\sigma}(q_2)W'(\varphi_2) + \tilde{\sigma}'(q_0)(q_1 - q_2)W'(\varphi_0) \big] \big\|_{W_p^1(0,T; H_N^{-3}(\Omega)) \cap L^p(0,T; H_{(0)}^{-1}(\Omega))} \\
	&\quad= \big\| \tilde{\sigma}(q_1)W'(\varphi_1) - \tilde{\sigma}(q_2)W'(\varphi_2) + \tilde{\sigma}'(q_0)(q_2 - q_1)W'(\varphi_0) \big\|_{W_p^1(0,T; H_{(0)}^{-1}(\Omega)) \cap L^p(0,T; H^1(\Omega))}.
\end{align*}
In the space $W_p^1(0,T; (H^1(\Omega))')$, setting $a_3(q, \varphi) = \tilde{\sigma}'(q)W'(\varphi)$ and $a_4(q, \varphi) = \tilde{\sigma}(q)W''(\varphi)$, we decompose the time derivative of the difference into four parts.
\begin{align}\nonumber
	&\big\| \partial_t \big[ \tilde{\sigma}(q_1)W'(\varphi_1) - \tilde{\sigma}(q_2)W'(\varphi_2) + \tilde{\sigma}'(q_0)(q_2 - q_1)W'(\varphi_0) \big] \big\|_{L^p(0,T;  H_{(0)}^{-1}(\Omega))} \\\nonumber
	&\quad\le \big\| \tilde{\sigma}'(q_1)\partial_t q_1 W'(\varphi_1) - \tilde{\sigma}'(q_2)\partial_t q_2 W'(\varphi_2)- \tilde{\sigma}'(q_0)(\partial_t q_1 - \partial_t q_2)W'(\varphi_0) \big\|_{L^p(0,T; (H^1(\Omega))')}\\\nonumber
	&\qquad +\big\|  \tilde{\sigma}(q_1)W''(\varphi_1)\partial_t \varphi_1 - \tilde{\sigma}(q_2)W''(\varphi_2)\partial_t \varphi_2  \big\|_{L^p(0,T; (H^1(\Omega))')} \\\nonumber
	&\quad\le \big\| \big(a_3(q_1, \varphi_1) - a_3(q_0, \varphi_0)\big)(\partial_t q_1 - \partial_t q_2) \big\|_{L^p(0,T; (H^1(\Omega))')} + \big\| \big(a_3(q_1, \varphi_1) - a_3(q_2, \varphi_2)\big)\partial_t q_2 \big\|_{L^p(0,T; (H^1(\Omega))')} \\\nonumber
	& \qquad+ \big\| \big(a_4(q_1, \varphi_1) - a_4(q_2, \varphi_2)\big)\partial_t \varphi_1 \big\|_{L^p(0,T; (H^1(\Omega))')} + \big\| a_4(q_2, \varphi_2)(\partial_t \varphi_1 - \partial_t \varphi_2) \big\|_{L^p(0,T; (H^1(\Omega))')} \\\label{I3}
	&\quad = I_{31} + I_{32} + I_{33} +I_{34}.
\end{align}
Recall the embeddings 
\begin{equation*}
	\varphi \in C^{0, 1-\frac{1}{p}}([0,T]; H^1(\Omega)) \cap C([0,T];   H^{s_0+1 }(\Omega)) \hookrightarrow C^{0, \theta_1(1-\frac{1}{p})}([0, T]; H^{s_2}(\Omega)),
\end{equation*}
and
\begin{equation*}
	q \in C^{0, 1-\frac{1}{p}}([0, T]; L^2(\Omega)) \cap C([0, T]; H^{s_0}(\Omega)) \hookrightarrow C^{0, \theta_2(1-\frac{1}{p})}([0, T]; H^{s_1}(\Omega)).
\end{equation*}
Let $s=\min\{s_1,s_2\}$, applying the inequalities \eqref{eq:product_H1_dual} to $K_1$ and exploiting the temporal Hölder continuity of the coefficients relative to the initial data, we obtain:
\begin{align*}
	I_{31} &\le C \sup_{t \in [0,T]} \| a_3(q_1, \varphi_1) - a_3(q_0, \varphi_0) \|_{H^s(\Omega)} \| \partial_t q_1 - \partial_t q_2 \|_{L^p(0,T; (H^1(\Omega))')} \\
	&\le C T^{(1-\frac{1}{p})\theta} \| (q_1, \varphi_1) \|_{C^{(1-\frac{1}{p})\theta}([0,T]; H^s(\Omega))} \| q_1 - q_2 \|_{W_p^1(0,T; (H^1(\Omega))')} \\
	&\le C R T^{(1-\frac{1}{p})\theta} \| q_1 - q_2 \|_{X_T^3}.
\end{align*}
For $I_{32}$, noting the precise cancelation of identical initial data in $t=0$ and similarly applying Lemma \ref{lem:product_estimate_dual}, we derive the bound of $I_{32}$ that
\begin{align*}
	I_{32} &\le C \sup_{t \in [0,T]} \big\| (q_1, \varphi_1) - (q_2, \varphi_2)\|_{H^s(\Omega)} \| \partial_t q_2 \|_{L^p(0,T; (H^1(\Omega))')} \\
	&\le C R T^{(1-\frac{1}{p})\theta} \| (q_1, \varphi_1) - (q_2, \varphi_2) \|_{C^{(1-\frac{1}{p})\theta}([0,T]; H^s(\Omega))} \\
	&\le C R T^{(1-\frac{1}{p})\theta} \big( \| q_1 - q_2 \|_{X_T^3} + \| \varphi_1 - \varphi_2 \|_{X_T^2} \big).
\end{align*}
To rigorously justify the extraction of the temporal supremum for the time derivatives in the remaining terms, we recall the maximal regularity of $\varphi$. Specifically, the time derivative satisfies $\partial_t \varphi \in L^p(0,T; H^1(\Omega)) \cap W_p^1(0,T; H_N^{-3}(\Omega))$. By the Lions-Peetre interpolation theorem, this yields the continuous embedding:
\begin{equation*}
    \partial_t \varphi \in L^p(0,T; H^1(\Omega)) \cap W_p^1(0,T; H_N^{-3}(\Omega)) \hookrightarrow BUC([0,T]; H^{s_3}(\Omega)),
\end{equation*}
where \rev{$s_3 \in (0, 1-\frac{3}{p})$}. This is supported by the interpolation inequality:
\begin{equation*}
    \|\partial_t \varphi\|_{H^{s_3}(\Omega)} \le C \|\partial_t \varphi\|_{H^1(\Omega)}^{1-\theta_3} \|\partial_t \varphi\|_{H_N^{-3}(\Omega)}^{\theta_3}
\end{equation*}
with  $s_3 = 1 - 4\theta_3$. \rev{Since $p>4$}, we ensure $s_3 > 0$, which immediately implies \rev{$H^{s_3}(\Omega) \hookrightarrow L^2(\Omega)$}. Applying Lemma \ref{lem:product_estimate_dual} to obtain
\begin{align*}
I_{33}+ I_{34}&\le C \left( \int_0^T \| a_4(q_1, \varphi_1) - a_4(q_2, \varphi_2) \|_{H^s(\Omega)}^p \rev{\| \partial_t \varphi_1 \|_{L^2(\Omega)}^p} dt \right)^{\frac{1}{p}} \\ &\quad+ C \left( \int_0^T \| a_4(q_2, \varphi_2) \|_{H^s(\Omega)}^p \rev{\| \partial_t \varphi_1 - \partial_t \varphi_2 \|_{L^2(\Omega)}^p} dt \right)^{\frac{1}{p}} \\
&\le C \sup_{t \in [0,T]} \| (q_1, \varphi_1) - (q_2, \varphi_2) \|_{H^s(\Omega)} \sup_{t \in [0,T]} \| \partial_t \varphi_1 \|_{L^{2}(\Omega)} T^{\frac{1}{p}}\\ &\quad + C \sup_{t \in [0,T]} \| a_4(q_2, \varphi_2) \|_{H^s(\Omega)} \sup_{t \in [0,T]} \| \partial_t \varphi_1 - \partial_t \varphi_2 \|_{L^{2}(\Omega)} T^{\frac{1}{p}} \\
	&\le C R T^{\frac{1}{p}} \big( \| q_1 - q_2 \|_{X_T^3} + \| \varphi_1 - \varphi_2 \|_{X_T^2} \big).
\end{align*}
Next, we address the estimate in the base space $L^p(0,T; H^1(\Omega))$ for \eqref{I3}.  Distributing the differences and utilizing the multiplier property of Sobolev spaces $H^s(\Omega) \hookrightarrow L^\infty(\Omega)$ for $s > d/2$, we deduce:
\begin{align*}
    &\big\| \tilde{\sigma}(q_1)W'(\varphi_1) - \tilde{\sigma}(q_2)W'(\varphi_2) + \tilde{\sigma}'(q_0)(q_2 - q_1)W'(\varphi_0) \big\|_{L^p(0,T; H^1(\Omega))} \\
    &\quad \le \big\| (\tilde{\sigma}(q_1) - \tilde{\sigma}(q_2))W'(\varphi_1) \big\|_{L^p(0,T; H^1(\Omega))} + \big\| \tilde{\sigma}(q_2)(W'(\varphi_1) - W'(\varphi_2)) \big\|_{L^p(0,T; H^1(\Omega))} \\ &\qquad+ \big\| \tilde{\sigma}'(q_0)W'(\varphi_0)(q_1 - q_2) \big\|_{L^p(0,T; H^1(\Omega))} \\
    &\quad \le C T^{\frac{1}{p}} \sup_{t \in [0,T]} \Big[ \| \tilde{\sigma}(q_1) - \tilde{\sigma}(q_2) \|_{H^1(\Omega)} \| W'(\varphi_1) \|_{H^s(\Omega)} \Big] \\
    &\qquad + C T^{\frac{1}{p}} \sup_{t \in [0,T]} \Big[ \| W'(\varphi_1) - W'(\varphi_2) \|_{H^1(\Omega)} \| \tilde{\sigma}(q_2) \|_{H^s(\Omega)} \Big] \\
    &\qquad + C T^{\frac{1}{p}} \| \tilde{\sigma}'(q_0)W'(\varphi_0) \|_{H^s(\Omega)} \sup_{t \in [0,T]} \| q_1 - q_2 \|_{H^1(\Omega)} \\
    &\quad \le C R T^{\frac{1}{p} + (1-\frac{1}{p})\theta} \big( \| q_1 - q_2 \|_{X_T^3} + \| \varphi_1 - \varphi_2 \|_{X_T^2} \big).
\end{align*}
Finally, this combined with the temporal bounds $I_{31}-I_{34}$ and the spatial estimate, we achieve the complete Lipschitz bound for the last term of $\mathcal{F}_2$.
\begin{align*}
	&\big\| -\Delta_N \big[ -\tilde{\sigma}(q_1)W'(\varphi_1) + \tilde{\sigma}(q_2)W'(\varphi_2) + \tilde{\sigma}'(q_0)(q_1 - q_2)W'(\varphi_0) \big] \big\|_{W_p^1(0,T; H_N^{-3}(\Omega)) \cap L^p(0,T; H_{(0)}^{-1}(\Omega))} \\
	&\quad \le C \big( T^{(1-\frac{1}{p})\theta} + T^{\frac{1}{p}} \big) \big( \| q_1 - q_2 \|_{X_T^3} + \| \varphi_1 - \varphi_2 \|_{X_T^2} \big).
\end{align*}

Since $L^2(\Omega)\hookrightarrow( H^{-2}(\Omega))'$ we can bound the difference by estimating $\mathbf{v}_1\varphi_1 - \mathbf{v}_2\varphi_2$ in $W_p^1(0,T; L^2(\Omega))$.
Because the velocity space $\mathbf{v} \in W_p^1(0,T; L^2_\sigma(\Omega))$ provides $\partial_t \mathbf{v} \in L^p(0,T; L^2(\Omega))$, we obtain:
Because the velocity space $\mathbf{v} \in W_p^1(0,T; L^2_\sigma(\Omega))$ provides $\partial_t \mathbf{v} \in L^p(0,T; L^2(\Omega))$, we obtain:
\begin{align*}
	&\big\| \Div_N(\mathbf{v}_1\varphi_1) - \Div_N(\mathbf{v}_2\varphi_2)-\Div_N((\ve_2-\ve_1)\varphi_0) \big\|_{W_p^1(0,T; H_N^{-3}(\Omega))} \\
	&\quad \le C \| (\partial_t \mathbf{v}_1 - \partial_t \mathbf{v}_2)(\varphi_1 - \varphi_0) \|_{L^p(0,T; L^2(\Omega))} + C \| (\mathbf{v}_1 - \mathbf{v}_2)\partial_t \varphi_1 \|_{L^p(0,T; L^2(\Omega))} \\
	& \qquad+ C \| \partial_t \mathbf{v}_2(\varphi_1 - \varphi_2) \|_{L^p(0,T;L^2(\Omega))} + C \| \mathbf{v}_2(\partial_t \varphi_1 - \partial_t \varphi_2) \|_{L^p(0,T; L^2(\Omega))} \\
	&\quad \le C \| \partial_t \mathbf{v}_1 - \partial_t \mathbf{v}_2 \|_{L^p(0,T; L^2(\Omega))} \| \varphi_1 - \varphi_0 \|_{L^\infty(0,T; C^0(\overline{\Omega}))}  \rev{+ C \| \mathbf{v}_2 \|_{L^\infty(0,T; C^0(\overline{\Omega}))} \| \partial_t\varphi_1 - \partial_t\varphi_2 \|_{L^\infty(0,T; L^2(\Omega))} T^{\frac{1}{p}}} \\
	&\qquad \rev{+ C \| \mathbf{v}_1 - \mathbf{v}_2 \|_{L^\infty(0,T; C^0(\overline{\Omega}))} \| \partial_t \varphi_1 \|_{L^\infty(0,T; L^2(\Omega))} T^{\frac{1}{p}}} + \| \partial_t\mathbf{v}_2 \|_{L^p(0,T; L^2(\Omega))} \| \varphi_1 - \varphi_2 \|_{L^\infty(0,T; C^0(\overline{\Omega}))}  \\
	&\quad \le C R \big( T^{(1-\frac{1}{p})\theta_1} + T^{\frac{1}{p}} \big) \big( \| \mathbf{v}_1 - \mathbf{v}_2 \|_{X_T^{1}} + \| \varphi_1 - \varphi_2 \|_{X_T^2} \big).
\end{align*}
 For the estimate in the space $L^p(0,T; H_{(0)}^{-1}(\Omega))$, we have
\begin{align*}
	&\big\| \Div_N(\mathbf{v}_1\varphi_1) - \Div_N(\mathbf{v}_2\varphi_2)-\Div_N((\ve_2-\ve_1)\varphi_0) \big\|_{L^p(0,T;H_{(0)}^{-1}(\Omega))} \\
	&\quad \le C \| ( \mathbf{v}_1 -  \mathbf{v}_2)(\varphi_1 - \varphi_0) \|_{L^p(0,T; L^2(\Omega))} +C \|  \mathbf{v}_2(\varphi_1 - \varphi_2) \|_{L^p(0,T; L^2(\Omega))} \\
	&\quad \le C \| \mathbf{v}_1 - \mathbf{v}_2 \|_{L^\infty(0,T; C^0(\overline{\Omega}))} \| \varphi_1 - \varphi_0 \|_{L^\infty(0,T; C^0(\overline{\Omega}))}T^{\frac{1}{p}}+ C \| \mathbf{v}_2 \|\rev{_{L^\infty(0,T; C^0(\overline{\Omega}))}} \| \varphi_1 - \varphi_2 \|_{L^\infty(0,T; C^0(\overline{\Omega}))}T^{\frac{1}{p}} \\
	&\quad \le C R  T^{\frac{1}{p}}  \big( \| \mathbf{v}_1 - \mathbf{v}_2 \|_{X_T^{1}} + \| \varphi_1 - \varphi_2 \|_{X_T^2} \big).
\end{align*}

Next, we estimate the highly nonlinear reaction terms in the phase-field residual $\mathcal{F}_2$. To rigorously handle the boundary conditions and operator domains, we isolate the highest-order singularity by peeling off the outer Neumann Laplacian operator $-\Delta_N$, moving the estimation from $W_p^1(0,T; H_N^{-3}(\Omega))$ to the space $W_p^1(0,T; H_{(0)}^{-1}(\Omega))$.
 Denoting $G_c^{(i)}(q) := G(c^{(i)}(q)) - q c^{(i)}(q)$. Taking the derivative with respect to $q$ yields $\frac{d G_c^{(i)}(q)}{dq} = -c^{(i)}(q)$. By distributing the differences through an exact algebraic rearrangement, we obtain:
\begin{align*}
    &\big\| -\Delta_N \sum_{i=1,2} \big[ \xi_i'(\varphi_1)G_c^{(i)}(q_1) - \xi_i'(\varphi_2)G_c^{(i)}(q_2) + \xi_i'(\varphi_0)c^{(i)}(q_0)(q_1 - q_2) \big] \big\|_{W_p^1(0,T; H_N^{-3}(\Omega))} \\
    &\quad \le C \sum_{i=1,2} \big\| \xi_i'(\varphi_1)G_c^{(i)}(q_1) - \xi_i'(\varphi_2)G_c^{(i)}(q_2) + \xi_i'(\varphi_0)c^{(i)}(q_0)(q_1 - q_2) \big\|_{W_p^1(0,T; H_{(0)}^{-1}(\Omega))} \\
    &\quad \le C \sum_{i=1,2} \big\| (\xi_i'(\varphi_1) - \xi_i'(\varphi_2)) G_c^{(i)}(q_1) \big\|_{W_p^1(0,T; (H^1(\Omega))')}\\
    &\qquad + C \sum_{i=1,2} \big\| (\xi_i'(\varphi_2) - \xi_i'(\varphi_0))(G_c^{(i)}(q_1) - G_c^{(i)}(q_2)) \big\|_{W_p^1(0,T; (H^1(\Omega))')} \\
    &\qquad + C \sum_{i=1,2} \big\| \xi_i'(\varphi_0) \big[ (G_c^{(i)}(q_1) - G_c^{(i)}(q_2)) + c^{(i)}(q_0)(q_1 - q_2) \big] \big\|_{W_p^1(0,T; (H^1(\Omega))')} \\
    &\quad = H_{31} + H_{32} + H_{33}.
\end{align*}

For $H_{31}$, leveraging the strict temporal continuity $\partial_t \varphi \in BUC([0,T];L^2(\Omega))$ established earlier, we can extract the $T^{\frac{1}{p}}$ factor from the $L^p$-norm integration, while extracting the temporal Hölder continuity for the undifferentiated difference vanishing at $t=0$:
\begin{align*}
    H_{31} &\le \sum_{i=1,2} \big\| (\partial_t \xi_i'(\varphi_1) - \partial_t \xi_i'(\varphi_2)) G_c^{(i)}(q_1) \big\|_{L^p(0,T; (H^1(\Omega))')}  + \sum_{i=1,2} \big\| (\xi_i'(\varphi_1) - \xi_i'(\varphi_2)) \partial_t G_c^{(i)}(q_1) \big\|_{L^p(0,T; (H^1(\Omega))')} \\
    &\le C T^{\frac{1}{p}} \sum_{i=1,2} \sup_{t \in [0,T]} \big\| \partial_t \xi_i'(\varphi_1) - \partial_t \xi_i'(\varphi_2) \big\|_{L^2(\Omega)} \sup_{t \in [0,T]} \big\| G_c^{(i)}(q_1) \big\|_{H^s(\Omega)} \\
    &\quad + C T^{(1-\frac{1}{p})\theta} \sum_{i=1,2} \big\| \xi_i'(\varphi_1) - \xi_i'(\varphi_2) \big\|_{C^{(1-\frac{1}{p})\theta}([0,T]; H^s(\Omega))} \big\| \partial_t G_c^{(i)}(q_1) \big\|_{L^p(0,T; L^2(\Omega))} \\
    &\le  C R T^{\frac{1}{p}} \| \partial_t \varphi_1 - \partial_t \varphi_2 \|_{L^\infty(0,T; L^2(\Omega))}  + C R T^{(1-\frac{1}{p})\theta}  \| \varphi_1 - \varphi_2 \|_{C^{0, (1-\frac{1}{p})\theta}([0,T]; H^s(\Omega))} \\
    &\le C R \big( T^{\frac{1}{p}} + T^{(1-\frac{1}{p})\theta} \big) \| \varphi_1 - \varphi_2 \|_{X_T^2}.
\end{align*}

Before estimating $H_{32}$ and $H_{33}$, we compute the common time derivative difference using the identity $\partial_t G_c^{(i)}(q) = -c^{(i)}(q) \partial_t q$ that
\begin{align*}
	&\| \partial_t G_c^{(i)}(q_1) - \partial_t G_c^{(i)}(q_2) \|_{L^p(0,T; H_{(0)}^{-1}(\Omega))}\\  
	&\quad \le\big\| -c^{(i)}(q_1) \partial_t q_1 + c^{(i)}(q_2) \partial_t q_2 \big\|_{L^p(0,T;  L^2(\Omega))} \\
	&\quad \le  \big\| (c^{(i)}(q_1) - c^{(i)}(q_2)) \partial_t q_1 \big\|_{L^p(0,T;  L^2(\Omega))} + \big\| c^{(i)}(q_2) (\partial_t q_1 - \partial_t q_2) \big\|_{L^p(0,T;  L^2(\Omega))} \\
 &\quad\le  C \| q_1 - q_2 \|_{L^\infty(0,T; H^s(\Omega))} \| \partial_t q_1 \|_{L^p(0,T;  L^2(\Omega))} + C \| c^{(i)}(q_2) \|_{L^\infty(0,T; H^s(\Omega))} \| \partial_t q_1 - \partial_t q_2 \|_{L^p(0,T; L^2(\Omega))} \\
	&\quad \le  C R \| q_1 - q_2 \|_{X_T^3}.
\end{align*}
Substituting this back into $H_{32}$, we acquire the contraction bound from the initial condition $\varphi_2|_{t=0} = \varphi_0$:
\begin{align*}
	H_{32} &\le C \sum_{i=1,2} \| \partial_t \xi_i'(\varphi_2) \|_{L^\infty(0,T; (H^1(\Omega))')} \| G_c^{(i)}(q_1) - G_c^{(i)}(q_2) \|_{L^\infty(0,T; H^s(\Omega))} T^{\frac{1}{p}} \\
	&\quad + C \sum_{i=1,2} \| \xi_i'(\varphi_2) - \xi_i'(\varphi_0) \|_{L^\infty(0,T; H^s(\Omega))} \big\| \partial_t G_c^{(i)}(q_1) - \partial_t G_c^{(i)}(q_2) \big\|_{L^p(0,T; (H^1(\Omega))')} \\
	&\le C R T^{\frac{1}{p}} \| q_1 - q_2 \|_{X_T^3} + C T^{(1-\frac{1}{p})\theta_1} C R\| \varphi_2 - \varphi_0 \|_{C^{0, (1-\frac{1}{p})\theta_1}([0,T]; H^s(\Omega))}  \| q_1 - q_2 \|_{X_T^3} \\
	&\le C R \big( T^{\frac{1}{p}} + T^{(1-\frac{1}{p})\theta} \big) \| q_1 - q_2 \|_{X_T^3}.
\end{align*}
Finally, for $H_{33}$, the constant coefficient $\xi_i'(\varphi_0)$ cleanly isolates the perfect cancellation between the nonlinear derivative and the linear principal part at $t=0$:
\begin{align*}
	H_{33} &\leq \sum_{i=1,2} \Big\| \xi_i'(\varphi_0) \big[ -c^{(i)}(q_1)\partial_t q_1 + c^{(i)}(q_2)\partial_t q_2 + c^{(i)}(q_0)\partial_t(q_1 - q_2) \big] \Big\|_{L^p(0,T; H_{(0)}^{-1}(\Omega))} \\
	&\leq \sum_{i=1,2} \Big\| \xi_i'(\varphi_0) \big[ -(c^{(i)}(q_1) - c^{(i)}(q_2))\partial_t q_1 - (c^{(i)}(q_2) - c^{(i)}(q_0))(\partial_t q_1 - \partial_t q_2) \big] \Big\|_{L^p(0,T;  (H^1(\Omega))')} \\
	&\le C \sum_{i=1,2} \| \xi_i'(\varphi_0) \|_{C^0(\overline{\Omega})} \| c^{(i)}(q_1) - c^{(i)}(q_2) \|_{L^\infty(0,T; H^s(\Omega))} \| \partial_t q_1 \|_{L^p(0,T; L^2(\Omega))} \\
	&\quad + C \sum_{i=1,2} \| \xi_i'(\varphi_0) \|_{C^0(\overline{\Omega})} \| c^{(i)}(q_2) - c^{(i)}(q_0) \|_{L^\infty(0,T; H^s(\Omega))} \| \partial_t q_1 - \partial_t q_2 \|_{L^p(0,T; L^2(\Omega))} \\
	&\le C R T^{(1-\frac{1}{p})\theta_2} \| q_1 - q_2 \|_{C^{0, (1-\frac{1}{p})\theta_2}([0,T]; H^s(\Omega))} + C R T^{(1-\frac{1}{p})\theta_2} \| q_2 - q_0 \|_{C^{0, (1-\frac{1}{p})\theta_2}([0,T]; H^s(\Omega))} \| q_1 - q_2 \|_{X_T^3} \\
	&\le C R T^{(1-\frac{1}{p})\theta} \| q_1 - q_2 \|_{X_T^3}.
\end{align*}

Combining the spatial and temporal bounds $H_{31}$, $H_{32}$, and $H_{33}$, we obtain the following
\begin{align*}
 &\big\| -\Delta_N \sum_{i=1,2} \Big[ \xi_i'(\varphi_1) G_c^{(i)}(q_1) - \xi_i'(\varphi_2) G_c^{(i)}(q_2) + \xi_i'(\varphi_0) c^{(i)}(q_0) (q_1 - q_2) \Big] \big\|_{W_p^1(0,T; H_N^{-3}(\Omega))}\\
	&\quad\le  C R \big( T^{\frac{1}{p}} + T^{(1-\frac{1}{p})\theta} \big) \big( \| \varphi_1 - \varphi_2 \|_{X_T^2} + \| q_1 - q_2 \|_{X_T^3} \big),
\end{align*}
where $\theta=\min\{\theta_1,\theta_2\}$.

For the estimate of the last term in $\mathcal{F}_2$ in the space $L^p(0, T; H_{(0)}^{-1}(\Omega))$, since
$ \varphi, q \in BUC([0, T]; H^s(\Omega))$, and $
s > \frac{d}{2}$, with $ p > 4 \text{ and } \varepsilon \text{ small enough,}$ we get the estimate with the help of product estimate in Lemma \ref{lem:product_estimate_dual} that
\begin{align*}
    &\left\| -\Delta_N \sum_{i=1,2} \left[ \xi'_i(\varphi_1) G_c^{(i)}(q_1) - \xi'_i(\varphi_2) G_c^{(i)}(q_2) + \xi'_i(\varphi_0) c^{(i)}(q_0) (q_1 - q_2) \right] \right\|_{L^p(0, T; H_{(0)}^{-1}(\Omega))} \\
    &\quad \leq C \left\| \sum_{i=1,2} \left[ \xi'_i(\varphi_1) G_c^{(i)}(q_1) - \xi'_i(\varphi_2) G_c^{(i)}(q_2) + \xi'_i(\varphi_0) c^{(i)}(q_0) (q_1 - q_2) \right] \right\|_{L^p(0, T; H^{1}(\Omega))} \\
    &\quad \leq C \sum_{i=1,2} \left\| (\xi'_i(\varphi_1) - \xi'_i(\varphi_2)) G_c^{(i)}(q_1) \right\|_{L^p(0, T; H^{1}(\Omega))} + C \sum_{i=1,2} \left\| \xi'_i(\varphi_2) (G_c^{(i)}(q_1) - G_c^{(i)}(q_2)) \right\|_{L^p(0, T; H^{1}(\Omega))} \\
    &\qquad + C \sum_{i=1,2} \left\| \xi'_i(\varphi_0) c^{(i)}(q_0) (q_1 - q_2) \right\|_{L^p(0, T; H^{ 1}(\Omega))} \\
    &\quad \leq C T^{\frac{1}{p}} \| \xi'_i(\varphi_1) - \xi'_i(\varphi_2) \|_{L^\infty(0, T; H^s(\Omega))} \| G_c^{(i)}(q_1) \|_{L^\infty(0, T; H^1(\Omega))} \\
    &\qquad + C T^{\frac{1}{p}} \| \xi'_i(\varphi_2) \|_{L^\infty(0, T; H^s(\Omega))} \| G_c^{(i)}(q_1) - G_c^{(i)}(q_2) \|_{L^\infty(0, T; H^1(\Omega))} \\
    &\qquad + C T^{\frac{1}{p}} \| \xi'_i(\varphi_0) \|_{H^s(\Omega)} \| c^{(i)}(q_0) \|_{H^1(\Omega)} \| q_1 - q_2 \|_{L^\infty(0, T; H^1(\Omega))} \\
    &\quad \leq C R T^{\frac{1}{p}} \left( \| \varphi_1 - \varphi_2 \|_{L^\infty(0, T; H^s(\Omega))} + \| q_1 - q_2 \|_{L^\infty(0, T; H^1(\Omega))} \right) \\
    &\quad \leq C R T^{\frac{1}{p}} \left( \| \varphi_1 - \varphi_2 \|_{X_T^2} + \| q_1 - q_2 \|_{X_T^3} \right).
\end{align*}

Thus, the requisite Lipschitz estimate for the reaction terms is fully established. This completes the estimate of $ \mathcal{F}_2$ that
\begin{equation*}
	\| \mathcal{F}_2(\ve_1,\varphi_1, q_1) - \mathcal{F}_2(\ve_2,\varphi_2, q_2) \|_{Y_T^2} \le C R \big(T^{(1-\frac{1}{p})\theta} + T^{\frac{1}{p}}\big) \big( \| \ve_1 - \ve_2 \|_{X_T^1}+\| \varphi_1 - \varphi_2 \|_{X_T^2} + \| q_1 - q_2 \|_{X_T^3} \big).
\end{equation*}
 
\rev{Next, we turn to the estimation of the effective source term $\mathcal{F}_3$.} The target space for this residual is the base Lebesgue space $Y_T^3 := L^p(0,T; L^2(\Omega))$. Recall that:
\begin{align*}
    \mathcal{F}_3(\mathbf{v}, \varphi, q) &= - \alpha(q, \varphi, \nabla \varphi) \partial_t q + \alpha(q_0, \varphi_0, \nabla \varphi_0) \partial_t q - W'(\varphi) c_\Gamma(q) \partial_t \varphi- \mathbf{v} \cdot \nabla \beta(q, \varphi, \nabla \varphi)\\
    &\quad + \Div \big[ \tilde{m}(\varphi_0, q_0, \nabla \varphi_0) \nabla q \big] - \Div \big[ \tilde{m}(\varphi, q, \nabla \varphi) \nabla q \big] + \big( c_\Gamma(q_0) \nabla \varphi_0 - c_\Gamma(q) \nabla \varphi \big) \cdot \partial_t \nabla \varphi.
\end{align*}

For the first component of $\mathcal{F}_3$, applying the triangle inequality and noting the continuous embedding $H^s(\Omega) \hookrightarrow C^0(\bar{\Omega})$ for $s > d/2$, we exploit the temporal Hölder continuity of the coefficient $\alpha(q, \varphi, \nabla \varphi)\in C^{0, (1-\frac{1}{p})\theta}([0,T]; H^s(\Omega))$ relative to its initial data. This allows us to extract the contraction factor $T^{(1-\frac{1}{p})\theta}$:
\begin{align*}
    &\big\| \alpha(q_0, \varphi_0, \nabla \varphi_0) \partial_t(q_1 - q_2) - \big(\alpha(q_1, \varphi_1, \nabla \varphi_1)\partial_t q_1 - \alpha(q_2, \varphi_2, \nabla \varphi_2)\partial_t q_2\big) \big\|_{L^p(0,T; L^2(\Omega))} \\
    &\quad \le C \big\| \big(\alpha(q_0, \varphi_0, \nabla \varphi_0) - \alpha(q_1, \varphi_1, \nabla \varphi_1)\big) \partial_t(q_1 - q_2) \big\|_{L^p(0,T; L^2(\Omega))} \\
    &\qquad + C \big\| \big(\alpha(q_1, \varphi_1, \nabla \varphi_1) - \alpha(q_2, \varphi_2, \nabla \varphi_2)\big) \partial_t q_2 \big\|_{L^p(0,T; L^2(\Omega))} \\
    &\quad \le C \sup_{t \in [0,T]} \| \alpha(q_0, \varphi_0, \nabla \varphi_0) - \alpha(q_1, \varphi_1, \nabla \varphi_1) \|_{C^0(\bar{\Omega})} \| \partial_t(q_1 - q_2) \|_{L^p(0,T; L^2(\Omega))} \\
    &\qquad + C \sup_{t \in [0,T]} \| \alpha(q_1, \varphi_1, \nabla \varphi_1) - \alpha(q_2, \varphi_2, \nabla \varphi_2) \|_{C^0(\bar{\Omega})} \| \partial_t q_2 \|_{L^p(0,T; L^2(\Omega))} \\
    &\quad \le C R T^{(1-\frac{1}{p})\theta} \big( \| \varphi_1 - \varphi_2 \|_{X_T^2} + \| q_1 - q_2 \|_{X_T^3} \big).
\end{align*}

For the non-principal term that involves the time derivative of the phase field, we define the multiplier $a_3(\varphi, q) = W'(\varphi)c_\Gamma(q)$. Using the continuous embedding $H^s(\Omega) \hookrightarrow C^0(\overline{\Omega})$ for $s > d/2$, and Lemma \ref{lem:Runst_Sickel}, we have:
For the non-principal term that involves the time derivative of the phase field, we define the multiplier $a_3(\varphi, q) = W'(\varphi)c_\Gamma(q)$. Using the continuous embedding $H^s(\Omega) \hookrightarrow C^0(\overline{\Omega})$ for $s > d/2$, and Lemma 2.4, we have:
\begin{align*}
    &\|a_3(\varphi_0, q_0)\partial_t(\varphi_1 - \varphi_2) - (a_3(\varphi_1, q_1)\partial_t\varphi_1 - a_3(\varphi_2, q_2)\partial_t\varphi_2)\|_{L^p(0,T; L^2(\Omega))} \\
    &\le \|(a_3(\varphi_0, q_0) - a_3(\varphi_1, q_1))\rev{\partial_t(\varphi_1 - \varphi_2)}\|_{L^p(0,T; L^2(\Omega))} + \|(a_3(\varphi_1, q_1) - a_3(\varphi_2, q_2))\partial_t\varphi_2\|_{L^p(0,T; L^2(\Omega))} \\
    &\le C \sup_{t\in[0,T]} \|(\varphi_1, q_1) - (\varphi_0, q_0)\|_{C^0(\overline{\Omega})} \|\rev{\partial_t(\varphi_1 - \varphi_2)}\|_{L^p(0,T; L^2(\Omega))} \\
    &\quad + C \sup_{t\in[0,T]} \|(\varphi_1, q_1) - (\varphi_2, q_2)\|_{C^0(\overline{\Omega})} \|\partial_t\varphi_2\|_{L^p(0,T; L^2(\Omega))} \\
    &\le C R T^{(1-\frac{1}{p})\theta} (\|\varphi_1 - \varphi_2\|_{X_T^2} + \|q_1 - q_2\|_{X_T^3}).
\end{align*}

Analogously, for the gradient-coupled term, we set $a_4(\nabla\varphi, q) = c_\Gamma(q)\nabla \varphi$. Exploiting the temporal Hölder regularity $\nabla \varphi \in C^{0, (1-\frac{1}{p})\theta}([0,T]; H^s(\Omega))$, we obtain:
\begin{align*} 
	&\big\| a_4(\nabla \varphi_0, q_0)(\partial_t \nabla \varphi_1 - \partial_t \nabla \varphi_2) - \big(a_4(\nabla \varphi_1, q_1)\partial_t \nabla \varphi_1 - a_4(\nabla \varphi_2, q_2)\partial_t \nabla \varphi_2\big) \big\|_{L^p(0,T; L^2(\Omega))} \\
	&\quad \le C \sup_{t \in [0,T]} \| (\nabla \varphi_0, q_0) - (\nabla \varphi_1, q_1) \|_{H^s(\Omega)} \| \rev{\partial_t \nabla (\varphi_1 - \varphi_2)} \|_{L^p(0,T; L^2(\Omega))} \\
	&\qquad + C \sup_{t \in [0,T]} \| (\nabla \varphi_1, q_1) - (\nabla \varphi_2, q_2) \|_{H^s(\Omega)} \| \partial_t \nabla \varphi_2 \|_{L^p(0,T; L^2(\Omega))} \\
	&\quad \le C R T^{(1-\frac{1}{p})\theta} \big( \| \varphi_1 - \varphi_2 \|_{X_T^2} + \| q_1 - q_2 \|_{X_T^3} \big).
\end{align*}

Finally, for the strongly coupled diffusion term, we bound the spatial divergence by estimating the inner flux within the space $L^p(0,T; H^1(\Omega))$. Relying on the multiplier algebra property of $H^s(\Omega)$, we deduce:
\begin{align*}
	&\big\| \Div \big[ \tilde{m}(\varphi_1, q_1, \nabla \varphi_1) \nabla q_1 - \tilde{m}(\varphi_2, q_2, \nabla \varphi_2) \nabla q_2 - \tilde{m}(\varphi_0, q_0, \nabla \varphi_0) \nabla (q_1 - q_2) \big] \big\|_{L^p(0,T; L^2(\Omega))} \\
	&\quad \le C \big\| \big(\tilde{m}(\varphi_1, q_1, \nabla \varphi_1) - \tilde{m}(\varphi_2, q_2, \nabla \varphi_2)\big) \nabla q_1 \big\|_{L^p(0,T; H^1(\Omega))} \\
	&\qquad + C \big\| \big(\tilde{m}(\varphi_2, q_2, \nabla \varphi_2) - \tilde{m}(\varphi_0, q_0, \nabla \varphi_0)\big) \nabla (q_1 - q_2) \big\|_{L^p(0,T; H^1(\Omega))} \\
	&\quad \le C \big \| \nabla q_1  \big\|_{L^p(0,T; H^1(\Omega))}  \big\| \tilde{m}(\varphi_1, q_1, \nabla \varphi_1) - \tilde{m}(\varphi_2, q_2, \nabla \varphi_2)  \big\|_{C^0([0,T]; H^s(\Omega))} \\
	&\qquad + C  \big\| \nabla q_1 - \nabla q_2  \big\|_{L^p(0,T; H^1(\Omega))}  \big\| \tilde{m}(\varphi_2, q_2, \nabla \varphi_2) - \tilde{m}(\varphi_0, q_0, \nabla \varphi_0) \big \|_{C^0([0,T]; H^s(\Omega))} \\
	&\quad \le C R T^{(1-\frac{1}{p})\theta} \| (\varphi_1, q_1, \nabla \varphi_1) - (\varphi_2, q_2, \nabla \varphi_2) \|_{C^{0, (1-\frac{1}{p})\theta}([0,T]; H^s(\Omega))} \\
	&\qquad + C T^{(1-\frac{1}{p})\theta} \| (\varphi_2, q_2, \nabla \varphi_2) - (\varphi_0, q_0, \nabla \varphi_0) \|_{C^{0, (1-\frac{1}{p})\theta}([0,T]; H^s(\Omega))} \| q_1 - q_2 \|_{L^p(0,T; H^2(\Omega))} \\
	&\quad \le C R T^{(1-\frac{1}{p})\theta} \big( \| \varphi_1 - \varphi_2 \|_{X_T^2} + \| q_1 - q_2 \|_{X_T^3} \big).
\end{align*}

Recall $\beta(q, \varphi, \nabla \varphi)=\xi_1(\varphi) c^{(1)}(q) + \xi_2(\varphi) c^{(2)}(q) + \delta(\varphi, \nabla \varphi) c_   \Gamma(q)$. Since $\xi_i, c^{(i)}, c_\Gamma \in C^3(\mathbb{R})$, $\ve, q, \varphi, \nabla \varphi \in BUC([0,T]; H^{s_0}(\Omega)),$ and $\delta(\varphi, \nabla \varphi) \in C([0,T]; H^{s_0}(\Omega))$ with $s_0 > \frac{d}{2}$, these give $$\beta(\varphi, \nabla \varphi, q) \in BUC([0,T]; H^{s_0}(\Omega)).$$
Then, we have $\nabla \beta(\varphi, \nabla \varphi, q) \in C([0,T]; H^{s_0-1}(\Omega)),$ and further obtain
\begin{align*}
	&\| \mathbf{v}_1 \cdot \nabla \beta(\varphi_1, \nabla \varphi_1, q_1) - \mathbf{v}_2 \cdot \nabla \beta(\varphi_2, \nabla \varphi_2, q_2) \|_{L^p(0,T;L^2(\Omega))}\\  
	&\quad \le C \| \mathbf{v}_1 - \mathbf{v}_2 \|_{L^\infty(0,T; C^0(\overline{\Omega}))} \| \nabla \beta(\varphi_1, \nabla \varphi_1, q_1) \|_{L^\infty(0,T;L^2(\Omega))} T^{\frac{1}{p}} \\
	&\qquad + C \| \mathbf{v}_2 \|_{L^\infty(0,T; C^0(\overline{\Omega}))} \| \nabla \beta(\varphi_1, \nabla \varphi_1, q_1) - \nabla \beta(\varphi_2, \nabla \varphi_2, q_2) \|_{L^\infty(0,T;L^2(\Omega))} T^{\frac{1}{p}} \\
	&\quad \le C R T^{\frac{1}{p}} \big( \| \mathbf{v}_1 - \mathbf{v}_2 \|_{X_T^1} + \| \varphi_1 - \varphi_2 \|_{X_T^2} + \| q_1 - q_2 \|_{X_T^3} \big).
\end{align*}
Hence, we have
\begin{equation*}
	\| \mathcal{F}_3(\ve_1,\varphi_1, q_1) - \mathcal{F}_3(\ve_2,\varphi_2, q_2) \|_{Y_T^3} \le C R ( T^{\frac{1}{p}}+T^{(1-\frac{1}{p})\theta} )\big(  \| \mathbf{v}_1 - \mathbf{v}_2 \|_{X_T^1}+\| \varphi_1 - \varphi_2 \|_{X_T^2} + \| q_1 - q_2 \|_{X_T^3} \big).
\end{equation*}
Combining the estimates of $\mathcal{F}_1$, $\mathcal{F}_2$ and $\mathcal{F}_3$, we complete the proof of Proposition \ref{F}.     
\end{proof}
\subsection{Linearized System}
Considering the full linearized system \eqref{eq:full_linearized_system}, we note a crucial structural property: the principal linear operator of the Stokes equations decouples from the phase-field and surfactant variables, whereas the Cahn-Hilliard equation and the surfactant equation are connected via high-order coupling terms. Motivated by this underlying triangular structure, we adopt a sequential analytic strategy. 
\rev{The remainder of this section is organized as follows. Since the maximal regularity for the decoupled Stokes equations is already established in Theorem \ref{thm:stokes_unified} in Section \ref{sec:Notation}, our analysis focuses on obtaining the maximal regularity for  $(\varphi, q)$ subsystem. To this end,} we first perform the resolvent estimates for this subsystem in Subsection \ref{subsec:resolvent_estimate}. Finally, in Subsection \ref{subsec:extension_lifting}, we extend the results to accommodate non-zero initial data via a lifting argument, thereby completing the proof of maximal regularity for the full linearized system.

 \subsubsection{Resolvent estimate}\label{subsec:resolvent_estimate}
In this section, we aim to show that the \rev{linear system for $(\varphi, q)$ admits a unique strong solution} exactly in our defined maximal regularity space:
\begin{equation*}
	\begin{aligned}
		\varphi &\in W^1_p(0,T; H^1(\Omega)) \cap L^p(0,T; H_N^3(\Omega)) \cap W^2_p(0,T; H_N^{-3}(\Omega)), \\
		q &\in W^1_p(0,T; L^2(\Omega)) \cap L^p(0,T; H_N^2(\Omega)).
	\end{aligned}
\end{equation*}

For any $T\in(0,\infty)$, \rev{we define the shifted principal linear operator $\tilde{\mathcal{L}}_T$ as}
\begin{equation*}
	\begin{aligned}
\tilde{\mathcal{L}}_T \begin{pmatrix} \varphi \\ q \end{pmatrix} = \begin{bmatrix} &(\omega + \partial_t) \varphi + \Delta_N ( \Div_N (\tilde{\sigma}(q_0) \nabla \varphi) ) - \Delta_N(\Div_N (c_{\Gamma}(q_0) q \nabla \varphi_0)) \\
	&+ \Delta_N \Big( \tilde{\sigma}'(q_0)q W'(\varphi_0 ) + \sum_{i=1,2} \xi_i'(\varphi_0) c^{(i)}(q_0) q \Big) \\ &\alpha(\varphi_0, \nabla \varphi_0, q_0) (\omega + \partial_t) q + c_\Gamma(q_0)\nabla(\omega + \partial_t)\varphi \cdot \nabla\varphi_0 - \Div_N \big( \tilde{m}(\varphi_0, q_0, \nabla \varphi_0) \nabla q \big). \end{bmatrix} 
	\end{aligned}
\end{equation*}
\rev{Here, the spectral shift $\omega > 0$ is to ensure the invertibility of this linear operator.} \rev{Later, for the subsequent nonlinear contraction mapping argument, the full equations will be rewritten as
\begin{equation}\label{linear}
	\tilde{\mathcal{L}}_T \begin{pmatrix} \varphi \\ q \end{pmatrix} = \tilde{\mathcal{F}}_T (\varphi , q ),
\end{equation}
where the nonlinear right-hand side is given by
$$ 
\tilde{\mathcal{F}}_T(\varphi, q) = (\mathcal{F}_T^2 + \omega \varphi, \mathcal{F}_T^3 + \alpha(\varphi_0, \nabla \varphi_0, q_0)\omega q +  c_{\Gamma}(q_0)\omega \nabla \varphi \cdot \nabla \varphi_0)^T. 
$$
} The resulting additional terms in $\mathcal{F}_T$ are lower-order linear perturbations which are easily absorbed as generic source terms in the linear resolvent estimates and satisfy the local Lipschitz conditions required for the subsequent nonlinear contraction mapping argument. For the resolvent estimate, we focus on the linear operator $\tilde{\mathcal{L}}_T$ with generic source terms $(f, g) \in H_N^{-3}(\Omega) \times L^2(\Omega)$. Provided that $f$ naturally satisfies the zero-mean compatibility condition $f \in L_{(0)}^2(\Omega)$, the shifted system reads:
\begin{equation}\label{fg-equtions}
	\tilde{\mathcal{L}}_T (\varphi,q)^T=(f,g)^T
\end{equation}
To be consistent with the initial data requirements in our main well-posedness framework, we state the assumptions on \rev{$(\varphi_0, q_0) \in  {}_NB_{2,p}^{3-\frac{2}{p}}(\Omega) \times  {}_NB_{2,p}^{2-\frac{2}{p}}(\Omega), L^2(\Omega))_{1-\frac{1}{p},p}$} with $|\varphi_0(x)| \le 1$. 

Applying the Fourier transform and setting $\lambda = \omega + i\tau$ with $\omega > 0$, where $\tau \in \mathbb{R}$, we obtain the abstract resolvent problem:
\begin{equation}\label{resovent equtions}
	\begin{aligned}
		\lambda \hat{\varphi} + \Delta_N \Big[ \text{div}_N  (\tilde{\sigma}(q_0) \nabla \hat{\varphi}) - \text{div}_N (c_{\Gamma}(q_0) \hat{q} \nabla \varphi_0) + \Big( \tilde{\sigma}'(q_0) W'(\varphi_0) + \sum_{i=1,2} \xi_i'(\varphi_0) c^{(i)}(q_0) \Big) \hat{q} \Big] &= \hat{f}(\tau), \\
		\alpha(\varphi_0, \nabla \varphi_0, q_0) \lambda \hat{q} + c_{\Gamma}(q_0) \nabla \lambda \hat{\varphi} \cdot \nabla \varphi_0 - \text{div}_N \big( \tilde{m}(\varphi_0, q_0, \nabla \varphi_0) \nabla \hat{q} \big) &= \hat{g}(\tau).
	\end{aligned} 
\end{equation}
Let $\overline{\hat{\varphi}}$ and $\overline{\hat{q}}$ denote the complex conjugates of the Fourier transformed variables $\hat{\varphi}$ and $\hat{q}$, respectively. Integrating $\eqref{resovent equtions}_1$ over $\Omega$ yields $\hat{\varphi} \in L_{(0)}^2(\Omega)$ due to the Neumann boundary conditions and the zero-mean property of the source term. Thus, by multiplying equation $\eqref{resovent equtions}_1$ by $\lambda$ and testing the result with $(-\Delta_N)^{-1} \bar{\lambda} \overline{\hat{\varphi}}$, while simultaneously testing equation $\eqref{resovent equtions}_2$ with $\bar{\lambda} \overline{\hat{q}}$, we obtain
\begin{equation} \label{phi-r-e}
	\begin{aligned}
		&\lambda |\lambda|^2 \|\hat{\varphi}\|_{H_{(0)}^{-1}(\Omega)}^2 + |\lambda|^2 \int_\Omega \tilde{\sigma}(q_0) |\nabla \hat{\varphi}|^2 dx - |\lambda|^2 \int_\Omega  c_\Gamma(q_0) \hat{q}  \nabla \varphi_0\cdot \nabla \overline{\hat{\varphi}} dx\\ 
		&-|\lambda|^2 \int_\Omega \Big( \tilde{\sigma}'(q_0) W'(\varphi_0) + \sum_{i=1,2} \xi_i'(\varphi_0) c^{(i)}(q_0) \Big) \hat{q}  \overline{\hat{\varphi}} dx
		=  |\lambda|^2 \int_\Omega \hat{f} (-\Delta_N)^{-1} \overline{\hat{\varphi}} dx,
	\end{aligned}
\end{equation}
\begin{equation}\label{q-r-e}
	|\lambda|^2 \int_\Omega \alpha(q_0, \varphi_0, \nabla \varphi_0) |\hat{q}|^2 dx +  |\lambda|^2 \int_\Omega c_\Gamma(q_0) \nabla \hat{\varphi} \cdot \nabla \varphi_0 \cdot \overline{\hat{q}} dx + \bar{\lambda} \int_\Omega \tilde{m}(\varphi_0, q_0, \nabla \varphi_0) |\nabla \hat{q}|^2 dx = \bar{\lambda} \int_\Omega \hat{g} \cdot \overline{\hat{q}} dx. 
\end{equation}
where we have used \eqref{eq:weak_H-1} for the first term $\lambda |\lambda|^2 \|\hat{\varphi}\|_{H_{(0)}^{-1}(\Omega)}^2$ in \eqref{phi-r-e}. 
By virtue of the \rev{Sobolev embedding $H^{s_0}(\Omega) \hookrightarrow C^0(\overline{\Omega})$ with $s_0 := 2 - \frac{2}{p} - \varepsilon$,} the coefficient $\alpha(q_0, \varphi_0, \nabla \varphi_0) \in H^{s_0}(\Omega)$ is bounded and uniformly continuous in space. Furthermore, Assumption \ref{asumm_1} guarantees that this coefficient is strictly bounded from below by a uniform constant $\alpha_0 > 0$:
\begin{equation*}
	\alpha(q_0, \varphi_0, \nabla\varphi_0) \ge \alpha_0 > 0 \quad \text{on } \overline{\Omega}.
\end{equation*}
Consequently, we obtain the estimate
\begin{equation}\label{al-estimates}
	|\lambda|^2 \int_\Omega \alpha(q_0, \varphi_0, \nabla \varphi_0) |\hat{q}|^2 dx \geq \alpha_0 |\lambda|^2 \|\hat{q}\|_{L^2(\Omega)}^2. 
\end{equation}
Similarly, there exist $\sigma_0 > 0$ such that $\tilde{\sigma}(q_0) \geq \sigma_0$, and 
\begin{equation}
	|\lambda|^2 \int_\Omega \tilde{\sigma}(q_0) |\nabla \hat{\varphi}|^2 dx \geq \sigma_0 |\lambda|^2 \|\nabla \hat{\varphi}\|_{L^2(\Omega)}^2.
\end{equation}
\rev{Since $\tilde{\sigma}, W, \xi_i, c^{(i)} \in C^2(\mathbb{R})$,} and the reference states possess the regularities $\varphi_0 \in H_N^3(\Omega)$ and $q_0 \in H_N^2(\Omega)$, the standard composition rules in Sobolev spaces guarantee that $$\tilde{\sigma}'(q_0)W'(\varphi_0) + \sum_{i=1,2} \xi_i'(\varphi_0) c^{(i)}(q_0) \in H^2(\Omega).$$
\rev{Exploiting} the Riesz isomorphism $\mathcal{R} = -\Delta_N: H^1_{(0)}(\Omega) \to H^{-1}_{(0)}(\Omega)$ and \eqref{eq:weak_H-1}, yields the interpolation inequality:
\begin{equation*}
	\|\hat{\varphi}\|_{L^2(\Omega)}^2 = \int_\Omega \nabla (-\Delta_N)^{-1} \hat{\varphi} \cdot \nabla \overline{\hat{\varphi}} \, dx \le \|\hat{\varphi}\|_{H^{-1}_{(0)}(\Omega)} \|\hat{\varphi}\|_{H_{(0)}^1(\Omega)}, 
\end{equation*}
for any $\hat{\varphi}\in H^{1}_{(0)}(\Omega)$. Substituting this interpolation bound into our estimate, and applying Young's inequality to decouple the product, we deduce:
\begin{equation} \label{est-perturbation}
	\begin{aligned}
		& \left| |\lambda|^2 \int_\Omega \Big( \tilde{\sigma}'(q_0) W'(\varphi_0) + \sum_{i=1,2} \xi_i'(\varphi_0) c^{(i)}(q_0) \Big) \hat{q} \overline{\hat{\varphi}} \, dx \right| \\
		&\quad \le  C |\lambda|^2 \Big[ \|\tilde{\sigma}'(q_0) W'(\varphi_0)\|_{L^\infty(\Omega)} + \sum_{i=1,2} \|\xi_i'(\varphi_0) c^{(i)}(q_0)\|_{L^\infty(\Omega)} \Big] \|\hat{q}\|_{L^2(\Omega)} \|\hat{\varphi}\|_{L^2(\Omega)} \\
		&\quad \le C |\lambda|^2 \|\hat{q}\|_{L^2(\Omega)} \|\hat{\varphi}\|_{L^2(\Omega)} \\
		&\quad \le \frac{\alpha_0}{4} |\lambda|^2 \|\hat{q}\|_{L^2(\Omega)}^2 + \frac{\sigma_0}{4} |\lambda|^2 \|\nabla\hat{\varphi}\|_{L^2(\Omega)}^2 + C_0 |\lambda|^2 \|\hat{\varphi}\|_{H_{(0)}^{-1}(\Omega)}^2,
	\end{aligned}
\end{equation}
where the constant $C_0$ depends only on $\alpha_0$ and $\sigma_0$.
Applying Hölder's and Young's inequalities to the right-hand side of \eqref{q-r-e}, we obtain
$$
	\left| \bar{\lambda} \int_\Omega \hat{g} \cdot \overline{\hat{q}} \, dx \right| \le |\lambda| \|\hat{g}\|_{L^2(\Omega)} \|\hat{q}\|_{L^2(\Omega)} \le C_0 \|\hat{g}\|_{L^2(\Omega)}^2 + \frac{\alpha_0}{2} |\lambda|^2 \|\hat{q}\|_{L^2(\Omega)}^2.
$$
To estimate the source term on the right-hand side of \eqref{phi-r-e}, we note that the inverse Neumann Laplacian acts as an isomorphism $(-\Delta_N)^{-1} : H_{(0)}^1(\Omega) \to H_N^3(\Omega)$, yielding the bound $\|(-\Delta_N)^{-1} \hat{\varphi}\|_{H_N^3(\Omega)} \le C \|\nabla \hat{\varphi}\|_{L^2(\Omega)}$. Utilizing this property alongside the natural duality pairing and Young's inequality, we deduce that
\begin{align*}
	\left| |\lambda|^2 \int_\Omega \hat{f} (-\Delta_N)^{-1} \overline{\hat{\varphi}} \, dx \right| 
	&\le |\lambda|^2 \|\hat{f}\|_{H_N^{-3}(\Omega)} \|(-\Delta_N)^{-1} \hat{\varphi}\|_{H_N^3(\Omega)} \\
	&\le \frac{\sigma_0}{2} |\lambda|^2 \|\nabla \hat{\varphi}\|_{L^2(\Omega)}^2 + C_0 |\lambda|^2 \|\hat{f}\|_{H_N^{-3}(\Omega)}^2.
\end{align*}

For the mixed-order coupling term, straightforward conjugation yields the algebraic identity
\begin{equation}\label{mix-term}
	- |\lambda|^2 \int_\Omega c^\Gamma(q_0) \hat{q} \nabla \varphi_0 \cdot \nabla \overline{\hat{\varphi}} dx = - \overline{|\lambda|^2 \int_\Omega c^\Gamma(q_0) \nabla \hat{\varphi} \cdot \nabla \varphi_0 \cdot \overline{\hat{q}} dx}.
\end{equation}
Recall $$\tilde{m}(\varphi_0, q_0, \nabla\varphi_0) = \sum_{i=1,2} M^{(i)}(c^{(i)}(q_0))\xi_i(\varphi_0) + M_\Gamma(c_\Gamma(q_0))\delta(\varphi_0, \nabla\varphi_0),$$ there exists a constant $m_0 > 0$ such that $\tilde{m}(\varphi_0, q_0, \nabla\varphi_0) \ge m_0 > 0$.
Now, we combine \eqref{phi-r-e}-\eqref{mix-term} and take the real part of both sides. By substituting the perturbation bound \eqref{est-perturbation} and absorbing the terms $\frac{\alpha_0}{4} |\lambda|^2 \|\hat{q}\|_{L^2(\Omega)}^2$ and $\frac{\sigma_0}{4} |\lambda|^2 \|\nabla \hat{\varphi}\|_{L^2(\Omega)}^2$ into the principal coercive parts, we arrive at:
\begin{align*}
	&\frac{\alpha_0}{4} |\lambda|^2 \|\hat{q}\|_{L^2(\Omega)}^2 + (\operatorname{Re} \lambda - C_0) |\lambda|^2 \|\hat{\varphi}\|_{H^{-1}_{(0)}(\Omega)}^2 + m_0\operatorname{Re} \lambda \|\hat{q}\|_{H^1(\Omega)}^2 + \frac{\sigma_0}{4} |\lambda|^2 \|\nabla \hat{\varphi}\|_{L^2(\Omega)}^2 \\
    &\quad\leq C \Big( \|\hat{g}\|_{L^2(\Omega)}^2 + |\lambda|^2 \|\hat{f}\|_{H_N^{-3}(\Omega)}^2 \Big).
\end{align*}
To ensure the uniform coercivity of the bilinear form, we restrict the resolvent parameter to a suitable right half-plane. Specifically, by choosing the real part to be sufficiently large, namely $\operatorname{Re} \lambda = \omega \ge 2C_0$, we guarantee the strict lower bound $\operatorname{Re} \lambda - C_0 \ge \frac{1}{2} \operatorname{Re} \lambda = \frac{\omega}{2} > 0$. Consequently, retaining all principal norm bounds on the left-hand side, we arrive at the fundamental a priori estimate:
\begin{align}\label{energy-inequality}
	&\frac{\alpha_0}{4} |\lambda|^2 \|\hat{q}\|_{L^2(\Omega)}^2 + \frac{1}{2} \operatorname{Re} \lambda |\lambda|^2 \|\hat{\varphi}\|_{H^{-1}_{(0)}(\Omega)}^2 + m_0\operatorname{Re} \lambda \|\hat{q}\|_{H^1(\Omega)}^2 + \frac{\sigma_0}{4} |\lambda|^2 \|\nabla \hat{\varphi}\|_{L^2(\Omega)}^2 \notag\\
	&\quad\leq C \Big( \|\hat{g}\|_{L^2(\Omega)}^2 + |\lambda|^2 \|\hat{f}\|_{H_N^{-3}(\Omega)}^2 \Big).
\end{align}

With this uniform a priori estimate established, we proceed to construct the associated resolvent operator.	
Firstly, let
\begin{equation}\label{inverst-eqas}
	(\hat{\varphi}, \hat{q})^T := R(\lambda) (\hat{f}, \hat{g})^T
\end{equation}
be the solution of \eqref{resovent equtions}.
The rigorous justification for the existence of the inverse operator $R(\lambda)$ relies on the generalized Lax-Milgram Lemma. We define the appropriate product Hilbert space $\mathbb{V} := H_{(0)}^1(\Omega) \times H^1(\Omega)$ and a bilinear form $B_\lambda(U, \Phi)$ over $\mathbb{V} \times \mathbb{V}$. Let $U = (\hat{\varphi}, \hat{q})^T$ and $\Phi = (\hat{\psi}, \hat{\eta})^T$. The bilinear form $B_\lambda(U, \Phi)$ is explicitly defined as:
\begin{align*}
B_\lambda(U, \Phi) = &\Big\langle \lambda^2 \hat{\varphi} + \lambda \Delta_N \big(\text{div}_N(\tilde{\sigma}(q_0)\nabla\hat{\varphi})\big) - \lambda \Delta_N \big(\text{div}_N(c_\Gamma(q_0)\hat{q}\nabla\varphi_0)\big) \\
&+ \lambda \Delta_N \Big( \big(\tilde{\sigma}'(q_0)W'(\varphi_0) + \sum_{i=1,2}\xi_i'(\varphi_0)c^{(i)}(q_0)\big)\hat{q} \Big), (-\Delta_N)^{-1}\overline{\hat{\psi}} \Big\rangle_{H_N^{-3}(\Omega), H_N^3(\Omega)} \\
&+ \Big( \alpha(\varphi_0, \nabla\varphi_0, q_0)\lambda\hat{q} + c_\Gamma(q_0)\nabla(\lambda\hat{\varphi}) \cdot \nabla\varphi_0, \overline{\hat{\eta}} \Big)_{L^2(\Omega)} +\Big( \tilde{m}(\varphi_0, q_0, \nabla\varphi_0)\nabla\hat{q}, \nabla\overline{\hat{\eta}} \Big)_{L^2(\Omega)}.
\end{align*}

First, we verify the continuity of the bilinear form. For any $U = (\hat{\varphi}, \hat{q})^T$ and $\Phi = (\hat{\psi}, \hat{\eta})^T$ in the product space $\mathbb{V}$, we apply standard energy estimates and Hölder's inequality to bound each term. For every given $\lambda \in \mathbb{C}$, applying the definition of the $H^{-1}_{(0)}$ inner product and performing standard integration by parts, we obtain the explicit expansion of the bilinear form:
\begin{align*}
	B_\lambda(U, \Phi) &= |\lambda|^2 ( \hat{\varphi}, \overline{\hat{\psi}} )_{H^{-1}_{(0)}(\Omega)} + \lambda\int_\Omega \tilde{\sigma}(q_0)\nabla\hat{\varphi} \cdot \nabla\overline{\hat{\psi}} \, dx \rev{-} \lambda\int_\Omega c_\Gamma(q_0)\hat{q}\nabla\varphi_0 \cdot \nabla\overline{\hat{\psi}} \, dx \\
	&\quad - \lambda\int_\Omega \Big( \tilde{\sigma}'(q_0) W'(\varphi_0) + \sum_{i=1,2}\xi_i'(\varphi_0) c^{(i)}(q_0) \Big) \hat{q} \overline{\hat{\psi}} \, dx + \lambda \int_\Omega \alpha(\varphi_0, \nabla\varphi_0, q_0) \hat{q}\overline{\hat{\eta}} \, dx \\
	&\quad + \lambda \int_\Omega c_\Gamma(q_0) (\nabla\hat{\varphi} \cdot \nabla\varphi_0) \overline{\hat{\eta}} \, dx + \int_\Omega \tilde{m}(\varphi_0, q_0, \nabla\varphi_0) \nabla\hat{q} \cdot \nabla\overline{\hat{\eta}} \, dx.
\end{align*}
Applying Cauchy-Schwarz and Hölder's inequalities, and utilizing the isometric identity $\|(-\Delta_N)^{-1}\overline{\hat{\psi}}\|_{H^{3}_{N}(\Omega)} \le C\|\hat{\psi}\|_{H^{1}_{(0)}(\Omega)}$ alongside the continuous embeddings $H^1_{(0)}(\Omega) \hookrightarrow L^2(\Omega) \hookrightarrow H^{-1}_{(0)}(\Omega) \hookrightarrow H^{-3}_N(\Omega)$, we bound each term to obtain: 
\begin{align*}
	|B_\lambda(U, \Phi)| &\le |\lambda|^2 \|\hat{\varphi}\|_{H^{1}_{(0)}(\Omega)} \|\overline{\hat{\psi}}\|_{H^{1}_{(0)}(\Omega)} + |\lambda|\|\tilde{\sigma}(q_0)\|_{L^\infty(\Omega)} \|\nabla\hat{\varphi}\|_{L^2(\Omega)} \|\nabla\overline{\hat{\psi}}\|_{L^2(\Omega)} \\
	&\quad + |\lambda|\|c_\Gamma(q_0)\nabla\varphi_0\|_{L^\infty(\Omega)} \|\hat{q}\|_{L^2(\Omega)} \|\nabla\overline{\hat{\psi}}\|_{L^2(\Omega)} \\
	&\quad + |\lambda| \Big\| \tilde{\sigma}'(q_0) W'(\varphi_0) + \sum_{i=1,2}\xi_i'(\varphi_0) c^{(i)}(q_0) \Big\|_{L^\infty(\Omega)} \|\hat{q}\|_{L^2(\Omega)} \|\overline{\hat{\psi}}\|_{L^2(\Omega)} \\
	&\quad + |\lambda| \|\alpha(\varphi_0, \nabla\varphi_0, q_0) \|_{L^\infty(\Omega)} \|\hat{q}\|_{L^2(\Omega)} \|\overline{\hat{\eta}}\|_{L^2(\Omega)} + |\lambda| \|c_\Gamma(q_0)\nabla\varphi_0\|_{L^\infty(\Omega)} \|\nabla\hat{\varphi}\|_{L^2(\Omega)} \|\overline{\hat{\eta}}\|_{L^2(\Omega)}  \\
	&\quad + \|\tilde{m}(\varphi_0, q_0, \nabla\varphi_0)\|_{L^\infty(\Omega)} \|\nabla\hat{q}\|_{L^2(\Omega)} \|\nabla\overline{\hat{\eta}}\|_{L^2(\Omega)}.
\end{align*}
Hence, there exists a constant $C > 0$ independent of $\lambda$, such that:
\begin{align*}
	|B_\lambda(U, \Phi)| &\le C (1 + |\lambda|+|\lambda|^2 ) \Big( \|\hat{\varphi}\|_{H^1_{(0)}(\Omega)} \|\overline{\hat{\psi}}\|_{H^1_{(0)}(\Omega)} + \|\hat{q}\|_{H^1(\Omega)} \|\overline{\hat{\psi}}\|_{H^1_{(0)}(\Omega)} \\&\quad\quad\quad\quad\quad\quad\quad\quad\quad+ \|\hat{q}\|_{H^1(\Omega)} \|\overline{\hat{\eta}}\|_{H^1(\Omega)} + \|\hat{\varphi}\|_{H^1_{(0)}(\Omega)} \|\overline{\hat{\eta}}\|_{H^1(\Omega)} \Big) \\
	&\le C (1 + |\lambda|+|\lambda|^2 )\big( \|\hat{\varphi}\|_{H^1_{(0)}(\Omega)} + \|\hat{q}\|_{H^1(\Omega)} \big) \big( \|\overline{\hat{\psi}}\|_{H^1_{(0)}(\Omega)} + \|\overline{\hat{\eta}}\|_{H^1(\Omega)} \big).
\end{align*}
Equipped with the product norm $\|U\|_{\mathbb{V}} = \|\hat{\varphi}\|_{H^1_{(0)}} + \|\hat{q}\|_{H^1}$, we conclude the continuity of the bilinear form:
\begin{equation}
	|B_\lambda(U, \Phi)| \le C (1 + |\lambda|+|\lambda|^2 ) \|U\|_{\mathbb{V}} \|\Phi\|_{\mathbb{V}} \quad \text{for all } U, \Phi \in \mathbb{V}.
\end{equation}

On the other hand, taking $\Phi = U$ and assuming $\operatorname{Re}\lambda = \omega > 0$ is chosen sufficiently large, we deduce from the definition of the bilinear form $B_\lambda(U,U)$ that there exist positive constants $c_1, c_2,$ and $\tilde{\alpha}$ such that the coercivity condition holds:
there are some constants $c_1, c_2$ and $\tilde{\alpha}$ such that the coercivity condition holds:
\begin{equation} \label{eq:coercivity}
	\text{Re} B_\lambda(U,U) \ge c_1 (\text{Re}\lambda) \|\hat{\varphi}\|_{H^1(\Omega)}^2 + c_2 (\text{Re}\lambda) \|\hat{q}\|_{H^1(\Omega)}^2 \ge \tilde{\alpha} \|U\|_{\mathbb{V}}^2.
\end{equation}

Thus, by the generalized Lax-Milgram Lemma, there exists a unique weak solution $U = (\hat{\varphi}, \hat{q}) \in H^1_{(0)}(\Omega) \times H^1(\Omega)$ to the resolvent equations \eqref{resovent equtions} for every $\lambda \in \mathbb{C}$ with $\operatorname{Re}\lambda = \omega$ sufficiently large. Serving as the foundation for strong solvability, this initial regularity is subsequently upgraded via an elliptic bootstrapping argument.

Given the reference states $(\varphi_0, q_0) \in H_N^3(\Omega) \times H_N^2(\Omega)$, Sobolev embeddings ensure that the coefficients $c_\Gamma(q_0)$ and $\alpha(\varphi_0, \nabla\varphi_0, q_0)$ belong to $H^2(\Omega) \hookrightarrow C^0(\overline{\Omega})$. Thus, we can rearrange the second equation in \eqref{resovent equtions} to isolate the principal diffusion term:
\begin{equation*}
	\text{div}_N \big( \tilde{m}(\varphi_0, q_0, \nabla\varphi_0) \nabla\hat{q} \big) = \lambda \alpha(\varphi_0, \nabla\varphi_0, q_0) \hat{q} + \lambda c_\Gamma(q_0) \nabla\hat{\varphi} \cdot \nabla\varphi_0 - \hat{g}.
\end{equation*}
Since the source term $\hat{g} \in L^2(\Omega)$ and the weak solution satisfies $(\hat{\varphi}, \hat{q}) \in H^1_{(0)}(\Omega) \times H^1(\Omega)$, the entire right-hand side is  in $L^2(\Omega)$. Combining this with $\tilde{m}\geq m_0>0$ and $\tilde{m}(\varphi_0, q_0, \nabla\varphi_0) \in  H^{s_0}(\Omega)$, standard elliptic regularity theory yields $\hat{q} \in H_N^2(\Omega)$.

Turning to the $\hat{\varphi}$ equation, we recall that the source term satisfies $\hat{f} \in H_{(0)}^{-1}(\Omega)$. Thanks to the reference states, the relevant composite coefficients that satisfy $$\tilde{\sigma}(q_0), \tilde{\sigma}'(q_0), W'(\varphi_0), \xi_i'(q_0), c^{(i)}(\varphi_0) \in H^2(\Omega) \hookrightarrow C^0(\overline{\Omega})$$
Coupled with the strict positivity condition that $\tilde{\sigma}(q_0)\geq \sigma_0 > 0$ in $\Omega$, we rearrange the first equation to isolate the principal spatial operator:
\begin{equation*}
	-\Delta_N \text{div}_N(\tilde{\sigma}(q_0)\nabla\hat{\varphi}) = \hat{f} - \lambda\hat{\varphi} + \Delta_N(\text{div}_N(c_\Gamma(q_0)\hat{q}\nabla\varphi_0)) - \Delta_N \Big( \big( \tilde{\sigma}'(q_0)W'(\varphi_0) + \sum_{i=1,2}\xi_i'(\varphi_0)c^{(i)}(q_0) \big) \hat{q} \Big).
\end{equation*}
Based on the obtained regularity $\hat{q} \in H_N^2(\Omega)$, the entire right-hand side is bounded in $H_{(0)}^{-1}(\Omega)$. Furthermore, by the divergence theorem and the homogeneous Neumann boundary conditions, the principal term $\text{div}_N(\tilde{\sigma}(q_0)\nabla\hat{\varphi})$ inherently possesses a zero spatial mean. Considering the isomorphism property of the restricted operator $\Delta_N: H_{(0)}^1(\Omega) \to H_{(0)}^{-1}(\Omega)$, we rigorously infer that
\begin{equation*}
	\rev{\tilde{f} := \text{div}_N(\tilde{\sigma}(q_0)\nabla\hat{\varphi}) \in H_{(0)}^1(\Omega).}
\end{equation*}
Expanding the divergence term and rearranging the equation to isolate the Laplacian:
\begin{equation*}
	\Delta \hat{\varphi} = \tilde{\sigma}(q_0)^{-1} \big( \tilde{f} - \nabla \tilde{\sigma}(q_0) \cdot \nabla \hat{\varphi} \big).
\end{equation*}
\rev{Since the coefficient satisfies $\tilde{\sigma}(q_0) \in H^2(\Omega)$, its gradient belongs to $H^1(\Omega)$. By virtue of the Sobolev embedding theorem $H^1(\Omega) \hookrightarrow L^6(\Omega)$ for dimensions $d =2, 3$ and Hölder's inequality, the product $\nabla \tilde{\sigma}(q_0) \cdot \nabla \hat{\varphi}$ is initially bounded in $L^3(\Omega)$. A standard bootstrap argument utilizing $L^p$-elliptic regularity then elevates the right-hand side to $H^1(\Omega)$. Consequently, standard elliptic regularity for the Poisson equation} immediately yields the lifted regularity $\hat{\varphi} \in H_N^3(\Omega)$.


As a conclusion, for any fixed parameter $\lambda \in \mathbb{C}$ with a sufficiently large real part $\operatorname{Re}\lambda = \omega$, the unique weak solution is elevated to $U = (\hat{\varphi}, \hat{q}) \in H_N^3(\Omega) \times H_N^2(\Omega)$. This confirms that the resolvent operator $R(\lambda): H_{(0)}^{-1}(\Omega) \times L^2(\Omega) \to H_N^3(\Omega) \times H_N^2(\Omega)$ is a rigorously well-defined and bounded linear operator. 

While the preceding Lax-Milgram framework successfully guarantees the existence and uniqueness of the solution to the resolvent equations, this well-posedness is confined to a fixed parameter $\lambda$, capturing the system at a frozen temporal state in the frequency domain. To transition from this static resolvent level back to the time-dependent dynamics of the original evolutionary system, we need the operator-valued Mikhlin Multiplier Theorem providing the exact theoretical foundation for this. To apply this theorem, we must verify the multiplier derivative conditions for the operator corresponding to the time derivative.
To establish maximal $L^p$-regularity via the Operator-Valued Mikhlin Multiplier Theorem (Lemma \ref{thm:mikhlin}), we introduce the multiplier symbol $m(\lambda)$ mapping the modified source $\tilde{F}:=(\lambda\hat{f}, \hat{g})^T$ to the scaled solution:
\begin{equation*}
	m(\lambda)\tilde{F} = (\lambda\hat{\varphi}, \lambda\hat{q})^T \in H_{(0)}^{1}(\Omega) \times L^2(\Omega).
\end{equation*}
Since the base space $H_{(0)}^1(\Omega) \times L^2(\Omega)$ is a Hilbert space, it inherently belongs to the class of $\mathcal{HT}$ spaces. In this classical setting, the $\mathcal{R}$-boundedness requirement conveniently reduces to uniform operator boundedness. Therefore, to ensure that the multiplier operator $m(D_t)$ is bounded from $L^p(\mathbb{R}; H_N^{-3}(\Omega) \times L^2(\Omega))$ to $L^p(\mathbb{R}; H_{(0)}^1(\Omega) \times L^2(\Omega))$ for any $1 < p < \infty$, it is sufficient to verify the classical uniform norm bounds:
\begin{align*}
 &\sup_{\tau \in \mathbb{R} \setminus \{0\}} \|m(\tau)\|_{\mathcal{L}(H_N^{-3}(\Omega) \times L^2(\Omega), H_{(0)}^1(\Omega) \times L^2(\Omega))} \le \kappa_0,\\  &\sup_{\tau \in \mathbb{R} \setminus \{0\}} \|\tau \partial_\tau m(\tau)\|_{\mathcal{L}(H_N^{-3}(\Omega) \times L^2(\Omega), H_{(0)}^1(\Omega) \times L^2(\Omega))} \le \kappa_1, 
\end{align*}
for some constants $\kappa_0, \kappa_1 > 0$.
Since $\lambda = \omega + i\tau$, the chain rule directly yields:
\begin{equation*}
	\frac{\partial}{\partial \tau} m(\lambda) = m'(\lambda) \frac{\partial (\omega + i\tau)}{\partial \tau} = i m'(\lambda).
\end{equation*}
Furthermore, since $|\tau| \le |\lambda| \le |\tau| + \omega$ for any $\tau \in \mathbb{R}$, establishing the classical Mikhlin conditions with respect to the real frequency $\tau$ is mathematically equivalent to verifying the bounds with respect to the complex parameter $\lambda \in \mathbb{C}$ restricted to $\operatorname{Re}\lambda = \omega$. That is, checking the uniform norm bounds reduces to proving the following spectral resolvent estimates:
\begin{equation} \label{eq:multiplier_lambda}
	\| m^{(k)}(\lambda) \|_{\mathcal{L}(H_N^{-3}(\Omega) \times L^2(\Omega), H_{(0)}^1(\Omega) \times L^2(\Omega))} \leq C |\lambda|^{-k}, 
\end{equation}
for $k = 0, 1$ and any $\lambda \in \mathbb{C}$ with $\operatorname{Re}\lambda = \omega$. Specifically, the abstract condition in \eqref{eq:multiplier_lambda} for the zeroth-order case ($k=0$) translates directly to bounding the scaled solution as follows:
\begin{equation*}
	\| (\lambda\hat{\varphi}, \lambda\hat{q})\|_{H_{(0)}^1(\Omega) \times L^2(\Omega)} \le C \|\tilde{F}\|_{H_N^{-3}(\Omega) \times L^2(\Omega)}.
\end{equation*}
Indeed, revisiting the fundamental a priori estimate established in \eqref{energy-inequality} and extracting the relevant non-negative terms on the left-hand side, we readily obtain:
\begin{equation*}
	\frac{\alpha_0}{4}|\lambda|^2 \|\hat{q}\|_{L^2(\Omega)}^2 + \frac{\sigma_0}{4}|\lambda|^2 \|\nabla \hat{\varphi}\|_{L^2(\Omega)}^2 \le C \Big( \|\hat{g}\|_{L^2(\Omega)}^2 + |\lambda|^2 \|\hat{f}\|_{H_N^{-3}(\Omega)}^2 \Big) = C \|\tilde{F}\|_{H_N^{-3}(\Omega) \times L^2(\Omega)}^2.
\end{equation*}
Equipping the zero-mean space $H_{(0)}^1(\Omega)$ with its standard gradient norm, taking the square root of both sides directly yields the desired zeroth-order resolvent bound. Consequently, the multiplier operator $m(\tau)$ is uniformly bounded by a constant $\kappa_0 > 0$ that is independent of the parameter $\lambda$.

To verify the first-order multiplier derivative bound, we must carefully account for the fact that the augmented data vector $\tilde{F} = (\lambda\hat{f}, \hat{g})^T$ depends on the spectral parameter $\lambda$. Recalling the multiplier definition $m(\lambda)(\lambda\hat{f}, \hat{g})^T = \lambda(\hat{\varphi}, \hat{q})^T = \lambda U$, we differentiate both sides with respect to $\lambda$. Applying the product rule yields:
\begin{equation} \label{eq:product_rule}
	(\partial_\lambda m(\lambda))(\lambda \hat{f}, \hat{g})^T + m(\lambda)(\hat{f}, 0)^T = (\hat{\varphi}, \hat{q})^T + \lambda(\partial_\lambda \hat{\varphi}, \partial_\lambda \hat{q})^T.
\end{equation}
Let $U_\lambda := \partial_\lambda U = (\partial_\lambda\hat{\varphi}, \partial_\lambda\hat{q})^T$. Rearranging \eqref{eq:product_rule} and multiplying by $\lambda$, we obtain the algebraic decomposition for the scaled multiplier derivative:
\begin{equation} \label{eq:operator_decomp}
	\lambda (\partial_\lambda m(\lambda)) \tilde{F} = \lambda U  - \lambda m(\lambda)(\hat{f}, 0)^T+ \lambda^2 U_\lambda.
\end{equation}

To establish $|\lambda| \| \partial_\lambda m(\lambda) \|_{\mathcal{L}(H_N^{-3}(\Omega) \times L^2(\Omega), H_{(0)}^1(\Omega) \times L^2(\Omega))} \le C$, we take the $H_{(0)}^1(\Omega) \times L^2(\Omega)$ norm of \eqref{eq:operator_decomp} and bound the three resulting terms on the right-hand side individually.
First, by the zero-order bound established previously, we have:
\begin{equation} \label{bound_1}
	\|\lambda U\|_{H_{(0)}^1(\Omega) \times L^2(\Omega)} \le C \|\tilde{F}\|_{H_N^{-3}(\Omega) \times L^2(\Omega)}.
\end{equation}
Secondly, we apply the zero-order boundedness of $m(\lambda)$ to the unscaled source component $(\hat{f}, 0)^T$:
\begin{equation} \label{bound_2}
	\begin{aligned}
	\big\| \lambda m(\lambda)(\hat{f}, 0)^T \big\|_{H_{(0)}^1(\Omega) \times L^2(\Omega)} &= |\lambda| \big\| m(\lambda)(\hat{f}, 0)^T \big\|_{H_{(0)}^1(\Omega) \times L^2(\Omega)} \le C |\lambda| \|(\hat{f}, 0)^T\|_{H_N^{-3}(\Omega) \times L^2(\Omega)} \\
	&= C \|\lambda \hat{f}\|_{H_N^{-3}(\Omega)} \le C \|\tilde{F}\|_{H_N^{-3}(\Omega) \times L^2(\Omega)}.	
\end{aligned}	
\end{equation}
Finally, it remains to bound the term $\|\lambda^2 U_\lambda\|_{H_{(0)}^1 \times L^2}$. 
Differentiating the resolvent system \eqref{resovent equtions} with respect to $\lambda$, we obtain:
\begin{equation} \label{differentiated_system_1}
	\begin{cases}
		\lambda \hat{\varphi}_\lambda + \Delta_N\big[ \Div_N(\tilde{\sigma}(q_0) \nabla \hat{\varphi}_\lambda) - \Div_N(c_\Gamma(q_0)\hat{q}_\lambda \nabla \varphi_0) \big]+ \Delta_N\Big[(\tilde{\sigma}'(q_0) W'(\varphi_0 )+\sum\limits_{i=1,2} \xi_i'(\varphi_0) c^{(i)}(q_0) )\hat{q}_\lambda\Big] = - \hat{\varphi}, \\
		\alpha \lambda \hat{q}_\lambda + c_\Gamma(q_0)\nabla(\lambda \hat{\varphi}_\lambda) \cdot \nabla \varphi_0 - \Div_N \big(\tilde{m}(\varphi_0, q_0, \nabla\varphi_0) \nabla \hat{q}_\lambda\big) = -  \alpha(\varphi_0, q_0, \nabla\varphi_0) \hat{q} - c_\Gamma(q_0)\nabla \hat{\varphi} \cdot \nabla \varphi_0.
	\end{cases}
\end{equation}
By the definition of the multiplier operator $m(\lambda)\tilde{F} = (\lambda \hat{\varphi}, \lambda \hat{q})^T$ for the scaled source $\tilde{F} := (\lambda \hat{f}, \hat{g})^T$, we can map the right-hand side of the differentiated system directly. We obtain the exact relation:
\begin{equation}\label{eq:3.21_corrected}
	\lambda U_\lambda = -m(\lambda) \begin{pmatrix} \lambda \hat{\varphi} \\ \alpha \hat{q} + c_\Gamma(q_0)\nabla\hat{\varphi} \cdot \nabla\varphi_0 \end{pmatrix}.
\end{equation}

To verify the multiplier derivative condition, we require bounds on $\lambda^2 U_\lambda$. Multiplying \eqref{eq:3.21_corrected} by $\lambda$ and applying the zero-order estimate yields:
\begin{equation}\label{eq:3.22_new}
	\|\lambda^2 U_\lambda\|_{H_{(0)}^1 \times L^2} \le C \left( \|\lambda^2 \hat{\varphi}\|_{H_N^{-3}} + \|\lambda \alpha \hat{q} + \lambda c_\Gamma(q_0)\nabla\hat{\varphi} \cdot \nabla\varphi_0\|_{L^2} \right).
\end{equation}
We now analyze the right-hand side of \eqref{eq:3.22_new}. For the second term, recalling that $\alpha$ and $c_\Gamma(q_0)\nabla\varphi_0$ are bounded in $L^\infty(\Omega)$, we have:
\begin{equation*}
	\|\lambda \alpha \hat{q} + \lambda c_\Gamma(q_0)\nabla\hat{\varphi} \cdot \nabla\varphi_0\|_{L^2} \le C \big( \|\lambda \hat{q}\|_{L^2} + \|\lambda \nabla \hat{\varphi}\|_{L^2} \big) \le C \|\lambda U\|_{H_{(0)}^1 \times L^2} \le C \|\tilde{F}\|_{H_N^{-3} \times L^2}.
\end{equation*}
For the critical term $\|\lambda^2 \hat{\varphi}\|_{H_N^{-3}}$, we extract $\lambda \hat{\varphi}$ directly from the first equation of the original resolvent system \eqref{resovent equtions}. Multiplying it by $\lambda$, we obtain:
\begin{equation*}
	\lambda^2 \hat{\varphi} = \lambda \hat{f} - \Delta_N \Big[\Div_N (\tilde{\sigma}(q_0) \lambda \nabla \hat{\varphi}) - \Div_N (c_\Gamma(q_0) \lambda \hat{q} \nabla \varphi_0\Big] - \Delta_N\Big[\big(\tilde{\sigma}'(q_0) W'(\varphi_0 )+\sum\limits_{i=1,2} \xi_i'(\varphi_0) c^{(i)}(q_0) \big)\lambda\hat{q}\Big].
\end{equation*}
Since the composite operator $\Delta_N \text{div}_N$ maps $L^2(\Omega)$ continuously into $H_N^{-3}(\Omega)$, we can take the $H_N^{-3}$ norm and utilize continuous embeddings to control this term:
\begin{equation*}
	\|\lambda^2 \hat{\varphi}\|_{H_N^{-3}} \le \|\lambda \hat{f}\|_{H_N^{-3}} + C \big( \|\lambda \nabla \hat{\varphi}\|_{L^2} + \|\lambda \hat{q}\|_{L^2} \big) \le C \|\tilde{F}\|_{H_N^{-3} \times L^2}.
\end{equation*}
Substituting these bounds back into \eqref{eq:3.22_new}, we synthesize the decisive result:
\begin{equation}\label{bound_3}
	\|\lambda^2 U_\lambda\|_{H_{(0)}^1 \times L^2} \le C \|\tilde{F}\|_{H_N^{-3} \times L^2}.
\end{equation} 
Substituting the bounds \eqref{bound_1}, \eqref{bound_2}, and \eqref{bound_3} back into the decomposition \eqref{eq:operator_decomp}, we obtain
\begin{equation}
	\big\| \lambda (\partial_\lambda m(\lambda)) \tilde{F} \big\|_{H_{(0)}^1 \times L^2} \le C \|\tilde{F}\|_{H_N^{-3} \times L^2}.
\end{equation}
This verifies the first-order multiplier derivative bound $$\sup_{\tau \in \mathbb{R} \setminus \{0\}} \|\tau \partial_\tau m(\tau)\|_{\mathcal{L}(H_N^{-3}(\Omega) \times L^2(\Omega), H_{(0)}^1(\Omega) \times L^2(\Omega))} \le \kappa_1, 
$$ 
satisfying the conditions of the operator-valued Mikhlin Multiplier Theorem. Noting that the Mikhlin Multiplier Theorem is formulated on the whole real line $\mathbb{R}$, we first introduce the zero-extension for the source data $(f, g)^T$ originally defined on $(0, \infty)$:
\begin{equation*}
	(\tilde{f}, \tilde{g})^T(t) := 
	\begin{cases} 
		(f(t), g(t))^T, & t > 0, \\ 
		(0, 0)^T, & t \le 0. 
	\end{cases}
\end{equation*}
Recall that the complex spectral parameter is given by $\lambda = \omega + i\tau$. As rigorously verified by integration by parts, the inverse Fourier transform of $\lambda(\hat{\varphi}, \hat{q})^T$ explicitly yields the shifted differential operator:
\begin{equation}\label{inverse F}
	\mathcal{F}^{-1}\big[ (\omega + i\tau) \hat{\varphi} \big] = \partial_t \varphi + \omega \varphi, \quad \text{and} \quad \mathcal{F}^{-1}\big[ (\omega + i\tau) \hat{q} \big] = \partial_t q + \omega q.
\end{equation}
We define the corresponding time-domain solution operator $\mathcal{T}_m$ by
\begin{equation} \label{eq:operator_Tm}
	\mathcal{T}_m \binom{(\partial_t + \omega)\tilde{f}}{\tilde{g} }:= \mathcal{F}^{-1} \left[ m(\omega + i\tau) \begin{pmatrix} (\omega + i\tau)\mathcal{F}(\tilde{f}) \\ \mathcal{F}(\tilde{g}) \end{pmatrix} \right].
\end{equation}
By applying the Mikhlin Multiplier Theorem on the whole real line $\mathbb{R}$ to the zero-extended source data, the associated time-domain operator $\mathcal{T}_m$ yields a global solution on $\mathbb{R}$ that formally satisfies the shifted evolution system for $t > 0$:
\begin{equation} \label{eq:shifted_system}
\begin{aligned}
	&\begin{bmatrix}
		&(\omega + \partial_t)\varphi + \Delta_N\Div_N\big(\tilde{\sigma}(q_0)\nabla\varphi-c_\Gamma(q_0)q\nabla\varphi_0\big) + \Delta_N\Big(\big(\tilde{\sigma}'(q_0)W'(\varphi_0) + \sum_{i=1,2}\xi_i'(\varphi_0)c^{(i)}(q_0)\big) q \Big) \\
		&\alpha(\varphi_0, \nabla\varphi_0, q_0)(\omega + \partial_t)q + c_\Gamma(q_0)\nabla(\omega + \partial_t)\varphi \cdot \nabla\varphi_0 -\Div_N \big(\tilde{m}(\varphi_0, q_0, \nabla\varphi_0) \nabla q\big)\\
	\end{bmatrix}\\
	&= (f,g)^T .    
\end{aligned}
\end{equation}
This holds for all source data $(f, g)^T \in C_0^\infty((0, \infty); H_N^{-3}(\Omega)) \times C_0^\infty((0, \infty); L^2(\Omega))$ provided $\omega$ is chosen sufficiently large. Given dense embeddings 
\begin{equation*}
C_0^\infty((0, \infty); L^2(\Omega)) \stackrel{d}{\hookrightarrow} L^p((0, \infty); L^2(\Omega)), \quad C_0^\infty((0, \infty); H_N^{-3}(\Omega)) \stackrel{d}{\hookrightarrow} W_{p,0}^1((0, \infty); H_N^{-3}(\Omega)
\end{equation*}
Crucially, for the zero-extended input $\big((\partial_t + \omega)\tilde{f}, \tilde{g}\big)^T$ to legitimately fall within this valid $L^p(\mathbb{R})$ domain, its distributional derivative must not contain any Dirac delta singularities at the origin $t=0$. This forces the original physical source $f$ to possess a vanishing initial trace, thereby naturally restricting $f$ to the space $W_{p,0}^1((0, \infty); H_N^{-3}(\Omega))$. Hence,we can define the solution operator $\mathcal{T}_m$ to the Banach spaces over the half-line $(0, \infty)$ as
\begin{equation} \label{eq:multiplier_operator}
	\mathcal{T}_m :W_{p,0}^1((0, \infty); H_N^{-3}(\Omega))  \times L^p((0, \infty); L^2(\Omega)) \to L^p(\R; H_{(0)}^1(\Omega)) \times L^p(\R; L^2(\Omega)).
\end{equation}

To restrict this global result back to the finite interval $(0, T)$,  we first assume $\varphi_0 = 0=q_0 $. Now, we extend the real frequency parameter $\tau$ into the complex plane by introducing the shift $z = \tau + iy$ for $y \in \mathbb{R}$. For source data $(f, g)^T \in C_0^\infty((0, \infty); H_N^{-3}(\Omega)) \times C_0^\infty((0, \infty); L^2(\Omega))$,  and any $y \le 0$, the Fourier transform of $f$ is given by
\begin{equation} \label{eq:analytic_f}
	\hat{f}(\tau + iy) = \int_0^\infty f(t) e^{-i(\tau + iy)t} dt = \int_0^\infty f(t) e^{-i\tau t} e^{yt} dt.
\end{equation}
Notice that for all $t \ge 0$ and $y \le 0$, $
	|e^{-i\tau t} e^{yt}| = e^{yt} \le 1.
$
This exponential damping ensures that the integral in \eqref{eq:analytic_f} is absolutely convergent. This $\hat{f}(z)$ admits an analytic extension to the half-plane $\{z \in \mathbb{C} : \operatorname{Im } z < 0\}$.

In addition, to establish the upper  boundedness required by the Paley-Wiener theorem, we apply Plancherel's theorem to the damped function $t \mapsto f(t)e^{yt}$. For any $y \le 0$, we obtain the uniform estimate:
\begin{equation}\label{eq:boundary_f}
\begin{aligned}
	\int_{\mathbb{R}} \|\hat{f}(\tau + iy)\|_{H_N^{-3}(\Omega)}^2 d\tau 
	&= 2\pi \int_0^\infty \|f(t)\|_{H_N^{-3}(\Omega)}^2 e^{2yt} dt \\
	&\le 2\pi \int_0^\infty \|f(t)\|_{H_N^{-3}(\Omega)}^2 dt = 2\pi \|f\|_{L^2(0, \infty; H_N^{-3}(\Omega))}^2 < \infty.
\end{aligned}
\end{equation}
An analogous uniform bound holds for $\hat{g}$ in the $L^2(\Omega)$ norm. 

Furthermore, for the boundary limit condition, we must verify that the analytic extension strongly converges to its boundary value on the real axis as $y \to 0^-$. Using Plancherel's theorem and the Lebesgue Dominated Convergence Theorem, this is straightforwardly obtained:
\begin{equation}\label{eq:boundary_limit_f}
\begin{aligned}
	\lim_{y \to 0^-} \int_{\mathbb{R}} \|\hat{f}(\tau + iy) - \hat{f}(\tau)\|_{H_N^{-3}(\Omega)}^2 d\tau 
	&= 2\pi \lim_{y \to 0^-} \int_0^\infty \|f(t)(e^{yt} - 1)\|_{H_N^{-3}(\Omega)}^2 dt \\
	&= 2\pi \int_0^\infty \lim_{y \to 0^-} \|f(t)(e^{yt} - 1)\|_{H_N^{-3}(\Omega)}^2 dt = 0.
\end{aligned}
\end{equation}
By extending the real frequency parameter $\tau$ into $z = \tau + iy$ for $y \in \mathbb{R}$, the corresponding spectral parameter thus becomes $\lambda = \omega + i(\tau + iy) = (\omega - y) + i\tau$. Since the condition $y \le 0$ strictly aligns with the fundamental requirement that $\operatorname{Re}\lambda \ge \omega$ is significantly large, which is a vital premise for the \emph{a priori} energy estimates and the uniform coercivity of the bilinear form, the resolvent operator remains well-defined and maintains the uniform bounds required for the Mikhlin Theorem. Consequently, the operator-valued multiplier $m\big((\omega - y) + i\tau\big)$ exists and is strictly analytic for all $y < 0$.

Let $\tilde{\omega}= \omega - y$, combining the analyticity and uniform boundedness of the multiplier $m(\tilde{\omega}+i\tau)$ and the source data in the lower half-plane established above, we construct their scaled product in the frequency domain:
\begin{equation} \label{eq:analytic_product}
	\hat{V}(\tau + iy) = m(\tilde{\omega}+i\tau)\hat{F}(\tau + iy),
\end{equation}
where $$\hat{V}(\tau + iy):=\begin{pmatrix} (\tilde{\omega}+i\tau) \hat{\varphi}(\tau + iy) \\ (\tilde{\omega}+i\tau)\hat{q}(\tau + iy) \end{pmatrix}, \quad \hat{F}(\tau + iy):=\begin{pmatrix} (\tilde{\omega}+i\tau) \hat{f}(\tau + iy) \\ \hat{g}(\tau + iy) \end{pmatrix}.$$  

On the one hand, $\tilde{\omega}+i\tau$ is a polynomial entire function, which preserves the analyticity of the product. Furthermore, by directly applying Plancherel and Lebesgue Dominated Convergence arguments from equations \eqref{eq:analytic_f}--\eqref{eq:boundary_f} to this derivative term, it follows that the vector-valued source $\big(\lambda \hat{f}, \hat{g}\big)^T$ belongs to the Hardy space $H^2(\mathbb{C}^-, H_N^{-3}(\Omega) \times L^2(\Omega))$. 
On the other hand, because the resolvent multiplier $m(\omega - y + i\tau )$ is analytic and uniformly bounded on $\mathbb{R}$, hence belonging to the operator-valued Hardy space $H^\infty(\mathbb{C}^-; \mathcal{L}(H_N^{-3}(\Omega) \times L^2(\Omega), H_{(0)}^1(\Omega) \times L^2(\Omega)))$. For any $y < 0$, applying the standard norm estimate yields:
\begin{equation*}
\begin{aligned}
	&\int_{\mathbb{R}} \|\hat{V}(\tau + iy)\|_{H_{(0)}^1(\Omega) \times L^2(\Omega)}^2 \, d\tau \\
	&\quad \le \left( \sup_{y < 0, \, \tau \in \mathbb{R}} \|m(\omega - y + i\tau )\|_{\mathcal{L}(H_N^{-3}(\Omega) \times L^2(\Omega), H_{(0)}^1(\Omega) \times L^2(\Omega))} \right)^2 \int_{\mathbb{R}} \|\hat{F}(\tau + iy)\|_{H_N^{-3}(\Omega) \times L^2(\Omega)}^2 \, d\tau.
\end{aligned}
\end{equation*}
Thus, $\hat{V}(\tau + iy)$ is analytic for all $y < 0$, satisfies the uniform $L^2$-bound and strongly converges to its boundary value as $y \to 0^-$ in the sense of the $L^2(\mathbb{R}; H_{(0)}^1(\Omega) \times L^2(\Omega))$ norm.
Returning to the time domain via the inverse Fourier transform as in \eqref{inverse F} evaluated on the real frequency axis:
\begin{equation} \label{eq:inverse_fourier}
\Big((\omega + \partial_t)\varphi(t),(\omega + \partial_t)q(t)\Big )^T= \mathcal{F}^{-1}\big[ \hat{V}(\tau) \big](t).
\end{equation}
By Paley-Wiener theorem, the analyticity coupled with the uniform Hardy space bounds of the frequency solution $\hat{V}(z)$ in the lower half-plane guaranties that
\begin{equation*}
\Big((\omega + \partial_t)\varphi(t),(\omega + \partial_t)q(t)\Big) =(0,0) , \quad \text{for } t < 0.
\end{equation*}
Solving this homogeneous ordinary differential equation yields 
 $$(\varphi(t), q(t))^T = e^{-\omega t}(\varphi(0^-), q(0^-))^T, \text{ for all } t<0.  $$  
 Given that $\omega > 0$, the factor $e^{-\omega t}$ grows exponentially as $t \to -\infty$. However, our global functional setting dictates that the solution must be $L^p$-integrable on the entire real line $\mathbb{R}$. This constraint forces the traces from the left to vanish, i.e., $(\varphi(0^-), q(0^-)) = (0,0)$, which matches our homogeneous initial assumption at $t=0$. 
 Hence, we obtain
 \begin{equation}
 	(\varphi(t), q(t)) \equiv (0,0), \quad \text{for } t < 0.
 \end{equation}
which implies that $\text{supp}\,\varphi \subset [0, \infty)$ and $\text{supp}\,q \subset [0, \infty)$.

For any source data $(f, g)^T$ given on $(0, T)$, we can choose an arbitrary extension to $(0, \infty)$ within the proper functional spaces to construct the global solution. If there were two distinct extensions coinciding on $(0, T)$, their difference would identically vanish on $(0, T)$. By virtue of the causality established via the Paley-Wiener theorem, the resulting difference solution must vanish identically on $(0, T)$ as well. This ensures that the restriction of the solution $(\varphi, q)^T$ onto $(0, T)$ is uniquely determined solely by the values of the source data within the same interval $(0, T)$, independent of the choice of future extension. 

In conclusion, the global solution on $\mathbb{R}$ can be safely restricted to the finite interval $(0, T)$ for any $0<T< \infty$, completing the proof of the local maximal $L^p$-regularity of system \eqref{fg-equtions} that the temporal derivatives satisfy
	\begin{equation}
		\partial_t \varphi \in L^p(0,T; H^1(\Omega)), \quad \text{and} \quad \partial_t q \in L^p(0,T; L^2(\Omega)).
	\end{equation}
Combining this with the vanishing initial traces $\varphi(0)=0$ and $q(0)=0$, this implies $\varphi \in W^1_p(0,T; H^1(\Omega))$ and $q \in W^1_p(0,T; L^2(\Omega))$. 
	
To recover spatial regularity, we return to the original linearized system \eqref{fg-equtions} and utilize elliptic bootstrapping. 
First, reformulate the second equation of \eqref{fg-equtions}:
	\begin{equation}
		-\Div_N \big( \tilde{m}(\varphi_0, q_0, \nabla \varphi_0) \nabla q \big) = g - \partial_t q - c^\Gamma(q_0) \nabla \partial_t \varphi \cdot \nabla \varphi_0.
	\end{equation}

	By assumption $g \in L^p(0,T; L^2(\Omega))$, and since $\partial_t q \in L^p(0,T; L^2(\Omega))$ and $\nabla \partial_t \varphi \in L^p(0,T; L^2(\Omega))$, the entire right-hand side belongs to $L^p(0,T; L^2(\Omega))$. Since the coefficient $\tilde{m}(\varphi_0, q_0, \nabla\varphi_0) \ge m_0 > 0$ and $\tilde{m}(\varphi_0, q_0, \nabla\varphi_0) \in H^2(\Omega)$, standard elliptic regularity theory (see, e.g., Evans \cite[Chapter 6]{Evans2010} or Gilbarg and Trudinger \cite[Chapter 9]{GilbargTrudinger2001}) yields:
	\begin{equation}
		q \in L^p(0,T; H_N^2(\Omega)).
	\end{equation}
	Similarly, we rewrite the first equation as a fourth-order elliptic problem for $\varphi$:
	\begin{equation} \label{eq:elliptic_phi}
		-\Delta_N\big(\text{div}_N(\tilde{\sigma}(q_0)\nabla\varphi)\big) = f - \partial_t \varphi + \Delta_N\big(\text{div}_N(c_\Gamma(q_0)q\nabla\varphi_0)\big) - \Delta_N\Big[(\tilde{\sigma}'(q_0) W'(\varphi_0 )+\sum\limits_{i=1,2} \xi_i'(\varphi_0) c^{(i)}(q_0) )q\Big].
	\end{equation}
	To apply elliptic regularity, we must estimate each term on the right-hand side in $L^p(0, T; H_{(0)}^{-1}(\Omega))$.
	
	Recall our data assumption that the source term $f \in L^p(0, T; H_{(0)}^{-1}(\Omega))$. For the time derivative, the maximal regularity already provides $\partial_t \varphi \in L^p(0, T; H_{(0)}^1(\Omega)) \hookrightarrow L^p(0, T; H_{(0)}^{-1}(\Omega))$.
	
	For the highest-order cross-coupling term involving $q$, noting that the third-order differential operator $\Delta_N \text{div}_N$ maps $H^2(\Omega)$ continuously into $H_{(0)}^{-1}(\Omega)$, we leverage the Banach algebra property of $H^2(\Omega)$ to obtain:
	\begin{align*}
		\big\| \Delta_N\big(\text{div}_N(c_\Gamma(q_0)q\nabla\varphi_0)\big) \big\|_{L^p(0, T; H_{(0)}^{-1}(\Omega))} 
		&\quad \le C \big\| c_\Gamma(q_0)q\nabla\varphi_0 \big\|_{L^p(0, T; H^2(\Omega))} \\
		&\quad \le C \big\| c_\Gamma(q_0)\nabla\varphi_0 \big\|_{H^2(\Omega)} \| q \|_{L^p(0, T; H^2(\Omega))}.
	\end{align*}
	Since we have established $q \in L^p(0, T; H_N^2(\Omega))$, this term is bounded in $L^p(0, T; H_{(0)}^{-1}(\Omega))$.
	
	Similarly, for the lower-order nonlinear term, the Laplacian operator $\Delta_N$ maps $H^1(\Omega)$ continuously into $H_{(0)}^{-1}(\Omega)$. Thus, we can easily bound it by:
	\begin{align*}
		&\big\| \Delta_N\Big[(\tilde{\sigma}'(q_0) W'(\varphi_0 )+\sum\limits_{i=1,2} \xi_i'(\varphi_0) c^{(i)}(q_0) )q\Big] \big\|_{L^p(0, T; H_{(0)}^{-1}(\Omega))}\\ 
		&\quad \le C \big\| (\tilde{\sigma}'(q_0) W'(\varphi_0 )+\sum\limits_{i=1,2} \xi_i'(\varphi_0) c^{(i)}(q_0) )q \big\|_{L^p(0, T; H^1(\Omega))} \\
		&\quad \le C \| q \|_{L^p(0, T; H^1(\Omega))}.
	\end{align*}
	
	Combining these estimates, we conclude that the entire right-hand side of \eqref{eq:elliptic_phi} belongs to $L^p(0, T; H_{(0)}^{-1}(\Omega))$. Therefore, standard elliptic regularity theory ensures maximal spatial regularity:
	\begin{equation}
		\varphi \in L^p(0, T; H_N^3(\Omega)).
	\end{equation}
	
Finally, to obtain the second-order temporal derivative for $\varphi$, we take the time derivative of the first equation, which gives:
\begin{equation}\label{eq:dtt_phi}
	\partial_{tt}\varphi = \partial_t f - \mathcal{A}_{11} \partial_t \varphi - \mathcal{A}_{12} \partial_t q,
\end{equation}
where the principal linear spatial differential operators $\mathcal{A}_{11}$ and $\mathcal{A}_{12}$ are defined by:
\begin{align*}
	\mathcal{A}_{11} \partial_t \varphi &= \omega \partial_t \varphi + \Delta_N \big( \text{div}_N (\tilde{\sigma}(q_0) \nabla \partial_t \varphi) \big), \\
	\mathcal{A}_{12} \partial_t q &= - \Delta_N \big( \text{div}_N (c_\Gamma(q_0) \partial_t q \nabla \varphi_0) \big) + \Delta_N \Big( \big( \tilde{\sigma}'(q_0)W'(\varphi_0) + \sum_{i=1,2}\xi_i'(\varphi_0)c^{(i)}(q_0) \big) \partial_t q \Big).
\end{align*}
 Since $\partial_t f \in L^p(0,T; H_N^{-3}(\Omega))$, $\varphi_0 \in H_N^3(\Omega)$ and $q_0 \in H_N^2(\Omega)$, Sobolev embeddings ensure that all the multiplied coefficients are bounded, i.e., $\tilde{\sigma}(q_0), \nabla\varphi_0, \tilde{\sigma}'(q_0), W'(\varphi_0),  \xi_i'(\varphi_0)c^{(i)}(q_0) \in C^0(\overline{\Omega})$. 
With the operator $\Div_N : L^2(\Omega) \to H_{(0)}^{-1}(\Omega)$ and $\Delta_N : H_{(0)}^{-1}(\Omega) \to H_N^{-3}(\Omega)$, $\mathcal{A}_{11}$ maps $L^p(0,T; H^1(\Omega))$ into $L^p(0,T; H_N^{-3}(\Omega))$. And similarly, $\mathcal{A}_{12}$ maps $L^p(0,T; L^2(\Omega))$ into $L^p(0,T; H_N^{-2}(\Omega)) \hookrightarrow L^p(0,T; H_N^{-3}(\Omega))$. Hence, since all terms on the right-hand side of \eqref{eq:dtt_phi} belong to $L^p(0,T; H_N^{-3}(\Omega))$, we immediately deduce that $\partial_{tt} \varphi \in L^p(0,T; H_N^{-3}(\Omega))$, which asserts:
\begin{equation}
	\varphi \in W_p^2(0,T; H_N^{-3}(\Omega)).
\end{equation}
Gathering all the regularity results above, we complete the proof of the linear maximal $L^p$-regularity.	
		
\subsubsection{Extension to Non-zero Initial Data via Lifting}\label{subsec:extension_lifting}

Having established the maximal $L^p$-regularity for the linearized system with zero initial conditions, we now extend the well-posedness result to incorporate arbitrary compatible initial data. Recall the definition of the fundamental Banach space $\tilde{Z}_T = Z_T^2 \times Z_T^3$ (without initial value restrictions), where
\begin{align*}
	Z_T^2 &:= W_p^1(0, T; H^1(\Omega)) \cap L^p(0, T; H_N^3(\Omega)) \cap W_p^2(0, T; H_N^{-3}(\Omega)), \\
	Z_T^3 &:= L^p(0, T; H_N^2(\Omega)) \cap W_p^1(0, T; L^2(\Omega)).
\end{align*}      
By the trace theory of real interpolation spaces, the trace operator $\gamma_0:\tilde{Z}_T \to \mathbb{Z}_0$, defined by $\gamma_0(\varphi, q) = (\varphi_0, q_0)$ onto the space
\begin{equation}
	\mathbb{Z}_0 := (H_N^3(\Omega), H^1(\Omega))_{1-\frac{1}{p}, p} \times (H_N^2(\Omega), L^2(\Omega))_{1-\frac{1}{p}, p},
\end{equation} 
is a bounded surjection. Therefore, by the Open Mapping Theorem, there exists a bounded right-inverse operator $\gamma_r^{-1} : \mathbb{Z}_0 \to \tilde{Z}_T$. For any given initial data $(\varphi_0, q_0) \in \mathbb{Z}_0$, this allows us to define a reference function $U^* := (\varphi^*, q^*) \in Z_T$ as 
\begin{equation}
	U^* = \gamma_r^{-1}(\varphi_0, q_0).
\end{equation}
This construction guarantees that
\begin{equation}
	U^*(0) = \gamma_0 \gamma_r^{-1}(\varphi_0, q_0) = (\varphi_0, q_0).
\end{equation}
Crucially, because $U^*$ satisfies the required non-zero initial conditions, it follows that $U^* \in \tilde{X}_T:=X_T^2 \times X_T^3$, where $\tilde{X}_T \subset \tilde{Z}_T$ is the affine linear subspace incorporating the initial data. We also obtain the norm estimate $\|U^*\|_{\tilde{Z}_T} \le C \|(\varphi_0, q_0)\|_{\mathbb{Z}_0}$.

By the definition of the linearized operator $\tilde{\mathcal{L}}_T: Z_T^2 \times Z_T^3\to Y_T^2 \times Y_T^3$, for any $(\varphi, q) \in \tilde{Z}_T$, we have the target space:
\begin{equation}
	Y_T^2 \times Y_T^3 := \big(W_p^1(0,T; H^{-3}_N(\Omega)) \cap L^p(0,T; H_{(0)}^{-1}(\Omega))\big) \times L^p(0,T; L^2(\Omega)).
\end{equation}
Recalling the explicit structure of the linearized operator $\tilde{\mathcal{L}}_T$ defined by the system \eqref{fg-equtions}, and applying the triangle inequality alongside the standard Sobolev embedding properties for the variable coefficients, we obtain the following bound for the components in $Y_T^2 \times Y_T^3$:
\begin{align*}
	&\| \mathscr{L}_T (\varphi, q)^T \|_{Y_T^2 \times Y_T^3}\\ 
	&\quad \le C \Big[ \|\varphi\|_{W_p^2(0,T; H_N^{-3}(\Omega)) \cap W_p^1(0,T; H_{(0)}^{-1}(\Omega))} + \|\tilde{\sigma}(q_0)\Delta \varphi\|_{W_p^1(0,T; L^2(\Omega)) \cap L^p(0,T; H^2(\Omega))} \\
	&\quad\quad + \| c_\Gamma(q_0) q \nabla \varphi_0 \|_{W_p^1(0,T; L^2(\Omega)) \cap L^p(0,T; H^2(\Omega))} \Big] \\
	&\qquad+\Big\| \Big(\tilde{\sigma}'(q_0) W'(\varphi_0 )+\sum\limits_{i=1,2} \xi_i'(\varphi_0) c^{(i)}(q_0) \Big)q \Big\|_{W_p^1(0,T; H_{(0)}^{-1}(\Omega)) \cap L^p(0,T; H^1(\Omega))}\\
	&\qquad+ C \Big[ \|\alpha(q_0, \varphi_0, \nabla \varphi_0) \partial_t q\|_{L^p(0,T; L^2(\Omega))} + \| c_\Gamma(q_0) \nabla \varphi_0 \cdot \nabla \partial_t \varphi \|_{L^p(0,T; L^2(\Omega))} \\
	&\qquad\quad\quad + \| \tilde{m}(\varphi_0, q_0, \nabla \varphi_0) \nabla q \|_{L^p(0,T; H^1(\Omega))} \Big] \\
	&\quad \le C \Big( \|\varphi\|_{W_p^2(0,T; H_N^{-3}(\Omega)) \cap W_p^1(0,T; H^1(\Omega)) \cap L^p(0,T; H_N^3(\Omega))} + \|q\|_{W_p^1(0,T; L^2(\Omega)) \cap L^p(0,T; H_N^2(\Omega))} \Big) \\
	&\quad \le C \| (\varphi, q) \|_{\tilde{Z}_T}.
\end{align*}
This establishes that $\tilde{\mathcal{L}}_T : Z_T^2 \times Z_T^3\to Y_T^2 \times Y_T^3$ is a bounded linear operator. Since $U^* \in \tilde{X}_T \subset \tilde{Z}_T$, we immediately have:
\begin{equation}
	\|\tilde{\mathcal{L}}_T U^*\|_{Y_T^2 \times Y_T^3} \le C \|U^*\|_{\tilde{Z}_T} \le C' \|(\varphi_0, q_0)\|_{\mathbb{Z}_0}.
\end{equation}

Now, let $V := U - U^*$. Then $V$ solves the shifted system
\begin{equation}
	\begin{cases}
	\tilde{\mathcal{L}}_T V = \tilde{\mathcal{F}} - \tilde{\mathcal{L}}_T U^*, \\
		V|_{t=0} = 0.
	\end{cases}
\end{equation}
Since $V \in \tilde{X}_T$, and the effective source satisfies $\tilde{\mathcal{F}} - \tilde{\mathcal{L}}_T U^* \in  Y_T^2 \times Y_T^3$, applying the conclusion of the zero initial data problem established above, and recovering the original variable $U = V + U^*$, we get
\begin{equation}
	U \in \tilde{X}_T, \quad \text{for every }\tilde{\mathcal{F}} \in  Y_T^2 \times Y_T^3 \text{ and initial data } (\varphi_0, q_0) \in \mathbb{Z}_0.
\end{equation} 
This completes the proof of the maximal $L^p$-regularity for the strongly coupled system with arbitrary compatible initial values.

\section{The local well-posedness of Model D}\label{sec:model_D}

In this section, we establish the \rev{existence and uniqueness} of strong solutions for \eqref{mod1}. Similarly to our overarching strategy for Model C, to apply the Banach fixed-point theorem, we will first verify the local Lipschitz continuity of the nonlinear part, and subsequently establish the existence and continuity of the inverse linear operator. However, it is crucial to emphasize that the mathematical methodology required for analyzing the linear operator in \eqref{mod1} differs significantly from that utilized for \eqref{main system}. We will provide a detailed and rigorous analysis of this distinct linear framework within this section. 

To initiate this procedure, we first reformulate system \eqref{mod1} into its linear and nonlinear \rev{parts}. Operating within the fundamental Banach space $Z_T = Z_T^1 \times Z_T^2 \times Z_T^3$ (where initial values are unrestricted), the nonlinear system \eqref{mod1} is equivalently rewritten in the abstract form:
\begin{equation}
	\mathcalL (\ve, \varphi,q) = \rev{\tilde{\mathcal{F}}(\ve, \varphi,q), \quad \text{together with } (\ve, \varphi,q)|_{t=0} = (\ve_0, \varphi_0,q_0)},
\end{equation}
where $\mathcalL$ captures the principal linear terms along with the initial traces, mapping into the product space $Y_T \times \gamma_0(Z_T)$, while $\tilde{\mathcal{F}}$ contains the corresponding nonlinearities.

To this end, we first rewrite the momentum equation into an equivalent form:
\begin{equation*}
	\dt(\rho \ve) + \sum_{j=1}^{d}\dxj \left[ \rev{\ve} \left( \rho \ve_j + \bJ_j \right) \right] + \grad p - \Div(2 \eta(\varphi) D\ve) = -\sum_{j=1}^{d}\dxj \left( \varepsilon \rev{\nabla} \varphi \dxj \varphi \right).
\end{equation*}
Using the constitutive relation for $\dt \rho$ alongside the product rule for derivatives, we obtain the equivalent \emph{reduced equation}:
\begin{equation*} 
	\rho \dt \ve + (\rho\ve + \bJ) \cdot \grad \ve + \grad \Pi - \Div(2 \eta(\varphi) D\ve) = -\varepsilon \Lap \varphi \, \grad \varphi, 
\end{equation*}
where the modified pressure is defined as $\Pi = p + \frac{1}{2} \varepsilon |\grad \varphi|^2$. For clarity, by explicitly inserting the constitutive relation for the diffuse flux $\bJ = -\frac{\tilde{\rho}_1 - \tilde{\rho}_2}{2} m(\varphi) \grad \mu$, the momentum equation expands to:
\begin{equation}
	\rho \dt \ve + \left[ \rho \ve - \frac{\tilde{\rho}_1 - \tilde{\rho}_2}{2} m(\varphi) \grad \left(-\varepsilon \Lap \varphi + \frac{1}{\varepsilon} h(q) W'(\varphi)\right) \right] \cdot \grad \ve + \grad \Pi - \Div(2 \eta(\varphi) D\ve) = -\varepsilon \Lap \varphi \, \grad \varphi. 
\end{equation}

Analogous to the procedure for system \eqref{main system}, linearizing around the initial data yields the following formulation. For any $(\ve, \varphi, q) \in Z_T$, the linear operator $\mathcalL$ and the nonlinear source mapping $\mathcal{F}$ are explicitly given by:
\rev{\begin{equation}\label{linear matrix}
	\begin{split}
		\mathcalL(\ve, \varphi, q) &= \begin{bmatrix}
			P_\sigma \dt \ve - P_\sigma \Div\left(2\eta(\varphi_0) D\ve\right)  \\
			\dt \varphi + \varepsilon  m(\varphi_0) \Lap^2 \varphi \\
			\left( \frac{1}{\varepsilon} f'(q_0) W(\varphi_0) + g'(q_0) \right) \dt q + \frac{1}{\varepsilon} f(\varphi_0) W'(\varphi_0) \dt \varphi - \tilde{m}(\varphi_0, q_0) \Lap q 
		\end{bmatrix},\\
		\mathcal F(\ve, \varphi, q) &= \begin{bmatrix}
			P_\sigma  \mathcal F_1(\ve, \varphi, q),
			\mathcal F_2(\ve, \varphi, q),
			\mathcal F_3(\ve, \varphi, q)
		\end{bmatrix}^T,
	\end{split}
\end{equation}}
where the nonlinear coupling terms are collected in:
\begin{equation*}
	\begin{split}
		\mathcal F_1(\ve, \varphi, q) =& (\rho(\varphi_0) - \rho(\varphi)) \dt \ve - \Div\left(2\eta(\varphi_0) D\ve\right) + \Div\left(2\eta(\varphi) D\ve\right) - \varepsilon \Lap \varphi \grad \varphi \\
		& - \left[ \rho \ve - \frac{\tilde{\rho}_1 - \tilde{\rho}_2}{2} m(\varphi) \grad \left(-\varepsilon \Lap \varphi + \frac{1}{\varepsilon} h(q) W'(\varphi)\right) \right] \cdot \grad \ve,\\
		\mathcal F_2(\ve, \varphi, q) =& -\ve \cdot \grad \varphi + \frac{1}{\varepsilon}\Div\left(  m(\varphi)\grad( h(q) W'(\varphi)) \right) +(\varepsilon m(\varphi_0)-\varepsilon m(\varphi)) \Lap^2 \varphi - \varepsilon m'(\varphi)\grad\varphi\cdot \grad\Lap \varphi, \\
		\mathcal F_3(\ve, \varphi, q) =& \left( \frac{1}{\varepsilon} f'(q_0) W(\varphi_0) + g'(q_0) - \frac{1}{\varepsilon} f(q) W(\varphi) - g'(q) \right) \dt q- \ve \cdot \grad \left( \frac{1}{\varepsilon} f(q) W(\varphi) \right) \\
		&- \left( \frac{1}{\varepsilon} f(q) W'(\varphi) - \frac{1}{\varepsilon} f(q_0) W'(\varphi_0) \right) \dt\varphi + g(q)  - \tilde{m}(\varphi_0, q_0) \Lap q + \Div\left(\tilde{m}(\varphi, q) \grad q\right).
	\end{split}
\end{equation*}

To establish the local well-posedness for \eqref{mod1}, we adopt a functional framework analogous to that of \eqref{main system}. We first define the fundamental Banach spaces $Z_T = Z_T^1 \times Z_T^2 \times Z_T^3$ and the target bulk spaces $Y_T = Y_T^1 \times Y_T^2 \times Y_T^3$ as:
\begin{equation*}
\begin{aligned}
	Z_T^1 &:= W_2^1(0,T; L_\sigma^2(\Omega)) \cap L^2(0,T; H^2(\Omega)^d \cap H_0^1(\Omega)^d), \\
	Z_T^2 &:= W_p^1(0,T; L^p(\Omega)) \cap L^p(0,T; W_{p,N}^4(\Omega)), \\
	Z_T^3 &:= W_p^1(0,T; L^p(\Omega)) \cap L^p(0,T; W_{p,N}^2(\Omega)), \\
\end{aligned}
\end{equation*}
and
\begin{equation*}
 Y_T = Y_T^1 \times Y_T^2 \times Y_T^3 := L^2(0,T; L^2_\sigma(\Omega))\times L^p(0,T;L^p(\Omega))\times L^p(0,T; L^p(\Omega)). 
\end{equation*}
By virtue of the maximal $L^p$-regularity framework and the trace theory of real interpolation spaces, the initial values must belong to the respective trace spaces of $Z_T$ at $t = 0$. Specifically, we require:
\begin{align*}
	\mathbf{v}_0 \in \gamma_0(Z_T^1) &= (L^2_\sigma(\Omega), H^2(\Omega)^d \cap H_0^1(\Omega)^d)_{1-\frac{1}{2}, 2} = H_0^1(\Omega)^d \cap L^2_\sigma(\Omega), \\
	\varphi_0 \in \gamma_0(Z_T^2) &= (L^p(\Omega), W_{p,N}^4(\Omega))_{1-\frac{1}{p}, p}, \\
	q_0 \in \gamma_0(Z_T^3) &= (L^p(\Omega), W_{p,N}^2(\Omega))_{1-\frac{1}{p}, p}.
\end{align*}

To ensure that the embedding constants remain independent of $T$, we equip $Z_T^1$, $Z_T^2$, and $Z_T^3$ with the equivalent norms $\|\cdot\|'_{Z_T^1}$, $\|\cdot\|'_{Z_T^2}$, and $\|\cdot\|'_{Z_T^3}$ defined by adding the initial trace norms:
\begin{align*}
	\|\mathbf{v}\|'_{Z_T^1} &:= \|\mathbf{v}\|_{Z_T^1} + \|\mathbf{v}(0)\|_{\gamma_0(Z_T^1)}, \\
	\|\varphi\|'_{Z_T^2} &:= \|\varphi\|_{Z_T^2} + \|\varphi(0)\|_{\gamma_0(Z_T^2)}, \\
	\|q\|'_{Z_T^3} &:= \|q\|_{Z_T^3} + \|q(0)\|_{\gamma_0(Z_T^3)}.
\end{align*}

Finally, for the application of the Banach contraction mapping principle to the nonlinear system, we define the affine linear subspace $X_T := X_T^1 \times X_T^2 \times X_T^3$ by incorporating the prescribed initial conditions:
\begin{align*}
	X_T^1 &:= \big\{ \mathbf{v} \in Z_T^1 \;\big|\; \mathbf{v}|_{t=0} = \mathbf{v}_0 \big\}, \\
	X_T^2 &:= \big\{ \varphi \in Z_T^2 \;\big|\; \varphi|_{t=0} = \varphi_0 \big\}, \\
	X_T^3 &:= \big\{ q \in Z_T^3 \;\big|\; q|_{t=0} = q_0 \big\}.
\end{align*}
We emphasize that, $X_T$ is not a vector space but an affine subspace of $Z_T$.

With the rigorous functional framework established, we are now positioned to present the two fundamental ingredients required for the Banach fixed-point argument. We first state the local Lipschitz continuity for the nonlinear source mapping $\mathcal{F}$.
\begin{proposition}\label{proposition}
	Let Assumptions \ref{asumm_2} hold, and let $\ve_0$, $\varphi_0$ and $q_0$ be given as in Theorem \ref{th1} and let $R>0$. Then, there exists a constant $C(T, R) > 0$ such that
	\begin{equation*}
		\|\mathcal{F}(\ve_1, \varphi_1, q_1) - \mathcal{F}(\ve_2, \varphi_2, q_2)\|_{Y_T} \leq C(T, R) \|(\ve_1 - \ve_2, \varphi_1 - \varphi_2,  q_1 - q_2)\|_{X_T},
	\end{equation*}
	for all $(\ve_i, \varphi_i, q_i) \in X_T$ with $\|(\ve_i, \varphi_i, q_i)\|_{X_T} \leq R$ for $i = 1, 2$. Moreover, $C(T, R) \to 0$ as $T \to 0$.
\end{proposition}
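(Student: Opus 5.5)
The argument follows the Model C Lipschitz estimates: first fix the spaces in which $\ve$, $\varphi$, $q$ and their compositions are uniformly bounded on $\overline{B_R^{X_T}}$, with constants independent of $T\in(0,T_0]$. Then estimate $\mathcal F_1$, $\mathcal F_2$, $\mathcal F_3$ term by term by add-and-subtract decompositions. Every difference is either multiplied by a factor vanishing at $t=0$, which gives a Hölder factor $T^{\gamma}$, or is integrated in time against an $L^\infty_t$ bound, which gives $T^{1/p}$ or $T^{1/2}$.

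\textbf{Step 1: uniform bounds.} By \eqref{con-em-th} and Lemma \ref{extension}, $\varphi\in BUC([0,T];B^{4-4/p}_{p,p}(\Omega))$ and $q\in BUC([0,T];B^{2-2/p}_{p,p}(\Omega))$. For $5<p<6$ and $d\le 3$ this gives $\varphi\in BUC([0,T];C^{2,\beta}(\overline\Omega))$, so $\nabla\varphi,\nabla^2\varphi$ are bounded, and $\nabla^3\varphi\in L^\infty_t L^{r}_x$ for some large $r$. It also gives $q\in BUC([0,T];C^{1,\beta}(\overline\Omega))$. Combining with $W^1_p(0,T;L^p)\hookrightarrow C^{0,1-1/p}([0,T];L^p)$ and Lemma \ref{XYX}, $\varphi$ is Hölder in time with values in $C^{2}$ and $q$ with values in $C^{1}$ (after a small loss of spatial index), with exponent $\gamma>0$. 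For $\ve$, $Z_T^1\hookrightarrow BUC([0,T];H^1_0)$ and $\ve\in L^2(0,T;H^2)\hookrightarrow L^2(0,T;L^\infty)$. Compositions $m(\varphi),h(q),f(q),W^{(k)}(\varphi),\tilde m(\varphi,q),\rho(\varphi)$ inherit these bounds and local Lipschitz dependence by Lemma \ref{lem:Runst_Sickel}, using $C^5$ regularity.

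\textbf{Step 2: the $\mathcal F_2$ and $\mathcal F_3$ terms in $L^p(0,T;L^p)$.} The top-order terms are $(m(\varphi_0)-m(\varphi))\Delta^2\varphi$ and $(\tilde m(\varphi_0,q_0)-\tilde m(\varphi,q))\Delta q$. For these I split
\[a(\varphi_1)\Delta^2\varphi_1-a(\varphi_2)\Delta^2\varphi_2=(a(\varphi_1)-a(\varphi_2))\Delta^2\varphi_1+a(\varphi_2)\Delta^2(\varphi_1-\varphi_2),\]
where $a=m(\varphi_0)-m(\cdot)$. The first piece is bounded by $\sup_t\|\varphi_1-\varphi_2\|_{C^0}$, which is at most $T^\gamma$ times the Hölder seminorm of a function vanishing at $t=0$. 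The second piece uses $\sup_t\|a(\varphi_2)\|_{C^0}\le CT^\gamma$. The terms with time derivatives $(\dots)\dt q$, $(\dots)\dt\varphi$ in $\mathcal F_3$ are handled identically.

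The third-order terms $m'(\varphi)\nabla\varphi\cdot\nabla\Delta\varphi$ and $\Div(m(\varphi)\nabla(h(q)W'(\varphi)))$ are lower order. I would bound them by $\|\nabla^3\varphi\|_{L^\infty_tL^p_x}$ and $\|\nabla^2q\|_{L^p_tL^p_x}$ times $L^\infty$ coefficients. Where only $L^p$ in time is available (for $\nabla^2 q$), I would interpolate $\nabla^2q$ between $L^\infty_tB^{2-2/p}$ and $L^p_tW^2_p$ into some $L^{r}_t L^p_x$ with $r>p$, so that Hölder in time yields a positive power of $T$.

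\textbf{Step 3: the $\mathcal F_1$ term and the convective couplings; main obstacle.} $\mathcal F_1$ is measured in $L^2(0,T;L^2)$. Its terms $(\rho(\varphi_0)-\rho(\varphi))\dt\ve$ and the viscosity difference go as above. $\varepsilon\Delta\varphi\nabla\varphi$ is bounded pointwise, giving $T^{1/2}$. The hardest term is the flux $m(\varphi)\nabla(-\varepsilon\Delta\varphi+\varepsilon^{-1}h(q)W'(\varphi))\cdot\nabla\ve$. Here $\nabla\Delta\varphi\in L^\infty_tL^r_x$ and $\nabla\ve\in L^2_tH^1_x\hookrightarrow L^2_tL^6_x$, so Hölder with $1/r+1/6\le1/2$ requires $r\ge3$; this is available since $4-4/p-1>3/p$ for $p>5$. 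I would then extract $T^{\delta}$ by interpolating $\nabla\ve$ between $L^\infty_tL^2$ and $L^2_tL^6$. The remaining convective terms $\ve\cdot\nabla\varphi$ in $\mathcal F_2$ and $\ve\cdot\nabla(f(q)W(\varphi))$ in $\mathcal F_3$ must be placed in $L^p(0,T;L^p)$ while $\ve$ has only $L^2$-in-time $H^2$ regularity. I expect this mismatch of exponents, which Remark \ref{rem:nonlinear_embedding} flags as forcing $p\le6$, to be the main difficulty. I would use $\ve\in L^\infty_tH^1\hookrightarrow L^\infty_tL^6$ together with $\nabla q\in L^\infty_tL^{\infty}$ and $\nabla\varphi\in L^\infty_tL^\infty$. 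This gives $L^\infty_tL^6_x\subset L^p_tL^p_x$ for $p\le6$, with a factor $T^{1/p}$ and the Lipschitz structure from the product splitting.

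Collecting all pieces gives $C(T,R)=C(R)(T^{\gamma}+T^{1/p}+T^{\delta})\to0$.
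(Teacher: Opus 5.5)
\textbf{The gap is in your Step 2, at the term $\varepsilon^{-1}\Div\bigl(m(\varphi)\nabla(h(q)W'(\varphi))\bigr)$ of $\mathcal F_2$.} This term is lower order in $\varphi$, but not in $q$. Expanding it produces $\varepsilon^{-1}m(\varphi)h'(q)W'(\varphi)\Delta q$, and $\Delta q$ lies exactly in $L^p(0,T;L^p(\Omega))=Y_T^2$, with nothing to spare.

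The interpolation you propose does not exist. The available bounds are $\nabla^2 q\in L^\infty_t B^{-2/p}_{p,p}$ and $\nabla^2 q\in L^p_tL^p_x$, so any space $L^r_t$ with $r>p$ only comes with negative spatial smoothness. Concretely, take the parabolically rescaled bump $u=A\,\psi((t-t_0)/\delta^2)\,w((x-x_0)/\delta)$ with $t_0>0$ and compact support in $\Omega$. It has zero trace and satisfies the Neumann condition, and $\|\nabla^2u\|_{L^r_tL^p_x}/\|u\|_{X_T^3}\sim\delta^{2/r-2/p}\to\infty$ as $\delta\to0$.

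\textbf{The obstruction is genuine, not a matter of finding a better embedding.} After your add-and-subtract, the piece $\varepsilon^{-1}m(\varphi_2)h'(q_2)W'(\varphi_2)\Delta(q_1-q_2)$ carries a coefficient that does not vanish at $t=0$. Take $\ve_1=\ve_2$, $\varphi_1=\varphi_2$, and $q_1-q_2=u$ as above, normalized in $X_T^3$ (so $A\to0$). Then every other contribution to the difference of $\mathcal F$ is $o(\|u\|_{X_T^3})$ as $\delta\to0$. This gives $\|\mathcal F(\ve_1,\varphi_1,q_1)-\mathcal F(\ve_2,\varphi_2,q_2)\|_{Y_T}\ge c\,\|q_1-q_2\|_{X_T^3}$ with $c>0$ independent of $T$, whenever $m(\varphi_0)h'(q_0)W'(\varphi_0)\not\equiv0$. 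So with this choice of $\mathcal L$ and $\mathcal F$, a constant $C(T,R)\to0$ is impossible.

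The paper's proof does not supply the missing step either. It estimates this term as $\Div(a_2)$ with $a_2=\varepsilon^{-1}m(\varphi)h(q)W'(\varphi)$, bounded in $L^p(0,T;W^1_p)$. That drops one derivative, so $\Delta q$ never appears there.

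\textbf{What is needed is a structural change.} Move $\varepsilon^{-1}m(\varphi_0)h'(q_0)W'(\varphi_0)\Delta q$ into the linear operator. Then $\mathcal F_2$ keeps only $\varepsilon^{-1}\bigl(m(\varphi)h'(q)W'(\varphi)-m(\varphi_0)h'(q_0)W'(\varphi_0)\bigr)\Delta q$, which your vanishing-coefficient device handles. The price is that the linear $(\varphi,q)$-system becomes coupled in both directions. The triangular argument of Lemma~\ref{inv-l3} then no longer applies and must be replaced by a separate maximal-regularity argument. Such an argument is plausible, since the coupling is of lower order in the Newton-polygon sense, but it has to be proved.

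The rest of your plan is sound and follows the same lines as the paper:
\begin{itemize}
\item the uniform bounds in Step 1;
\item the treatment of $(m(\varphi_0)-m(\varphi))\Delta^2\varphi$ and of the coefficient differences multiplying $\partial_t q$, $\partial_t\varphi$ and $\Delta q$;
\item the flux term in $\mathcal F_1$;
\item the convective terms, via $H^1\hookrightarrow L^6$ and $p\le6$.
\end{itemize}
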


Complementing the nonlinear estimates, the second essential ingredient is the continuous invertibility of the linearized operator $\mathcal{L}$. 
\begin{proposition}\label{inv-con}
	Let $T > 0$ and $\mathcal{L}, X_T, Y_T$ be defined as before. Then, the \rev{operator} $\mathcal{L}: X_T \to Y_T$ is invertible. Moreover, for every $T_0 > 0$, there exists a constant $C(T_0) > 0$ such that
	\rev{
	\begin{equation*}
		\|\mathcal{L}^{-1}(\mathcal{F}_1) - \mathcal{L}^{-1}(\mathcal{F}_2)\|_{X_T} \leq C(T_0) \|\mathcal{F}_1 - \mathcal{F}_2\|_{Y_T} \quad \forall \mathcal{F}_1, \mathcal{F}_2 \in Y_T,
	\end{equation*}
	}
	for all $T \in (0, T_0]$.
\end{proposition}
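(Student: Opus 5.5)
The linear operator $\mathcal{L}$ in \eqref{linear matrix} has a triangular structure, and I would exploit it by solving three decoupled problems in order. The Stokes component contains only $\ve$, so its solvability in $Z_T^1$ comes directly from Theorem \ref{thm:stokes_unified}(i) with $p=2$. That result gives a unique $\ve\in Z_T^1$ with $\ve(0)=\ve_0\in H_0^1(\Omega)^d\cap L^2_\sigma(\Omega)=\gamma_0(Z_T^1)$, and the bound $\|\ve\|_{Z_T^1}'\le C(T_0)(\|f_1\|_{L^2(0,T;L^2)}+\|\ve_0\|_{\gamma_0(Z^1_T)})$ holds uniformly for $T\in(0,T_0]$. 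The second component,
\[
\dt\varphi+\varepsilon m(\varphi_0)\Lap^2\varphi=f_2,\quad \dn\varphi=\dn\Lap\varphi=0,\quad \varphi(0)=\varphi_0,
\]
is a scalar fourth-order parabolic problem with coefficient $m(\varphi_0)$. This coefficient lies in $C^0(\overline\Omega)$, and in fact in $C^{1}(\overline\Omega)$, because $\varphi_0\in B^{4-4/p}_{pp}\hookrightarrow C^{3}(\overline\Omega)$ for $p>5$, and it is bounded below by a positive constant. I would verify parameter-ellipticity of the symbol $\lambda+\varepsilon m(\varphi_0(x))|\xi|^4$ together with the Lopatinskii--Shapiro condition for the Neumann pair $(\dn,\dn\Lap)$ on the half-space model problem; the latter reduces to the same ODE computation as for the constant-coefficient Cahn--Hilliard operator. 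The Denk--Hieber--Pr\"uss theory of $L^p$-maximal regularity for parabolic boundary value problems with continuous top-order coefficients then gives a unique $\varphi\in Z_T^2$, estimated by $\|f_2\|_{L^p(0,T;L^p)}+\|\varphi_0\|_{\gamma_0(Z_T^2)}$. This follows the treatment in \cite{AbelsWeber2021}.

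The third equation is where the temporal coupling noted in Remark \ref{rem:temporal_coupling} appears. Once $\varphi$ is known, $\dt\varphi\in L^p(0,T;L^p(\Omega))$ is a given function. Let $b(x):=\frac1\varepsilon f'(q_0)W(\varphi_0)+g'(q_0)$. By (II.A4) we have $f'\ge 0$, $W\ge0$ and $g'\ge c_g$, so $b\ge c_g>0$, and $b\in C^0(\overline\Omega)$ because $q_0\in B^{2-2/p}_{pp}\hookrightarrow C^{1}(\overline\Omega)$ when $p>d$. Dividing by $b$ puts the equation in the form
\[
\dt q-\frac{\tilde m(\varphi_0,q_0)}{b}\Lap q=\frac1b\Big(f_3-\frac1\varepsilon f(\cdot)W'(\varphi_0)\dt\varphi\Big),\qquad \dn q=0,\quad q(0)=q_0 .
\]
The principal coefficient $\tilde m/b$ is continuous and strictly positive, and the right-hand side belongs to $L^p(0,T;L^p)$. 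Classical maximal $L^p$-regularity for second-order Neumann problems therefore yields $q\in Z_T^3$ with the corresponding estimate. This is exactly the elementary elimination the tridiagonal structure allows. Uniqueness for the whole system follows from uniqueness in each step.

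For the Lipschitz bound, the map $\mathcal{L}^{-1}$ is affine with a fixed initial trace. The difference of two preimages therefore solves the same triangular system with zero initial data and data $\mathcal{F}_1-\mathcal{F}_2$, and the three estimates chain together, with the coupling term controlled by $\|\dt\varphi\|_{L^p L^p}$. The remaining issue, which I expect to be the main technical obstacle, is showing that the constants are independent of $T\in(0,T_0]$. I would handle it as in Theorem \ref{thm:stokes_unified} and \cite[Lemma 7]{AbelsWeber2021}. First I would subtract an extension of the initial value; for that extension I would take a solution of the problem on $(0,T_0)$ with zero right-hand side, or the extension operator of Lemma \ref{extension}. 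Next I would extend the right-hand sides by zero from $(0,T)$ to $(0,T_0)$ and solve on $(0,T_0)$. Finally I would restrict back to $(0,T)$, which is legitimate by causality and uniqueness. With the norms $\|\cdot\|'_{Z_T^i}$, which include the trace term, this gives a $T$-independent constant $C(T_0)$. A secondary point to check carefully is the Lopatinskii--Shapiro verification for the fourth-order Neumann pair when the top coefficient is variable. Since that coefficient is only frozen at boundary points, the argument reduces to the constant-coefficient case.
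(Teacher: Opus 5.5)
Your proposal is correct and follows essentially the same route as the paper: you exploit the block-triangular structure of $\mathcal{L}$, take $\ve$ from Theorem~\ref{thm:stokes_unified} and $\varphi$ from the fourth-order maximal regularity result of \cite{AbelsWeber2021} (Lemma~\ref{inv-l2}), then divide the $q$-equation by its positive time coefficient and solve it with $\dt\varphi$ as given data (Lemmas~\ref{exi-q} and~\ref{inv-l3}), obtaining the $T$-independent constant by zero extension of the data to $(0,T_0)$ and causality, as the paper does (your use of $g'\ge c_g$, $f'\ge 0$, $W\ge 0$ to bound the time coefficient from below is in fact more precise than the paper's continuity argument). There are two harmless slips: $B^{4-4/p}_{pp}(\Omega)\hookrightarrow C^3(\overline\Omega)$ would need $p>4+d$, which fails for $5<p<6$, but the $C^2$-regularity you actually have is more than enough; and $B^{2-2/p}_{pp}(\Omega)\hookrightarrow C^1(\overline\Omega)$ requires $p>d+2$ rather than $p>d$, which still holds here, and only continuity of the coefficient is needed anyway.
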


Equipped with Proposition \ref{proposition} and Proposition \ref{inv-con}, the full nonlinear problem can be rigorously reduced to a \rev{fixed point equation}. We now \rev{use} these results to establish the existence and uniqueness of the local strong solution.
\begin{proof}[Proof of Theorem \ref{th1}]
	 With the help of Proposition \ref{proposition} and Proposition \ref{inv-con}, one can easily 
	 complete the proof of Theorem \ref{th1} with the same argument as in the poof of Theorem \ref{thm:local_well_posedness}.
\end{proof}

\subsection{Lipschitz continuity of \texorpdfstring{$\mathcal{F}$}{F}}

To rigorously establish the local Lipschitz continuity of the nonlinear mapping $\mathcal{F}$, the integrability exponent for the phase-field space $X_T^2$ must strictly satisfy $p > 4$. This specific constraint is fundamentally dictated by the necessity to control the highly singular nonlinear cross-term $\grad\Lap\varphi \cdot \grad\ve$. 

Furthermore, the rigorous analysis of the surfactant concentration space $X_T^3$ imposes an even stricter integrability constraint, requiring $5 < p \le 6$. This secures continuous embeddings into spaces with sufficient classical differentiability to properly bound higher-order derivative couplings, such as the terms introducing $\grad^2\varphi \cdot \grad\ve$ or $m'(\varphi)|\grad\varphi|^2$. While this restricted range is not necessary for every individual lower-order term, we uniformly adopt $5 < p < 6$ to guarantee consistency in all nonlinear estimates.

Exploiting these precisely chosen integrability exponents, we deduce a series of fundamental embedding properties essential for closing the nonlinear estimates. Drawing upon the maximal regularity framework in \cite{AbelsWeber2021}, the fluid velocity $\ve \in X_T^1$ satisfies:
\begin{equation}
	\ve \in X_T^1 \hookrightarrow \mathrm{BUC}([0,T]; B_{2,2}^1(\Omega)) = \mathrm{BUC}([0,T]; H^1(\Omega)).
\end{equation}
In particular, via standard interpolation, the spatial gradient $\grad\ve$ inherits the following spatio-temporal integrability:
\rev{\begin{equation}
	\begin{aligned}
		\grad \ve &\in L^\infty(0,T; L^2(\Omega)) \cap L^2(0,T; L^6(\Omega)) \hookrightarrow L^{8/3}(0,T; L^4(\Omega))\cap L^4(0,T; L^3(\Omega)). \\
	\end{aligned}
\end{equation}}

Simultaneously, the high-order spatial and temporal integrability of the phase-field variable $\varphi \in X_T^2$ yields the trace embedding:
\begin{equation}
	\varphi \in L^p(0,T; W_{p,N}^4(\Omega)) \cap W_p^1(0,T; L^p(\Omega)) \hookrightarrow \mathrm{BUC}\left([0,T]; W_p^{4-4/p}(\Omega)\right).
\end{equation}
Furthermore, applying mixed-derivative interpolation bounds, we obtain:
\begin{equation}
	\varphi \in C^{0,1-1/p}([0,T]; L^p(\Omega)) \cap C\left([0,T]; W_p^{4-4/p}(\Omega)\right) \hookrightarrow C^{0,(1-1/p)\theta_1}([0,T]; W_p^3(\Omega)), 
\end{equation}
for some appropriately chosen $\theta_1 \in (0,1)$. Crucially, for spatial dimensions $d \in \{2, 3\}$ and our restricted exponent $5 < p < 6$, the Sobolev embedding $W_p^3(\Omega) \hookrightarrow C^2(\overline{\Omega})$ holds. This guarantees the essential uniform bound:
\begin{equation}\label{phi-boun}
	\varphi \in \mathrm{BUC}([0,T]; C^2(\overline{\Omega})).
\end{equation}

On the other hand, let $q \in X_T^3$ be given. We first recall the definition of the underlying Sobolev space $Z_T^3$:
\begin{equation*}
	Z_T^3 := W_p^1(0,T; L^p(\Omega)) \cap L^p(0,T; W_{p,N}^2(\Omega)).
\end{equation*}
From \eqref{con-em-th}, we obtain 
\begin{equation}\label{q-boun}
    q \in W_p^1(0,T; L^p(\Omega)) \cap L^p(0,T; W_{p,N}^2(\Omega)) \hookrightarrow \BUC([0,T]; W_p^{2-2/p}(\Omega)).
\end{equation}
For $p > d+2$, the embedding $W_p^{2-2/p}(\Omega) \hookrightarrow C^1(\overline{\Omega})$ directly yields $q \in C([0,T]; C^1(\overline{\Omega}))$ and $$\nabla q \in \BUC([0,T]; W_p^{1-2/p}(\Omega)).$$

Furthermore, the continuous embedding $W_p^1(0,T; L^p(\Omega)) \hookrightarrow C^{0,1-1/p}([0,T]; L^p(\Omega))$ allows us to interpolate with \eqref{q-boun}. By choosing $\theta_2 = \frac{p-2}{2p-2}$, we have
\begin{equation}
    \left(B_{p,p}^{2-\frac{2}{p}}(\Omega), L^p(\Omega)\right)_{\theta_2,2} = B_{p,2}^1(\Omega) \hookrightarrow W_p^1(\Omega),
\end{equation}
which yields the pointwise estimate
\begin{equation}
    \norm{q(t)}_{W_p^1(\Omega)} \leq C \norm{q(t)}_{W_p^{2-\frac{2}{p}}(\Omega)}^{1- \theta_2} \norm{q(t)}_{L^p(\Omega)}^{\theta_2}, \quad \forall t \in [0,T].
\end{equation}
Consequently, by Lemma \ref{XYX}, we deduce the Hölder continuity in time:
\begin{equation}
    q \in C^{0,1-1/p}([0,T]; L^p(\Omega)) \cap C([0,T]; W_p^{2-2/p}(\Omega)) \hookrightarrow \Cspace{(1-1/p)\theta_2}{W_p^1(\Omega)},
\end{equation}
where the prerequisite $1 > \frac{d}{p}$ is satisfied since $5 < p < 6$ and $d =2,3$.

Before proceeding further, we analyze the Banach spaces bounding auxiliary quantities like $\ve$,  $\varphi$, $q$, $\grad q$, $\grad\varphi$, $m(\varphi)$, $\tilde{m}(\varphi, q)$, and so on are bounded. Let $\theta=\min\{\theta_1, \theta_2\}$, and $r_\varphi \in \{\eta(\varphi), m(\varphi)\}$. We have:
\begin{equation}\label{r1-boun}
	r_\varphi \text{ is bounded in } \Cspace{(1-1/p)\theta}{W_p^3(\Omega)},
\end{equation}
which can be readily established by the same arguments as in Section \ref{sec:model_c}.

Furthermore, we need to prove that for any $q \in X_T^3$ with $\|q\|_{X_T^3} \leq R$, there exists a constant $C(R) > 0$ such that 
\begin{equation*}
	\|r_q(q)\|_{Z_T^3} \leq C(R),
\end{equation*} 
where $r_q \in \{f, g, h\}$. To this end, given that $q$ is bounded in $X_T^3$, the continuous embedding $$q \in \BUC([0,T]; W_p^{2-2/p}(\Omega)) \hookrightarrow \BUC([0,T]; C^1(\overline{\Omega}))$$ strictly ensures that $\|q(t)\|_{C^1(\overline{\Omega})} \leq C(R)$ for all $t \in [0,T]$. Consequently, we initially possess:
\begin{equation*}
	r_q(q) \in L^\infty(0,T; W_p^1(\Omega)).
\end{equation*}
Since $W_p^1(\Omega)$ is a Banach algebra, which holds because $1 > \frac{d}{p}$ for $d \in \{2,3\}$ and $p > 5$, we obtain the pointwise estimate:
\begin{equation*}
	\|\grad r_q(q(t))\|_{W_p^1(\Omega)} \leq C \|r_q'(q(t))\|_{W_p^1(\Omega)} \|\grad q(t)\|_{W_p^1(\Omega)} \quad \text{a.e. } t \in [0,T].
\end{equation*}
In addition, since $r_q'(q)$ is bounded in $L^\infty(0,T; W_p^1(\Omega))$ and $q \in L^p(0,T; W_p^2(\Omega))$, integrating over time yields the boundedness for $\grad r_q(q)$. Thus, there exists a constant $C(R)$ such that:
\begin{equation*}
	\|\grad r_q(q)\|_{L^p(0,T; W_p^1(\Omega))} \leq C(R) \quad \forall q \in X_T^3 \quad \text{with} \quad \|q\|_{X_T^3} \leq R,
\end{equation*}
which implies that $r_q(q)$ is bounded in $L^p(0,T; W_p^2(\Omega))$. 

Additionally, because $q \in W_p^1(0,T; L^p(\Omega))$, we can derive that $r_q(q)$ is bounded in $W_p^1(0,T; L^p(\Omega))$ via the chain rule
\begin{equation*}
	\dt r_q(q(t)) = r_q'(q(t)) \dt q(t),
\end{equation*}
where $r_q'(q)$ is bounded in $C^0(\overline{Q}_T)$. This establishes the continuous embedding:
\begin{equation*}
	r_q(q) \in W_p^1(0,T; L^p(\Omega)) \hookrightarrow C^{0,1-1/p}([0,T]; L^p(\Omega)).
\end{equation*}
Coupling this with Lemma \ref{con-em-th} yields:
\begin{equation}
	r_q(q) \in C^{0,1-1/p}([0,T]; L^p(\Omega)) \cap C([0,T]; W_p^{2-2/p}(\Omega)) \hookrightarrow C^{0,(1-1/p)\theta}([0,T]; W_p^1(\Omega)).
\end{equation}
In particular, since the convective term $\ve \cdot \nabla\big(f(q)W(\varphi)\big) = W(\varphi) \ve \cdot \nabla f(q) + f(q) \ve \cdot \nabla W(\varphi)$ appears in $\mathcal{F}_3$, it is necessary to highlight the trace embedding:
\[
\nabla f(q) \in L^\infty\left(0,T; W_p^{1-\frac{2}{p}}(\Omega)\right) \hookrightarrow L^\infty\big([0,T]; C^0(\overline{\Omega})\big), \quad \text{valid for any } p > 5.
\]
\rev{Next, let $z \in \{\tilde{m}, a_i\}$ satisfying $z(\varphi, q) \in C^2(\mathbb{R}^2)$. Then, for every $R > 0$, there exists a constant $L = L(R) > 0$ such that, for every $t \in [0, T]$ and $\varphi_i \in X_T^2, q_i \in X_T^3$ such that $\|\varphi_i\|_{X_T^2}, \|q_i\|_{X_T^3} \le R$ ($i = 1, 2$), it follows from Theorem \ref{lem:Runst_Sickel} that}
\begin{equation}
	\|z(\varphi_1(t), q_1(t)) - z(\varphi_2(t), q_2(t))\|_{W_p^1(\Omega)} \le L \Big( \|\varphi_1(t) - \varphi_2(t)\|_{W_p^1(\Omega)} + \|q_1(t) - q_2(t)\|_{W_p^1(\Omega)} \Big).
\end{equation}
\begin{equation}\label{zma}
	\|z(\varphi_1(t), q_1(t)) - z(\varphi_2(t), q_2(t))\|_{W_p^1(\Omega)} \leq L \left( \|\varphi_1(t) - \varphi_2(t)\|_{W_p^1(\Omega)} + \|q_1(t) - q_2(t)\|_{W_p^1(\Omega)} \right).
\end{equation}
In particular, this simplifies for single-variable functions $r_q \in \{f, g, h\}$ to:
\begin{equation*}
	\|r_q(q_1(t)) - r_q(q_2(t))\|_{W_p^1(\Omega)} \leq L \|q_1(t) - q_2(t)\|_{W_p^1(\Omega)}.
\end{equation*}
Analogously, we possess the uniform bound $\|z(\varphi, q)\|_{W_p^1(\Omega)} \leq C(R),$
and
\begin{equation}\label{z-bound}
	z(\varphi, q) \text{ is bounded in } C^{0,(1-1/p)\alpha}([0,T]; W_p^1(\Omega)).
\end{equation}

\begin{proof}[Proof of Proposition \ref*{proposition}]
Let $(\ve_i, \varphi_i, q_i) \in X_T$ for $i=1,2$ \rev{with $\|(\ve_i, \varphi_i, q_i)\|_{X_T} \le R$, we} estimate the full nonlinear \rev{terms}:
\begin{equation*}
	\begin{aligned}
		&\|\mathcal F(\ve_1, \varphi_1, q_1) - \mathcal F(\ve_2, \varphi_2, q_2)\|_{Y_T} \\
		&\quad= \|P_{\sigma}\left(\mathcal F_1(\ve_1, \varphi_1, q_1) - \mathcal F_1(\ve_2, \varphi_2, q_2)\right)\|_{L^2(0,T; L^2(\Omega))} + \|\mathcal F_2(\ve_1, \varphi_1, q_1) - \mathcal F_2(\ve_2, \varphi_2, q_2)\|_{L^p(0,T; L^p(\Omega))} \\
		&\qquad+ \|\mathcal F_3(\ve_1, \varphi_1, q_1) - \mathcal F_3(\ve_2, \varphi_2, q_2)\|_{L^p(0,T; L^p(\Omega))}.
	\end{aligned}
\end{equation*}
    Based on the uniform boundedness of the variables $\ve$, $\varphi$, $q$, $\nabla q$, $\nabla\varphi$, $m(\varphi)$, and $\tilde{m}(\varphi, q)$ derived above within their respective topological spaces, the complete local Lipschitz continuity of $\mathcal{F}$ can be systematically established by synthesizing two rigorous frameworks. Specifically, the estimates for the nonlinearities in the Navier-Stokes and phase-field equations follow the exact procedures detailed in of \cite[Section 4]{AbelsWeber2021}, whereas all nonlinear terms within the surfactant equation are bounded analogously to the methods developed in Section \ref{sec:model_c}.
    
	Recall the momentum nonlinearity:
	\begin{equation*}
		\begin{aligned}
			\mathcal F_1(\ve, \varphi, q) =& (\rho_0 - \rho) \dt \ve - \Div(\eta(\varphi_0) D\ve) + \Div(\eta(\varphi) D\ve) - \Lap \varphi\grad\varphi \\
			&- \left[ \rho \ve + \frac{\tilde{\rho}_1 - \tilde{\rho}_2}{2} m(\varphi) \grad \left(-\varepsilon \Lap \varphi + \frac{1}{\varepsilon}h(q)W'(\varphi)\right) \right] \cdot \grad \ve.
		\end{aligned}
	\end{equation*}
Due to the distinct function space required for the velocity $\ve$ compared to Theorem \ref{thm:local_well_posedness}, we highlight the estimate for the highly coupled convective term in $\mathcal F_1$. Let the associated vector field be defined as $a_1(\varphi, q) := m(\varphi)\grad\left(h(q)W'(\varphi)\right)$. Exploiting the embeddings established above, we bound this term as follows:
\begin{align*}
	&\| a_1(\varphi_1, q_1) \cdot \nabla \ve_1 - a_1(\varphi_2, q_2) \cdot \nabla \ve_2 \|_{Y_T^1} \\
	&\quad \leq C \| a_1(\varphi_1, q_1) \cdot (\nabla \ve_1 - \nabla \ve_2) \|_{Y_T^1} + C \| (a_1(\varphi_1, q_1) - a_1(\varphi_2, q_2)) \cdot \nabla \ve_2 \|_{Y_T^1} \\
	&\quad \leq C \sup_{t \in [0,T]} T^{1/8} \| a_1(\varphi_1, q_1) \|_{W_p^1(\Omega)} \left( \int_0^T \| \nabla \ve_1 - \nabla \ve_2 \|_{L^4(\Omega)}^{8/3} \, dt \right)^{3/8} \\
	&\qquad + C T^{1/8} \| \nabla \ve_2 \|_{L^{8/3}(0,T; L^4(\Omega))} \| a_1(\varphi_1, q_1) - a_1(\varphi_2, q_2) \|_{C([0,T]; C^0(\overline{\Omega}))} \\
	&\quad \leq C(R) T^{1/8} \| \ve_1 - \ve_2 \|_{X_T^1} + C(R) T^{1/8} \left( \| \varphi_1 - \varphi_2 \|_{X_T^2} + \| q_1 - q_2 \|_{X_T^3} \right) \\
	&\quad \leq C(R, T) \| (\ve_1 - \ve_2, \varphi_1 - \varphi_2, q_1 - q_2) \|_{X_T}.
\end{align*} 
Combining these with the preceding estimates yields
\begin{equation}
	\|\mathcal{F}_1(\ve_1, \varphi_1, q_1) - \mathcal{F}_1(\ve_2, \varphi_2, q_2)\|_{Y_T} \leq C(R,T) \|(\ve_1 - \ve_2, \varphi_1 - \varphi_2, q_1 - q_2)\|_{X_T},
\end{equation}
where $
C(R,T) \to 0 \text{ as } T \to 0$.

For $\mathcal F_2$, recall that 
\begin{equation*}
\mathcal F_2(\ve, \varphi, q) = -\ve \cdot \grad \varphi + \frac{1}{\varepsilon}\Div\left(  m(\varphi)\grad( h(q) W'(\varphi)) \right) +(\varepsilon m(\varphi_0)-\varepsilon m(\varphi)) \Lap^2 \varphi - \varepsilon m(\varphi)\grad\varphi\cdot \grad\Lap \varphi,
\end{equation*}
Let $a_2(\varphi,q)= \frac{1}{\varepsilon} \tilde{m}(\varphi) h(q) W'(\varphi)$. Then we get from \eqref{zma} that
\begin{equation*}
	\begin{aligned}
	&\norm{\Div\left(a_2(\varphi_1,q_1)- a_2(\varphi_2,q_2) \right)}_{Y_T^2}\\ 
    &\quad \leq\norm{a_2(\varphi_1,q_1)- a_2(\varphi_2,q_2)}_{L^p(0,T;W^1_p(\Omega ))} \\ 
	&\quad \leq C\sup_{t \in [0, t]} \|(\varphi_1,q_1)-(\varphi_{1}(0),q_{1}(0))-\left((\varphi_2,q_2)-(\varphi_{2}(0),q_{2}(0))\right)\|_{W^1_p(\Omega)}T^{1/p} \\
		&\quad \leq C(R) T^{1/p+(1-1/p)\alpha} \|(\varphi_1,q_1) -(\varphi_2, q_2)\|_{C^{0,(1-1/p)\alpha}([0,T]; W_p^1(\Omega))}\\
		&\quad \leq  C(R,T)  \|(\ve_1 -\ve_2,\varphi_1 -\varphi_2, q_1 -q_2)\|_{X_T}.
	\end{aligned}
\end{equation*}
Combining these with the preceding estimates as in the \cite[Section 4]{AbelsWeber2021}, yields
\begin{equation}
	\|\mathcal{F}_2(\ve_1, \varphi_1, q_1) - \mathcal{F}_2(\ve_2, \varphi_2, q_2)\|_{Y_T} \leq C(R,T) \|(\ve_1 - \ve_2, \varphi_1 - \varphi_2, q_1 - q_2)\|_{X_T},
\end{equation}
where $
C(R,T) \to 0 \text{ as } T \to 0. $ 

Recall
\begin{equation*}
		\begin{aligned}
	\mathcal{F}_3(v, \varphi, q) =& \left( \frac{1}{\varepsilon} f'(q_0) W(\varphi_0) + g'(q_0) - \frac{1}{\varepsilon} f'(q) W(\varphi) - g'(q) \right) \dt q - \left( \frac{1}{\varepsilon} f(q) W'(\varphi) - \frac{1}{\varepsilon} f(q_0) W'(\varphi_0) \right) \dt \varphi \\&- \ve \cdot \grad \left( \frac{1}{\varepsilon} f(q) W(\varphi) + g(q) \right) - \tilde{m}(\varphi_0, q_0) \Lap q + \Div \left( \tilde{m}(\varphi, q) \grad q \right).
		\end{aligned}
\end{equation*}

Let $a_3(\varphi,q)= \frac{1}{\varepsilon} f'(q) W(\varphi)$. Since $ f'(q), W(\varphi) \in C^2(\mathbb{R})$, it holds $a_3(\varphi,q)\in C^2(\mathbb{R}^2)$. Then we obtain the estimate through \eqref{z-bound} for the first part:
\begin{equation*}
	\begin{aligned}
		&\norm{(a_3(\varphi_1,q_1)-a_3(\varphi_0,q_0))\dt q_1 - (a_3(\varphi_2,q_2)-a_3(\varphi_0,q_0))\dt q_2}_{Y_T^3} \\
		&\quad \leq \norm{(a_3(\varphi_1,q_1)-a_3(\varphi_0,q_0))(\dt q_1 - \dt q_2)}_{Y_T^3} + \norm{(a_3(\varphi_1,q_1) - a_3(\varphi_2,q_2))\dt q_2}_{Y_T^3} \\
		&\quad = H_{11} + H_{12}.
	\end{aligned}
\end{equation*}
It get from \eqref{zma} that
\begin{equation*}
	\begin{aligned}
	H_{11} \leq& C \sup_{t \in [0,T]} \norm{a_3(\varphi_1,q_1) - a_3(\varphi_1{(0)},q_1{(0)})}_{L^\infty(\Omega)} \norm{\dt q_1 - \dt q_2}_{L^p(0,T;L^p(\Omega))}\\
	\leq & C \sup_{t \in [0,T]} \norm{(\varphi_1,q_1) - (\varphi_1{(0)},q_1{(0)})}_{W^{1,p}(\Omega)} \norm{q_1 - q_2}_{W^{1,p}(0,T;L^p(\Omega))}\\
	\leq & C(R) T^{(1-1/p)\alpha} \norm{q_1 - q_2}_{X_T^3} \\
\end{aligned}
\end{equation*}
and
\begin{equation*}
\begin{aligned}
H_{12} \leq &C \sup_{t \in [0,T]} \norm{a_3(\varphi_1,q_1) - a_3(\varphi_2,q_2)}_{L^\infty(\Omega)} \norm{q_2}_{L^p(0,T;W^{1,p}(\Omega))}\\
\leq& C \sup_{t \in [0,T]} \norm{(\varphi_1,q_1) - (\varphi_2,q_2)}_{W^{1,p}(\Omega)} \norm{q_2}_{X_T^3}\\
\leq& C(R) T^{(1-1/p)\alpha}\norm{(v_1 - v_2, \varphi_1 - \varphi_2, q_1 - q_2)}_{X_T}
\end{aligned}
\end{equation*}	
for $5 < p < 6$.
For the second part of the first term, we obtain	
\begin{equation}
	\begin{aligned}
	&\| (g'(q_1) - g'(q_0) ) \dt q_1 - (g'(q_2) - g'(q_0) ) \dt q_2 \|_{Y_T^3} \\
	&\quad \leq C \sup_{t \in [0,T]} \| g'(q_1(t)) - g'(q_2(t)) \|_{W^{1,p}(\Omega)} \| \dt q_1 \|_{L^p(0,T; L^p(\Omega))} \\
	&\qquad + C \sup_{t \in [0,T]} \| g'(q_2(t)) - g'(q_2{(0)}) \|_{W^{1,p}(\Omega)} \| \dt q_1 - \dt q_2 \|_{L^p(0,T; L^p(\Omega))} \\
	&\quad \leq  C(R) T^{(1-1/p)\alpha} \| q_1 - q_2 \|_{X_T^3}.
	\end{aligned}
\end{equation} 
Combining the estimates of $H_{11}$ and $H_{12}$, the estimate for the first term of $\mathcal{F}_{3}$ can be written as
\begin{equation*}
	\norm{(a_3(\varphi_1,q_1)-a_3(\varphi_0,q_0))(\dt q_1 - \dt q_2)}_{Y_T^3}  \leq C(R) T^{(1-1/p)\alpha}\norm{(\ve_1 - \ve_2, \varphi_1 - \varphi_2, q_1 - q_2)}_{X_T}.
\end{equation*} 
Applying analogous arguments to the remaining terms in $\mathcal{F}_3$, we obtain the comprehensive bound for the third component:
\begin{equation}\label{est-F3}
	\norm{\mathcal{F}_3(\ve_1, \varphi_1, q_1) - \mathcal{F}_3(\ve_2, \varphi_2, q_2)}_{Y_T} \leq C(R, T)\norm{(\ve_1 - \ve_2, \varphi_1 - \varphi_2, q_1 - q_2)}_{X_T},
\end{equation}
where $C(R,T) \to 0$ as $T \to 0$.

Finally, collecting the individual estimates for $\mathcal{F}_1$, $\mathcal{F}_2$, and $\mathcal{F}_3$ above, we conclude that the full nonlinear source mapping $\mathcal{F}: X_T \rightarrow Y_T$ satisfies:
\begin{equation*}
	\left\| \mathcal{F}(\ve_1,\varphi_1,q_1) - \mathcal{F}(\ve_2,\varphi_2,q_2) \right\|_{Y_T} \leq C(T,R) \left\| (\ve_1 - \ve_2, \varphi_1 - \varphi_2, q_1 - q_2) \right\|_{X_T}.
\end{equation*}
\end{proof}

\subsection{Existence and continuity of \texorpdfstring{$\mathcal{L}^{-1}$}{L}}
To prove Theorem \ref{inv-con}, we must demonstrate that the  operator $\mathcal{L}: Z_T \to \mathbb{Y}_T$ is invertible and that its inverse possesses a uniform upper bound for $T\in (0,T_0]$, where the target space is structured block-wise as $\mathbb{Y}_T := (Y_{T,1} \times \gamma_0(Z_{T,1})) \times (Y_{T,2} \times \gamma_0(Z_{T,2})) \times (Y_{T,3} \times \gamma_0(Z_{T,3}))$. From \eqref{linear matrix}, we observe that the first block depends solely on the velocity variable $\ve$, the second block depends exclusively on the phase-field variable $\varphi$, whereas the third block couples the variables $\varphi$ and $q$. 

Restating the equations in a decoupled block format, we have
\begin{equation*}
	\mathcalL(\ve, \varphi, q) = \begin{bmatrix}
		\mathcalL_1 (\ve) \\
		\mathcalL_2 (\varphi) \\
		\mathcalL_3 (\varphi, q)
	\end{bmatrix}
	= \begin{bmatrix}
		\left( P_\sigma \dt \ve - P_\sigma \Div\left(2\eta(\varphi_0) D\ve\right), \, \ve|_{t=0} \right) \\
		\left( \dt \varphi + \varepsilon m(\varphi_0) \Lap^2 \varphi, \, \varphi|_{t=0} \right) \\
		\left( \left( \frac{1}{\varepsilon} f'(q_0) W(\varphi_0) + g'(q_0) \right) \dt q + \frac{1}{\varepsilon} f(\varphi_0) W'(\varphi_0) \dt \varphi - \tilde{m}(\varphi_0, q_0) \Lap q, \, q|_{t=0} \right)
	\end{bmatrix}.
\end{equation*}
Therefore, we may first solve the top two equations independently before addressing the coupled third equation.

\begin{lemma}\label{inv-l2}
	Let Assumption \ref{asumm_2} hold. Let $\varphi_0 \in \left(L^{p}(\Omega), W^4_{p,N}(\Omega) \right)_{1-\frac{1}{p},p}$, $f \in L^{p}(0, \rev{T_0}; L^{p}(\Omega))$, and $0 < T_0 < \infty\rev{, 1 < p < \infty}$ be given. Then, \rev{there} exists a unique solution
	$$
	\varphi \in L^{p}(0, T; W^4_{p,N}(\Omega)) \cap \left\{ u \in W^{1}_p(0, T; L^{p}(\Omega)) : u|_{t=0} = \varphi_0 \right\}
	$$
	\rev{to}
	\begin{equation}\label{phi-system}
		\begin{aligned}
			\dt\varphi + \varepsilon m(\varphi_0) \Delta^2 \varphi &= f \quad \text{in } Q_T, \\
			\dn\varphi|_{\partial\Omega} &= 0 \quad \text{on } (0, T) \times \partial\Omega, \\
			\dn{\Delta \varphi}|_{\partial\Omega} &= 0 \quad \text{on } (0, T) \times \partial\Omega, \\
			\varphi(0) &= \varphi_0 \quad \text{in } \Omega.
		\end{aligned}
	\end{equation}
	Moreover, we have the uniform bound:
	\begin{equation}\label{einv-l2}
		\norm{\mathcal{L}_{2,T}^{-1}}_{\mathcal{L}(Y_T^2\times \mathbb{Z}_{0}^2, Z_T^2)} \leq \norm{\mathcal{L}_{2,T_0}^{-1}}_{\mathcal{L}(Y_{T_0}^2\times \mathbb{Z}_{0}^2, Z_{T_0}^2)} < \infty \quad \text{for all } 0 < T < T_0.
	\end{equation}
\end{lemma}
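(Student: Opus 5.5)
This is a linear fourth-order parabolic problem with a time-independent principal coefficient, so the plan is to invoke the general maximal $L^p$-regularity theory for parameter-elliptic boundary value problems (Denk--Hieber--Pr\"uss). That theory gives the result on a finite interval $(0,T_0)$, and we then transfer the bound to every $T<T_0$ by an extension argument, as in \cite[Lemma 5.2]{weber2016analysis} and \cite{AbelsWeber2021}.

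\textbf{Step 1: Coefficient regularity.} Since $5<p$ and $\varphi_0\in(L^p,W^4_{p,N})_{1-1/p,p}=B^{4-4/p}_{p,p}\hookrightarrow C^2(\overline\Omega)$, and $m\in C^5_b$ with $m>0$, the coefficient $a(x):=\varepsilon m(\varphi_0(x))$ is continuous on $\overline\Omega$ and satisfies $a\ge a_0>0$ there (positivity on the compact set $\overline{\Omega}$).

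\textbf{Step 2: Parameter-ellipticity.} The operator $A=a(x)\Delta^2$ has principal symbol $a(x)|\xi|^4$, which lies in $(0,\infty)$ for $\xi\ne0$. Hence it is parameter-elliptic with angle $0$, and in particular for every sector $\Sigma_\theta$, $\theta<\pi$.

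\textbf{Step 3: Lopatinskii--Shapiro condition.} The boundary operators are $B_1=\partial_n$ and $B_2=\partial_n\Delta$, of orders $1$ and $3$, with constant coefficients on the $C^4$ boundary. Freeze coefficients, flatten the boundary, and consider the half-space ODE
\[
\lambda w+a(\partial_y^2-|\xi'|^2)^2w=0,\qquad y>0.
\]
Its decaying solutions are $w=c_1e^{-\mu_1 y}+c_2e^{-\mu_2 y}$, where $\mu_j^2=|\xi'|^2+\omega_j$, $\omega_j^2=-\lambda/a$, and $\operatorname{Re}\mu_j>0$. Imposing $w'(0)=0$ and $(\partial_y^2-|\xi'|^2)w'(0)=0$ gives a $2\times2$ system with determinant proportional to $\mu_1\mu_2(\mu_2^2-\mu_1^2)=\mu_1\mu_2(\omega_2-\omega_1)$. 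This is nonzero because $\omega_1=-\omega_2\neq0$ for $(\lambda,\xi')\neq0$.

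Rather than compute this directly, one can also argue by energy: test with $\bar w$ and integrate by parts. The boundary terms vanish under the two conditions, leaving $\lambda\|w\|^2+a\|(\partial_y^2-|\xi'|^2)w\|^2=0$, which forces $w=0$ for $\lambda\notin(-\infty,0]$. I expect this verification to be the main (mild) obstacle, and I would present the energy version.

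\textbf{Step 4: Maximal regularity on $(0,T_0)$.} With the compatibility $\partial_n\varphi_0=0$ built into the trace space (and the $\partial_n\Delta$ trace not required, since $4-4/p-3<1/p$ for $p<5$... but actually $p>5$ requires $\partial_n\Delta\varphi_0=0$, which is encoded in $W^4_{p,N}$ interpolation), Denk--Hieber--Pr\"uss yields a unique solution $\varphi\in Z^2_{T_0}$. It satisfies
\[
\|\varphi\|_{Z^2_{T_0}}\le C\bigl(\|f\|_{L^p(0,T_0;L^p)}+\|\varphi_0\|_{\gamma_0(Z^2_{T_0})}\bigr).
\]
Equivalently, $\mathcal L_{2,T_0}$ is a bounded bijection, so its inverse is bounded by the open mapping theorem.

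\textbf{Step 5: Uniformity in $T$.} For $T<T_0$ and $f\in L^p(0,T;L^p)$, extend $f$ by zero to $(0,T_0)$ and solve on $(0,T_0)$. Uniqueness on $(0,T)$ follows by restriction, since any solution on $(0,T)$ extends to one on $(0,T_0)$. Restriction does not increase norms: the $Z^2_T$ norm of the restriction is at most the $Z^2_{T_0}$ norm, the initial trace is unchanged, and the zero extension preserves the $L^p$ norm of $f$. Together these give \eqref{einv-l2}.
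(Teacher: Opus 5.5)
Your approach is the one the paper relies on. The paper gives no proof of its own and defers to Abels--Weber. The pattern there is exactly yours: first Denk--Hieber--Pr\"uss maximal regularity after checking parameter-ellipticity and the Lopatinskii--Shapiro condition (the paper does the same check for the second-order operator in Lemma~\ref{exi-q}). Then zero extension of $f$ to $(0,T_0)$ and restriction yield \eqref{einv-l2}, the same device used in the proof of Lemma~\ref{inv-l3}.

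There is one concrete hole in Step~3. The Lopatinskii--Shapiro condition in the Denk--Hieber--Pr\"uss theorem must hold for all $(\lambda,\xi')\neq 0$ with $\lambda$ in the closed sector, and this includes $\lambda=0$, $\xi'\neq 0$.

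\begin{itemize}
\item Your determinant argument fails at $\lambda=0$. There $\omega_1=\omega_2=0$ and $\mu_1=\mu_2=|\xi'|$ is a double root, so the decaying solutions are $e^{-|\xi'|y}$ and $ye^{-|\xi'|y}$. Your determinant $\mu_1\mu_2(\omega_2-\omega_1)$ vanishes there only because your basis degenerates, not because the condition fails.
\item Your energy version, as written, only covers $\lambda\notin(-\infty,0]$.
\end{itemize}

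The repair is one line from your own identity. For $\lambda=0$ it gives $(\partial_y^2-|\xi'|^2)w=0$, hence $w=ce^{-|\xi'|y}$, and $w'(0)=0$ forces $c=0$. Parameter-ellipticity gives exactly two roots with positive real part, so this injectivity yields unique solvability for arbitrary boundary data. That is the form in which the theorem states the condition.

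Two smaller remarks.

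\begin{itemize}
\item The parenthetical about compatibility in Step~4 can be dropped. The datum $\varphi_0$ lies in $(L^p(\Omega),W^4_{p,N}(\Omega))_{1-1/p,p}$, and $W^4_{p,N}(\Omega)$ is exactly the domain of your operator $A$ with both boundary conditions. This is therefore the exact trace space of the maximal-regularity class, so $\partial_n\varphi_0=0$ and, for $p>5$, $\partial_n\Delta\varphi_0=0$ are automatically built in.
\item Step~1 uses $p>5$, whereas the lemma is stated for $1<p<\infty$. Continuity of $m(\varphi_0)$ only needs $4-4/p>d/p$, so your argument covers the range in which the lemma is actually used. For $p\le (d+4)/4$, however, $\varphi_0$ and hence the top-order coefficient need not be continuous, and the Denk--Hieber--Pr\"uss hypotheses are then not available.
\end{itemize}
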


\begin{remark}
	The \rev{proof of} Lemmas \ref{inv-l2} is established in \cite{AbelsWeber2021}. Consequently, we obtain the continuity and bounded invertibility of $\mathcal{L}_{1,T}$ and $\mathcal{L}_{2,T}$. Noting that the last equation of system \eqref{mod1} is a coupled equation for the variables $\varphi$ and $q$, we will subsequently apply the following general result for linear parabolic equations to establish the invertibility of this coupled operator and obtain a uniform upper bound for its inverse $\mathcal{L}_{3,T}^{-1}$.
\end{remark}

\begin{lemma}\label{exi-q}
	Let $q_0 \in W^{2-\frac{2}{p}}_{p, N}(\Omega)$ with $1 < p < \infty$. For any $l\in L^p(0,T; L^p(\Omega))$ and $m\in C^0(\overline{\Omega})$ with $m(x)>0$ for all $x\in \overline{\Omega}$, there exists a unique solution 
	$$
	q \in L^p(0,T; W^2_{p,N}(\Omega) ) \cap \left\{ v \in W^{1,p}(0,T; L^p(\Omega)) \mid v|_{t=0}=q_0 \right\}
	$$
	to the following parabolic system:
	\begin{equation}\label{qqq}
		\begin{cases}
			\dt q - \tilde{m}(x) \Delta q &= l  \quad \text{in } (0,T) \times \Omega, \\
			\partial_n q &= 0  \quad\text{on } (0,T) \times \partial\Omega, \\
			q|_{t=0} &= q_0  \quad\text{in } \Omega.
		\end{cases}	
	\end{equation}
\end{lemma}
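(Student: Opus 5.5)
We will reduce Lemma \ref{exi-q} to classical maximal $L^p$-regularity for second-order parabolic problems in non-divergence form with continuous leading coefficients. Here the coefficient $\tilde m$ plays the role of the function $m$ in the statement. By compactness of $\overline{\Omega}$ and continuity, we have $0<m_0\le \tilde m(x)\le m_1$ on $\overline{\Omega}$. Hence the operator $A q:=-\tilde m\Delta q$ on $L^p(\Omega)$ with domain $D(A)=W^2_{p,N}(\Omega)$ is normally elliptic. The Neumann boundary operator $\partial_n$ satisfies the Lopatinskii--Shapiro condition with respect to $\partial_t-\tilde m(x_0)\Delta$ at every boundary point $x_0$, because the frozen symbol $\lambda+\tilde m(x_0)|\xi|^2$ is parameter-elliptic in the sector $\operatorname{Re}\lambda\ge0$. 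We then invoke the general theorem of Denk--Hieber--Prüss on maximal $L^p$-regularity for parabolic boundary value problems with $C^0$ top-order coefficients. It gives, for $\omega$ large, that $\omega+A$ has maximal $L^p$-regularity on $(0,T)$ for every $1<p<\infty$, so that $\partial_t+A$ does as well on any finite interval.

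The key steps, in order, are as follows.

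\textbf{Step 1 (homogeneous initial data).} For $q_0=0$, we localize with a partition of unity subordinate to small balls in which $\tilde m$ oscillates by less than a prescribed $\delta$. In each patch we freeze the coefficient. In interior patches this gives the whole-space heat equation with constant diffusivity; in boundary patches, after flattening the $C^4$-boundary, it gives a half-space Neumann problem. For these model problems, maximal regularity follows from the scalar Mikhlin theorem, or equivalently from the explicit reflection formula. The perturbation $(\tilde m-\tilde m(x_j))\Delta q$ is bounded by $\delta\|q\|_{W^2_p}$ and is absorbed. The commutator terms involve only $\nabla q$ and $q$; they are lower order and are handled by interpolation plus a large shift $\omega$, or by taking $T$ small and iterating.

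\textbf{Step 2 (nonzero initial data).} Since $q_0\in W^{2-2/p}_{p,N}(\Omega)=(L^p,W^2_{p,N})_{1-1/p,p}$, we take the lifting $q^*:=e^{-t(\omega+A)}q_0$, or any right inverse of the trace as in Subsection \ref{subsec:extension_lifting}. Then $q^*\in W^1_p(0,T;L^p)\cap L^p(0,T;W^2_{p,N})$, and we solve for $q-q^*$ with zero initial data and right-hand side $l-(\partial_t+A)q^*\in L^p(0,T;L^p)$.

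\textbf{Step 3 (uniqueness).} We show that any solution with $l=0$ and $q_0=0$ vanishes. This follows from the a priori estimate of Step 1, or alternatively by testing with $|q|^{p-2}q/\tilde m$ when $\tilde m$ is smooth and approximating otherwise. Either way, the maximal regularity estimate directly implies injectivity.

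The main obstacle is that $\tilde m$ is merely continuous and the equation is in non-divergence form. This rules out energy or Lax--Milgram methods of the kind used in Section \ref{sec:model_c}, so the localization and perturbation argument must be carried out carefully. In particular, the uniformity of the constants in the frozen-coefficient estimates at boundary points requires care. In practice I would cite the Denk--Hieber--Prüss theorem (which requires precisely $C^0$ top-order coefficients, $C^2$ boundary, and the Lopatinskii--Shapiro condition) rather than redo the localization. The nontrivial verification then reduces to the Lopatinskii--Shapiro condition for the Neumann condition. That check is immediate: the ODE $\lambda v-\tilde m(x_0)(\partial_y^2-|\xi'|^2)v=0$, $v'(0)=h$, has the unique decaying solution $v=-h\,e^{-\mu y}/\mu$, where $\mu=\sqrt{|\xi'|^2+\lambda/\tilde m(x_0)}$ has positive real part.
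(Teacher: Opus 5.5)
Your proposal is correct and follows essentially the same route as the paper. Both arguments reduce the lemma to the Denk--Hieber--Pr\"uss maximal regularity theorem after checking two things: parameter-ellipticity of $\tilde m(x)|\xi|^2$, and the Lopatinskii--Shapiro condition via the decaying root $\mu$ of the frozen half-line ODE. You solve the inhomogeneous Neumann problem, while the paper shows the homogeneous problem has only the trivial decaying solution; these are equivalent. Your localization sketch, lifting and uniqueness steps are extra detail that the paper absorbs into the citation. One small correction: the Lopatinskii--Shapiro check must hold for $\lambda$ in a sector $\overline{\Sigma_{\pi-\phi}}$ with $\phi<\pi/2$, not merely for $\operatorname{Re}\lambda\ge 0$. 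Your formula for $\mu$ covers that sector equally well.
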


\begin{remark}\label{rem:1.5}
	By the continuity of $\beta:=\frac{1}{\varepsilon}f'(q_0)W(\varphi_0)+g'(q_0)$, we deduce thet there exists a constant $m_0>0$ such that $\beta \geq m_0 > 0$, which implies $\beta^{-1}$ is well-defined and uniformly bounded in $\Omega$. Let $\tilde{m}=\beta^{-1}\tilde{m}(\varphi_0,q_0)$. Since $\tilde{m}(\varphi_0,q_0)\in C^0(\overline{\Omega})$, it directly follows that $\tilde{m}=\beta^{-1}\tilde{m}(\varphi_0,q_0)\in C^0(\overline{\Omega})$.
\end{remark}	

\begin{proof}[Proof of Lemma \ref{exi-q}]
	We apply the maximal regularity framework for parabolic boundary value problems as in \cite[Theorem 8.2]{denk2003r}. Here, the spatial domain $\Omega \subset \mathbb{R}^d$ is a bounded domain with a $C^2$-boundary $\partial\Omega$.
	
\noindent\textbf{(i) Parameter-Ellipticity of the Operator.} First, we verify that the principal symbol of the operator $-\tilde{m}(x)\Delta$, given by
	\begin{equation}\label{principal_symbol}
		\mathcal{A}_\#(x,\xi) = \tilde{m}(x)|\xi|^2,
	\end{equation}
	is parameter-elliptic. Note that the symbol depends only on $x \in \Omega$ and not on $t$. Since $\tilde{m}(x) > 0$ on $\overline{\Omega}$, the spectrum trivially satisfies $\sigma(\mathcal{A}_\#(x,\xi)) \subset \mathbb{R}_+$ for all $x \in \overline{\Omega}$ and $\xi \in \mathbb{R}^d$ with $|\xi|=1$. Thus, the angle of ellipticity is exactly $\phi_{\mathcal{A}} = 0$, meaning the operator is parameter-elliptic for any angle $\phi > 0$.
	
\noindent\textbf{(ii) Boundary Condition Handling.} Since $\Omega$ has a $C^2$-boundary, for any fixed $x_0 \in \partial\Omega$, there exists a local $C^2$-diffeomorphism straightening $\partial\Omega$ near $x_0$ into the half-space
	\[
		\mathbb{R}^d_+ = \{y = (x', y_d) \mid x' \in \mathbb{R}^{d-1}, \, y_d > 0\},
	\]
	where $x'$ denotes the tangential coordinates and $y_d$ is the normal coordinate. Let $u(y)$ denote the localized function in these new coordinates. Then the homogeneous Neumann boundary condition becomes $\partial_{y_d} u(x', 0) = 0$.
	
	Next, we formulate the corresponding ODE problem on the half-line. Performing the Fourier transform with respect to the tangential coordinates $x'$ (with dual variable $\xi'$), the principal symbol becomes $\mathcal{A}_\#(x_0, \xi', \partial_{y_d}) = \tilde{m}(x_0)|\xi'|^2 - \tilde{m}(x_0)\partial_{y_d}^2$. This yields the following ODE for $y_d > 0$:
	\[
		\big(\lambda + \tilde{m}(x_0)|\xi'|^2 - \tilde{m}(x_0)\partial_{y_d}^2\big) \hat{u}(\xi', y_d) = 0.
	\]
	Solving the characteristic equation for this ODE, we obtain a decaying stable solution $\hat{u}(y_d) = A e^{-r y_d}$, with the root
	\[
		r = \left(\frac{\lambda}{\tilde{m}(x_0)} + |\xi'|^2\right)^{1/2}.
	\]
	Applying the Neumann boundary condition at $y_d = 0$, we have
	\[
		\partial_{y_d} \hat{u}(0) = -rA = 0.
	\]
	From part (i), since $(\lambda, \xi') \neq (0,0)$ and the parameter $\lambda \in \overline{\Sigma_\phi}$, it  follows that $r \neq 0$. This forces the constant $A = 0$ and completes the verification of the Lopatinskii-Shapiro condition. Since our initial assumptions strictly guarantee the requisite coefficient regularity $\tilde{m} \in C^0(\overline{\Omega})$ and data regularity $l \in L^p(0,T; L^p(\Omega))$, all prerequisites of the maximal $L^p$-regularity result \cite[Theorem 8.2]{denk2003r} are fulfilled, thereby establishing the existence of a unique strong solution $q$. 
\end{proof}

The following steps establish the continuity of the linear operators in the coupled parabolic system:
\begin{equation}\label{couple}
	\begin{cases} 
		\dt \varphi +\varepsilon m(\varphi_0) \Lap^2 \varphi = f_1 & \text{in } (0,T) \times \Omega, \\
		\beta \dt q - \tilde{m}(\varphi_0, q_0) \Lap q + \gamma \dt \varphi = f_2 & \text{in } (0,T) \times \Omega, \\
		\dn \varphi|_{\partial\Omega} = 0, \quad \dn q|_{\partial\Omega} = 0 & \text{on } (0,T) \times \partial\Omega, \\
		\dn \Lap \varphi|_{\partial\Omega} = 0 & \text{on } (0,T) \times \partial\Omega, \\
		\varphi|_{t=0} = \varphi_0, \quad  q|_{t=0} = q_0 & \text{in } \Omega,
	\end{cases}
\end{equation} 
where $\beta, \gamma \in C^0(\overline{\Omega})$ with $\beta(x) > 0$.

First, we rewrite system \eqref{couple} in an exact operator form. Let the foundational Banach spaces be $Z_T = Z_T^2 \times Z_T^3$ and $\mathbb{Y}_T = (Y_T^2 \times \mathbb{Z}_0^2) \times (Y_T^3 \times \mathbb{Z}_0^3)$. We define the coupled linear operator $\mathcal{L}_{*,T}: Z_T \to \mathbb{Y}_T$ as:
\begin{equation}\label{matrix-couple}
	\mathcal{L}_{*,T}(\varphi, q) = 
	\begin{pmatrix} 
		\mathcal{L}_{2,T}(\varphi) \\ 
		\mathcal{L}_{q,T}(q) + (\gamma \dt \varphi, 0) 
	\end{pmatrix}
	= \begin{pmatrix} 
		(f_1, \varphi_0)\\ 
		(f_2, q_0) 
	\end{pmatrix},
\end{equation}
where $\mathcal{L}_{q,T}(q) := \left( \beta \dt q - m(\varphi_0, q_0) \Lap q, \, q|_{t=0} \right)$.

\begin{lemma}\label{inv-l3}
	Let the assumptions of Lemmas \ref{inv-l2} and \ref{exi-q} hold. Given $0 < T_0 < \infty$, the inverse operator $\mathcal{L}_{*,T}^{-1}$ exists and is uniformly bounded for all $0 < T \le T_0$. Specifically, there exists a constant $C(T_0) > 0$ independent of $T$ such that:
	\begin{equation}\label{einv-l3}	\|\mathcal{L}_{*,T}^{-1}\|_{\mathcal{L}(\mathbb{Y}_T, Z_T)} \leq C(T_0) < \infty \quad \text{for all } 0 < T \le T_0.
	\end{equation}
\end{lemma}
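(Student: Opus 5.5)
The triangular (block lower-triangular) structure of \eqref{matrix-couple} lets us invert $\mathcal{L}_{*,T}$ by solving the two rows successively. Given $((f_1,\varphi_0),(f_2,q_0))\in\mathbb{Y}_T$, we first apply Lemma \ref{inv-l2} to the first row. This produces a unique $\varphi=\mathcal{L}_{2,T}^{-1}(f_1,\varphi_0)\in Z_T^2$, and by \eqref{einv-l2} its norm is bounded by $\|\mathcal{L}_{2,T_0}^{-1}\|\,\|(f_1,\varphi_0)\|_{Y_T^2\times\mathbb{Z}_0^2}$, uniformly in $T\le T_0$. In particular
\[
\|\dt\varphi\|_{L^p(0,T;L^p(\Omega))}\le \|\varphi\|_{Z_T^2}\le C(T_0)\|(f_1,\varphi_0)\|.
\]

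Next we treat the second row. Since $\gamma\in C^0(\overline{\Omega})$, the term $\gamma\dt\varphi$ lies in $L^p(0,T;L^p(\Omega))$, with norm at most $\|\gamma\|_{C^0(\overline\Omega)}\|\dt\varphi\|_{L^p(L^p)}$. Following Remark \ref{rem:1.5}, we divide the $q$-equation by $\beta\ge m_0>0$. This gives
\[
\dt q-\beta^{-1}\tilde m(\varphi_0,q_0)\Delta q=l:=\beta^{-1}(f_2-\gamma\dt\varphi).
\]
The coefficient $\beta^{-1}\tilde m(\varphi_0,q_0)$ is continuous and strictly positive, so Lemma \ref{exi-q} yields a unique $q\in Z_T^3$ with $q(0)=q_0$. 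The bound is
\[
\|q\|_{Z_T^3}\le C\big(\|l\|_{L^p(L^p)}+\|q_0\|_{\mathbb{Z}_0^3}\big)\le C\big(\|f_2\|+\|q_0\|+\|(f_1,\varphi_0)\|\big).
\]
Uniqueness of $(\varphi,q)$ follows from the uniqueness in each step. Together these give bijectivity of $\mathcal{L}_{*,T}$ and the explicit inverse
\[
\mathcal{L}_{*,T}^{-1}\big((f_1,\varphi_0),(f_2,q_0)\big)=\Big(\varphi,\ \mathcal{L}_{q,T}^{-1}\big(f_2-\gamma\dt\varphi,\,q_0\big)\Big).
\]

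The main obstacle is that Lemma \ref{exi-q} gives existence without a quantitative bound, and in particular without a constant independent of $T\in(0,T_0]$. We would obtain this by the same extension argument used for \eqref{einv-l2}, following \cite{AbelsWeber2021}.

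First we work on the fixed interval $(0,T_0)$. There the solution map $(l,q_0)\mapsto q$ is a bounded linear bijection onto $Z_{T_0}^3$, and its inverse $\mathcal{L}_{q,T_0}$ is clearly bounded. The open mapping theorem then gives a bound $\|\mathcal{L}_{q,T_0}^{-1}\|<\infty$.

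For $T<T_0$ we extend $l$ by zero to $(0,T_0)$ and solve there. By uniqueness on $(0,T)$, the restriction to $(0,T)$ is the solution on $(0,T)$. Since restriction does not increase the $Z^3$-norm (which is equipped with the initial-trace term), we get $\|\mathcal{L}_{q,T}^{-1}\|\le\|\mathcal{L}_{q,T_0}^{-1}\|$.

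Composing the two uniform bounds then gives \eqref{einv-l3} with a constant $C(T_0)$ depending only on $\|\mathcal{L}_{2,T_0}^{-1}\|$, $\|\mathcal{L}_{q,T_0}^{-1}\|$, $\|\gamma\|_{C^0}$ and $m_0$.
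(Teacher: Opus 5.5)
Your proof is correct and follows essentially the same route as the paper. You solve the $\varphi$-row via Lemma \ref{inv-l2}, treat $\gamma\,\partial_t\varphi$ as a source in the $q$-equation, and get $T$-independence by extending the source by zero to $(0,T_0)$, solving there and restricting by uniqueness. Your use of the open mapping theorem to obtain $\|\mathcal{L}_{q,T_0}^{-1}\|<\infty$, together with the normalization by $\beta$ from Remark \ref{rem:1.5}, justifies a step the paper simply attributes to Lemma \ref{exi-q}, which states no quantitative bound.
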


\begin{proof}
	By Lemmas \ref{inv-l2} and \ref{exi-q}, the block-diagonal counterparts $\mathcal{L}_{2,T}$ and $\mathcal{L}_{q,T}$ are both continuously invertible. Consequently, system \eqref{matrix-couple} admits a unique strong solution $(\varphi, q) \in Z_T^2 \times Z_T^3$ for any given right-hand side data in $\mathbb{Y}_T$.
	
	To explicitly establish the uniform bound with respect to $T$, we exploit the lower-triangular block structure of \eqref{matrix-couple}, which sequentially decouples the system. From the first row, we immediately obtain $\varphi = \mathcal{L}_{2,T}^{-1}(f_1, \varphi_0)$. Substituting this phase-field solution into the second row yields:
	\begin{equation*}
		\mathcal{L}_{q,T}(q) = (f_2 - \gamma \dt \varphi, q_0).
	\end{equation*}
	Since the uniform maximal regularity for $\mathcal{L}_{2,T}$ is already established in Lemma \ref{inv-l2}, we inherently possess the uniform estimate $\|\varphi\|_{Z_T^2} \leq C_{T_0} \|(f_1, \varphi_0)\|_{Y_T^2 \times \mathbb{Z}_0^2}$. Thus, we only need to rigorously derive the uniform boundedness of the coupled component $q$. 
	
	Let us define source term on the fixed time interval $(0, T_0)$:
	\begin{equation*}
		\widetilde{f}_0(t) = 
		\begin{cases} 
			f_2(t) - \gamma \dt \varphi(t) & \text{if } t \in (0, T], \\
			0 & \text{if } t \in (T, T_0).
		\end{cases}
	\end{equation*}
	Because $f_2, \dt \varphi \in L^p(0,T; L^p(\Omega))$, the trivial extension ensures $\widetilde{f}_0 \in L^p(0,T_0; L^p(\Omega))$, and their norms satisfy $\|\widetilde{f}_0\|_{Y_{T_0}^3} = \|f_2 - \gamma \dt \varphi\|_{Y_T^3}$.
	
	According to Lemma \ref{exi-q}, there exists a unique solution $\tilde{q} \in Z_{T_0}^3$ to the auxiliary parabolic problem $\mathcal{L}_{q,T_0}(\tilde{q}) = (\widetilde{f}_0, q_0)$. Due to the causality and uniqueness of the initial-boundary value problem, the restriction of $\tilde{q}$ to $(0,T)$ exactly coincides with $q$, i.e., $\tilde{q}|_{(0, T)} = q$.
	
	Employing the uniform bound of $\mathcal{L}_{q,T_0}^{-1}$ from Lemma \ref{exi-q}, and standard triangle inequalities, we deduce:
	\begin{equation*}
		\begin{aligned}
			\| q \|_{Z_T^3} &\leq \| \tilde{q} \|_{Z_{T_0}^3} = \left\| \mathcal{L}_{q,T_0}^{-1} (\widetilde{f}_0 ,q_0)\right\|_{Z_{T_0}^3} \\
			&\leq \left\| \mathcal{L}_{q,T_0}^{-1} \right\|_{\mathcal{L}(Y_{T_0}^3\times \mathbb{Z}_{0}^3, Z_{T_0}^3)} \left( \| \widetilde{f}_0 \|_{Y_{T_0}^3} + \| q_0 \|_{\mathbb{Z}_0^3} \right) \\
			&\leq C(T_0) \left( \| f_2 - \gamma \dt \varphi \|_{Y_T^3} + \| q_0 \|_{\mathbb{Z}_0^3} \right) \\
			&\leq C(T_0) \left( \| f_2 \|_{Y_T^3} + \| \gamma \|_{L^\infty(\Omega)} \| \dt \varphi \|_{Y_T^2} + \| q_0 \|_{\mathbb{Z}_0^3} \right).
		\end{aligned}
	\end{equation*}
	Crucially, since $\| \dt \varphi \|_{Y_T^2} \leq \| \varphi \|_{Z_T^2}$, we can feed the estimate of $\varphi$ into the inequality to obtain:
	\begin{equation*}
		\| q \|_{Z_T^3} \leq \widetilde{C}(T_0) \left( \| f_1 \|_{Y_T^2} + \| \varphi_0 \|_{\mathbb{Z}_0^2} + \| f_2 \|_{Y_T^3} + \| q_0 \|_{\mathbb{Z}_0^3} \right).
	\end{equation*}
	This uniform estimate bounds both $\varphi$ and $q$ by the initial configurations and the sources on the right-hand side. It shows that the fully coupled inverse operator $\mathcal{L}_{*,T}^{-1}$ is uniformly bounded by a constant dependent only on $T_0$, thus completing the proof.
\end{proof}

\section*{Acknowledgements}	
	 SZ. Chen acknowledges support by the China Scholarship Council (CSC) through grant 202508340071

         \bibliographystyle{abbrv}
\bibliography{references}
	
\end{document}